\theoremstyle{plain}
\newtheorem{thm}{Theorem}[section]
\newtheorem{lem}[thm]{Lemma}
\newtheorem{cor}[thm]{Corollary}
\newtheorem{conj}[thm]{Conjecture}
\theoremstyle{definition}
\newtheorem{defn}[thm]{Definition}
\newtheorem{notn}[thm]{Notation}
\newtheorem{rem}[thm]{Remark}
\theoremstyle{plain} %
\newcommand{\thistheoremname}{}
\newtheorem{genericthm}[thm]{\thistheoremname}
\newenvironment{custom}[1]
  {\renewcommand{\thistheoremname}{#1}%
   \begin{genericthm}}
  {\end{genericthm}}
\theoremstyle{remark}
\DeclareMathOperator{\Hom}{Hom}
\newcommand*{\from}{\colon}
\newcommand*{\defword}[1]{\emph{#1}}
\newcommand*{\defined}{\coloneqq}
\mathchardef\mhyphen="2D
\newcommand*{\xto}[1]{\xrightarrow{#1}}
\DeclareMathOperator*{\colim}{colim}
\DeclareMathOperator{\id}{id}
\DeclareMathOperator{\im}{im}
\DeclareMathOperator{\rk}{rk}
\def\mydefbb#1{\expandafter\def\csname bb#1\endcsname{\mathbb{#1}}}
\def\mydefallbb#1{\ifx#1\mydefallbb\else\mydefbb#1\expandafter\mydefallbb\fi}
\newcommand*{\injto}{\hookrightarrow}
\newcommand*{\surjto}{\twoheadrightarrow}
\newcommand*{\Mod}{\mathrm{Mod}}
\newcommand*{\rhom}{\mathrm{RHom}}
\DeclareMathOperator{\spec}{Spec}
\DeclareMathOperator{\res}{Res}
\DeclareMathOperator{\rep}{Rep}
\DeclareMathOperator{\bun}{Bun}
\newcommand*{\lheck}{\overleftarrow{h}}
\newcommand*{\rheck}{\overrightarrow{h}}
\newcommand*{\hck}{\mathrm{Hck}}
\newcommand*{\Div}{\mathrm{Div}}
\DeclareMathOperator{\spd}{\mathrm{Spd}}
\newcommand*{\supp}{\mathrm{supp}}
\newcommand*{\oo}{\mathcal{O}}
\newcommand*{\localhck}{\mathcal{H}\mathrm{ck}}
\DeclareMathOperator{\locsys}{Par}
\DeclareMathOperator{\cind}{c-ind}
\DeclareMathOperator{\perf}{Perf}
\DeclareMathOperator{\qcoh}{QCoh}
\DeclareMathOperator{\coh}{Coh}
\newcommand*{\nilp}{{\mathrm{Nilp}}}
\newcommand{\op}{{\mathrm{op}}}
\DeclareMathOperator{\End}{End}
\DeclareMathOperator{\prl}{{\mathcal{P}r^L}}
\DeclareMathOperator{\D}{\mathcal{D}}
\newcommand{\qc}{\mathrm{qc}}
\DeclareMathOperator{\cofib}{cofib}
\DeclareMathOperator{\ind}{Ind}
\DeclareMathOperator{\nind}{nInd}
\DeclareMathOperator{\nres}{nRes}
\newcommand\widecheck[1]{%
\savestack{\tmpbox}{\stretchto{%
  \scaleto{%
    \scalerel*[\widthof{\ensuremath{#1}}]{\kern-.6pt\bigwedge\kern-.6pt}%
    {\rule[-\textheight/2]{1ex}{\textheight}}%
  }{\textheight}%
}{0.5ex}}%
\stackon[1pt]{#1}{\scalebox{-1}{\tmpbox}}%
}
\DeclareMathOperator{\ad}{ad}
\DeclareMathOperator{\speccenter}{\mathcal{Z}^{spec}}
\DeclareMathOperator{\berncenter}{\mathfrak{Z}}
\newcommand{\coarse}{{\mathrm{coarse}}}
\newcommand{\irred}{{\mathrm{irred}}}
\newcommand{\ren}{{\mathrm{ren}}}
\newcommand{\GL}{\mathrm{GL}}
\newcommand{\std}{{\mu_{\mathrm{std}}}}
\newcommand{\stdd}{{\mu_{\mathrm{std}^\vee}}}
\newcommand{\alg}{{\mathrm{alg}}}
\DeclareMathOperator{\numin}{\nu_{\min}}
\DeclareMathOperator{\numax}{\nu_{\max}}
\DeclareMathOperator{\Lie}{Lie}
\newcommand{\lift}{{\mathrm{lift}}}
\DeclareMathOperator*{\Ind}{Ind}
\DeclareMathOperator*{\sht}{Sht}
\newcommand{\func}[4]{i_{#2}^{\ren!}\oo(#3,#4)*i_{#1*}^\ren}
\DeclareMathOperator{\rkmin}{\rk_{\min}}
\DeclareMathOperator{\degmin}{\deg_{\min}}
\newcommand{\nmin}{n_{\min}}
\newcommand{\LSt}{{\mathrm{LSt}}}
\DeclareRobustCommand{\gobblefive}[5]{}
\newcommand*{\SkipTocEntry}{\addtocontents{toc}{\gobblefive}}%
\def\subsection{\@startsection{subsection}{2}%
  \z@{.5\linespacing\@plus.7\linespacing}{.3\linespacing}%
  {\normalfont\bfseries}}
\author{Konrad Zou}
\title{Categorical local Langlands for \texorpdfstring{\(\mathrm{GL}_{\MakeLowercase{n}}\)}{GLn} for parameters of Langlands-Shahidi type with integral coefficients}
\begin{document}	
	\begin{abstract}
		We prove the categorical form of Fargues' geometrization conjecture for \(\mathrm{GL}_n\) along \(L\)-parameters of Langlands-Shahidi type for rational, torsion, and integral coefficients.
		Additionally, we prove that in this case the categorical equivalence is \(t\)-exact, which yields new torsion vanishing results in the cohomology of unitary Shimura	 varieties.
	\end{abstract}
	\maketitle
	\tableofcontents
	\section{Introduction}
Let \(E\) be a non-archimedean local field of characteristic 0 of residue characteristic \(p\) with residue field \(\bbF_{q}\) and let \(G=\GL_n\).
To this one can attach the Langlands dual group, which is \(\widehat{G}=\GL_n\).

The local Langlands correspondence analyzes the set of irreducible \(\bbC\)-valued continuous representations of \(G(E)\) by constructing a map
\begin{equation*}
    \begin{tikzcd}[row sep=tiny]
        \{\text{Irreducible representations of }G(E)\} \arrow[rr] &  & {\{\text{Weil-Deligne representations }(\rho,N)\}} \\
        \pi \arrow[rr, maps to]                                   &  & {\varphi_\pi=(\rho,N).}                           
        \end{tikzcd}
\end{equation*}
Here \(N\in\Lie(\widehat{G})\) is a nilpotent endomorphism and \(\rho\from W_E\to\widehat{G}(\bbC)\) is a \(n\)-dimensional representation of \(W_E\) where \(N\) and \(\rho\) satisfy the relation \(\ad(\rho(\sigma))N=q^{|\sigma|}N\).
The map \(|\cdot|\from W_E\to\bbZ\) is the valuation of the corresponding element of \(E^\times\) under local class field theory.
This Weil-Deligne representation \(\varphi_\pi\) is called the \(L\)-parameter of \(\pi\).
In our case this map is in fact a bijection.

These mapping should have some properties, for example it should be bijective and preserve various analytical invariants like $L$- and $\epsilon$-factors.
This falls outside the scope of this work, instead let us focus on the question how one would construct such a map.

Fix an isomorphism $\iota\from\mathbb{C}\cong\overline{\mathbb{Q}_\ell}$ with \(\ell\neq p\).
Firstly, using such an isomorphism, by Grothendieck's quasi-unipotence theorem the datum of a Weil-Deligne representation is equivalent to a continuous morphism $\varphi\from W_E\to\GL_n(\overline{\mathbb{Q}_\ell})$.
The datum of the element $N$ is taken by $\log\varphi(s)$ where $s$ is some generator of the tame inertia.

This suggest to realize this correspondence in the \'etale cohomology of some space.
It turns out that the correct space is \(\mathrm{LT}_{\infty}\), the Lubin-Tate tower at infinite level.
It carries an action of \(G(E)\times D^\times\times W_E\) where \(D^\times\) is the division algebra of invariant \(1/n\).
Indeed, in \cite{harris-taylor} they compute that for supercuspidal \(\pi\) we have
\begin{equation*}
    R\Gamma_c(\mathrm{LT}_\infty,\overline{\mathbb{Q}_\ell})\otimes_{\mathcal{H}(G(E))}^L\pi=\mathrm{JL}(\pi)\boxtimes\varphi_\pi[1-n](\frac{1-n}{2}).
\end{equation*}
Here \(\mathcal{H}(G(E))\) is the Hecke algebra for \(G(E)\) and \(\mathrm{JL}(\pi)\) is the so called Jacquet-Langlands transfer of \(\pi\).

Using Bernstein-Zelevinsky's theory of segments we can extend this correspondence to all irreducible representations \(\pi\).
The idea is to use that the local Langlands correspondence is compatible with parabolic induction and use the fact that any irreducible representation \(\pi\) occurs as some irreducible consituent of \(\Ind_{\GL_{n_1}(E)\times\dots\GL_{n_r}(E)}^{\GL_n(E)}\pi_1\boxtimes\dots\boxtimes\pi_r\), where each \(\pi_i\) is a supercuspidal representation of \(\GL_{n_i}(E)\).
The simplest case occurs when this parabolic induction is already irreducible.
There is a certain genericity assumption one can impose on \(L\)-parameters called ``Langlands-Shahidi'' type, which guarantees that the corresponding representation is an irreducible parabolic induction.
We will recall more details of this condition later on in the introduction.
We record for future reference in this introduction that the cohomology is concentrated in middle degree, as \(\dim(\mathrm{LT}_{\infty})=n-1\).

In \cite{geometrization}, Laurent Fargues and Peter Scholze have reformulated this circle of ideas as a geometric Langlands conjecture for the Fargues-Fontaine curve.
For this, they consider the stack \(\bun_G\) of \(G\)-bundles on the Fargues-Fontaine curve and look at the category \(\D(\bun_G)\) of lisse sheaves on \(\bun_G\).
This category should be equivalent to \(\Ind\coh_{\nilp}^{\qc}(\locsys_{\widehat{G}})\), where \(\locsys_{\widehat{G}}\) is the stack of \(L\)-parameters for \(\widehat{G}\).
Given a \(\bbZ_{\ell}\)-algebra \(\Lambda\) there is a scheme \(Z^1(W_E,\widehat{G})\) whose \(\Lambda\)-valued points are really continuous cocycles with values in \(\widehat{G}(\Lambda)\).
We can then form the quotient stack under the \(\widehat{G}\)-action to obtain \(\locsys_{\widehat{G}}\).
Instead one can also form the GIT quotient to form a scheme \(\locsys_{\widehat{G}}^\coarse\).
This is an infinite union of affine schemes and its closed points parametrize semi-simple \(L\)-parameters and in general it parametrizes pseudo-representations.
There is a natural map \(\locsys_{\widehat{G}}\to\locsys_{\widehat{G}}^\coarse\).
On closed points it maps an \(L\)-parameter to its simplification, and in general it maps representations to its corresponding pseudo-representations.

There is a canonical action of \(\Ind\perf(\locsys_{\widehat{G}})\) on \(\Ind\coh_\nilp^\qc(\locsys_{\widehat{G}})\), by Ind-extending the natural action of \(\perf(\locsys_{\widehat{G}})\) given by the tensor product.
They also construct an action of \(\Ind\perf(\locsys_{\widehat{G}})\) on \(\D(\bun_G)\), the so called ``spectral action''.
For this they prove that for each \(V\in\rep(\widehat{G})\) one can attach a functor \(T_V\from\D(\bun_G)\to\D(\bun_G)^{BW_E}\).
Taking \(V=\mathrm{std}\), one recovers the computation of \(R\Gamma_c(\mathrm{LT}_{\infty},\overline{\bbQ_{\ell}})\) via the isomorphism
\begin{equation}\label{eq: stalk hecke operator}
    i_{\oo(1/n)}^*T_{\mathrm{std}}i_{\oo^n!}\pi[n-1](\frac{n-1}{2})=R\Gamma_c(\mathrm{LT}_{\infty},\overline{\bbQ_{\ell}}).
\end{equation}
Here \(\oo^n\) denotes the trivial rank \(n\) vector bundle and \(\oo(1/n)\) the semistable vector bundle of slope \(1/n\) occuring in the Fargues-Fontaine classifcation of vector bundles on the Fargues-Fontaine curve.
Varying over all \(V\) and also allowing representations for \(\widehat{G}^I\) for \(I\) a finite set  they show this is indeed equivalent to giving an action of \(\Ind\perf(\locsys_{\widehat{G}})\) on \(\D(\bun_G)\).
The details can be found in \cite[Chapter IX]{geometrization}.
With this setup, the main conjecture of \cite{geometrization} is roughly (ignoring the question of coefficients, which can be taken to be a \(\bbZ_{\ell}[\sqrt{q},\mu_{p^\infty}]\)-algebra\footnote{The inclusion of $\mu_{p^\infty}$ is not strictly necessary, however the set of Whittaker data we will consider later will be empty otherwise.} \(\Lambda\) where \(\ell\neq p\)).
\begin{conj}\label{conj: full categorical conjecture}
    Choose a Whittaker datum \((U,\psi)\).
    There is an \(\Ind\perf(\locsys_{\widehat{G}})\)-linear equivalence 
    \begin{equation*}
        \bbL_G\from\Ind\coh^\qc_\nilp(\locsys_{\widehat{G}})\simeq\D(\bun_G)
    \end{equation*}
    preserving compact objects, mapping \(\oo\) to \(\mathcal{W}\), where \(\mathcal{W}\in\D(\bun_G)\) is a certain object constructed from the Whittaker datum\footnote{In the geometric Langlands literature this functor would be called $\bbL_G^L$. To simplify the notation in this paper, we drop the ``$L$'' superscript.}.
\end{conj}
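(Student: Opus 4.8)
The plan is to construct $\bbL_G$ from the spectral action and then verify it is an equivalence one Bernstein block at a time. Define $\bbL_G$ to be the $\Ind\perf(\locsys_{\widehat{G}})$-linear, colimit-preserving functor characterised by $\bbL_G(\oo)=\mathcal{W}$ together with the requirement that it intertwine tensoring by $\ev_V^*$ of the universal $\widehat{G}$-representation with the Hecke functor $T_V$ for every $V\in\rep(\widehat{G}^I)$; such a functor exists and is unique by the formalism of \cite[Chapter IX]{geometrization} (equivalently it is the left adjoint of the enhanced coefficient functor of Fargues--Scholze). Linearity over $\Ind\perf(\locsys_{\widehat{G}})$ and the normalisation $\bbL_G(\oo)=\mathcal{W}$ then hold by construction, so the only content is that $\bbL_G$ is an equivalence preserving compact objects. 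Both sides are modules over $\qcoh(\locsys_{\widehat{G}}^\coarse)$ --- the spectral side via the affinisation map $\locsys_{\widehat{G}}\to\locsys_{\widehat{G}}^\coarse$, the automorphic side via the Fargues--Scholze excursion algebra realised inside the Bernstein centre --- and $\locsys_{\widehat{G}}^\coarse$ is a disjoint union of affine schemes whose components biject with the Bernstein blocks of $\D(\bun_G)$. Since $\bbL_G$ is $\qcoh(\locsys_{\widehat{G}}^\coarse)$-linear it decomposes accordingly, so it suffices to prove it is fully faithful on a set of compact generators of each block and essentially surjective onto one.

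\emph{Cuspidal blocks.} If the semisimple parameter $\varphi$ of a block is irreducible, the spectral side is $\coh$ of the derived formal completion of $\locsys_{\widehat{G}}$ along $\varphi$, here $\bbG_m$ acting trivially on a point (the torsion/integral subtleties being exactly those of the deformation ring of $\varphi$), while the automorphic side is, by the structure of the semistable locus of $\bun_G$, modules over the supercuspidal Bernstein block of $G(E)$ attached to $\varphi$ by classical local Langlands. The Hecke eigenproperty does the work: evaluating the intertwining of $\ev_{\mathrm{std}}^*$ with $T_{\mathrm{std}}$ at the bundle $\oo(1/n)$ and using the Harris--Taylor computation \eqref{eq: stalk hecke operator} identifies $\bbL_G(\oo)$ with the Whittaker sheaf and pins the $G(E)$-action down to the classical correspondence; full faithfulness then reduces to local Langlands for $\GL_n$ being a bijection on supercuspidals together with a matching of the relevant endomorphism/deformation rings, which is available with $\bbZ_\ell$-coefficients by Helm--Moss. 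The same argument applied to each proper Levi $\widehat{M}$ of $\widehat{G}$ (a product of smaller general linear groups) handles every block whose parameter is elliptic in some Levi, \emph{provided} $\bbL_G$ is compatible with Eisenstein induction.

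\emph{Induction on $n$ via geometric Eisenstein series.} We induct on $n$, the case $n=1$ being class field theory. For a general block the semisimple parameter $\varphi$ factors, uniquely up to conjugacy, through a proper Levi $\widehat{M}$ and is elliptic there; by induction $\bbL_M$ is an equivalence. On $\D(\bun_G)$ normalised parabolic induction is the pull-push along the Hecke correspondence $\bun_M\leftarrow\bun_P\rightarrow\bun_G$, and by \cite{geometrization} this is intertwined, under $\bbL_M$ and $\bbL_G$, with the spectral Eisenstein functor along $\locsys_{\widehat{M}}\leftarrow\locsys_{\widehat{P}}\rightarrow\locsys_{\widehat{G}}$; combining this with $\Ind\perf$-linearity and the projection formula shows that on the essential image of Eisenstein induction from $\widehat{M}$ the functor $\bbL_G$ agrees with $\bbL_M$ up to this geometric Eisenstein functor, hence is fully faithful there with the expected image. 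It remains to \emph{glue}: stratify $\D(\bun_G)$ by Harder--Narasimhan/Newton strata and $\locsys_{\widehat{G}}$ by the loci of parameters admitting a reduction to a given Levi, obtaining on each side a recollement whose open part is governed by cuspidal support on $G(E)$ and whose closed parts are Eisenstein series from proper Levis; the two recollements match because the gluing functors are again built from Hecke operators $T_V$ with $V$ minuscule, and for $\GL_n$ all minuscule $V$ are fundamental and the corresponding $T_V$ enjoy the finiteness ($\ula$, $\heckefinite$) needed to control the strata. Reassembling the strata gives the equivalence on each block, hence globally.

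\emph{$t$-exactness, compact objects, and the main obstacle.} Preservation of compact objects follows once $\bbL_G$ is known to be an equivalence, since $\mathcal{W}$ is compact and the $T_V$-translates of $\oo$ compactly generate $\Ind\coh_\nilp^\qc(\locsys_{\widehat{G}})$. For $t$-exactness one checks that $\bbL_G$ carries a set of $t$-generators --- the $\star$-translates of $\oo$ by line bundles and by minuscule $T_V$ --- to perverse sheaves; this uses that $\mathcal{W}$ is perverse and that for $\GL_n$ all minuscule Hecke operators are perverse $t$-exact after the shift and Tate twist visible in \eqref{eq: stalk hecke operator}, which in turn rests on the concentration of Lubin--Tate cohomology in the middle degree $n-1$ recorded in the introduction. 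The step I expect to be genuinely hard --- and the reason the unconditional theorem here is restricted to Langlands--Shahidi parameters --- is controlling the nilpotent singular-support condition at the non-semisimple, ramified parameters: there $Z^1(W_E,\widehat{G})$ is singular, $\coh_\nilp$ is strictly larger than $\perf$, the gluing functors above are no longer transparent, and one must match the singular-support bound on the spectral side with the $\ula$/Hecke-finiteness conditions defining $\D_{\lis}(\bun_G)$ uniformly in an integral coefficient ring $\Lambda$ in which Jordan--Hölder multiplicities and the excursion algebra can degenerate in small residue characteristic. Precisely in the Langlands--Shahidi range the relevant parabolic induction is already irreducible, the recollement degenerates, and this obstruction disappears.
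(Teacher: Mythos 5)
The statement you are proving is \cref{conj: full categorical conjecture}, which the paper states as a \emph{conjecture} and does not prove: the text explicitly says the full conjecture ``appears to be out of reach'' and only establishes \cref{conj: categorical conjecture open substack} for the open locus $\LSt$ of Langlands--Shahidi type parameters (\cref{thm: categorical equivalence}). Your proposal is therefore being measured against an open problem, and it does not close it. The construction of $\bbL_G$ from the spectral action, the reduction to one Bernstein component (equivalently one connected component of $\locsys_{\widehat{G}}^\coarse$, or after \cref{lem: reduction to individual parameter} one semisimple parameter $\varphi$ at a time), and the treatment of irreducible $\varphi$ via the Whittaker normalisation and local Langlands in families all track the paper's actual strategy for the restricted theorem. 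But the decisive step in your sketch --- matching a recollement of $\D(\bun_G)$ by Newton strata with a recollement of $\Ind\coh^{\qc}_{\nilp}(\locsys_{\widehat{G}})$ by reducibility loci, with gluing functors ``again built from Hecke operators'' --- is precisely the part that is not known. Away from the Langlands--Shahidi locus the semi-orthogonal decomposition of $\D(\bun_G)^\wedge_\varphi$ into strata does \emph{not} split, the stabiliser $S_\varphi$ need not be a torus, $\nilp$ is nonzero so $\coh_\nilp\neq\perf$ on the spectral side, and no identification of the two gluing data exists; you flag this yourself as ``the step I expect to be genuinely hard,'' which is an admission that the proof is incomplete rather than a proof. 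The entire technical content of the paper (the inductive computation of $i_{b'}^{\ren!}\oo(\chi,\varphi)*i_{b*}^{\ren}$ in \cref{sec: computation Hecke operators}) exists exactly to show that on $\LSt$ this obstruction vanishes because every relevant modification is Hodge--Newton reducible or forced to vanish.

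Two further points. First, your compatibility of $\bbL_G$ with geometric Eisenstein series on the Fargues--Fontaine curve is asserted by citation to \cite{geometrization}, but no such intertwining statement is proved there in the generality you need; the paper deliberately avoids it and instead uses Hodge--Newton reducibility of Hecke stalks (\cref{cor: hodge-newton renormalized reducible reduction}) as a substitute. Second, your claim that $\bbL_G$ is globally $t$-exact contradicts the paper's explicit expectation: ``We do not expect this to be true for $V=\locsys_{\widehat{G}}^\coarse$ (unless $G$ is a torus).'' The $t$-exactness established in \cref{thm: t-exact} is again special to the Langlands--Shahidi locus.
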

Philosophically, this claims that two sheaves of categories over \(\locsys_{\widehat{G}}\) are equivalent as sheaves of categories over \(\locsys_{\widehat{G}}\), see \cite{sheaves-of-categories} for a way to make this mathematically precise.
While the full conjecture appears to be out of reach, one might hope to be able to prove a version of the conjecture when restricted to an open substack \(V\subset\locsys_{\widehat{G}}\).

For technical reasons it is easier to restrict to an open subscheme \(V\subset\locsys_{\widehat{G}}^\coarse\).
This should be formulated via base changing along the symmetric monoidal pullback functor \(\Ind\perf(\locsys_{\widehat{G}}^\coarse)\to\Ind\perf(V)\).
Let us write 
\[\D^V(\bun_G)\defined\D(\bun_G)\otimes_{\Ind\perf(\locsys_{\widehat{G}}^\coarse)}\Ind\perf(V).\]

Thus, the conjecture would look like this:
\begin{conj}\label{conj: categorical conjecture open substack}
    Choose a Whittaker datum \((U,\psi)\).
    There is an \(\Ind\perf(p^{-1}(V))\)-linear equivalence 
    \begin{equation*}
        \bbL_G^V\from\Ind\coh^\qc_\nilp(p^{-1}(V))\simeq\D^V(\bun_G)
    \end{equation*}
    preserving compact objects, mapping \(\oo\) to \(\mathcal{W}^V\), where \(p\from\locsys_{G}\to\locsys_{\widehat{G}}^\coarse\) is the canonical map.
\end{conj}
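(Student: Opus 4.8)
The plan is to reduce the statement for an arbitrary open $V\subseteq\locsys_{\widehat{G}}^\coarse$ to the case $V=\locsys_{\widehat{G}}^\coarse$ --- that is, to \cref{conj: full categorical conjecture} --- and then to attack the latter one connected component at a time. For the reduction: the defining formula $\D^V(\bun_G)=\D(\bun_G)\otimes_{\Ind\perf(\locsys_{\widehat{G}}^\coarse)}\Ind\perf(V)$, together with an equivalence $\Ind\coh^\qc_\nilp(p^{-1}(V))\simeq\Ind\coh^\qc_\nilp(\locsys_{\widehat{G}})\otimes_{\Ind\perf(\locsys_{\widehat{G}}^\coarse)}\Ind\perf(V)$ --- which holds because $p^{-1}(V)\subseteq\locsys_{\widehat{G}}$ is open and ind-coherent sheaves with nilpotent singular support restrict along open immersions --- shows that an $\Ind\perf(\locsys_{\widehat{G}})$-linear equivalence $\bbL_G$ with $\bbL_G(\oo)=\mathcal W$ base-changes to an $\Ind\perf(p^{-1}(V))$-linear equivalence $\bbL_G^V$ with $\bbL_G^V(\oo)=\mathcal W^V$. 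Moreover both $\Ind\coh^\qc_\nilp(\locsys_{\widehat{G}})$ and $\D(\bun_G)$ split as products indexed by $\pi_0(\locsys_{\widehat{G}}^\coarse)$ --- the former because $\locsys_{\widehat{G}}^\coarse$ is a disjoint union of affine schemes, the latter because the corresponding idempotents of $\Ind\perf(\locsys_{\widehat{G}}^\coarse)$ act on $\D(\bun_G)$ through the spectral action --- so it suffices to produce, for each component $\mathfrak X\subseteq\locsys_{\widehat{G}}^\coarse$, an $\Ind\perf(p^{-1}(\mathfrak X))$-linear equivalence $\Ind\coh^\qc_\nilp(p^{-1}(\mathfrak X))\simeq\D^{\mathfrak X}(\bun_G)$ carrying $\oo$ to $\mathcal W^{\mathfrak X}$.

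On a fixed component $\mathfrak X$ the heart of the matter is a comparison of Hecke algebras on the two sides. On the automorphic side, a point of $\mathfrak X$ is a semisimple parameter $\bigoplus_i\rho_i\otimes M_i$ with each $\rho_i$ irreducible, and one describes $\D^{\mathfrak X}(\bun_G)$ via the Harder--Narasimhan stratification of $\bun_G$: each stratum contributes the Bernstein block --- for the inertial class cut out by $\mathfrak X$ --- of smooth $\Lambda$-representations of an inner form $J_b(E)$, and by the theory of simple types (Bushnell--Kutzko for $\GL_n$, S\'echerre--Stevens for its inner forms, in their integral refinements) such a block is equivalent to modules over an extended affine Hecke algebra of type $A$ with parameters powers of $q$; the compatibility, known for $\GL_n$, of the Fargues--Scholze correspondence with the classical one and with the Bernstein centre identifies the $\Ind\perf(\mathfrak X)$-action with the action of the centre of that algebra. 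On the spectral side, the explicit description of $\locsys_{\GL_n}$ over $\mathfrak X$ --- an explicit global complete intersection of Springer type, available with $\bbZ_\ell$-coefficients after Dat--Helm--Kurinczuk--Moss --- identifies $\Ind\coh^\qc_\nilp(p^{-1}(\mathfrak X))$ with modules over a ``coherent'' Hecke algebra, which by an integral Deligne--Langlands/Kazhdan--Lusztig computation of Chriss--Ginzburg type is the same extended affine Hecke algebra. Matching the two presentations over $\mathfrak X$ yields the equivalence, and its $\Ind\perf(\mathfrak X)$-linearity is automatic, both module-category structures being by construction those over the common algebra.

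It remains to fix the Whittaker normalization, i.e.\ to check that the equivalence just built sends $\mathcal W^{\mathfrak X}$ to $\oo$. On the open stratum $\bun_G^{\mathrm{ss}}=[\ast/\GL_n(E)]$ the sheaf $\mathcal W$ restricts to (a twist of) the Gelfand--Graev representation, which under the type equivalence is the regular bimodule of the affine Hecke algebra, and on the spectral side the regular bimodule Koszul-dualizes to the structure sheaf; so the two agree on the open stratum, and the computation of the Hecke operators on $\mathcal W$ --- the global incarnation of the Lubin--Tate computation \eqref{eq: stalk hecke operator} --- propagates the identification to the deeper Newton strata. Assembling these equivalences over $\pi_0(\locsys_{\widehat{G}}^\coarse)$ produces $\bbL_G$, and hence $\bbL_G^V$ for every open $V$.

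The hard part is twofold. First, the integral structure: with $\bbZ_\ell$- or $\overline{\bbF_\ell}$-coefficients the Bernstein decomposition coarsens, blocks can merge, the relevant Hecke algebras acquire bad-prime phenomena, and the Deligne--Langlands dictionary for affine Hecke algebras becomes genuinely delicate; carrying out the comparison uniformly in the coefficient ring $\Lambda$ is where nearly all of the work lies, and is precisely what a Langlands--Shahidi-type hypothesis circumvents by forcing each block to be as simple as possible (a single principal-series-like factor), which is the regime treated in the present paper. Second, the gluing along Harder--Narasimhan strata: one must show that the semiorthogonal decomposition of $\D(\bun_G)$ by Newton strata matches the filtration of $\Ind\coh^\qc_\nilp(\locsys_{\widehat{G}})$ by support, i.e.\ that the geometric Eisenstein functors on $\bun_G$ intertwine with spectral parabolic induction compatibly with the spectral action --- a constant-term/charge-conservation computation which, interwoven with Jacquet--Langlands, is the principal technical obstacle in the general case.
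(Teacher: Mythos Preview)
The statement you are attempting is a \emph{conjecture}; the paper does not prove it in general and offers no proof to compare against. What the paper does prove is the special case $V=\LSt$ (\cref{thm: categorical equivalence}), and its method is specific to that locus. Your proposal, by contrast, is a strategy for the full conjecture, and you yourself flag in the final paragraph that it is incomplete.

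The gaps you name are genuine and not routine. The claim that each Bernstein block of $\D(G_b(E))$ is, integrally, modules over an extended affine Hecke algebra whose coherent counterpart is identified by a Chriss--Ginzburg computation is not available in the literature in the generality required (arbitrary $\bbZ_\ell$-algebra $\Lambda$, arbitrary block, all inner forms); with torsion coefficients blocks merge and the parameters degenerate, and the coherent Deligne--Langlands theorem in that regime is an open problem. More seriously, even granting the stratum-wise identifications, the sentence ``matching the two presentations over $\mathfrak X$ yields the equivalence'' hides the entire difficulty: the semiorthogonal decomposition of $\D(\bun_G)$ along Harder--Narasimhan strata does \emph{not} in general match any filtration on the spectral side, because it does not split outside loci like $\LSt$. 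Assembling stratum-wise equivalences into a global one compatible with the spectral action is precisely the content of the conjecture, not a formality.

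The paper's route for $V=\LSt$ is different in kind. It does not compare Hecke algebras block by block; instead it exploits the special geometry of $\LSt$ --- stabilisers are tori (\cref{lem: residual gerbes langlands-shahidi}), $\nilp|_{p^{-1}(\LSt)}=\{0\}$ (\cref{lem: nilpotent singular support Langlands-Shahidi}), and the semiorthogonal decomposition of $\D(\bun_G)^\wedge_\varphi$ actually splits (\cref{lem: decomposition D(Bun_G) at Langlands-Shahidi type parameter}) --- so that the spectral side reduces to line bundles $\oo(\chi)$, whose action is then computed by an explicit inductive analysis of Hodge--Newton reducible modifications (\cref{sec: computation Hecke operators}). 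This buys a complete proof on $\LSt$ but does not extend beyond it; your type-theoretic outline is in principle more general, but as written it is a programme rather than a proof.
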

Here \(\mathcal{W}^V\) is the image of \(\mathcal{W}\) under the functor \(\D(\bun_G)\to\D^V(\bun_G)\).
Since \(V\) is an open subscheme, this functor is a Verdier quotient, so we have a fully faithful right adjoint \(\D^V(\bun_G)\subset\D(\bun_G)\).
Note that, if $V\subset V'$ are open subschemes of $\locsys_{\widehat{G}}^\coarse$, \cref{conj: categorical conjecture open substack} for $V$ is implied by \cref{conj: categorical conjecture open substack} for $V'$ and if $V=\locsys_{\widehat{G}}^\coarse$ then \cref{conj: categorical conjecture open substack} is precisely \cref{conj: full categorical conjecture}.
Of course, now the task is to find an open subscheme \(V\subset\locsys_{\widehat{G}}^\coarse\) that is as big as possible, such that we can prove the conjecture.

We now restrict our attention to \(\Lambda\) a field.
 Note that, \(\D(\bun_G)\) has a semi-orthogonal decomposition into \(\D(G_b(E))\) for \(b\in B(G)\), and thus \(\D^V(\bun_G)\) also admits a semi-orthogonal decomposition into \(\D(G_b(E))\otimes_{\Ind\perf(\locsys_{\widehat{G}}^\coarse)}\Ind\perf(V)\) for \(V\) an open subscheme of \(\locsys_{\widehat{G}}^\coarse\)\footnote{Since it is a full subcategory the orthogonality relations are preserved. 
As $i_b^*$ is $\ind\perf(\locsys_{\widehat{G}}^\coarse)$-linear, it maps $\D(\bun_G)\otimes_{\Ind\perf(\locsys_{\widehat{G}}^\coarse)}\otimes\Ind\perf(V)$ into $\D(G_b(E))\otimes_{\Ind\perf(\locsys_{\widehat{G}}^\coarse)}\Ind\perf(V)$. From this, joint conservativity of $i_b^*$ follows.}.

It has been discovered by Linus Hamann, that for \(\LSt\subset\locsys_{\widehat{G}}^\coarse\) the locus of parameters of Langlands-Shahidi type, that the semi-orthogonal decomposition should split.
This would reduce the conjecture to representation theory of \(G_b(E)\) for various \(b\) which is more tractable than \(\D(\bun_G)\).
In our situation, parameters of Langlands-Shahidi type are the semisimple \(L\)-parameters \(\varphi\), such that when one writes them as \(\varphi\cong\varphi_1\oplus\dots\oplus\varphi_r\) for \(\varphi_i\) irreducible, we have \(\varphi_i\ncong\varphi_j\) and \(\varphi_i\ncong\varphi_j(1)\) for \(i\neq j\).
This generalizes the condition of decomposed generic in \cite{caraiani-scholze} beyond the case \(r=n\) and we recall that the representations corresponding to these \(L\)-parameters are irreducible parabolic inductions.
We obtain one of our main results:
\begin{thm}[\cref{thm: categorical equivalence}]
    Let \(\LSt\) be the locus of \(\locsys_{\widehat{G}}\) consisting of \(L\)-parameters of Langlands-Shahidi type.
    There is an \(\Ind\perf(\LSt)\)-linear equivalence 
    \begin{equation*}
        \bbL_G^\LSt\from\Ind\perf^\qc(\LSt)\simeq\D^\LSt(\bun_G)
    \end{equation*}
    preserving compact objects, mapping \(\oo\) to \(\mathcal{W}^\LSt\).
\end{thm}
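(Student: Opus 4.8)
The plan is to take the functor that the constraints force on us and reduce the claim that it is an equivalence to two ingredients: a splitting of the semi-orthogonal decomposition of \(\D(\bun_G)\) over \(\LSt\), and a representation-theoretic identification of the resulting summands, the latter resting on Bernstein--Zelevinsky theory, type theory, and the Harris--Taylor computation reformulated in \eqref{eq: stalk hecke operator}.

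\emph{Geometry of \(\LSt\) and the functor.} Here \(\LSt\subset\locsys_{\widehat G}\) is the preimage of an open subscheme \(\LSt^{\coarse}\subset\locsys_{\widehat G}^{\coarse}\), along which the base change defining \(\D^{\LSt}(\bun_G)\) is formed. At a point \(\varphi\cong\bigoplus_{i=1}^{r}\varphi_i\) with the \(\varphi_i\) irreducible, pairwise non-isomorphic, and with \(\varphi_i\ncong\varphi_j(1)\) for \(i\neq j\), Schur's lemma gives \(Z_{\widehat G}(\varphi)=(\bbG_m)^{r}\), so the residual gerbes are tame; by local Tate duality the singular-support cone is \(\sing_\varphi(\locsys_{\widehat G})\cong\Hom_{W_E}(\varphi,\varphi(1))=\bigoplus_{i,j}\Hom_{W_E}(\varphi_i,\varphi_j(1))\), whose off-diagonal pieces vanish by the Langlands--Shahidi condition and whose diagonal pieces are spanned by isomorphisms \(\varphi_i\xrightarrow{\sim}\varphi_i(1)\). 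A nonzero element of this cone is therefore block-diagonal with invertible blocks, hence not nilpotent as an element of \(\widehat{\mathfrak g}=\End(\varphi)\), so the nilpotent cone \(\nilp(\locsys_{\widehat G})\) over \(\LSt\) is the zero section; consequently \(\Ind\coh^{\qc}_{\nilp}(\LSt)=\Ind\perf^{\qc}(\LSt)\), which is why the theorem is stated with \(\Ind\perf^{\qc}\). Moreover \(\LSt\) and \(\LSt^{\coarse}\) decompose into components indexed by the multiset of restrictions to inertia of the \(\varphi_i\), so it suffices to argue one component at a time. The natural candidate for \(\bbL_G^{\LSt}\) is \(A\mapsto A*\mathcal W^{\LSt}\), with \(*\) the localisation to \(\LSt\) of the Fargues--Scholze spectral action; this is \(\Ind\perf^{\qc}(\LSt)\)-linear, sends \(\oo\) to \(\mathcal W^{\LSt}\), and preserves compact objects (since \(\mathcal W\) is compact, the spectral action by a perfect complex preserves compacts, and the Verdier localisation \(\D(\bun_G)\to\D^{\LSt}(\bun_G)\) preserves compacts). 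So the whole content is that this functor is an equivalence.

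\emph{Splitting over \(\LSt\).} The structural heart, following an observation of Hamann, is that the semi-orthogonal decomposition \(\D^{\LSt}(\bun_G)=\langle\D^{\LSt}(G_b(E))\rangle_{b\in B(G)}\) — where \(\D^{\LSt}(G_b(E))\) is the base change of \(\D(G_b(E))\) along \(\Ind\perf(\locsys_{\widehat G}^{\coarse})\to\Ind\perf(\LSt^{\coarse})\) — becomes orthogonal over \(\LSt\). The plan is to express the gluing functor between two strata through the Hecke operators \(T_V\): its components are cut out by the cohomology of Hecke correspondences on \(\bun_G\) carrying a \(W_E\)-action whose Frobenius eigenvalues force \(\varphi_i\cong\varphi_j(k)\) for some \(i\neq j\) and \(k\in\{-1,0,1\}\), which the Langlands--Shahidi condition excludes. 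Carrying out this eigenvalue bookkeeping uniformly in the pair of strata, so that every gluing functor vanishes after restriction to \(\LSt\), is what I expect to be the main obstacle; granting it, \(\D^{\LSt}(\bun_G)\simeq\bigoplus_b\D^{\LSt}(G_b(E))\) and the problem reduces to the representation theory of the groups \(G_b(E)\).

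\emph{Identifying the summands and concluding.} For \(\GL_n\) the contributing \(b\) are basic classes in the inner forms \(\prod_j\GL_{m_j}(D_{d_j})\), which after one unwinds the \((\bbG_m)^{r}\)-gerbes correspond on the spectral side to the gradings by the cocharacter lattices of the centralisers. The Langlands--Shahidi condition forbids linked Bernstein--Zelevinsky segments, so by type theory (Bushnell--Kutzko, S\'echerre--Stevens) the Bernstein blocks of \(G_b(E)\) meeting \(\LSt\) are as simple as possible — products of blocks of \(\GL_1\)-type, hence equivalent to \(\qcoh\) of a quotient of a torus — and the image of \(\mathcal W^{\LSt}\) in each such block is a generic (Whittaker) object, which generates it. These blocks are matched with the corresponding components of \(\locsys_{\widehat G}|_{\LSt}\) using the Fargues--Scholze local Langlands correspondence on the \(b=1\) stratum and the Jacquet--Langlands correspondence on the \(D^{\times}\)-type strata, the compatibility with the spectral action coming down to the computation of \(T_{\mathrm{std}}\) in \eqref{eq: stalk hecke operator} and its compatibility with the Eisenstein functors; the central \(\bbG_m\) of \(\GL_n\) is handled by the standard bookkeeping between \(\widehat G\)- and \(\widehat G^{\mathrm{ad}}\)-parameters. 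Full faithfulness of \(\bbL_G^{\LSt}\) then follows from that of the individual summands — and, as a cross-check, from \(\iHom_{\D^{\LSt}(\bun_G)}(\mathcal W^{\LSt},\mathcal W^{\LSt})\simeq\oo\), whose only non-formal part is the vanishing of higher terms, itself a consequence of \(\nilp(\locsys_{\widehat G})\) being the zero section over \(\LSt\) — while essential surjectivity follows from the summands exhausting both sides, and the normalisation \(\oo\mapsto\mathcal W^{\LSt}\) is built in. Finally the torsion and integral cases reduce to the case of a field by a d\'evissage: the categories are \(\Lambda\)-flat and compactly generated and \(\bbL_G^{\LSt}\) is \(\Lambda\)-linear, so the equivalence may be checked after base change to the residue and fraction fields of \(\Lambda\).
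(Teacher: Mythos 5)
Your geometric observations about \(\LSt\) are correct and match the paper's \cref{lem: langlands-shahidi type explicit}, \cref{lem: nilpotent singular support Langlands-Shahidi}, and \cref{lem: residual gerbes langlands-shahidi}. The high-level strategy — split the semi-orthogonal decomposition of \(\D^{\LSt}(\bun_G)\) into a direct sum and identify the pieces — is indeed the shape of the paper's argument, via \cref{lem: general equivalence criterion} and \cref{lem: decomposition D(Bun_G) at Langlands-Shahidi type parameter}. The reduction of integral/torsion coefficients to fields and the one-parameter-at-a-time framing also tracks the paper's Section \ref{sec: reduction}.

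The genuine gap is exactly where you yourself flag "the main obstacle": proving the gluing functors vanish. You sketch an "eigenvalue bookkeeping" of the \(W_E\)-action on Hecke-correspondence cohomology forcing \(\varphi_i\cong\varphi_j(k)\), but this is not how the paper argues and it is not clear how to make it work directly. The issue is that the gluing functors \(i_{b'}^*i_{b*}\) are not literally Hecke operators, so one cannot simply read off Frobenius eigenvalues from \(T_V\). What the paper does instead is to parametrize the expected pieces by \(\chi\in X^*(S_\varphi)\) (not by \(b\in B(G)\) alone), introduce the line bundles \(\oo(\chi,\varphi)\) that act on \(\D(\bun_G)^\wedge_\varphi\), and then prove the central computational theorem \cref{thm: explict hecke ! stalk} (extended to \cref{cor: explict hecke * stalk}): that \(i_{b'}^{\ren!}\oo(\chi,\varphi)*i_{b*}^\ren\) is supported at a single predicted \(b'\) and is an equivalence there. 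This requires the entire Section \ref{sec: computation Hecke operators}: a double induction on \(|\chi|\) and on \(n\), the Hodge--Newton reducibility reduction \cref{cor: hodge-newton renormalized reducible reduction} to pass from \(\GL_n\) to smaller Levis, the non-reducible modifications handled by the "probing" mechanism \cref{lem: P true up to finite set}, and the base case from the irreducible parameter case and characteristic-\(0\) \(t\)-exactness. Only after this computation can the orthogonality in \cref{lem: decomposition D(Bun_G) at Langlands-Shahidi type parameter} be deduced. Your proposal has no substitute for this and your sketch gives no indication of how the non-Hodge--Newton-reducible modifications would be controlled.

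A secondary point: your identification of the summands via Bushnell--Kutzko type theory and "\(\GL_1\)-type" blocks is a different (and less robust) route than the paper's. The paper does not need an explicit structure theory of the Bernstein blocks; it instead uses the compact projective Whittaker object \(\mathcal W^{P^e}\) together with local Langlands in families, i.e.\ the ring isomorphism \(\speccenter(G)\cong\End(\mathcal W)\) of \cref{lem: generic local langlands in families}, to match \(\perf\) of the spectral side with \(\perf_{\End(\mathcal W)}\) inside \(\D(G(E))\). Your characterization of the blocks as "equivalent to \(\qcoh\) of a quotient of a torus" is true when the cuspidal supports are pairwise distinct, but you would still have to match this identification to the spectral action uniformly over all \(b\), which is again precisely what the Hecke stalk computation handles; invoking \eqref{eq: stalk hecke operator} alone is not sufficient because it only treats the supercuspidal case.
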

We will compute that \(\nilp|_\LSt=\{0\}\) (\cref{lem: nilpotent singular support Langlands-Shahidi}), so by \cite[Theorem VIII.2.9]{geometrization} \(\Ind\perf^\qc=\Ind\coh^\qc_{\nilp}\), so this verifies \cref{conj: categorical conjecture open substack} on \(\LSt\).
On \(\bun_G\) there is a perverse \(t\)-structure by setting \(\dim(b)=-\langle2\rho_G,\nu_b\rangle\), see \cite[Proposition 1.2.1.]{beijing-notes}.
This equips \(\D^\LSt(\bun_G)\) with a perverse \(t\)-structure by defining the \(t\)-structure on \(\Ind\perf(\locsys_{\widehat{G}}^\coarse)\) and \(\Ind\perf(\LSt)\) by declaring the connective part to be generated by connective perfect complexes.
One can also ask if the equivalence in \cref{conj: categorical conjecture open substack} is \(t\)-exact.
We do not expect this to be true for \(V=\locsys_{\widehat{G}}^\coarse\) (unless \(G\) is a torus, where it is \cite[Theorem 6.4.5.]{categorical-fargues-tori}). However, we obtain:
\begin{thm}[\cref{thm: t-exact}]\label{thm: introduction t-exact}
    Let \(\Lambda\) be a field.
    Then \(\bbL_G^\LSt\) of the previous theorem is \(t\)-exact.
\end{thm}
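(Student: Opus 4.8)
The plan is to prove $t$-exactness by verifying separately that $\bbL_G^\LSt$ and its inverse are right $t$-exact. This suffices by the following elementary observation: if $F$ is an equivalence of stable $\infty$-categories equipped with $t$-structures such that $F$ and $F^{-1}$ both preserve connective objects, then they preserve the shifted subcategories $\mathcal C_{\geq 1}=\mathcal C_{\geq 0}[1]$ as well, so $F$ identifies $\mathcal C_{\geq 1}$ with $\mathcal D_{\geq 1}$; since an object is coconnective exactly when it is right orthogonal to $\mathcal C_{\geq 1}$, and equivalences preserve orthogonality, $F$ also preserves coconnective objects and is therefore $t$-exact.

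For right $t$-exactness of $\bbL_G^\LSt$: because $\bbL_G^\LSt$ is $\Ind\perf(\LSt)$-linear, colimit-preserving and sends $\oo$ to $\mathcal W^\LSt$, it is the functor $P\mapsto P\star\mathcal W^\LSt$, where $\star$ denotes the spectral action; and, by the definition of the perverse $t$-structure on $\Ind\perf^\qc(\LSt)$, its connective part is generated under colimits and extensions by connective perfect complexes $P$. So it is enough to show that $P\star\mathcal W^\LSt$ is connective for the perverse $t$-structure whenever $P$ is such. This I would split into two inputs. First, $\mathcal W^\LSt$ is perverse-connective: it is the $!$-extension along the open stratum $\bun_G^1$ (on which the perverse and the standard $t$-structures agree, as $\langle 2\rho_G,\nu_1\rangle=0$) of the representation $\cind_U^{G(E)}\psi$ placed in degree $0$. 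Second, the spectral action of a connective perfect complex is perverse right $t$-exact on $\D^\LSt(\bun_G)$: after resolving $P$, and using that by construction the spectral action of $\perf(\locsys_{\widehat{G}})$ is generated by the Hecke functors $T_V$ ($V\in\rep(\widehat{G})$) together with the excursion operators — which, being central natural endotransformations, preserve the heart — this reduces to perverse $t$-exactness of the normalized Hecke operators $T_V$ on $\D^\LSt(\bun_G)$.

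That last assertion is the crux and where I expect the real work to be. On $\LSt$ it amounts to the statement that the cohomology of the relevant local Shimura varieties and Rapoport-Zink spaces, with coefficients in the sheaves occurring in $\D^\LSt(\bun_G)$, is concentrated in the single degree prescribed by the perverse normalization of $T_V$ — the extension, from $r=n$ to arbitrary $r$, of the decomposed-generic vanishing theorem of \cite{caraiani-scholze}. I would prove it by induction over the Harder-Narasimhan stratification of $\bun_G$, reducing $T_V$ to minuscule coweights, examining the associated Hecke correspondences stratum by stratum, and using the Langlands-Shahidi conditions $\varphi_i\ncong\varphi_j$ and $\varphi_i\ncong\varphi_j(1)$ — the same conditions responsible for $\nilp|_\LSt=\{0\}$ in \cref{lem: nilpotent singular support Langlands-Shahidi} — to force the resulting spectral sequences to degenerate. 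It is precisely because this concentration is the input needed for the torsion-vanishing applications that it has to be established here rather than invoked.

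For right $t$-exactness of $(\bbL_G^\LSt)^{-1}$ I would pass to the splitting of the semi-orthogonal decomposition of $\D^\LSt(\bun_G)$ into the summands $\D(G_b(E))\otimes_{\Ind\perf(\locsys_{\widehat{G}}^\coarse)}\Ind\perf(\LSt)$, $b\in B(G)$, discussed in the introduction. After this splitting $i_b^*=i_b^!$ is the projection onto the $b$-th summand, so the perverse $t$-structure is the direct sum of the standard $t$-structures on the $\D(G_b(E))$-summands shifted by $\langle 2\rho_G,\nu_b\rangle$, and $\Ind\perf^\qc(\LSt)$ decomposes compatibly along $\bbL_G^\LSt$. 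The Langlands-Shahidi block of $\rep(G_b(E))$ has a projective generator $P_b$ — a finite sum of compact inductions of generic supercuspidals from parabolics — so the connective part of the $b$-th summand is generated under colimits and extensions by $i_{b!}(P_b)$ in the appropriate perverse degree, and it remains to check that $(\bbL_G^\LSt)^{-1}$ carries each such object to a connective object of $\Ind\perf^\qc(\LSt)$. For $b=1$ this is immediate: $\nu_1=0$, so there the perverse $t$-structure is the standard one, $P_1=\cind_U^{G(E)}\psi$ on the block, $\End(P_1)$ is concentrated in degree $0$, and $\bbL_G^\LSt$ restricted to this summand is the projective-generator equivalence $M\mapsto\rhom(P_1,M)$, hence $t$-exact. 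For the remaining $b$ one argues identically, now using the explicit block-wise form of $\bbL_G^\LSt$ assembled from the torus case \cite[Theorem 6.4.5.]{categorical-fargues-tori} and the Eisenstein and Jacquet-Langlands transfer functors, whose perverse $t$-exactness on $\LSt$ is once more the single-degree concentration of the previous paragraph. Combined with the first paragraph, this gives \cref{thm: t-exact}.
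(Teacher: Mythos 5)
Your opening observation (that a stable equivalence $F$ with $F$ and $F^{-1}$ both right $t$-exact is $t$-exact) is correct, and it is close to what the paper does — the paper instead reduces to right $t$-exactness of $T_V$ and invokes the adjunction $T_V\dashv T_{V^\vee}$ with $V=V^{\vee\vee}$. The observation that $\mathcal W^\LSt$ is perverse-connective and that the statement therefore reduces to perverse $t$-exactness of the Hecke operators is also correct and matches \cref{thm: t-exact}. However there is a genuine gap at exactly the place you flag as ``the crux'': your proposed proof of the $t$-exactness of $T_V$.

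The paper does not establish the single-degree concentration by ``inducting over the HN stratification and forcing spectral sequences to degenerate using the Langlands-Shahidi conditions.'' That is, in fact, not known to work for torsion coefficients, and it is precisely the point the paper is careful about: the introduction explicitly says that the concentration of $R\Gamma_c(\mathrm{LT}_\infty,-)$ in middle degree is used in the characteristic-$0$ proof (so a direct argument there would be circular), and that in positive characteristic the concentration is \emph{new} and is \emph{deduced} from the categorical equivalence, not proven independently. Concretely, the structure you are missing is: (i) the reduction to $t$-exactness of $\oo(\chi,\varphi)*-$ on each formal completion $\D(\bun_G,k)^\wedge_\varphi$, via conservativity of the product of localization functors (the diagram in the proof of \cref{thm: t-exact}, relying on \cref{lem: reduction to individual parameter} and the established equivalence \cref{thm: categorical equivalence}); (ii) the inductive computation of Section 5, which reduces the $t$-exactness of $\oo(\chi,\varphi)*-$ to the Hodge-Newton reducible situation (\cref{cor: hodge-newton renormalized reducible reduction}), where normalized parabolic induction and restriction carry the $t$-exactness, terminating in the $r=1$ case (\cref{cor: t-exact irreducible case}); and (iii) for that base case, a lifting argument (\cref{lem: hecke operator on whittaker}) which chooses a DVR $\oo$ with $\oo/\ell=k$ and $\oo[1/\ell]$ of characteristic $0$, lifts $\varphi$ and $\mathcal W_\varphi$, observes $\oo(\chi,\varphi')*\mathcal W_{\varphi'}$ is concentrated in a single degree, and reads off the degree from the characteristic-$0$ fibre where \cite[Theorem~1.8]{t-exact} applies. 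Without (iii) there is no way to pin down the degree in characteristic $\ell$, and without (ii) there is no way to reduce a general $\chi$ to the irreducible case.

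Your second half — using the semi-orthogonal decomposition of $\D^\LSt(\bun_G)$ and projective generators block-by-block for the inverse direction — is a plausible alternative packaging, but it quietly re-imports the exact same missing input: you say the perverse $t$-exactness of the Eisenstein/Jacquet-Langlands transfers is ``once more the single-degree concentration of the previous paragraph.'' So the whole proposal rests on an unproven and not-directly-provable (in torsion coefficients) vanishing statement. The fix is to run the reduction to formal fibres, the Hodge-Newton induction, and, critically, the characteristic-$0$-to-characteristic-$\ell$ lift, as the paper does.
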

Together with \cref{eq: stalk hecke operator}, this yields that the cohomology of the Lubin-Tate tower is concentrated in middle degree.
In characteristic 0 we use this fact in the proof, so the argument is circular.
This result seems to be new in positive characteristic however and the argument uses the full power of the categorical equivalence to apply a reduction argument from characteristic 0 to positive characteristic.
In \cite{beijing-notes}, David Hansen proposes many conjectures for Langlands-Shahidi type parameters (which he calls generous), and these theorems resolve them all, see \cref{sec: applications}.
Furthermore, using \cite[Section 6]{torsionvanishing} we see that the \(t\)-exactness for fields of characteristic \(\ell\) implies torsion vanishing results for PEL type Shimura varieties in type A.
Applications of \cref{thm: introduction t-exact} will be collected in \cref{sec: applications}.
\SkipTocEntry
\subsection*{Structure of the proof}
Let \(p\from\locsys_{\widehat{G}}\to\locsys_{\widehat{G}}^\coarse\) be the natural map.
We fix a Levi \(L=\GL_{n_1}\times\dots\times\GL_{n_r}\) and consider the locus \(\LSt\subset\locsys_{\widehat{G}}^\coarse\) of parameters of Langlands-Shahidi type that factor through \(\widehat{L}\) but no smaller Levi.

It suffices to prove \cref{conj: categorical conjecture open substack} one parameter at a time by using \cite[Observation 21.4.4.]{stack-of-restricted-variation}, which also applies in our context. 
We recall this in \cref{sec: reduction}.
Therefore, we want to show that 
\begin{equation*}
    \bbL_G^\varphi\from\Ind\coh_{\nilp}^\qc(p^{-1}(\LSt)\times_{\LSt}\LSt^{\wedge}_{\varphi})\to\D(\bun_G)\otimes_{\Ind\perf(\LSt)}\Ind\perf(\LSt^{\wedge}_{\varphi})
\end{equation*}
is an equivalence, where \(\varphi\) is an \(L\)-parameter valued in an algebraically closed field \(k\) which lives over \(\Lambda\).
There are now many observations that make the situation easier.

First, note that, \(\coh_\nilp^\qc(p^{-1}(\LSt)\times_{\LSt}\LSt_\varphi^{\wedge})\simeq\perf^\qc(p^{-1}(\LSt)\times_{\LSt}\LSt_{\varphi}^{,\wedge})\), thus \(\bbL_G^\varphi\) is pinned down by the condition that \(\bbL_G\) was \(\Ind\perf(p^{-1}(\LSt))\)-linear.
Moreover, when \(\varphi\in\LSt\), then \(\mathrm{Stab}_{\widehat{G}}(\varphi)=Z(\widehat{L})\) and we have 
\begin{equation*}
    p^{-1}(\LSt)\times_{\LSt}\LSt^{\wedge}_{\varphi}\cong BZ(\widehat{L})\times \LSt_{\varphi}^{\wedge}.
\end{equation*}
Such an isomorphism is not canonical, but if one fixes a lift \(\varphi_{\widehat{L}}\from W_E\to\widehat{L}(k)\) of $\varphi$ along the inclusion \(\widehat{L}\subset\widehat{G}\), then this determines a canonical isomorphism.
Hence, we have 
\begin{equation*}
    \Ind\coh_{\nilp}^\qc(p^{-1}(\LSt)\times_{\LSt}\LSt^{\wedge}_{\varphi})\simeq\prod_{\chi\in X^*(Z(\widehat{L}))}\Ind\perf^\qc(\LSt_{\varphi}^{\wedge}).
\end{equation*}
Thus, there are two types of objects to consider to entirely understand 
\begin{equation*}
    \Ind\coh_{\nilp}^\qc(p^{-1}(\LSt)\times_{\LSt}\LSt^{\wedge}_{\varphi}):
\end{equation*}
\begin{enumerate}
    \item The sheaves \(\oo(\chi)\), coming from \(\chi\in X^*(Z(\widehat{L}))\),
    \item Sheaves \(A\in\Ind\perf^\qc(\LSt_{\varphi}^{\wedge})\).
\end{enumerate}
Let us write \(\D(\bun_G)^\wedge_{\varphi}\defined\D(\bun_G)\otimes_{\Ind\perf(\LSt)}\Ind\perf(\LSt^{\wedge}_{\varphi})\).
Assume we have a similar decomposition
\begin{equation*}
    \D(\bun_G)^\wedge_{\varphi}\simeq\prod_{\xi\in X^*(Z(\widehat{L}))}(\D(\bun_G)^\wedge_{\varphi})_{\xi}.
\end{equation*}
Proving the equivalence would then consider of two parts:
\begin{enumerate}
    \item Understand the action of \(\oo(\chi)\), in particular checking that \(\oo(\chi)*-\) does restrict to a functor \((\D(\bun_G)^\wedge_{\varphi})_{\xi}\to(\D(\bun_G)^\wedge_{\varphi})_{\xi\chi}\),
    \item Show that \(\Ind\perf^\qc(\LSt_{\varphi}^{                            \wedge})\simeq(\D(\bun_G)^\wedge_{\varphi})_{0}\).
\end{enumerate}
The second point is simple, it turns out that there is a commutative diagram of functors
\begin{equation*}
    \begin{tikzcd}
        \Ind\perf^\qc(\LSt_{\varphi}^{\wedge}) \arrow[r] \arrow[d, hook] & (\D(\bun_G)^\wedge_\varphi)_0 \arrow[d, "{\Hom(\mathcal{W},-)}", hook] \\
        \Mod_{\speccenter(G)} \arrow[r]                                      & \Mod_{\End(\mathcal{W})}                                              
        \end{tikzcd}
\end{equation*}
where the vertical arrows are fully faithful.
The bottom arrow is an equivalence via the isomorphism of rings \(\speccenter(G)\cong\End(\mathcal{W})\), which is an easy consequence of local Langlands in families and unicity of the Whittaker model, see \cref{lem: generic local langlands in families}.

We are left with understanding the origin of the categories \((\D(\bun_G)^\wedge_{\varphi})_{\xi}\) and the action of \(\oo(\chi)\) on them.
Since \(\varphi\) is of Langlands-Shahidi type, we expect a decomposition 
\begin{equation*}
    \D(\bun_G)^\wedge_{\varphi}\simeq\prod_{b\in B(G)}\D(\bun_G^b)^\wedge_{\varphi}
\end{equation*}
with \(\D(\bun_G^b)^\wedge_{\varphi}\) defined similarly to \(\D(\bun_G)^\wedge_{\varphi}\).
Any object in this category admits a filtration by Schur-irreducible objects with \(L\)-parameter \(\varphi\).
Thus, by consideration of \(L\)-parameters we can see that \(\D(\bun_G^b)^\wedge_{\varphi}=0\) if \(b\notin\im(B(L)_{\mathrm{basic}}\to B(G))\).
There is a canonical map \(X^*(Z(\widehat{L}))\to\im(B(L)_{\mathrm{basic}}\to B(G))\) coming from the isomorphism \(X^*(Z(\widehat{L}))\cong B(L)_{\mathrm{basic}}\). However, this map is not injective.
There is an injective map \(X^*(Z(\widehat{L}))\injto B(G)\times W_G\), where \(W_G\) is the Weyl group of \(G\).
Also, note while \(\speccenter(G)\) acts on \(\D(\bun_G^b)\), the action factors through an action of \(\speccenter(G_b)\) on this category.
Together with this observation, one might guess that the correct decomposition is 
\begin{equation*}
    \D(\bun_G)^\wedge_{\varphi}\simeq\prod_{\xi\in X^*(Z(\widehat{L}))}\D(\bun_G^{b_{\xi,L}})^\wedge_{\eta\circ\varphi_{\widehat{L}}^{w_{\xi}}}.
\end{equation*}
where \((b_{\xi,L},w_{\xi})\) is the image of \(\xi\) under \(X^*(Z(\widehat{L}))\injto B(G)\times W_G\), \(\varphi_{\widehat{L}}^{w_{\xi}}\) is the conjugation of \(\varphi_{\widehat{L}}\) with \(w_\xi\) and \(\eta\) is just \(\widehat{L}^{w_\xi}\subset\widehat{G_{b_{\xi,L}}}\).
Of course, one needs to check that such an \(\eta\) exists.
One should also compare this also with the construction of the \(B(G)\)-parametrized local Langlands correspondence of \cite[Theorem 1.1.]{bm-o-parametrization}, when restricted to parameters of Langlands-Shahidi type.
It follows from the semi-orthogonal decomposition on $\D(\bun_G)$, that the factors generate \(\D(\bun_G)^\wedge_{\varphi}\), so we only need to check the vanishing of hom-spaces.

It turns out that the vanishing of hom-spaces and also a complete description of the action of \(\oo(\chi)\) on the factors follows from the following theorem:
\begin{thm}[special case of \cref{cor: explict hecke * stalk}]\label{thm: putative theorem}
    The functor \(i_b^{\ren*}\oo(\chi)*i_{1!}^\ren\) vanishes unless \(b=b_{\chi,L}\), and in this case, it restricts to a functor 
    \begin{equation*}
        i_{b_{\chi,L}}^{\ren*}\oo(\chi)*i_{1!}^\ren\from\D(G(E))^\wedge_{\varphi}\to\D(G_{b_{\xi,L}}(E))^\wedge_{\eta\circ\varphi_{\widehat{L}}^{w_{\chi}}}.
    \end{equation*}
\end{thm}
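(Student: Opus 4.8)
The plan is to analyze the Hecke operator $\oo(\chi)*-$ by first reducing to a computation of geometric Hecke correspondences at the level of stalks, using the semistable reduction maps $i_b^\ren$ and $i_1^\ren$. Since $\oo(\chi)$ is the pushforward of a line bundle along $BZ(\widehat{L})\hookrightarrow p^{-1}(\LSt)$, the spectral action of $\oo(\chi)$ is built from the Hecke operators $T_V$ for $V$ a representation of $\widehat{G}$, cut down by the idempotent corresponding to the weight $\chi$ of $Z(\widehat{L})$. First I would recall the general formula for $i_b^{\ren*}T_V i_{1!}^\ren$ in terms of the cohomology of local Shimura varieties / moduli of shtukas with one leg, as in \cite[Chapter IX]{geometrization} and \cref{eq: stalk hecke operator}: this expresses the composite as $R\Gamma$ of the relevant $\mathrm{Sht}_{1,b,\mu}$ space with coefficients in $V$, shifted and twisted. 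The key point is that the geometric Satake description lets one decompose $T_V$ according to the weights $\mu$ occurring in $V$, and the nonvanishing of the $(b,\mu)$-stratum is controlled by the condition that $b$ lies in the appropriate Kottwitz set $B(G,\mu)$.

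The central step is to match the combinatorics: the isomorphism $X^*(Z(\widehat{L}))\xrightarrow{\sim} B(L)_{\mathrm{bas}}$ together with the injection $X^*(Z(\widehat{L}))\hookrightarrow B(G)\times W_G$ from the structure-of-the-proof discussion identifies $\chi$ with a pair $(b_{\chi,L},w_\chi)$. I would show that the only $b\in B(G)$ for which $i_b^{\ren*}\oo(\chi)*i_{1!}^\ren$ is nonzero is $b=b_{\chi,L}$, by the following $L$-parameter bookkeeping: any object in the essential image of $i_{1!}^\ren\D(G(E))^\wedge_\varphi$ has spectral support concentrated at $\varphi$, the operator $\oo(\chi)$ shifts the $Z(\widehat{L})$-weight by $\chi$, and an object of $\D(\bun_G^b)$ with $L$-parameter of Langlands-Shahidi type factoring through $\widehat{L}$ can only be nonzero if $b\in\im(B(L)_{\mathrm{bas}}\to B(G))$ — and more precisely the $W_G$-conjugate $w_\chi$ is forced by the requirement that the $\widehat{L}$-parameter land in $\widehat{G_b}$ (i.e.\ that $\eta$ exists). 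This is where the hypothesis that $\varphi$ is of Langlands-Shahidi type (so $\mathrm{Stab}_{\widehat{G}}(\varphi)=Z(\widehat{L})$, $\nilp|_\LSt=\{0\}$, and the relevant parabolic inductions are irreducible) does the real work: it forces the cohomology of the shtuka spaces to be concentrated in a single degree and a single stratum, so the a priori complicated Hecke operator collapses to a clean functor.

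Concretely, the steps in order: (1) recall the stalk formula $i_b^{\ren*}T_V i_{1!}^\ren(-) = R\Gamma_c(\mathrm{Sht}_{G,b,\mu},-)$-type expression and reduce $\oo(\chi)*-$ to a colimit/summand of such, using that $\oo(\chi)$ is obtained from the $T_V$ via the spectral action and the $Z(\widehat{L})$-weight decomposition; (2) compute the singular support / spectral support of the output using $\nilp|_\LSt=\{0\}$ (\cref{lem: nilpotent singular support Langlands-Shahidi}) and $\Ind\perf^\qc=\Ind\coh^\qc_\nilp$ there, so that everything is lisse and perfect and one may argue on $T$-points / fibers; (3) match $b$ with $b_{\chi,L}$ via the Kottwitz-set nonvanishing criterion for $B(G,\mu)$ together with the combinatorial identification $X^*(Z(\widehat{L}))\hookrightarrow B(G)\times W_G$; (4) identify the target as $\D(G_{b_{\chi,L}}(E))^\wedge_{\eta\circ\varphi_{\widehat{L}}^{w_\chi}}$ by tracking how the spectral action of $\speccenter(G)$ factors through $\speccenter(G_b)$ on $\D(\bun_G^b)$, and checking $\eta\colon\widehat{L}^{w_\chi}\hookrightarrow\widehat{G_{b_{\chi,L}}}$ exists (a Galois-cohomology/Kottwitz computation about the centralizer of a basic element).

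The main obstacle I expect is step (3)–(4): precisely pinning down \emph{which} $W_G$-conjugate $w_\chi$ and \emph{which} embedding $\eta$ arise, and verifying compatibly that the resulting functor is exactly the restriction claimed rather than some twist of it. This amounts to a careful analysis of the geometry of the Newton strata $\bun_G^b$ meeting the relevant Hecke correspondence, and of the identification of $G_b$ with an inner form of a Levi when $b$ is basic in the image of $B(L)_{\mathrm{bas}}$; getting the cocharacter $w_\chi$ and the slope data to align requires the full strength of the combinatorial dictionary between $X^*(Z(\widehat{L}))$, $B(L)_{\mathrm{bas}}$, and $B(G,\mu)$. The degree-concentration input (cohomology of the Lubin-Tate tower, resp.\ the shtuka spaces, in middle degree) is what makes the functor land in a single factor with no higher cohomology, and in characteristic $0$ this is \cite{harris-taylor}, while in positive characteristic it is part of what the paper bootstraps — so here I would be careful to only use the characteristic-$0$ input at this stage, consistent with the remark after \cref{thm: introduction t-exact}.
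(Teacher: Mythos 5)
Your outline identifies the correct high-level skeleton — decompose $\oo(\chi)*$ into Hecke operators $T_V$ via the $Z(\widehat{L})$-weight decomposition and track spectral support — but it stops short of the actual mechanism that makes the proof go, and the step you lean on most heavily does not in fact suffice. The $L$-parameter bookkeeping (\cref{lem: support sheaf with L-parameter of Langlands-Shahidi type}) only shows $\supp(A)\subset B(G)_L$; it does \emph{not} pin down a single $b$. Likewise, the Kottwitz-set criterion you invoke in step (3) says only that $b\xto{\mu}b'$ is necessary, and once $\oo(\chi)$ is written as an iterate of summands of $T_{\std}$ (which is what the paper does, rather than using a single $T_V$ with highest weight $\chi$), there are in general many elements $b'\in B(G)_L$ with a chain of $\std$-modifications from $1$ passing through $b'$. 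Excluding those spurious strata is the whole content of Section~5, and it cannot be done purely by parameter or weight considerations.

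What the paper actually does is an induction on $|\chi|$ (\cref{inductive hypothesis}), reducing at each step to a single $\oo(\chi_c)$ applied to a $*$-extension from a known stratum. The key computational input is the Hodge--Newton reducibility theorem (\cref{cor: hodge-newton renormalized reducible reduction}), which rewrites $i_{b'}^{\ren!}T_{\std}i_{b*}^\ren$ in terms of a Hecke operator for a smaller group $M$ composed with normalized parabolic (co)induction, allowing one to descend along the second part of the inductive hypothesis. But not every relevant modification is Hodge--Newton or $\omega$-Hodge--Newton reducible (cf.\ \cref{lem: all modification reducible condition} and the discussion around \cref{fig 3}), and the remaining ``non-reducible'' strata are killed by a separate probing argument (\cref{lem: P true up to finite set}): one exhibits, from the would-be nonzero stalk, a nonzero map between $*$-extensions from distinct strata, which is then ruled out by the semi-orthogonal decomposition of $\D(\bun_G)$ or by a comparison of parameters. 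None of this is present in your proposal, and without it the vanishing claim is not a consequence of the facts you list.

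A secondary issue: you appeal to ``degree concentration of the shtuka cohomology'' as the reason the functor ``lands in a single factor.'' This conflates two distinct statements. The single-stratum conclusion of \cref{thm: putative theorem} has nothing to do with degrees — it is a support statement — while the degree concentration ($t$-exactness of $\oo(\chi,\varphi)*$) is proved \emph{afterwards}, in \cref{sec: t-exactness}, and for torsion coefficients is bootstrapped from characteristic $0$ \emph{using} the already-established equivalence. Invoking it here would make the argument circular in exactly the way the paper is careful to avoid. Finally, it is worth noting that the paper proves the $!$-version \cref{thm: explict hecke ! stalk} first and deduces the $*$-version \cref{cor: explict hecke * stalk} by a duality/compactness argument via \cref{lem: compact objects in localization compact and ULA}; working directly with the $*$-version would require re-deriving the analogue of \cref{cor: hodge-newton renormalized reducible reduction} in terms of $-\otimes^L_{\mathcal{H}(G_b)}-$, which the paper remarks is possible but less convenient.
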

On first pass the reader should ignore the renormalizations on the functors, they only differ by a shift and twist from the usual functors.
Assuming this theorem, we obtain that
\begin{equation*}
    \D(\bun_G)^\wedge_{\varphi}\simeq\prod_{\chi\in X^*(Z(\widehat{L}))}\oo(\chi)*\D(\bun_G^1)^\wedge_{\varphi}
\end{equation*}
from which the vanishing of hom-spaces can be deduced, as well as a description of \(\oo(\chi)\) on the factor categories.
Later we observe that this is simple if \(\varphi\) is irreducible, in this case, the decomposition comes from the connected components of \(\bun_G\) and the computation of \(\oo(\chi)\) follows from \cite[Lemma 5.3.3.]{categorical-fargues-tori}.
Here \(t\)-exactness is obtained in \cite[Theorem 1.8.]{t-exact} in characteristic 0, and one can deduce from this the same \(t\)-exactness in characteristic \(\ell\) by lifting to characteristic 0 (following a similar strategy as in \cite[Lemma 3.7.]{hansenjohansson}).
This will be done in \cref{sec: t-exactness}.

\SkipTocEntry
\subsection*{Spectral action at Langlands-Shahidi type parameters}
We are left with understanding the stalks of the Hecke operators \(\oo(\chi)\).
Recall how the Hecke action is constructed.
We have the following diagram of \(v\)-stacks
\begin{equation*}
    \begin{tikzcd}
        & \hck_G \arrow[ld, "\lheck"'] \arrow[r, "q"] \arrow[rd, "\rheck"] & \localhck_G            \\
 \bun_G &                                                                  & \bun_G\times\Div^1
 \end{tikzcd}
\end{equation*}
(the precise construction is not relevant for us). Note that geometric Satake provides a functor \(\mathcal{S}\from\rep(\widehat{G})\to\D(\localhck_G)\), so we obtain functors 
\begin{equation*}
    T_VA\defined\rheck_{\natural}(q^*\mathcal{S}(V)\otimes\lheck^*A)
\end{equation*}
for \(V\in\rep(\widehat{G})\) and \(A\in\D(\bun_G)\), which one can show lands in \[\D(\bun_G)^{BW_E}\subset\D(\bun_G\times\Div^1).\footnote{\text{In fact, in this case, the categories are equivalent, however there is a version of this diagram where} \((\Div^1)^I\) \text{appears for \(I\) a finite set, and then the inclusion is strict.}} \]
This relates to the spectral action as follows:

Let \(f\from\locsys_{\widehat{G}}\to B\widehat{G}\) be the canonical projection.
By identifying \(\rep(\widehat{G})\) as a full subcategory of \(\perf(B\widehat{G})\), for \(V\in\rep(\widehat{G})\) we get \(f^*V\in\perf(\locsys_{\widehat{G}})^{BW_E}\) and thus \(f^*V*A\in\D(\bun_G)^{BW_E}\), and the spectral action is defined to satisfy \(f^*V*A\cong T_VA\).

Fix the isomorphism \(\bbG_m^r\cong Z(\widehat{L})\) given by embedding via diagonal matrices. %
This allows us to identify \(X^*(Z(\widehat{L}))\) with \(\bbZ\)-valued functions on \(\{1,\dots,r\}\), which we equip with the lexicographic ordering.
One Hecke operator that is easy to compute is \(T_{\det}\), this corresponds to \(\oo(\chi_{\det})\) where \(\chi_{\det}(i)=n_i\) for all \(i\).
Thus we may assume \(\chi(i)\geq 0\) for all \(i\) in the proof of \cref{thm: putative theorem}, which allows for an induction on \(|\chi|\).
For this induction, we need to understand \(i_{b'}^{\ren*}\oo(\chi_c)*i_{b!}^\ren\from\D(\bun_G^b)^\wedge_{\eta\circ\varphi_{\widehat{L}}}\to\D(\bun_G^{b'})\), where \(\chi_c\) is the indicator function on the element \(c\).

Let us write \(\varphi_{\widehat{L}}=\varphi_1\times\dots\times\varphi_r\).
The isomorphism \(p^{-1}(\LSt)\times_{\LSt}\LSt^{\wedge}_{\varphi}\cong BZ(\widehat{L})\times \LSt^{\wedge}_{\varphi}\) shows us that \(\oo(\chi)\) occurs as a direct summand of \(T_{\mathrm{std}}\).
Here, another advantage of the Langlands-Shahidi type condition comes in.
It implies that \(\rhom_{W_E}(\varphi_i,\varphi_j)=0\) for \(i\neq j\).
Therefore, keeping the Galois action in mind, the functor \(\oo(\chi)\) can be recovered from \(T_{\mathrm{std}}\) by taking the \(\varphi_c\)-isotypic component, at least up to tensoring with \(\rhom_{W_E}(\varphi_c,\varphi_c)\).
Thus we need to understand \(i_{b'}^{\ren*}T_{\mathrm{std}}i_{b!}\).
When \((b,b',\mathrm{std})\) are Hodge-Newton reducible, this can be computed (up to some twists and shifts), from \(i_{\theta'}^{\ren*}T_{V}i_{\theta!}\from\D(\bun_M^\theta)^\wedge_{\eta\circ\varphi_{\widehat{L}}^w}\to\D(\bun_M)\), where \(\theta,\theta'\in B(M)\) are reductions of \(b\) and \(b'\), and \(w\in W_G\) is some element such that \(\widehat{L}^w\subset\widehat{M_{\theta}}\), and the highest weight of \(V\) is a reduction to \(\widehat{M}\) of the highest weight \(\mu\) of \(\mathrm{std}\).

We also note, that for \(b,b'\) such that there is no modification between the attached vector bundles \(\mathcal{E}_b\) and \(\mathcal{E}_{b'}\) bounded by \(\mu\), \(i_{b'}^{\ren*}T_{\mathrm{std}}i_{b!}\) vanishes automatically.
The main observation due to \cite[Section 6]{nguyen} is, that by carefully writing \(\chi=\prod_{i\leq|\chi|}\chi_{c_i}\), for some sequence \(c_i\in\{1,\dots,r\}\) and an analysis which types of modifications can occur, that we will only ever consider situations where \(i_{b'}^{\ren*}T_{\mathrm{std}}i_{b!}\) vanishes due to the non-existence of a suitable modification or where \((b,b',\mathrm{std})\) are Hodge-Newton reducible.
This allows us to reduce to the case where \(\varphi\) is irreducible in the end, which we have already resolved at the end of the previous section.
We roughly follow the steps \cite[Section 6]{nguyen}, but our setup is slightly different, and therefore we execute the inductive argument in \cref{sec: computation Hecke operators} following the ideas of Nguyen.
\begin{rem}
    In this paper, we will assume that we are working with a non-archimedean local field of characteristic 0.
    The reason for this is mainly that local Langlands in families are only available in this case.
    However, the spectral action constructs a map \(\speccenter(G)\to\End(\mathcal{W})\) without any restriction on the characteristic.
    Using the theory of close local fields we expect to remove the assumption that $E$ have characteristic 0.
    This will be done in a future update.
\end{rem}

\SkipTocEntry
\subsection*{Notations and conventions}
By ``category'' we will mean ``\(\infty\)-category'' and all functors are implicitly derived.
However, the fiber product of schemes and Artin stacks will be underived.
Since we work with \(\GL_n\), the notation simplifies a bit, otherwise (co)invariants for the Galois group would appear.
\begin{itemize}
    \item Let \(E\) be a non-archimedean local field of characteristic 0 and residue characteristic \(p\) and residue field \(\bbF_q\), we fix a prime \(\ell\neq p\), and we write \(W_E\) for its Weil group.

    \item Let \(\Lambda\) be a \(\bbZ_\ell\)-algebra unless otherwise mentioned.
    
    \item Let \(G=\GL_n\), \(L\subset G\) is a Levi, with corresponding parabolic \(P'\) and unipotent \(N\), so that \(LN=P'\). 
    We will also have a Levi \(M\) of the form \(\GL_{m_1}\times\GL_{m_2}\) with parabolic \(P\).
    
    \item We fix the upper triagular Borel \(B\subset G\), its unipotent radical \(U\) and the torus \(T\subset B\).
    
    \item Write \(r_{\ad}^N\) for \(\widehat{L}\) acting on \(\Lie(\widehat{N})\) via the adjoint action, for \(\theta\in X^*(Z(\widehat{L}))\), let \(r_{\ad}^{N,\theta}\) denote the maximal subrepresentation of \(r_{\ad}^N\) such that \(Z(\widehat{L})\) acts via \(\theta\).
    
    \item For an Artin \(v\)-stack \(X\) let \(\D(X,\Lambda)\) denote the category of lisse sheaves as defined in \cite[Definition VII.6.1.]{geometrization}, sometimes we will just write \(\D(X)\).
    
    \item For a connected reductive group \(H/E\), let \(\D(H(E),\Lambda)\) denote the derived category of smooth \(H(E)\)-representations, sometimes we will just write \(\D(H(E))\).
    
    \item We write \(B(H)\) for the Kottwitz set of \(H\) and \(H_b\) for the automorphisms of the associated isocrystal for \(b\). We write \(\kappa(b)\) for the Kottwitz point and \(\nu_b\) for the Newton point. However, we will twist the Kottwitz point by \(-1\) compared to the usual conventions using isocrystals, as this is more compatible with bundles on the Fargues-Fontaine curve. For example, there is no opposite in \cite[Proposition III.3.6. (ii)]{geometrization} or \cite[Lemma 5.3.3.]{categorical-fargues-tori} and for example, explicitly we have \(\kappa(\oo(1))=1\).
    
    \item For \(b\in B(H)\), let \(i_b\from\bun_H^b\to\bun_H\) be the canonical locally closed immersion and \(s_b\from [*/H_b(E)]\to\bun_H^b\) be the splitting of \(\bun_H^b\to [*/H_b(E)]\) as constructed in \cite[Proposition III.5.3.]{geometrization}.
    
    \item For \(?\in\{\natural,!,*\}\) we define \(i_{b?}^\ren\from\D(H_b(E),\Lambda)\to\D(\bun_H,\Lambda)\) via \begin{equation*}
        i_{b?}^\ren A=i_{b?}(s_*(A\otimes\delta_b^{-1/2}))[-\langle 2\rho_G,\nu_b\rangle]
    \end{equation*}
    and similarly for \(?\in\{*,!\}\) we define \(i_b^{\ren?}A=\delta_b^{1/2}\otimes s_b^*i_b^{?}A[\langle2\rho_G,\nu_b\rangle]\), where \(\delta_b\) is a modulus character defined as in \cite[Definition 3.14.]{imaihamann}.%
    
    \item We write \(\locsys_{\widehat{H}}\) for the stack of \(L\)-parameters, denoted \(Z^1(W_E,\widehat{H})/\widehat{H}\) in \cite{geometrization}. 
    
    \item We write \(\speccenter(H)\defined\oo(\locsys_{\widehat{H}})\) and write \(\locsys_{\widehat{H}}^\coarse\) for coarse moduli space, denoted \(Z^1(W_E,\widehat{H})/\!/\widehat{H}\) in \cite{geometrization}.
    Given a compact open pro-\(p\) subgroup \(P^e\subset W_E\) write \(\locsys_{\widehat{H},P^e}\) and \(\locsys_{\widehat{H},P^e}^{\coarse}\) for the corresponding open substacks.
    
    \item For an open subscheme \(V\subset\locsys_{\widehat{G}}^\coarse\) we write
    \begin{equation*}
        \D^V(\bun_G,\Lambda)\defined\D(\bun_G,\Lambda)\otimes_{\Ind\perf(\locsys_{\widehat{G}}^\coarse)}\Ind\perf(V)
    \end{equation*}
    and write \(A\mapsto A^V\) for the functor \(\D(\bun_G,\Lambda)\to\D^V(\bun_G,\Lambda)\).
    For \(V=\locsys_{\widehat{G},P^e}^\coarse\) we will instead also write \(\D^{P^e}(\bun_G,\Lambda)\defined\D^V(\bun_G)\) and write \(A\mapsto A^{P^e}\).
    \item We will use cohomological conventions, and our shift functor is \(M[1]\defined\cofib(M\to 0)\). In particular we have \(H^{-1}(M[1])=H^0(M)\).
    \item Given an Artin stack \(\mathfrak{X}\) and a closed stack \(\mathfrak{Z}\subset\mathfrak{X}\) we will talk about the formal completion \(\mathfrak{X}^\wedge_{\mathfrak{Z}}\).
    By this we mean the functor on the category of affine schemes given by maps to \(\mathfrak{X}\) that set-theoretically factor though \(|\mathfrak{Z}|\).
    Recall that \(\perf(\mathfrak{X}^\wedge_{\mathfrak{Z}})\) consists of those perfect complexes set-theoretically supported on \(|\mathfrak{Z}|\).
\end{itemize}
\SkipTocEntry
\subsection*{Acknowledgements}
We thank Peter Scholze for many helpful discussions and for reading the preliminary versions of this manuscript, vastly improving its structure. We would also like to thank David Hansen, Kieu Hieu Nguyen and Annie Littler reading a preliminary draft of this paper. We thank David Hansen and Christian Johansson for sharing a preliminary draft of \cite{hansenjohansson}.
I am thankful to the Max Planck Institute
for Mathematics for their hospitality. The author was supported by the DFG Leibniz Preis through Peter
Scholze.

        \section{Parameters of Langlands-Shahidi type for \texorpdfstring{\(\GL_n\)}{GLn}}
All schemes and stacks in this section are considered to live over $\bbZ_\ell$.
\begin{defn}
    We define the substack \(\locsys_{\widehat{G}}^\irred\) as the complement over the images of \(\locsys_{\widehat{P}}\to\locsys_{\widehat{G}}\) with \(P\subset G\) running over the proper parabolics.
    This is open as there are only finitely many parabolics to consider and \(\locsys_{\widehat{P}}\to\locsys_{\widehat{G}}\) is proper.
    A similar definition can be made for any Levi \(L\subset G\).
    We call a parameter \(\varphi\) irreducible if the corresponding point on \(\locsys_{\widehat{G}}\) lies in \(\locsys_{\widehat{G}}^\irred\).
\end{defn}
\begin{rem}
    An irreducible parameter is indeed irreducible as a representation of \(W_E\), in particular it is semisimple.
\end{rem}
\begin{lem}\label{lem: etale morphism algebraic stack}
    Let \(f\from\mathfrak{X}\to\mathfrak{Y}\) be a morphism locally of finite presentation between algebraic stacks such that the cotangent complex \(L_f\) vanishes.
    Then \(f\) is \'etale in the sense of \cite[Tag 0CIL]{stacks-project}.
\end{lem}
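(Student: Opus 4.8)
The plan is to reduce to the known \'etaleness criterion for morphisms of schemes by working smooth-locally on source and target. First I would recall that being \'etale for a morphism of algebraic stacks in the sense of \cite[Tag 0CIL]{stacks-project} is, by definition, a property that can be checked after base change along a smooth surjection from a scheme, and that in turn it is a property \emph{fppf-local} (indeed smooth-local) on the source; so it suffices to produce a commutative square
\begin{equation*}
    \begin{tikzcd}
        U \arrow[r, "g"] \arrow[d] & V \arrow[d] \\
        \mathfrak{X} \arrow[r, "f"] & \mathfrak{Y}
    \end{tikzcd}
\end{equation*}
with $U,V$ schemes, the vertical maps smooth and surjective, $U \to \mathfrak{X}\times_{\mathfrak{Y}} V$ smooth surjective, and to show that $g$ is \'etale as a morphism of schemes. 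Here one chooses first a smooth surjection $V \to \mathfrak{Y}$ from a scheme, then a smooth surjection $U \to \mathfrak{X}\times_{\mathfrak{Y}}V$ from a scheme; since $f$ is locally of finite presentation, so is $g$.

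Next I would transport the hypotheses on the cotangent complex through these base changes. The key inputs are: (i) the cotangent complex is stable under (derived, but here the relevant squares are smooth, hence flat, so underived) base change, giving $L_{U/V}$ in terms of $L_f$ pulled back together with the (vanishing, since smooth and we only pick up a shifted locally free contribution that cancels) relative terms; more precisely one uses the transitivity triangles for $U \to \mathfrak{X}\times_{\mathfrak{Y}}V \to V$ and for $U \to \mathfrak{X}\times_\mathfrak{Y} V \to \mathfrak{X}$ together with $L_f = 0$ and the fact that $U \to \mathfrak{X}\times_\mathfrak{Y}V$ and $V \to \mathfrak{Y}$ are smooth, hence have perfect cotangent complexes in degree $0$. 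Chasing these triangles shows $L_{g} = L_{U/V}$ is concentrated in degree $0$ and, combined with $g$ being locally of finite presentation and flat (flatness of $g$ follows because $U \to \mathfrak{X}$ and $V \to \mathfrak{Y}$ are flat and $L_f = 0$ forces $f$ to be flat, so its smooth-local model $g$ is flat), forces $L_g$ to vanish. A morphism of schemes that is flat, locally of finite presentation, and has vanishing cotangent complex is \'etale by the standard characterization (e.g. \cite[Tag 02GI]{stacks-project} together with \cite[Tag 0D0K]{stacks-project}), so $g$ is \'etale.

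The main obstacle is the bookkeeping in the previous paragraph: one must be careful that "vanishing cotangent complex" for the stack morphism $f$ is exactly the right hypothesis, i.e. that it already encodes both flatness (via $H^{0}$ and $H^{1}$ vanishing controlling unramifiedness and, with finite presentation, \'etaleness of fibers) and the infinitesimal lifting property, and that these survive passage to the smooth-local model without picking up spurious contributions from the smooth covering maps. Concretely, the delicate point is the transitivity triangle
\begin{equation*}
    g^{*}L_{(\mathfrak{X}\times_{\mathfrak{Y}}V)/V} \to L_{U/V} \to L_{U/(\mathfrak{X}\times_{\mathfrak{Y}}V)}
\end{equation*}
where the outer two terms are locally free in degree $0$ (smoothness) while the base-change identification $L_{(\mathfrak{X}\times_{\mathfrak{Y}}V)/V} \simeq (\text{pr})^{*}L_f = 0$ kills the left term; one then argues that $L_{U/V}$, sitting in a triangle with a degree-$0$ locally free sheaf and a term it maps onto, is itself degree-$0$ and locally free of the same rank as the relative dimension, and reconciles this with the corresponding computation over $\mathfrak{X}$ to conclude the rank is $0$. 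Once this is set up cleanly the statement follows; alternatively, if one prefers to avoid cotangent-complex manipulations over stacks entirely, the whole lemma is essentially \cite[Tag 0DNV]{stacks-project} or can be cited from the discussion of formally \'etale plus locally of finite presentation implies \'etale in the stacks setting, and I would include that as a remark.
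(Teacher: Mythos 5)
Your proposal has a genuine gap, which you partially acknowledge at the end but do not resolve, and which the paper's proof is designed precisely to handle. The notion of étale for morphisms of algebraic stacks in \cite[Tag 0CIL]{stacks-project} requires $f$ to be a DM morphism: only then does $\mathfrak{X}\times_{\mathfrak{Y}}V$ (for $V\to\mathfrak{Y}$ a smooth atlas) admit an \emph{étale} atlas $U\to\mathfrak{X}\times_{\mathfrak{Y}}V$, and only with such an atlas does the transitivity triangle collapse to $L_{U/V}\simeq L_{U/(\mathfrak{X}\times_{\mathfrak{Y}}V)}=0$, giving that $g\from U\to V$ is étale. You instead take a \emph{smooth} atlas $U\to\mathfrak{X}\times_{\mathfrak{Y}}V$; the same triangle then gives $L_{U/V}\simeq L_{U/(\mathfrak{X}\times_{\mathfrak{Y}}V)}$, which is a locally free sheaf in degree $0$ of rank equal to the relative dimension $d$ of the atlas --- so $g$ is smooth of relative dimension $d$, not étale, and the assertion that ``the rank is $0$'' is exactly what is unproved. (A toy version of the failure: for $B\bbG_m\to\spec(k)$ with atlas $\spec(k)\to B\bbG_m$, the composite $\spec(k)\to\spec(k)$ is the identity, hence étale, yet $B\bbG_m\to\spec(k)$ is not.) The paper sidesteps all of this by first extracting DM-ness directly from the hypothesis: $L_f=0$ forces $L_{\Delta_f}=0$ (transitivity for $\mathfrak{X}\to\mathfrak{X}\times_{\mathfrak{Y}}\mathfrak{X}\to\mathfrak{X}$ plus base change applied to $\pr\from\mathfrak{X}\times_{\mathfrak{Y}}\mathfrak{X}\to\mathfrak{X}$), so $\Delta_f$ is formally unramified, hence unramified by finite presentation, hence $f$ is DM; one then reduces to DM stacks, where étale atlases are available and the cotangent-complex computation is immediate. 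Your fallback reference to a ``formally étale $+$ lfp $\Rightarrow$ étale'' statement for stacks does not avoid the issue either, since any such statement presupposes or must first establish DM-ness. To repair your argument, insert the paper's diagonal step before choosing atlases, and then take the étale atlas of $\mathfrak{X}\times_{\mathfrak{Y}}V$ that DM-ness guarantees.
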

\begin{proof}
    Since \(L_f\) vanishes, the cotangent complex of \(\Delta_f\) vanishes too.
    This implies that \(\Delta_f\) is unramified, so the morphism is DM, so base changing to an DM-stack we can assume that both \(\mathfrak{X}\) and \(\mathfrak{Y}\) are DM-stacks, but then the claim is easy.
\end{proof}
\begin{defn}
    We define a substack \(\locsys_{\widehat{G}}^{\widehat{L}\mathrm{\mhyphen LSt}}\) in the following way.
    Consider the morphism \(i\from\locsys_{\widehat{L}}^\irred\to\locsys_{\widehat{G}}\).
    Let \(U\) be the locus where the cotangent complex \(L_i\) vanishes. This is open since the support of perfect complexes is always closed. %
    We get a map \(U\to\locsys_{\widehat{G}}\), this is \'etale in the sense of \cite[Tag 0CIL]{stacks-project} by \cref{lem: etale morphism algebraic stack}, thus the image is an open substack we call \(\locsys_{\widehat{G}}^{\widehat{L}\mathrm{\mhyphen LSt}}\).
    We say a parameter \(\varphi\) is Langlands-Shahidi type with cuspidal support \(\widehat{L}\), if the corresponding point in \(\locsys_{\widehat{G}}\) lies in \(\locsys_{\widehat{G}}^{\widehat{L}\mathrm{\mhyphen LSt}}\).
\end{defn}
\begin{rem}
    Since parameters of Langlands-Shahidi type are the image of semisimple parameters, they are themselves semisimple.
    In particular the property can be checked on the coarse moduli space and by computation of the cotangent complex we see that this agrees with the notion \cite[Definition 6.2.]{torsionvanishing}.
\end{rem}
\begin{defn}
    We say that a semisimple parameter \(\varphi\) valued in a \(\bbZ_\ell\)-field has cuspidal support \((\widehat{L},\varphi_{\widehat{L}})\), if \(\varphi_{\widehat{L}}\) is an irreducible parameter for \(\widehat{L}\) and \(\varphi\) arises from \(\varphi_{\widehat{L}}\) by composing with \(\widehat{L}\subset\widehat{G}\).
    We sometimes say that \(\varphi\) has cuspidal support \(\widehat{L}\) if we would rather not stress the existence of \(\varphi_{\widehat{L}}\).
\end{defn}
\begin{lem}\label{lem: langlands-shahidi type explicit}
    Let \(\varphi\) be a semisimple parameter with cuspidal support \(\GL_{n_1}\times\dots\times\GL_{n_r}\) valued in an algebraically closed \(\bbZ_\ell\)-field \(k\), giving rise to a decomposition \(\varphi\cong\varphi_1\oplus\dots\oplus\varphi_r\) with each \(\varphi_i\) irreducible and \(\dim(\varphi_i)=m_i\).
    Then \(\varphi\) is Langlands-Shahidi type with cuspidal support \(\widehat{L}\) if and only if \(\varphi_i\ncong\varphi_j\) for \(i\neq j\) and \(\varphi_i\ncong\varphi_j(1)\) for \(i\neq j\).
    In this notation \(\Hom(\varphi_i,\varphi_j)=0\) if \(i\neq j\).
\end{lem}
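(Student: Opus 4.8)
The plan is to unwind the definition of Langlands-Shahidi type via the vanishing of the cotangent complex $L_i$ for $i\from\locsys_{\widehat{L}}^\irred\to\locsys_{\widehat{G}}$, and translate this into the stated numerical conditions on the $\varphi_j$. First I would recall the standard deformation-theoretic description of the (co)tangent complex of $\locsys_{\widehat{H}}=Z^1(W_E,\widehat{H})/\widehat{H}$: the tangent complex at a parameter $\psi$ is computed by the two-term complex $\Lie(\widehat{H})\to Z^1(W_E,\Ad\psi)$ (in degrees $-1,0$), so that $H^0$ of the tangent complex is $H^1(W_E,\Ad\psi)$ and $H^{-1}$ is $H^0(W_E,\Ad\psi)$. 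Consequently the relative tangent complex $T_i$ at $\varphi_{\widehat{L}}$ sits in a fiber sequence relating group cohomology of $\Ad_{\widehat{L}}\varphi_{\widehat{L}}$ and $\Ad_{\widehat{G}}\varphi$; since $\Lie(\widehat{G})=\Lie(\widehat{L})\oplus\Lie(\widehat{N})\oplus\Lie(\widehat{N}^-)$ as $\widehat{L}$-representations, $T_i$ is computed by the $W_E$-cohomology of $\Lie(\widehat{N})\oplus\Lie(\widehat{N}^-)$ acted on through $\varphi_{\widehat{L}}$. For $\widehat{L}=\GL_{n_1}\times\dots\times\GL_{n_r}$ this adjoint action on $\Lie(\widehat{N})\oplus\Lie(\widehat{N}^-)$ decomposes as $\bigoplus_{i\neq j}\Hom(\varphi_i,\varphi_j)$ as a $W_E$-representation (here I use that $\varphi_i$ has dimension $m_i$ and the block $(i,j)$ of the Lie algebra is $\varphi_i^\vee\otimes\varphi_j=\Hom(\varphi_i,\varphi_j)$).

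Next I would observe that $L_i$ (equivalently $T_i$) vanishes if and only if $R\Gamma(W_E,\Hom(\varphi_i,\varphi_j))=0$ for all $i\neq j$. Using the local Euler characteristic formula over a field (the Euler characteristic of $R\Gamma(W_E,-)$ of an $n$-dimensional representation vanishes, as the relevant local field has characteristic $0$, so only $H^0$ and $H^1=H^2$-dual contribute and they have equal dimension), vanishing of the whole complex is equivalent to vanishing of $H^0(W_E,\Hom(\varphi_i,\varphi_j))=\Hom_{W_E}(\varphi_i,\varphi_j)$ together with vanishing of $H^2(W_E,\Hom(\varphi_i,\varphi_j))$, which by local Tate/Poincaré duality for $W_E$ is dual to $H^0(W_E,\Hom(\varphi_j,\varphi_i)(1))=\Hom_{W_E}(\varphi_j,\varphi_i(1))$. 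So $L_i$ vanishes iff for all $i\neq j$ we have $\Hom_{W_E}(\varphi_i,\varphi_j)=0$ and $\Hom_{W_E}(\varphi_i,\varphi_j(1))=0$. Since each $\varphi_i$ is irreducible (being a point of $\locsys_{\widehat{L}}^\irred$), Schur's lemma over the algebraically closed field $k$ turns these Hom-vanishings into the non-isomorphism statements $\varphi_i\ncong\varphi_j$ and $\varphi_i\ncong\varphi_j(1)$ for $i\neq j$; the last sentence $\Hom(\varphi_i,\varphi_j)=0$ for $i\neq j$ is then just a restatement of one half of the equivalence. I would also record here that being Langlands-Shahidi type is checked on the coarse space (it is a condition on semisimple parameters), so it suffices to work with this fixed semisimple $\varphi$ and its cuspidal support, matching the earlier remark identifying the notion with \cite[Definition 6.2.]{torsionvanishing}.

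The main obstacle I anticipate is purely bookkeeping rather than conceptual: being careful that the cotangent complex of the morphism $i$ of \emph{Artin} stacks (not just of the underlying schemes of cocycles) is what is being computed, i.e. that the $\widehat{L}$- and $\widehat{G}$-stack quotients contribute exactly the $\Lie(\widehat{N})\oplus\Lie(\widehat{N}^-)$ part and nothing else, and that "locally of finite presentation with vanishing $L_i$" is being applied at the right point (a $k$-point with $k$ a $\bbZ_\ell$-field) so that the fiberwise computation of $L_i\otimes k$ via $W_E$-cohomology is legitimate. One should also make sure the Euler characteristic argument is applied to $W_E$ (not the full Galois group) correctly; since $E$ has characteristic $0$ and $\ell\neq p$, the cohomology $R\Gamma(W_E,-)$ of a finite-dimensional $k$-representation is concentrated in degrees $0,1,2$ with $H^2$ dual to $H^0$ of the Tate twist and total Euler characteristic $0$, which is exactly what makes "complex vanishes $\iff$ $H^0$ and $H^2$ vanish" work. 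Once these points are pinned down, the rest is the representation-theoretic decomposition of the Lie algebra plus Schur's lemma.
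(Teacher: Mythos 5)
Your proposal is correct and takes essentially the same route as the paper: identify $L_i$ with the $W_E$-cohomology of the off-diagonal blocks $\Lie(\widehat{N})\oplus\Lie(\widehat{N}^-)$, use vanishing Euler characteristic plus cohomological dimension $2$ to reduce to $H^0$ and $H^2$, apply local Tate duality, and finish with Schur's lemma. Two minor inaccuracies worth flagging (neither affects the conclusion): the Euler characteristic of $R\Gamma(W_E,-)$ vanishes for $\ell\neq p$ independently of $\fchar(E)$, not because $E$ has characteristic $0$; and under local duality $H^1$ is self-dual while $H^2$ is dual to $H^0$ of the Tate twist, rather than $H^1$ being dual to $H^2$ as your parenthetical suggests.
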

\begin{proof}
    Recall that the complex that needs to vanish is \(R\Gamma(W_E,r_{\ad}^N\varphi)\oplus R\Gamma(W_E,r_{\ad}^N\varphi^\vee)\).
    This happens if for all \(\theta\in X^*(Z(\widehat{L}))\) the complex \(R\Gamma(W_E,r_{\ad}^{N,\theta}\varphi)\oplus R\Gamma(W_E,r_{\ad}^{N,\theta}\varphi^\vee)\) vanishes.
    Since the Euler characteristic vanishes and since the cohomological dimension of \(W_E\) is 2 this is equivalent to \(H^0(R\Gamma(W_E,r_{\ad}^{N,\theta}\varphi))\oplus H^0(R\Gamma(W_E,r_{\ad}^{N,\theta}\varphi^\vee))=0\) and similarly \(H^2\) vanishing for all \(\theta\in X^*(Z(\widehat{L}))\).
    The claim about \(H^0\) is equivalent to \(H^0(R\Gamma(W_E,\varphi_i\otimes\varphi_j^\vee))\) vanishing for \(i\neq j\).
    This is equivalent to \(\varphi_i\ncong\varphi_j\).
    The claim about \(H^2\) by local Tate duality is equivalent to \(H^0(R\Gamma(W_E,\varphi_i\otimes\varphi_j(1)^\vee))\) vanishing for \(i\neq j\).
    This is equivalent to \(\varphi_i\ncong\varphi_j(1)\).
\end{proof}
\begin{lem}\label{lem: nilpotent singular support irreducible}
    Let \(\varphi\) be an irreducible parameter valued in an algebraically closed \(\bbZ_\ell\)-field \(k\)
    Then \(x_{\varphi}^*\mathrm{Sing}_{\locsys_{\widehat{G}}/\bbZ_l}\cap\mathcal{N}^*_{\widehat{G}}\otimes_{\bbZ_\ell}k=\{0\}\).
\end{lem}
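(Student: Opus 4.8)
The plan is to compute the singularity support of $\locsys_{\widehat G}$ at an irreducible parameter $\varphi$ directly from the deformation theory. Recall that $\locsys_{\widehat G} = Z^1(W_E,\widehat G)/\widehat G$, and that by the general formalism (e.g. as in \cite[Section VIII]{geometrization}) the derived structure on $Z^1(W_E,\widehat G)$ is governed by the complex $R\Gamma(W_E,\ad\varphi)$: the tangent complex at $\varphi$ is $R\Gamma(W_E,\ad\varphi)[1]$ for the stack, and the singularity support is detected by $H^2(W_E,\ad\varphi)$, which by local Tate duality is $H^0(W_E,\ad\varphi(1))^\vee = \Hom_{W_E}(\varphi,\varphi(1))^\vee$. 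The point $x_\varphi^*\mathrm{Sing}_{\locsys_{\widehat G}/\bbZ_\ell}$ should be identified with (the $\widehat G$-orbit of) this $H^2$, sitting inside $\mathcal N^*_{\widehat G}$ via the moment map / the identification of the classical cotangent of $B\widehat G$ with $\widehat{\mathfrak g}^*$. So the whole statement reduces to showing $\Hom_{W_E}(\varphi,\varphi(1)) = 0$ for $\varphi$ irreducible.

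First I would set up the precise comparison: since $\varphi$ is irreducible it is in particular semisimple, so by the remark after the definition the point lies over the coarse space and I may work after base change to the algebraically closed field $k$. I would then invoke the computation of $\mathrm{Sing}$ for $\locsys_{\widehat G}$ in terms of the $(-1)$-shifted cotangent / the self-duality pairing on $R\Gamma(W_E,\ad\varphi)$, exactly as in \cite[Theorem VIII.2.9 and its proof]{geometrization} or the cotangent complex computations used in \cref{lem: langlands-shahidi type explicit}; concretely, a nonzero element of $x_\varphi^*\mathrm{Sing}\cap\mathcal N^*$ corresponds to a nonzero nilpotent element $N\in\widehat{\mathfrak g}$ fixed appropriately by $\varphi$, i.e. to a nonzero class in $H^2(W_E,\ad\varphi)$ landing in the nilpotent cone. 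The content is then the vanishing $H^2(W_E,\ad\varphi)=0$.

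For the vanishing: by local Tate duality $H^2(W_E,\ad\varphi) \cong H^0(W_E, \ad\varphi \otimes k(1))^\vee = \Hom_{W_E}(\varphi, \varphi\otimes k(1))^\vee$. Now $\varphi$ is an $n$-dimensional irreducible representation of $W_E$, so is $\varphi(1)$ (twisting by a character preserves irreducibility and dimension), hence by Schur's lemma any nonzero $W_E$-map $\varphi\to\varphi(1)$ is an isomorphism. But an isomorphism $\varphi\cong\varphi(1)$ forces $\det\varphi\cong\det\varphi\otimes k(1)^{\otimes n}$, i.e. the cyclotomic-type character $k(1)^{\otimes n}$ — which corresponds under local class field theory to $x\mapsto q^{-n\cdot|x|}$ (up to the normalization conventions in the excerpt) — is trivial; since $q^n\neq 1$ in any $\bbZ_\ell$-field (as $\ell\nmid q$ and $q>1$), this is absurd. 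Hence $\Hom_{W_E}(\varphi,\varphi(1))=0$, so $H^2(W_E,\ad\varphi)=0$, and therefore $x_\varphi^*\mathrm{Sing}_{\locsys_{\widehat G}/\bbZ_\ell}\cap\mathcal N^*_{\widehat G}\otimes_{\bbZ_\ell}k = \{0\}$.

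The main obstacle is not the vanishing argument itself, which is elementary, but pinning down the precise identification of $x_\varphi^*\mathrm{Sing}$ with $H^2(W_E,\ad\varphi)$ inside $\mathcal N^*_{\widehat G}\otimes k$ with the correct Tate twist and the correct sign/normalization of the class-field-theory valuation map — one must make sure the relevant twist really is $k(1)^{\otimes n}$ and not, say, $k(1)$, and that one is intersecting with the nilpotent cone rather than all of $\widehat{\mathfrak g}^*$. Once that bookkeeping matches the conventions fixed in \cite{geometrization} and in \cref{lem: langlands-shahidi type explicit}, the proof is immediate; I would reference that lemma's cohomological setup to avoid redoing the $R\Gamma(W_E,-)$ manipulations.
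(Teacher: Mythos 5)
You take a genuinely different route from the paper: instead of invoking the Jacobson--Morozov (instability) parabolic of a nilpotent, you use the deformation-theoretic description of $\mathrm{Sing}$ plus Tate duality to reduce to $\Hom_{W_E}(\varphi,\varphi(1))$, and then try to show this Hom-space is zero outright via a determinant argument. That final step is false: the claim "$q^n\neq 1$ in any $\bbZ_\ell$-field" fails whenever $k$ has characteristic $\ell$ and $\ell\mid q^n-1$ (which always happens for suitable $n$, e.g.\ $n=\ell-1$). In that situation $k(n)$ is the trivial character and the determinant obstruction evaporates; one can indeed have irreducible $\varphi$ with $\varphi\cong\varphi(1)$ (if $q\equiv 1\pmod\ell$, \emph{every} $\varphi$ satisfies this, since $k(1)$ is already trivial), and then $H^2(W_E,\ad\varphi)\neq 0$. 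So the vanishing you aimed for is simply not true over $\bbZ_\ell$-fields of characteristic $\ell$, and the proposal as written has a genuine gap.

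The lemma itself is still correct, and the reason is exactly the point you flagged at the end and then dismissed as bookkeeping: the statement only asks for the intersection with the nilpotent cone $\mathcal{N}^*_{\widehat G}$ to be $\{0\}$, not for $H^2$ to vanish. The right way to finish your line of argument is to observe that any nonzero element of $\Hom_{W_E}(\varphi,\varphi(1))$ is an \emph{isomorphism} by Schur's lemma (both sides irreducible of the same dimension), hence under the trace-form identification $\widehat{\mathfrak g}^*\cong\widehat{\mathfrak g}=\mathrm{End}(V_\varphi)$ it corresponds to an invertible, in particular non-nilpotent, endomorphism. So no nonzero element of $H^0(W_E,\ad\varphi(1))$ lands in $\mathcal{N}^*$, which is all you need — no determinant argument and no assertion $H^2=0$. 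The paper's proof avoids this entirely by taking a hypothetical nilpotent $v$ stabilized (up to cyclotomic twist) by $\varphi$, passing to its associated parabolic $P_v$ (using that every prime is good for $\GL_n$), and deducing from irreducibility that $P_v=\widehat G$, forcing $v=0$; that version has the advantage of not relying on Schur's lemma or the $\GL_n$-specific self-duality of $\ad$, and so generalizes more readily to other reductive groups.
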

\begin{proof}
    We have to check that the only element in the nilpotent cone that is stable under the \(W_E\)-action given by \(\varphi\) and twisted by \(\bbZ_\ell(1)\) is \(\{0\}\).
    Assume we have a \(v\in\mathcal{N}^*\) stabilized by \(W_E\).
    Let \(P_v\) denote the parabolic associated with \(v\), see \cite[Proposition 5.9.]{nilpotent-orbit-in-rep-theory}. 
    Note this requires that \(\ell\) is good for \(\widehat{G}\), but since \(G=\GL_n\), any prime is good for \(\widehat{G}\).
    \cite[Remark 5.10.]{nilpotent-orbit-in-rep-theory} states that the stabilizer of the linear subspace spanned by \(v\) is contained in \(P_v\), so that \(\varphi\) factors through \(P_v\).
    By irreducibility, \(P_v=\widehat{G}\), which implies that \(v=0\).
\end{proof}
\begin{lem}\label{lem: nilpotent singular support Langlands-Shahidi}
    Let \(\varphi\) be a parameter of Langlands-Shahidi type valued in an algebraically closed \(\bbZ_\ell\)-field \(k\).
    Then \(x_{\varphi}^*\mathrm{Sing}_{\locsys_{\widehat{G}}/\bbZ_\ell}\cap\mathcal{N}^*_{\widehat{G}}\otimes_{\bbZ_\ell}k=\{0\}\).
\end{lem}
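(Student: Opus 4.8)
The plan is to follow the proof of \cref{lem: nilpotent singular support irreducible}, replacing the parabolic argument by the block decomposition afforded by the cuspidal support. As in that proof, it suffices to show that the only element $v$ of the nilpotent cone $\mathcal{N}^*_{\widehat{G}}\otimes_{\bbZ_\ell}k\subset\Lie(\widehat{G})\otimes_{\bbZ_\ell}k$ that is stable under the $W_E$-action on $\Lie(\widehat{G})\otimes_{\bbZ_\ell}k\cong\End_k(k^n)$ given by $\ad\circ\varphi$ and twisted by $\bbZ_\ell(1)$ is $v=0$.

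First I would invoke \cref{lem: langlands-shahidi type explicit}: writing the cuspidal support as $\GL_{n_1}\times\dots\times\GL_{n_r}$ we obtain $\varphi\cong\varphi_1\oplus\dots\oplus\varphi_r$ with each $\varphi_i$ absolutely irreducible (as $k$ is algebraically closed) and $\varphi_i\ncong\varphi_j$, $\varphi_i\ncong\varphi_j(1)$ for $i\neq j$. This gives a $\varphi$-stable decomposition $k^n=V_1\oplus\dots\oplus V_r$ with $W_E$ acting on $V_i$ through $\varphi_i$, and correspondingly $\End_k(k^n)=\bigoplus_{i,j}\Hom_k(V_j,V_i)$ as a decomposition into $W_E$-subrepresentations for the twisted adjoint action. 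Writing $v=\sum_{i,j}v_{ij}$ accordingly, each $v_{ij}\in\Hom_k(V_j,V_i)$ is separately fixed, i.e.\ defines a class in $\Hom_{W_E}(\varphi_j,\varphi_i(1))$ (the sign of the Tate twist is immaterial: for $i\neq j$ the conditions $\varphi_i\ncong\varphi_j(1)$ are symmetric in $i,j$, and for $i=j$ one has $\varphi_i\cong\varphi_i(1)$ iff $\varphi_i\cong\varphi_i(-1)$).

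For $i\neq j$, the Langlands-Shahidi condition $\varphi_i\ncong\varphi_j(1)$ together with Schur's lemma over the algebraically closed field $k$ forces $\Hom_{W_E}(\varphi_j,\varphi_i(1))=0$, so $v_{ij}=0$; thus $v=\sum_i v_{ii}$ is block diagonal. For each $i$ one of two things happens: either $\varphi_i\not\cong\varphi_i(1)$, in which case $\Hom_{W_E}(\varphi_i,\varphi_i(1))=0$, or $\varphi_i\cong\varphi_i(1)$, in which case $\Hom_{W_E}(\varphi_i,\varphi_i(1))$ is one-dimensional and spanned by an isomorphism of $V_i$. In the latter case a nonzero $v_{ii}$ would be invertible, so $v$ restricted to $V_i$ would have a nonzero eigenvalue, contradicting nilpotence of $v$. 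Hence $v_{ii}=0$ for all $i$ and $v=0$.

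I do not expect a genuine obstacle here: the Langlands-Shahidi hypothesis is used precisely, and only, to kill the off-diagonal blocks via \cref{lem: langlands-shahidi type explicit}. The sole points requiring (minor) care are that the block decomposition of $v$ is compatible with the twisted $W_E$-action — immediate from the $\varphi$-stability of $k^n=\bigoplus_iV_i$ — and that the diagonal $\Hom$-spaces are at most one-dimensional and, when nonzero, spanned by isomorphisms, which is formal over $k=\bar{k}$.
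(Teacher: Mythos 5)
Your argument is correct, but it takes a more hands-on route than the paper. The paper's proof is a one-line reduction: the map \(\locsys_{\widehat{L}}^\irred\times_{\locsys_{\widehat{G}}}\locsys_{\widehat{G}}^{\widehat{L}\mathrm{\mhyphen LSt}}\to\locsys_{\widehat{G}}^{\widehat{L}\mathrm{\mhyphen LSt}}\) is \'etale by construction (the defining condition is exactly the vanishing of the relative cotangent complex), so the singular support at \(\varphi\) is identified with that of \(\locsys_{\widehat{L}}^\irred\) at \(\varphi_{\widehat{L}}\), and one concludes from \cref{lem: nilpotent singular support irreducible}, whose own proof goes through the parabolic \(P_v\) attached to a nilpotent element (hence the reference to nilpotent-orbit theory and the ``good prime'' remark). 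You instead unfold everything explicitly: the block decomposition \(\End_k(k^n)=\bigoplus_{i,j}\Hom_k(V_j,V_i)\), the vanishing of the off-diagonal blocks from \(\varphi_i\ncong\varphi_j(1)\) via Schur's lemma — which is precisely the linear-algebra content that the \'etaleness encodes — and then a direct Schur-plus-nilpotence argument for the diagonal blocks in place of the \(P_v\) argument. Your treatment of the sign of the Tate twist and of the case \(\varphi_i\cong\varphi_i(1)\) (a nonzero element of the one-dimensional \(\Hom_{W_E}(\varphi_i,\varphi_i(1))\) is an isomorphism of \(V_i\), incompatible with nilpotence) is correct. What your approach buys is that it is entirely elementary and self-contained for \(\GL_n\), bypassing the nilpotent-orbit machinery; what the paper's approach buys is that the reduction to the irreducible case is conceptual and would adapt verbatim to other groups where no such explicit matrix decomposition is available.
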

\begin{proof}
    By definition the morphism \(\locsys_{\widehat{L}}^\irred\times_{\locsys_{\widehat{G}}}\locsys_{\widehat{G}}^{\widehat{L}\mathrm{\mhyphen LSt}}\to\locsys_{\widehat{G}}^{\widehat{L}\mathrm{\mhyphen LSt}}\) is \'etale (since the cotangent complex vanishes, and it is clearly locally of finite presentation), we reduce to \cref{lem: nilpotent singular support irreducible}.
\end{proof}
\begin{lem}\label{lem: residual gerbes langlands-shahidi}
    The stabilizer of a parameter of Langlands-Shahidi type with cuspidal support \(\widehat{L}\) is \(Z(\widehat{L})\).
\end{lem}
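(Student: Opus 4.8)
The plan is to identify $\mathrm{Stab}_{\widehat{G}}(\varphi)$ with the scheme-theoretic centralizer of $\varphi(W_E)$ inside $\widehat{G}=\GL_n$ and then to compute that centralizer from the isotypic decomposition of $\varphi$. First I would reduce to the case where $\varphi$ is valued in an algebraically closed $\bbZ_\ell$-field $k$ (as in the preceding lemmas), since formation of the stabilizer of a point of $Z^1(W_E,\widehat{G})$ for the conjugation action of $\widehat{G}$ commutes with base change of fields. Over $k$, \cref{lem: langlands-shahidi type explicit} applies and supplies a decomposition $\varphi\cong\varphi_1\oplus\dots\oplus\varphi_r$ into irreducible parameters $\varphi_i$ for $\GL_{n_i}$ with $\varphi_i\ncong\varphi_j$, and in particular $\Hom_{W_E}(\varphi_i,\varphi_j)=0$, for $i\neq j$.

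Next I would observe that the stabilizer of $\varphi$ is exactly the centralizer group scheme $Z_{\GL_n}(\varphi(W_E))$, i.e.\ the functor $R\mapsto\{g\in\GL_n(R):g\varphi(w)=\varphi(w)g\text{ for all }w\in W_E\}$. Writing $A\subseteq M_n(k)$ for the $k$-subalgebra generated by $\varphi(W_E)$, this functor is $\GL_1$ of the $k$-algebra $\End_A(k^n)=\End_{W_E}(\varphi)$, i.e.\ $R\mapsto(\End_{W_E}(\varphi)\otimes_k R)^\times$. I would then compute $\End_{W_E}(\varphi)$ from the decomposition above: it is the ``matrix of $\Hom$-spaces'' $\bigoplus_{i,j}\Hom_{W_E}(\varphi_i,\varphi_j)$, whose off-diagonal entries vanish since $\varphi_i\ncong\varphi_j$, and whose diagonal entries are $\End_{W_E}(\varphi_i)=k$ by Schur's lemma (using $k=\overline k$ and irreducibility of $\varphi_i$). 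Hence $\End_{W_E}(\varphi)\cong\prod_{i=1}^r k$ as a $k$-algebra; it is in particular reduced, so the centralizer group scheme is $\bbG_m^r$, realized concretely as the block-scalar matrices $\mathrm{diag}(\lambda_1\mathrm{id}_{\varphi_1},\dots,\lambda_r\mathrm{id}_{\varphi_r})$ with $\lambda_i\in\bbG_m$.

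Finally I would match this with $Z(\widehat{L})$: choosing a basis of $k^n$ adapted to $\varphi\cong\bigoplus_i\varphi_i$ identifies $\widehat{L}=\GL_{n_1}\times\dots\times\GL_{n_r}$ with the block-diagonal Levi of $\widehat{G}$ and its center $Z(\widehat{L})=\prod_i\bbG_m$ with the block-scalar matrices, so $\mathrm{Stab}_{\widehat{G}}(\varphi)=Z(\widehat{L})$ as subgroup schemes of $\widehat{G}$; the inclusion $Z(\widehat{L})\subseteq\mathrm{Stab}_{\widehat{G}}(\varphi)$ is in any case immediate because $\varphi$ factors through $\widehat{L}$. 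I do not expect a serious obstacle here — the essential input is Schur's lemma together with the pairwise non-isomorphism of the $\varphi_i$ furnished by the Langlands–Shahidi condition. The only point requiring a little care is that the assertion concerns a group scheme rather than merely its $k$-points, but this is taken care of by the computation $\End_{W_E}(\varphi)\cong\prod_i k$, which determines the scheme structure of the centralizer (it has no nilpotents and is precisely $\bbG_m^r$), not just its group of $k$-points.
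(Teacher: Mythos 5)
Your proof is correct and takes essentially the same route as the paper: the paper's proof of this lemma is the one-line ``This follows from \cref{lem: langlands-shahidi type explicit}.'' You have simply unpacked what that entails, namely computing $\mathrm{Stab}_{\widehat{G}}(\varphi)$ as the unit group of $\End_{W_E}(\varphi)$, which by the pairwise non-isomorphism (hence $\Hom$-vanishing) from \cref{lem: langlands-shahidi type explicit} together with Schur's lemma is $\prod_i k$, giving $\bbG_m^r = Z(\widehat{L})$.
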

\begin{proof}
    This follows from \cref{lem: langlands-shahidi type explicit}.
\end{proof}
\begin{cor}\label{cor: formal completion langlands-shahidi}
    Let \(\varphi\) be a parameter of Langlands-Shahidi type valued in a \(\bbZ_{\ell}\)-field \(k\) with cuspidal support \((\widehat{L},\varphi_{\widehat{L}})\).
    Then \((k\times_{\bbZ_{\ell}}\locsys_{\widehat{L}})^\wedge_{\varphi_{\widehat{L}}}\cong(k\times_{\bbZ_{\ell}}\locsys_{\widehat{G}})^\wedge_\varphi\).
\end{cor}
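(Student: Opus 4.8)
The plan is to extract this from the étale morphism used to \emph{define} the Langlands--Shahidi locus. Since the formal completion of an algebraic stack along a closed point depends only on an open neighbourhood of that point, and since any $\spec R\to\locsys_{\widehat{G}}$ (resp.\ $\to\locsys_{\widehat{L}}$) set-theoretically supported at $\varphi$ (resp.\ $\varphi_{\widehat{L}}$) automatically factors through any open substack containing that point, I would first replace $\locsys_{\widehat{G}}$ by the open substack $\locsys_{\widehat{G}}^{\widehat{L}\mathrm{\mhyphen LSt}}$ and $\locsys_{\widehat{L}}$ by the open substack $U\subseteq\locsys_{\widehat{L}}^{\irred}$ cut out by the vanishing of the cotangent complex $L_i$ of $i\from\locsys_{\widehat{L}}^{\irred}\to\locsys_{\widehat{G}}$. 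The first point to verify is that $\varphi_{\widehat{L}}$ lies in $U$: by definition of cuspidal support $\varphi_{\widehat{L}}$ is an irreducible $\widehat{L}$-parameter with $i(\varphi_{\widehat{L}})=\varphi$, and the complex governing $L_i$ at $\varphi_{\widehat{L}}$ is, up to a shift and a dual, $R\Gamma(W_E,r_{\ad}^N\varphi)\oplus R\Gamma(W_E,r_{\ad}^N\varphi^\vee)$, which vanishes because $\varphi$ is of Langlands--Shahidi type by the computation in the proof of \cref{lem: langlands-shahidi type explicit}. Thus $\pi\defined i|_U\from U\to\locsys_{\widehat{G}}^{\widehat{L}\mathrm{\mhyphen LSt}}$ is étale (this is the étaleness established in the construction of $\locsys_{\widehat{G}}^{\widehat{L}\mathrm{\mhyphen LSt}}$ via \cref{lem: etale morphism algebraic stack}, restricting the target to the open image of $\pi$), it carries the $k$-point $\varphi_{\widehat{L}}$ to $\varphi$, and it remains étale after base change along $\bbZ_\ell\to k$.

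The second step is to check that $\pi$ is an isomorphism of residual gerbes at $\varphi_{\widehat{L}}$ --- this is what gives étaleness content here, since neither stack is Deligne--Mumford. The residue fields of $\varphi$ and $\varphi_{\widehat{L}}$ are both $k$, as they are genuine $k$-points of the $k$-stacks $k\times_{\bbZ_\ell}\locsys_{\widehat{G}}$ and $k\times_{\bbZ_\ell}\locsys_{\widehat{L}}$. Their automorphism groups agree: $\aut(\varphi_{\widehat{L}})=\mathrm{Stab}_{\widehat{L}}(\varphi_{\widehat{L}})=Z(\widehat{L})$ by Schur's lemma, since each component of $\varphi_{\widehat{L}}$ is an irreducible $W_E$-representation over the algebraically closed field $k$, whereas $\aut(\varphi)=\mathrm{Stab}_{\widehat{G}}(\varphi)=Z(\widehat{L})$ by \cref{lem: residual gerbes langlands-shahidi}, and the map between them induced by $\widehat{L}\subseteq\widehat{G}$ is the identity of $Z(\widehat{L})$, since both stabilisers are literally the subgroup of block scalars in $\widehat{G}=\GL_n$. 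Equivalently, unwinding groupoids of points, the fibre $\pi^{-1}(\varphi)$ is, near $\varphi_{\widehat{L}}$, the reduced point $\spec k$: the automorphism group $Z(\widehat{L})$ of $\varphi_{\widehat{L}}$ is rigidified away once one also records the isomorphism $i(\varphi_{\widehat{L}})\cong\varphi$, precisely because that map of automorphism groups is an isomorphism.

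Finally I would invoke the stacky form of the standard fact that an étale morphism which is an isomorphism of residual gerbes at a point induces an isomorphism of formal completions there (for schemes: an étale morphism inducing a trivial residue field extension induces an isomorphism on complete local rings; in general one writes the formal completion as the colimit of the infinitesimal thickenings of the residual gerbe and uses formal étaleness of $\pi$ together with the triviality of its fibre to produce, and uniquely lift, the required maps). Applied to the base change of $\pi$ along $\bbZ_\ell\to k$ this yields an equivalence $(k\times_{\bbZ_\ell}U)^\wedge_{\varphi_{\widehat{L}}}\xrightarrow{\ \sim\ }(k\times_{\bbZ_\ell}\locsys_{\widehat{G}}^{\widehat{L}\mathrm{\mhyphen LSt}})^\wedge_{\varphi}$, which by the reduction of the first paragraph is exactly the asserted isomorphism $(k\times_{\bbZ_\ell}\locsys_{\widehat{L}})^\wedge_{\varphi_{\widehat{L}}}\cong(k\times_{\bbZ_\ell}\locsys_{\widehat{G}})^\wedge_{\varphi}$. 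I expect the main (purely formal) obstacle to be pinning down this last invocation, i.e.\ that completing along the closed residual gerbe $BZ(\widehat{L})$ --- which models both sides --- is insensitive to an étale map restricting to an isomorphism on that gerbe; alternatively one can argue by hand, trivialising the $\widehat{G}$- and $\widehat{L}$-torsors over formal thickenings and noting that the vanishing $H^*(W_E,r_{\ad}^N\varphi)=H^*(W_E,r_{\ad}^N\varphi^\vee)=0$ identifies the deformation functor of $\varphi$ as a $\widehat{G}$-parameter with that of $\varphi_{\widehat{L}}$ as an $\widehat{L}$-parameter, taken relative to their common $Z(\widehat{L})$ of automorphisms. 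No ingredient beyond \cref{lem: langlands-shahidi type explicit} and \cref{lem: residual gerbes langlands-shahidi} is needed.
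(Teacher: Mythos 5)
Your proof takes essentially the same approach as the paper: both reduce the claim to the étaleness of the map $\locsys_{\widehat{L}}^\irred\times_{\locsys_{\widehat{G}}}\locsys_{\widehat{G}}^{\widehat{L}\mathrm{\mhyphen LSt}}\to\locsys_{\widehat{G}}^{\widehat{L}\mathrm{\mhyphen LSt}}$ (which holds by construction of the Langlands--Shahidi locus) and then to the comparison of residual gerbes via \cref{lem: residual gerbes langlands-shahidi}. You spell out a few more details (the appeal to Schur's lemma slips in an unwarranted assumption that $k$ is algebraically closed, but this is harmless since the stabiliser identification is supplied by \cref{lem: residual gerbes langlands-shahidi} anyway), but the skeleton is identical to the paper's one-line argument.
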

\begin{proof}
    By definition the morphism \(\locsys_{\widehat{L}}^\irred\times_{\locsys_{\widehat{G}}}\locsys_{\widehat{G}}^{\widehat{L}\mathrm{\mhyphen LSt}}\to\locsys_{\widehat{G}}^{\widehat{L}\mathrm{\mhyphen LSt}}\) is \'etale, so it suffices to check that the residual gerbes are isomorphic, this is \cref{lem: residual gerbes langlands-shahidi}.
\end{proof}
\begin{lem}
    The fiber of \(\locsys_{\widehat{G}}\to\locsys_{\widehat{G}}^\coarse\) at a parameter \(\varphi\) of Langlands-Shahidi type is \(BS_{\varphi}\).
\end{lem}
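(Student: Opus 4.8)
The plan is to reduce, via \cref{cor: formal completion langlands-shahidi}, to the case where $\varphi$ is irreducible, and in that case to realise $\locsys_{\widehat G}$ near $\varphi$ as a gerbe over its coarse space. Let $\bar\varphi$ be the $k$-point of $\locsys_{\widehat G}^\coarse$ underlying $\varphi$ (which is semisimple, being of Langlands--Shahidi type). The first observation is that the fibre of $\locsys_{\widehat G}\to\locsys_{\widehat G}^\coarse$ over $\bar\varphi$ is determined by the formal stack $(k\times_{\bbZ_\ell}\locsys_{\widehat G})^\wedge_\varphi$ alone: restricting to some $\locsys_{\widehat G,P^e}$ we are looking at the quotient of an affine scheme by the linearly reductive group $\widehat G$, and formation of the good moduli space commutes with the flat base change $(\locsys_{\widehat G}^\coarse)^\wedge_{\bar\varphi}\to\locsys_{\widehat G}^\coarse$, so the coarse space of $(k\times_{\bbZ_\ell}\locsys_{\widehat G})^\wedge_\varphi$ is $(k\times_{\bbZ_\ell}\locsys_{\widehat G}^\coarse)^\wedge_{\bar\varphi}$ and the sought fibre is its fibre over the closed point. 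Since \cref{cor: formal completion langlands-shahidi} identifies $(k\times_{\bbZ_\ell}\locsys_{\widehat G})^\wedge_\varphi$ with $(k\times_{\bbZ_\ell}\locsys_{\widehat L})^\wedge_{\varphi_{\widehat L}}$, the fibre over $\bar\varphi$ agrees with the fibre of $\locsys_{\widehat L}\to\locsys_{\widehat L}^\coarse$ over $[\varphi_{\widehat L}]$; and as $\locsys_{\widehat L}=\prod_{i=1}^r\locsys_{\GL_{n_i}}$ with $\varphi_{\widehat L}=\varphi_1\times\dots\times\varphi_r$ and each $\varphi_i$ irreducible, this is the product over $i$ of the fibres of $\locsys_{\GL_{n_i}}\to\locsys_{\GL_{n_i}}^\coarse$ over $[\varphi_i]$. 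So it suffices to treat an irreducible parameter.

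Suppose then that $\varphi$ is an irreducible parameter for $\GL_m$. The centre $Z(\GL_m)=\bbG_m$ acts trivially on $Z^1(W_E,\GL_m)$ by conjugation, so $\locsys_{\GL_m}=[Z^1(W_E,\GL_m)/\GL_m]$ is a gerbe banded by $\bbG_m$ over $\mathcal{Y}\defined[Z^1(W_E,\GL_m)/\mathrm{PGL}_m]$, compatibly with the common map to the coarse space ($Z^1(W_E,\GL_m)/\!/\GL_m=Z^1(W_E,\GL_m)/\!/\mathrm{PGL}_m$). On the open irreducible locus $Z^1(W_E,\GL_m)^\irred$ the group $\mathrm{PGL}_m$ acts with trivial stabilisers (the stabiliser of an irreducible parameter is its centre, by \cref{lem: residual gerbes langlands-shahidi} or Schur) and with closed orbits (irreducible parameters are semisimple); it therefore acts freely and properly, so $\mathcal{Y}^\irred$ is an algebraic space equal to its own good moduli space $\locsys_{\GL_m}^{\irred,\coarse}$. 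Consequently the fibre of $\mathcal{Y}\to\locsys_{\GL_m}^\coarse$ over $\bar\varphi$ is the reduced point $\bar\varphi$, and the fibre of $\locsys_{\GL_m}\to\locsys_{\GL_m}^\coarse$ over $\bar\varphi$ is the restriction of the $\bbG_m$-gerbe to it: the residual gerbe of $\locsys_{\GL_m}$ at $\varphi$, which is $B\mathrm{Aut}(\varphi)=B\bbG_m$ since the orbit of $\varphi$ is closed. Taking the product over $i$, the fibre of the original map over $\bar\varphi$ is $\prod_{i=1}^r B\bbG_m=B\!\left(\prod_{i=1}^r Z(\GL_{n_i})\right)=BZ(\widehat L)$, and this equals $BS_\varphi$ by \cref{lem: residual gerbes langlands-shahidi}.

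The step I expect to be most delicate is the first one --- checking carefully that the fibre over a closed point of the coarse space is unchanged under formal completion of the stack there, so that it transports along the isomorphism of \cref{cor: formal completion langlands-shahidi}; the remaining input, that a free and proper action of a reductive group on a scheme admits an algebraic space quotient coinciding with the GIT quotient, is standard. Conceptually the only thing going on is that the Langlands--Shahidi condition forces the ``extra'' nilpotent directions in the fibre to vanish, which is precisely what \cref{cor: formal completion langlands-shahidi} already encodes.
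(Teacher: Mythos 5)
Your route via \cref{cor: formal completion langlands-shahidi} is exactly the alternative the paper's one-line proof alludes to (the paper otherwise just cites an external reference), and your irreducible-case argument is sound --- it is in effect a re-derivation of \cref{lem: locsys irred is gerbe}, which the paper only states afterwards, so that is fair game. The gap is in the first step. Flat base change for adequate moduli spaces shows that $\locsys_{\widehat G}\times_{\locsys_{\widehat G}^\coarse}(\locsys_{\widehat G}^\coarse)^\wedge_{\bar\varphi}\to(\locsys_{\widehat G}^\coarse)^\wedge_{\bar\varphi}$ is again an adequate moduli space map, but you then silently equate the source of this map with the formal completion $(\locsys_{\widehat G})^\wedge_\varphi$ of the stack. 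That equality is precisely \cref{cor: localize at langlands-shahidi fiber product}, which the paper deduces \emph{from} the lemma you are proving, so invoking it here is circular. Concretely, what is missing is the check that the topological fibre of $\locsys_{\widehat G}\to\locsys_{\widehat G}^\coarse$ over $\bar\varphi$ is the single point $\varphi$: a priori the fibre could contain non-semisimple parameters whose semisimplification is $\varphi$, and those are not seen by the formal completion at $\varphi$ --- compare $[\bbA^1/\bbG_m]\to\spec(k)$, whose fibre over the unique point of $\spec(k)$ is the whole stack while the formal completion at the origin is a strictly smaller sub-functor.

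The repair is short. By the vanishing established in the proof of \cref{lem: langlands-shahidi type explicit} one has $\mathrm{Ext}^1_{W_E}(\varphi_i,\varphi_j)=0$ for $i\neq j$, so the only parameter with semisimplification $\varphi$ is $\varphi$ itself, and the fibre is indeed set-theoretically concentrated at $\varphi$; with that in hand the identification of the two formal completions does determine the fibre and your chain of reductions goes through. (A cosmetic point: over $\bbZ_\ell$ one should say ``geometrically reductive'' and ``adequate moduli space'' rather than ``linearly reductive'' and ``good moduli space'', since $\GL_n$ is not linearly reductive in small residue characteristic; flat base change and the identification of the free proper quotient with the GIT quotient hold in that setting as well.)
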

\begin{proof}
    In general, this is \cite[Remark 6.5.]{torsionvanishing}, however in this case, it is also an easy consequence of \cref{cor: formal completion langlands-shahidi}.
\end{proof}
\begin{cor}\label{cor: localize at langlands-shahidi fiber product}
    We have \((k\times_{\bbZ_{\ell}}\locsys_{\widehat{G}})^\wedge_\varphi\cong\locsys_{\widehat{G}}\times_{\locsys_{\widehat{G}}^\coarse}(k\times_{\bbZ_{\ell}}\locsys_{\widehat{G}}^\coarse)^{\wedge}_{\varphi}\).
\end{cor}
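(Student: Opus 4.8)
The plan is to deduce this from the functor‑of‑points description of formal completions together with the preceding lemma computing the fibre of the coarse map. Recall the general principle that formal completion commutes with base change: if \(f\from X\to Y\) is a morphism of Artin stacks and \(Z\subset Y\) is a closed substack, then
\begin{equation*}
    X\times_Y Y^\wedge_Z\cong X^\wedge_{f^{-1}(Z)},
\end{equation*}
where \(f^{-1}(Z)\) denotes the set‑theoretic preimage. This is immediate from the convention recalled in the introduction: \(Y^\wedge_Z\) is the functor sending an affine scheme \(\spec R\) to the set of maps \(\spec R\to Y\) whose underlying map of topological spaces lands in \(|Z|\), so a point of \(X\times_Y Y^\wedge_Z\) over \(\spec R\) is precisely a map \(\spec R\to X\) whose underlying map lands in \(|f^{-1}(Z)|\).

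I would then apply this with \(X=k\times_{\bbZ_{\ell}}\locsys_{\widehat{G}}\), with \(Y=k\times_{\bbZ_{\ell}}\locsys_{\widehat{G}}^\coarse\), with \(f\) the base change along \(\spec k\to\spec\bbZ_{\ell}\) of the coarse moduli map \(p\from\locsys_{\widehat{G}}\to\locsys_{\widehat{G}}^\coarse\), and with \(Z\) the (closed) point \(\varphi\). This yields
\begin{equation*}
    (k\times_{\bbZ_{\ell}}\locsys_{\widehat{G}})\times_{k\times_{\bbZ_{\ell}}\locsys_{\widehat{G}}^\coarse}(k\times_{\bbZ_{\ell}}\locsys_{\widehat{G}}^\coarse)^\wedge_\varphi\cong(k\times_{\bbZ_{\ell}}\locsys_{\widehat{G}})^\wedge_{f^{-1}(\varphi)},
\end{equation*}
and by transitivity of base change the left‑hand side is exactly \(\locsys_{\widehat{G}}\times_{\locsys_{\widehat{G}}^\coarse}(k\times_{\bbZ_{\ell}}\locsys_{\widehat{G}}^\coarse)^\wedge_\varphi\), i.e.\ the right‑hand side of the assertion. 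It therefore remains to identify \((k\times_{\bbZ_{\ell}}\locsys_{\widehat{G}})^\wedge_{f^{-1}(\varphi)}\) with \((k\times_{\bbZ_{\ell}}\locsys_{\widehat{G}})^\wedge_\varphi\); since formal completion depends only on the underlying closed subset, this reduces to showing \(|f^{-1}(\varphi)|=\{\varphi\}\).

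This last point is the only genuine content, and it is where the Langlands–Shahidi hypothesis enters: by the preceding lemma the fibre of \(\locsys_{\widehat{G}}\to\locsys_{\widehat{G}}^\coarse\) over \(\varphi\) is the residual gerbe \(BS_\varphi\), which is topologically a single point, namely \(\varphi\) itself regarded as a (semisimple) parameter in \(\locsys_{\widehat{G}}\). Hence \(|f^{-1}(\varphi)|=\{\varphi\}\) and the identification follows. The one thing to be careful about is keeping the base change to \(k\) consistent throughout and checking that the two ``support'' conditions defining the two formal completions literally coincide; beyond this bookkeeping no difficulty is expected. (Alternatively one could bypass the lemma and argue directly from \cref{cor: formal completion langlands-shahidi} and \cref{lem: residual gerbes langlands-shahidi}, but the route above is the cleanest.)
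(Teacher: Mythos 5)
Your proof is correct and is exactly the argument the paper leaves implicit (the corollary is stated without proof, immediately after the lemma identifying the fibre of the coarse map over $\varphi$ as $BS_{\varphi}$). The only real content is the observation that formal completion in the paper's set-theoretic sense commutes with base change, together with $|f^{-1}(\varphi)|=\{\varphi\}$, and you have identified and justified both points.
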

\begin{lem}\label{lem: locsys irred is gerbe}
    The map \(\locsys_{\widehat{G}}^\irred\to\locsys_{\widehat{G}}^{\irred,\coarse}\) is a gerbe banded by \(Z(\widehat{G})\).
\end{lem}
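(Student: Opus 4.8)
The plan is to factor the map as a gerbe of liftings followed by an identification of the coarse space with a free quotient. Let \(X\subset Z^1(W_E,\widehat G)\) be the \(\widehat G\)-stable open subscheme with \(\locsys_{\widehat G}^\irred=[X/\widehat G]\). Since \(\widehat G\) acts on \(Z^1(W_E,\widehat G)\) by conjugation, the central torus \(Z(\widehat G)=\bbG_m\) acts trivially, so the action on \(X\) factors through \(\mathrm{PGL}_n=\widehat G/Z(\widehat G)\). I would then establish: (a) the \(\mathrm{PGL}_n\)-action on \(X\) is free; (b) the quotient \(X\to X/\!/\widehat G=\locsys_{\widehat G}^{\irred,\coarse}\) is a \(\mathrm{PGL}_n\)-torsor, whence \(\locsys_{\widehat G}^{\irred,\coarse}\simeq[X/\mathrm{PGL}_n]\); and (c) for a central extension \(1\to A\to H\to Q\to 1\) with \(A\) acting trivially, the natural map \([Y/H]\to[Y/Q]\) is a gerbe banded by \(A\). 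Granting these, \(\locsys_{\widehat G}^\irred=[X/\widehat G]\to[X/\mathrm{PGL}_n]=\locsys_{\widehat G}^{\irred,\coarse}\) is a gerbe banded by \(Z(\widehat G)\), which is the claim.

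For (a): the stabilizer group scheme of a point \(\varphi\from W_E\to\GL_n(\mathcal O_S)\) of \(X\) is the unit group of \(\End_{\mathcal O_S[W_E]}(\mathcal O_S^n)\). Factoring through \(\locsys_{\widehat G}^\irred\) means that \(\varphi_s\) is absolutely irreducible for every point \(s\in S\); a standard Nakayama argument then shows that the \(\mathcal O_S\)-subalgebra of \(\End_{\mathcal O_S}(\mathcal O_S^n)\) generated by the \(\varphi(w)\) is all of \(\End_{\mathcal O_S}(\mathcal O_S^n)\), so the double centralizer theorem gives \(\End_{\mathcal O_S[W_E]}(\mathcal O_S^n)=\mathcal O_S\). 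Hence the stabilizer in \(\GL_n\) is exactly \(\bbG_m=Z(\widehat G)\), i.e.\ the \(\mathrm{PGL}_n\)-action is free. This is just the relative form of Schur's lemma, and is exactly the reason irreducible parameters have automorphism group \(Z(\widehat G)\).

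For (b): an irreducible parameter is semisimple, so its \(\widehat G\)-orbit (equivalently its \(\mathrm{PGL}_n\)-orbit) in \(Z^1(W_E,\widehat G)\) is closed over any field. Thus on \(X\) the reductive group \(\mathrm{PGL}_n\) acts freely with closed orbits, and Luna's étale slice theorem --- in its relative form over \(\bbZ_\ell\), for which one uses that \(Z^1(W_E,\widehat G)\) is a nice (flat, with base-change-compatible GIT quotient) scheme over \(\bbZ_\ell\) as in \cite{geometrization} --- shows that \(X\to X/\!/\mathrm{PGL}_n\) is a \(\mathrm{PGL}_n\)-torsor. Since \(Z(\widehat G)\) acts trivially, \(X/\!/\mathrm{PGL}_n=X/\!/\widehat G=\locsys_{\widehat G}^{\irred,\coarse}\), and being a torsor gives \([X/\mathrm{PGL}_n]\simeq X/\!/\mathrm{PGL}_n\). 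Finally (c) is Giraud's theory of liftings: the fibre of \([Y/H]\to[Y/Q]\) over a \(Q\)-torsor is the groupoid of lifts to an \(H\)-torsor (the map to \(Y\) being then forced), which is a gerbe banded by \(A\). The main obstacle is (b), namely making the slice/GIT identification work integrally over \(\bbZ_\ell\) rather than over a field; everything else is either formal, as in (c), or a direct instance of Schur's lemma, as in (a).
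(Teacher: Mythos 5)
Your proof takes essentially the same approach as the paper's, which is a one-liner: it identifies the map with $[Z^1(W_E,\widehat G)/\widehat G] \to [Z^1(W_E,\widehat G)/\widehat G_{\ad}]$ on the irreducible locus and calls the gerbe structure ``clear''. Your steps (a) and (c) unpack the ``clearly'', and your step (b) supplies the implicit identification $[X/\widehat G_{\ad}]\simeq X/\!/\widehat G = \locsys_{\widehat G}^{\irred,\coarse}$; so the content is the same, just made explicit.

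One technical caveat on (b): appealing to a relative Luna slice theorem over $\bbZ_\ell$ is delicate, since $\mathrm{PGL}_n$ is not linearly reductive in residue characteristic $\ell \mid n$, and the usual slice-theorem proofs need linear reductivity. The cleaner route to the same conclusion is: freeness of the action (your (a)) together with flatness and finite presentation of $\mathrm{PGL}_n$ makes $[X/\mathrm{PGL}_n]$ an algebraic space by Artin; since $\mathrm{PGL}_n$ is geometrically reductive over $\bbZ_\ell$, the map $[X/\mathrm{PGL}_n]\to X/\!/\mathrm{PGL}_n$ is an adequate moduli space map in the sense of Alper (using closedness of irreducible orbits to see that $X$ is a saturated open of an affine $\widehat G$-scheme); and an algebraic space is its own adequate moduli space, so by the universal property this map is an isomorphism. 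This avoids slice theorems entirely and works uniformly over $\bbZ_\ell$.
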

\begin{proof}
    This map identifies with \(Z^1(W_E,\widehat{G})/\widehat{G}\to Z^1(W_E,\widehat{G})/\widehat{G}_{\ad}\) restricted to the locus of irreducible parameters, which is clearly a gerbe banded by \(Z(\widehat{G})\).
\end{proof}
\begin{lem}\label{lem: isotypic decomposition Langlands-Shahidi type}
    Let \(\varphi\) be a parameter of Langlands-Shahidi type with cuspidal support \((\widehat{L},\varphi_{\widehat{L}})\).
    The isomorphism \((k\times_{\bbZ_{\ell}}\locsys_{\widehat{L}})^\wedge_{\varphi_{\widehat{L}}}\cong(k\times_{\bbZ_{\ell}}\locsys_{\widehat{G}})^\wedge_\varphi\) of \cref{cor: formal completion langlands-shahidi} and \cref{lem: locsys irred is gerbe} provides a map \(f\from(k\times_{\bbZ_{\ell}}\locsys_{\widehat{G}})^\wedge_\varphi\to BS_{\varphi}\), this allows us to define line bundles \(\oo(\chi,\varphi)\defined f^*\chi\) for \(\chi\in X^*(S_{\varphi})\).
    Then \(T_V\cong \bigoplus_{\chi\in\mathrm{Irr}(S_{\varphi})}\Hom_{S_{\varphi}}(\chi,V)\boxtimes \oo(\chi,\varphi)*-\) equivariant for the \(W_E\)-action on both sides.
\end{lem}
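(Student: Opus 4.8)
The plan is to reduce the statement to a computation about the composite $f\colon (k\times_{\bbZ_\ell}\locsys_{\widehat G})^\wedge_\varphi\to BS_\varphi$ and the geometric Satake functor $\mathcal S$. First I would use \cref{cor: formal completion langlands-shahidi} to identify the formal completion $(k\times_{\bbZ_\ell}\locsys_{\widehat G})^\wedge_\varphi$ with $(k\times_{\bbZ_\ell}\locsys_{\widehat L})^\wedge_{\varphi_{\widehat L}}$, and then apply \cref{lem: locsys irred is gerbe} (for $\widehat L$ in place of $\widehat G$, which is fine since $\widehat L$ is a product of general linear groups) to see that $\locsys_{\widehat L}^\irred\to\locsys_{\widehat L}^{\irred,\coarse}$ is a gerbe banded by $Z(\widehat L)=S_\varphi$ (the equality being \cref{lem: residual gerbes langlands-shahidi}). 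After choosing the lift $\varphi_{\widehat L}$, this gerbe is trivialized over the formal completion, giving the map $f$ and hence the line bundles $\oo(\chi,\varphi)=f^*\chi$ for $\chi\in X^*(S_\varphi)=\mathrm{Irr}(S_\varphi)$.

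The heart of the argument is to trace how the Hecke operator $T_V$ is built. Recall $T_V A=\rheck_\natural(q^*\mathcal S(V)\otimes\lheck^*A)$, and under the spectral action $T_V A\cong f_{\widehat G}^*V*A$ where $f_{\widehat G}\colon\locsys_{\widehat G}\to B\widehat G$ is the projection. After base change to the formal completion at $\varphi$, the object $f_{\widehat G}^*V$ becomes the pullback of $V$ along $(k\times_{\bbZ_\ell}\locsys_{\widehat G})^\wedge_\varphi\to B\widehat G$, which factors as $(k\times_{\bbZ_\ell}\locsys_{\widehat G})^\wedge_\varphi\xrightarrow{f} BS_\varphi\to B\widehat G$ where the second map is induced by $S_\varphi=Z(\widehat L)\hookrightarrow\widehat G$ — here one uses that $\varphi_{\widehat L}$ is irreducible so the residual gerbe sees only the centre. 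Now I would decompose $V$, viewed as a representation of $\widehat G$ restricted to $S_\varphi=Z(\widehat L)$, into its $S_\varphi$-isotypic pieces: $V|_{S_\varphi}\cong\bigoplus_{\chi\in\mathrm{Irr}(S_\varphi)}\Hom_{S_\varphi}(\chi,V)\otimes\chi$, where $\Hom_{S_\varphi}(\chi,V)$ is the (finite-dimensional) multiplicity space carrying a residual $W_E$-action through $\varphi$. Pulling back along $f$ turns $\chi$ into $\oo(\chi,\varphi)$ and the multiplicity space into a sheaf over the formal completion pulled back from a point, i.e.\ a complex of $\Lambda$-modules. Convolving with $A$ and using $\Lambda$-linearity of the spectral action gives $T_V A\cong\bigoplus_\chi\Hom_{S_\varphi}(\chi,V)\otimes\bigl(\oo(\chi,\varphi)*A\bigr)$, which is the claimed formula once one remembers $\oo(\chi,\varphi)*-$ is the spectral-action convolution with the line bundle.

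The main obstacle I anticipate is bookkeeping the $W_E$-equivariance correctly on both sides. The left side $T_V$ naturally lands in $\D(\bun_G)^{BW_E}$, and the $W_E$-action on $\mathcal S(V)$ (hence on $T_V$) is the one coming from the geometric Satake construction over $\Div^1$; on the right side the $W_E$-action appears both on the multiplicity spaces $\Hom_{S_\varphi}(\chi,V)$ — through how $\varphi\colon W_E\to\widehat G(k)$ intertwines the $S_\varphi$-isotypic decomposition — and potentially on $\oo(\chi,\varphi)*-$. I would check that the $W_E$-action on $f^*V$ obtained from the factorization through $BS_\varphi$ is compatible with the Satake $W_E$-action via the identification of the spectral action with $T_{(-)}$, which is part of the construction in \cite[Chapter IX]{geometrization}; granting that, the equivariance of the isotypic decomposition is a formal consequence of the fact that $\varphi$ is a $W_E$-fixed point of $\locsys_{\widehat G}$ in the appropriate sense, so $W_E$ acts on the whole picture compatibly. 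A secondary point to be careful about is that the isomorphism $(k\times_{\bbZ_\ell}\locsys_{\widehat L})^\wedge_{\varphi_{\widehat L}}\cong(k\times_{\bbZ_\ell}\locsys_{\widehat G})^\wedge_\varphi$ of \cref{cor: formal completion langlands-shahidi} is the one induced by $\widehat L\subset\widehat G$, so that $f$ is genuinely the pullback of the center-character map and $\oo(\chi,\varphi)$ agrees with the line bundles $\oo(\chi)$ used elsewhere in the paper; this is immediate from the construction but should be stated.
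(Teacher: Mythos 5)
Your proposal is correct and takes essentially the same approach as the paper; the paper's proof is a single sentence ("This is just the isotypic decomposition of $V$ restricted to $S_{\varphi}$") and your write-up is the detailed version of exactly that sentence, including the correct identification of the factorization $(k\times_{\bbZ_\ell}\locsys_{\widehat G})^\wedge_\varphi\to BS_\varphi\to B\widehat G$ and the correct treatment of the $W_E$-action on the multiplicity spaces via $\varphi$.
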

\begin{proof}
    This is just the isotypic decomposition of \(V\) restricted to \(S_{\varphi}\).
\end{proof}
\begin{cor}\label{cor: hom galois representation}
    Consider the situation in \cref{lem: isotypic decomposition Langlands-Shahidi type}.
    Let \(\varphi=\varphi_1\times\dots\times\varphi_r\) be the decomposition of \(\varphi\) into irreducible summands coming from the fact that \(\varphi\) is an irreducible parameter for \(\widehat{L}\).
    We have a natural isomorphism \(\rhom_{W_E}(\varphi_c,T_\std(-))\cong K\otimes\oo(\chi_c,\varphi)*-\), where \(K\defined\rhom_{W_E}(\varphi_c,\varphi_c)\) is a perfect complex.
\end{cor}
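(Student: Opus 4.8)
The goal is to prove Corollary~\ref{cor: hom galois representation}: a natural isomorphism $\rhom_{W_E}(\varphi_c, T_\std(-)) \cong K \otimes \oo(\chi_c,\varphi)*-$ with $K = \rhom_{W_E}(\varphi_c,\varphi_c)$ perfect.

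The plan is to feed the decomposition of Lemma~\ref{lem: isotypic decomposition Langlands-Shahidi type} into $\rhom_{W_E}(\varphi_c, -)$ and track which isotypic pieces survive. First I would apply Lemma~\ref{lem: isotypic decomposition Langlands-Shahidi type} with $V = \std$, giving
\begin{equation*}
    T_\std(-) \cong \bigoplus_{\chi \in \mathrm{Irr}(S_\varphi)} \Hom_{S_\varphi}(\chi, \std) \boxtimes \oo(\chi,\varphi)*-
\end{equation*}
$W_E$-equivariantly. Here $S_\varphi = Z(\widehat L)$ by Lemma~\ref{lem: residual gerbes langlands-shahidi}, and under the identification $X^*(Z(\widehat L)) \cong \bbZ^r$ via the diagonal torus, the restriction of $\std$ to $Z(\widehat L)$ decomposes as $\bigoplus_{c=1}^r \varphi_c$, where the summand indexed by the character $\chi_c$ (the indicator function of $c$) is exactly the underlying space of $\varphi_c$ as a $W_E$-representation — the point being that $\std$ restricted to $\widehat L = \prod \GL_{n_i}$ is $\bigoplus$ of the standard representations of each factor, and after composing with $\varphi_{\widehat L}$ this matches $\bigoplus \varphi_i$ with compatible $W_E$-action. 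So $\Hom_{S_\varphi}(\chi_c, \std) = \varphi_c$ as $W_E$-representations, and for $\chi$ not of the form $\chi_c$ the multiplicity space vanishes. Then I would apply $\rhom_{W_E}(\varphi_c, -)$ to the decomposition, using that $\rhom_{W_E}$ commutes with the finite direct sum and with the $\oo(\chi,\varphi)*-$ (which is $\Lambda$-linear and $W_E$-equivariant in the appropriate sense, so the $W_E$-action it carries is trivial on the $\bun_G$ side), yielding
\begin{equation*}
    \rhom_{W_E}(\varphi_c, T_\std(-)) \cong \bigoplus_{d=1}^r \rhom_{W_E}(\varphi_c, \varphi_d) \otimes \oo(\chi_d,\varphi)*-.
\end{equation*}

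Now the Langlands-Shahidi condition enters decisively: by Lemma~\ref{lem: langlands-shahidi type explicit}, $\Hom(\varphi_c, \varphi_d) = 0$ for $c \neq d$, and moreover $\varphi_c \ncong \varphi_d$ and $\varphi_c \ncong \varphi_d(1)$, so by the same cohomological-dimension-2 plus vanishing-Euler-characteristic argument as in the proof of Lemma~\ref{lem: langlands-shahidi type explicit} (local Tate duality controls $H^2$ via $H^0(\varphi_c \otimes \varphi_d(1)^\vee)$), the full complex $\rhom_{W_E}(\varphi_c, \varphi_d)$ vanishes for $c \neq d$. Hence only the $d = c$ term survives, giving $K \otimes \oo(\chi_c,\varphi)*-$ with $K = \rhom_{W_E}(\varphi_c,\varphi_c)$. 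Finiteness of $K$ follows because $\varphi_c$ is a continuous representation of $W_E$ on a finite projective $k$-module and $W_E$ has finite cohomological dimension (so the continuous cochain complex computing $\rhom_{W_E}(\varphi_c,\varphi_c) = R\Gamma(W_E, \varphi_c \otimes \varphi_c^\vee)$ is a perfect complex of $k$-modules); naturality in the argument $(-)$ is automatic since every step is a natural transformation of functors in $A \in \D(\bun_G)$.

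I do not expect a genuine obstacle here — the corollary is essentially bookkeeping on top of Lemma~\ref{lem: isotypic decomposition Langlands-Shahidi type} and Lemma~\ref{lem: langlands-shahidi type explicit}. The one point requiring a little care is the $W_E$-equivariance: one must be precise that in $\rhom_{W_E}(\varphi_c, T_\std(-))$ the $W_E$-action is the diagonal one on the source $\varphi_c$ and on the Satake/Galois factor of $T_\std$, and that the $\oo(\chi,\varphi)*-$ functors carry the trivial $W_E$-action so that passing to $W_E$-equivariant Hom out of the irreducible $\varphi_c$ picks out exactly the $\varphi_c$-isotypic multiplicity space — which is what the displayed decomposition of $\std|_{S_\varphi}$ encodes. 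Once that identification of $\Hom_{S_\varphi}(\chi_c,\std)$ with $\varphi_c$ (as a $W_E$-representation) is pinned down, the rest is immediate.
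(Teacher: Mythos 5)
Your proof is correct and follows exactly the route the paper takes: the paper's own proof simply declares the corollary a direct consequence of \cref{lem: isotypic decomposition Langlands-Shahidi type} and \cref{lem: langlands-shahidi type explicit}, and you have filled in precisely the intended bookkeeping — identifying $\Hom_{S_\varphi}(\chi_c,\std)$ with $\varphi_c$ as a $W_E$-representation and using the Langlands-Shahidi condition (plus vanishing Euler characteristic and local Tate duality) to kill $\rhom_{W_E}(\varphi_c,\varphi_d)$ for $c\neq d$.
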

\begin{proof}
    This is a direct consequence of \cref{lem: isotypic decomposition Langlands-Shahidi type} and \cref{lem: langlands-shahidi type explicit}.
\end{proof}
\begin{lem}\label{lem: support sheaf with L-parameter of Langlands-Shahidi type}
    Let \(A\in\D(\bun_G,k)^\wedge_{\varphi}\) for an algebraically closed \(\Lambda\)-field \(k\), with \(\varphi\) a parameter of Langlands-Shahidi type with cuspidal support \(\widehat{L}\).
    Then \(\supp(A)\subset\im(B(L)_{\mathrm{basic}}\to B(G))\).
\end{lem}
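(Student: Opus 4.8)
The plan is to use the semi-orthogonal decomposition of $\D(\bun_G)$ into the strata $\D(G_b(E))$ together with the fact that an object of $\D(\bun_G,k)^\wedge_\varphi$ is, by construction, built out of objects whose $L$-parameter (in the sense of local Langlands in families) is forced to be $\varphi$. Concretely, I would first reduce to showing that $i_b^{\ren*}A = 0$ whenever $b \notin \im(B(L)_{\mathrm{basic}}\to B(G))$; since the functors $i_b^{\ren*}$ are jointly conservative (this is recorded in the footnote on the semi-orthogonal decomposition of $\D^V(\bun_G)$), the support statement is equivalent to this vanishing. Fixing such a $b$, the object $i_b^{\ren*}A \in \D(G_b(E),k)$ lies in the full subcategory obtained by base change along $\Ind\perf(\locsys_{\widehat G}^\coarse)\to\Ind\perf((\locsys_{\widehat G}^\coarse)^\wedge_\varphi)$, i.e. its cohomology is supported, as a $\speccenter(G_b)$-module, only at parameters mapping to $\varphi$ under $\locsys_{\widehat{G_b}}^\coarse \to \locsys_{\widehat G}^\coarse$ (the latter induced by $\widehat{G_b}\hookrightarrow\widehat G$; note the $\speccenter(G)$-action on $\D(\bun_G^b)$ factors through $\speccenter(G_b)$, as remarked in the introduction).

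The key step is then a purely group-theoretic/Galois-cohomological observation: if $b\in B(G)$ does \emph{not} come from $B(L)_{\mathrm{basic}}$, then no semisimple parameter $\psi$ for $\widehat{G_b}$ can map to $\varphi$ in $\locsys_{\widehat G}^\coarse$. Indeed, such a $\psi$ would exhibit $\varphi$ (which is semisimple, being of Langlands–Shahidi type) as factoring through $\widehat{G_b}$; but $\widehat{G_b}$ is a Levi subgroup of $\widehat G$ (the centralizer of $\nu_b$), so $\varphi$ would have cuspidal support contained in a Levi $\widehat M \subset \widehat{G_b}$. Since $\varphi$ has cuspidal support exactly $\widehat L$ (this is rigid: two conjugacy classes of Levis can only both be cuspidal supports of the same semisimple parameter if they are $\widehat G$-conjugate, by uniqueness of cuspidal support for $\GL_n$), this forces $\widehat L$ to be conjugate into $\widehat{G_b}$, equivalently $L$ to transfer to $G_b$, which by the standard description of $B(L)_{\mathrm{basic}}\to B(G)$ (its image is exactly those $b$ whose associated Levi $G_b$ contains a conjugate of $L$) means $b\in\im(B(L)_{\mathrm{basic}}\to B(G))$. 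The contrapositive gives the desired vanishing of the support of $i_b^{\ren*}A$.

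To make the second paragraph precise I would invoke local Langlands in families (Fargues–Scholze), which attaches to any object of $\D(G_b(E),k)$ a support on $\locsys_{\widehat{G_b}}^\coarse$ compatible with the spectral action, so that $i_b^{\ren*}A$ being annihilated by the maximal ideals away from (the image of) $\varphi$ translates into the statement that any Schur-irreducible subquotient of $i_b^{\ren*}A$ has Fargues–Scholze parameter lifting to $\widehat{G_b}$ and mapping to $\varphi$ — and then the previous paragraph applies. The main obstacle I anticipate is bookkeeping around the twist in the Kottwitz point and the precise identification of $\widehat{G_b}$ as a Levi (so that "$\varphi$ factors through $\widehat{G_b}$" is literally the statement that $\widehat{G_b}$ contains a conjugate of the cuspidal-support Levi $\widehat L$); once that dictionary is set up, the argument is a short combination of the semi-orthogonal decomposition, joint conservativity of the $i_b^{\ren*}$, and uniqueness of cuspidal support. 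Alternatively, and perhaps more cleanly, one can cite \cite[Remark 6.5.]{torsionvanishing} or the analogous support computations there to shortcut the parameter-matching step.
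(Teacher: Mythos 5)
Your reduction via joint conservativity of the $i_b^{\ren*}$ and the spectral-support constraint on $i_b^{\ren*}A$ is fine, but the key ``group-theoretic'' step is false: it is not true that if no semisimple parameter for $\widehat{G_b}$ can map to $\varphi$ then $b\in\im(B(L)_{\mathrm{basic}}\to B(G))$. The problem is that $\widehat{G_b}=\widehat{M_b}$ depends only on the Newton point $\nu_b$ (the Levi $M_b=\mathrm{Cent}_G(\nu_b)$), whereas the image of $B(L)_{\mathrm{basic}}\to B(G)$ is much smaller. Concretely, take $n=4$, $L=\GL_2\times\GL_2$ and $b$ with $\mathcal{E}_b=\oo(1/4)$. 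Here $\nu_b$ is central, $M_b=G$, $G_b=D^\times_{1/4}$, and $\widehat{G_b}=\GL_4=\widehat{G}$; so $\psi=\varphi$ trivially maps to $\varphi$ under $\locsys_{\widehat{G_b}}^\coarse\to\locsys_{\widehat G}^\coarse$ (the identity), and your argument cannot rule this $b$ out. Yet $\oo(1/4)$ is stable, hence not of the form $\oo(a_1|2)\oplus\oo(a_2|2)$, so $b\notin\im(B(L)_{\mathrm{basic}}\to B(G))$. The same failure occurs for every basic $b$: your criterion ``$\widehat L$ conjugate into $\widehat{G_b}$'' is vacuously satisfied there.

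What is actually needed is the finer, genuinely representation-theoretic constraint on which parameters can occur as Fargues--Scholze parameters of representations of the \emph{inner form} $G_b(E)$. Not every semisimple parameter factoring through $\widehat{G_b}$ arises: a supercuspidal of a Levi of $G_b=\prod_i\GL_{k_i/d_i}(D_{\lambda_i})$ lives on a Levi $\prod_{i,j}\GL_{l_{ij}/d_i}(D_{\lambda_i})$ with the divisibility $d_i\mid l_{ij}$, so by compatibility of Fargues--Scholze parameters with parabolic induction and the supercuspidal case, a Schur-irreducible object of $\D(G_b(E),k)$ with parameter $\varphi$ forces $\mathcal{E}_b$ to split as $\bigoplus_{i,j}\oo(\lambda_i\mid l_{ij})$ with $\{l_{ij}\}=\{n_c\}$; only then does one get $b\in\im(B(L)_{\mathrm{basic}}\to B(G))$. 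This is exactly what the paper's proof does: it invokes the Bernstein decomposition of $\D(G_b(E))$ (for modular coefficients via S\'{e}cherre--Stevens) plus compatibility of Fargues--Scholze parameters with parabolic induction, reducing to the supercuspidal case where your ``passage from $\widehat{G_b}$ to $\widehat{G}$'' reasoning does apply. In your write-up this input is entirely absent, so the argument does not close.
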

\begin{proof}
    For \(L=\widehat{G}\), this is the supercuspidal case, this follows from compatibility with passage from \(\widehat{G_b}\) to \(\widehat{G}\).

    In general, this follows from the Bernstein decomposition of \(\D(G_b(E))\) and compatibility with the Fargues-Scholze parameters with parabolic induction and the supercuspidal case, where the Bernstein decomposition for modular coefficients is the main result of \cite{secherre-stevens}.
\end{proof}
\begin{lem}\label{lem: Langlands-Shahidi type generic}
    Let \(\pi\in\D(G(E),k)\) be an irreducible representation over an algebraically closed \(\bbZ_\ell\)-field \(k\) such that its parameter \(\varphi\) is of Langlands-Shahidi type.
    Then \(\pi\) is generic.
    In particular, given a Whittaker datum \((U,\psi)\) we have \(\Hom(\cind_{U(E)}^{G(E)}\psi,\pi)\neq 0\).
\end{lem}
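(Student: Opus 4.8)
The plan is to argue that an irreducible representation $\pi$ with $\varphi_\pi$ of Langlands–Shahidi type is an irreducible parabolic induction of a generic (supercuspidal) representation, and then invoke that genericity is preserved under such inductions. First I would reduce to the supercuspidal case: by local Langlands in families and compatibility of Fargues–Scholze parameters with parabolic induction, the parameter $\varphi$ having cuspidal support $(\widehat L,\varphi_{\widehat L})$ with $L=\GL_{n_1}\times\dots\times\GL_{n_r}$ means $\pi$ is a subquotient of $\Ind_{P'(E)}^{G(E)}\pi_1\boxtimes\dots\boxtimes\pi_r$ where each $\pi_i$ is supercuspidal with parameter $\varphi_i$. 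Since each $\varphi_i$ is an irreducible parameter, hence supercuspidal, $\pi_i$ is generic by uniqueness of Whittaker models for supercuspidals (Whittaker-genericity of supercuspidal representations of $\GL_m$, valid with coefficients in any algebraically closed $\bbZ_\ell$-field by the modular theory of types, e.g.\ Vign\'eras/M\'inguez–S\'echerre).

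Next I would show the parabolic induction is in fact irreducible, so that $\pi$ is the full induction. This is exactly where the Langlands–Shahidi hypothesis enters: by \cref{lem: langlands-shahidi type explicit} we have $\varphi_i\ncong\varphi_j$ and $\varphi_i\ncong\varphi_j(1)$ for $i\neq j$, i.e.\ no two of the $\pi_i$ lie on a common Bernstein–Zelevinsky segment (the condition $\pi_i\not\cong\pi_j$ and $\pi_i\not\cong\pi_j|\det|^{\pm1}$). By the classification of parabolic induction for $\GL_n$ (Bernstein–Zelevinsky in characteristic $0$, and its mod-$\ell$ analogue by M\'inguez–S\'echerre), this linkage-freeness forces $\Ind_{P'(E)}^{G(E)}\pi_1\boxtimes\dots\boxtimes\pi_r$ to be irreducible. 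Hence $\pi\cong\Ind_{P'(E)}^{G(E)}\pi_1\boxtimes\dots\boxtimes\pi_r$.

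Finally, genericity of $\pi$ follows from the standard fact that normalized parabolic induction of a generic representation is generic: the Whittaker functional on $\pi_1\boxtimes\dots\boxtimes\pi_r$ induces, via the Jacquet integral / transitivity of the Whittaker functor, a nonzero Whittaker functional on $\Ind_{P'(E)}^{G(E)}\pi_1\boxtimes\dots\boxtimes\pi_r$; equivalently $\cind_{U(E)}^{G(E)}\psi$ admits a nonzero map to any irreducible constituent that is itself an induction of generics. This gives $\Hom(\cind_{U(E)}^{G(E)}\psi,\pi)\neq 0$, proving the ``in particular''. I expect the main subtlety to be citing the right integral-coefficient versions: uniqueness of Whittaker models for supercuspidals and irreducibility of linkage-free inductions are both available for $\GL_n$ over algebraically closed $\bbZ_\ell$-fields (M\'inguez–S\'echerre, S\'echerre–Stevens), so the argument goes through uniformly in the characteristic of $k$; I would simply spell out these references rather than reprove them.
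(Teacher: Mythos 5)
Your route to irreducibility of the parabolic induction and to Whittaker-genericity is essentially what the paper delegates to the cited reference (the paper simply invokes \cite[Th\'{e}or\`{e}me 9.10.]{lift-irreducible} for the irreducible-generic statement, whereas you unwind it via linkage-freeness and M\'inguez--S\'echerre/S\'echerre--Stevens; either is fine). However, there is a genuine gap at the very end: you assert that ``equivalently $\cind_{U(E)}^{G(E)}\psi$ admits a nonzero map to any irreducible constituent that is itself an induction of generics,'' but the passage from the standard notion of generic, namely $\Hom(\pi,\ind_{U(E)}^{G(E)}\psi)\neq 0$, to the desired conclusion $\Hom(\cind_{U(E)}^{G(E)}\psi,\pi)\neq 0$ is not automatic, especially over a field of characteristic $\ell$. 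These two $\Hom$-spaces come from different adjunctions and are related by duality, not trivially equal, and the paper treats this as a real step.

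The paper's proof fills this gap as follows: given $\Hom(\pi,\ind_{U(E)}^{G(E)}\psi)\neq 0$, the argument of \cite[Corollary A.0.2.]{beijing-notes} gives $\Hom(\cind_{U(E)}^{G(E)}\psi^{-1},\pi^\vee)\neq 0$. One then notes that taking contragredients is an involution on irreducible representations whose parameter is of Langlands--Shahidi type (because the contragredient of a Langlands--Shahidi type parameter is again of Langlands--Shahidi type), and that for $G=\GL_n$ any two Whittaker data are $G(E)$-conjugate. Combining these allows one to replace $\pi^\vee$ and $\psi^{-1}$ by $\pi$ and $\psi$ and conclude. You should spell out this dualization step rather than pass over it with ``equivalently''; without it the ``in particular'' clause of the lemma is not established.
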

\begin{proof}
    This follows from \cite[Th\'{e}or\`{e}me 9.10.]{lift-irreducible}.
    We remark that usually we call a representation \(\pi\) generic for some fixed Whittaker datum, if \(\Hom(\pi,\ind_{U(E)}^{G(E)}\psi)\neq 0\).
    Following the argument of \cite[Corollary A.0.2.]{beijing-notes} we see that if \(\Hom(\pi,\ind_{U(E)}^{G(E)}\psi)\neq 0\), then \(\Hom(\cind_{U(E)}^{G(E)}\psi^{-1},\pi^\vee)\neq 0\).
    Taking contragradients is an involution on the set of irreducible representations whose parameter is of Langlands-Shahidi type, and for \(G=\GL_n\) any two Whittaker data are \(G(E)\)-conjugate, which gives us the final claim.
\end{proof}

	\section{Reduction to generic local Langlands in families}\label{sec: reduction}
The material here regarding abstract category theory follows the discussion in the proof of \cite[Observation 21.4.4.]{stack-of-restricted-variation}.

\begin{rem}\label{rem: remove sqrt q}
    The results in this section except \cref{lem: generic local langlands in families} work unconditionally for arbitrary quasi-split groups $G$, and the last one is conjecturally true.
    If there is a lift of the cocharacter $\widehat{\rho}_{\ad}$ to $\widehat{\rho}'\from\bbG_m\to\widehat{G}$ one can drop all the $\sqrt{q}$.
    This is the case for example for $G=\GL_n$.
    In particular in this case the spectral action is already defined over \(\bbZ_\ell\).
\end{rem}

\begin{defn}
    Let \(Y\) be an eventually coconnective noetherian derived scheme.
    Then \(\qcoh(Y)\) is generated by \((i_y)_*\oo_{\spec(k)}\) running through the geometric points \(y\from\spec(k)\to Y\).
\end{defn}
\begin{proof}
    By eventual coconnectivity we may assume that \(Y\) is classical.
    In this case, it follows by noetherian induction.
\end{proof}
\begin{defn}
    Let \(\mathcal{C}\) be a presentable category with action of \(\qcoh(Y)\) for \(Y\) an eventually coconnective noetherian derived scheme.
    Let \(y\from\spec(k)\to Y\) be a point.
    Then define \(\mathcal{C}_{y}\defined\Mod_k\otimes_{\qcoh(Y)}\mathcal{C}\), which we call the \defword{fiber} of \(\mathcal{C}\) at \(y\).
    Here the tensor product is the Lurie tensor product of presentable categories.
\end{defn}
We recall the following:
\begin{lem}\label{lem: fppf descent sheaves of categories}
    Let \(Y\) be a qcqs derived scheme.
    Fix a presentable \(\qcoh(Y)\)-linear category \(\mathcal{C}\).\footnote{By this we mean an object of \(\Mod_{\qcoh(Y)}(\prl)\). Recall that these objects are automatically stable categories.}
    The assignment 
    \begin{equation*}
        X\mapsto\qcoh(X)\otimes_{\qcoh(Y)}\mathcal{C}
    \end{equation*}
    on the category of qcqs derived schemes over \(Y\) is an fppf sheaf.
    If \(\mathcal{C}\) is dualizable in \(\prl\) then this also holds for all derived schemes.
\end{lem}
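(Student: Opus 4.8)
The plan is to establish the fppf (and, in the dualizable case, arbitrary) descent statement by reducing it to a statement about descent of modules over commutative rings, which is a form of faithfully flat descent.

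**First I would** reduce to the affine case. Since $Y$ is qcqs, we may choose a finite cover of $Y$ by affine opens; by a standard Mayer–Vietoris / \v{C}ech argument it suffices to prove the sheaf property for the fppf topology on affine derived schemes, i.e.\ to show that for $Y = \spec R$ and a faithfully flat map $R \to S$ of finite presentation (more precisely, for the whole \v{C}ech nerve $\spec S^{\otimes_R \bullet +1}$), the natural functor
\begin{equation*}
    \qcoh(\spec R)\otimes_{\qcoh(Y)}\mathcal{C} \longrightarrow \lim_{\Delta}\big(\qcoh(\spec S^{\otimes_R\bullet+1})\otimes_{\qcoh(Y)}\mathcal{C}\big)
\end{equation*}
is an equivalence. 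Since $\qcoh(\spec R) \simeq \Mod_R$ as symmetric monoidal categories, $\mathcal{C}$ is simply an $R$-linear presentable category and the statement becomes: $R$-linear presentable categories satisfy faithfully flat descent along $R\to S$. Next I would appeal to the fact that, by Lurie (see \cite{lurie-ha} DAG VIII / SAG, or the formulation in \cite{geometrization} and \cite{stack-of-restricted-variation}), $\Mod_R(\prl)$ itself satisfies fppf descent — the functor $R'\mapsto \Mod_{R'}(\prl)$ from connective $E_\infty$-$R$-algebras to $\cat$ is a sheaf for the fppf topology — and then observe that $-\otimes_{\Mod_R}\mathcal{C}$ as a functor $\Mod_{R'}(\prl) \to \prl$ commutes with the relevant totalization. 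The key input for the latter is that $\qcoh$ of an affine scheme is dualizable (indeed compactly generated) as an object of $\prl$, so tensoring with it preserves limits along the \v{C}ech nerve; combined with $\mathcal{C}$ being a fixed module this handles the descent. For a general (non-qcqs) $Y$ one needs the extra dualizability hypothesis on $\mathcal{C}$: dualizable objects of $\prl$ are precisely those for which $-\otimes\mathcal{C}$ preserves all limits, which is what allows the \v{C}ech descent argument to run without the quasi-compactness needed to reduce to finite limits.

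**The main obstacle** I expect is the bookkeeping around commuting the tensor product $-\otimes_{\qcoh(Y)}\mathcal{C}$ past the totalization defining the descent limit: in general Lurie tensor product does not commute with limits, so one genuinely needs either that the cosimplicial diagram is (split after base change / of bounded cohomological amplitude, allowing a reduction to finite limits) — which is where qcqs-ness is used — or that $\mathcal{C}$ is dualizable so that $-\otimes\mathcal{C}$ is a right adjoint and hence preserves all limits. I would state this dichotomy cleanly and cite the precise descent result for $\Mod(\prl)$ rather than reprove it, since this lemma is invoked as a known tool (``We recall the following''); the proof should therefore be short, essentially: reduce to affines, invoke fppf descent for $\Mod_R(\prl)$, and commute the tensor using dualizability of $\qcoh$ on affines in the qcqs case and of $\mathcal{C}$ in the general case.
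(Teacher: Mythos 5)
Your proposal takes a genuinely different route from the paper: the paper's proof is a pure citation to Gaitsgory's 1-affineness theory — Theorem 2.1.1 of \cite{sheaves-of-categories} (qcqs derived schemes are 1-affine), Corollary 1.5.4 (which converts 1-affineness into the desired descent statement), and Lemma 1.3.5 for the dualizable case. You instead attempt a direct reconstruction via reduction to affines and \v{C}ech descent for module categories. That is a reasonable plan, but there is a genuine gap in the non-dualizable (qcqs) case.

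Your dualizable case is essentially correct: if $\mathcal{C}$ is dualizable, $-\otimes_{\qcoh(Y)}\mathcal{C}$ preserves all limits and hence commutes with the totalization of the \v{C}ech nerve. One small omission is the observation the paper makes explicitly — that rigidity of $\qcoh(Y)$ is what lets you pass from ``dualizable in $\prl$'' (the stated hypothesis) to ``dualizable in $\Mod_{\qcoh(Y)}(\prl)$'' (what the argument actually needs); this is worth saying, since the two notions do not coincide over a non-rigid base.

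The gap is in the qcqs case. You offer two justifications for commuting $-\otimes_{\Mod_R}\mathcal{C}$ past the totalization: (i) dualizability of $\qcoh$ of an affine scheme in $\prl$, and (ii) a ``bounded cohomological amplitude / reduction to finite limits'' argument. Neither works as stated. For (i), dualizability of $\Mod_{S^{\otimes\bullet}}$ in $\prl$ controls the functor $\Mod_{S^{\otimes\bullet}}\otimes(-)$, not the functor $(-)\otimes_{\Mod_R}\mathcal{C}$; commuting the latter past a limit would require dualizability of $\mathcal{C}$ over $\Mod_R$, which is exactly what is not assumed here. For (ii), the totalization of the \v{C}ech nerve of an fppf cover is never a finite limit, and the splitting/bounded-amplitude trick that works at the level of module spectra does not transfer straightforwardly to the $2$-categorical setting of module categories; this is precisely the content that makes Gaitsgory's 1-affineness theorem nontrivial, and it cannot be waved away. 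What you would actually need to cite (or reprove) at this point is 1-affineness of qcqs schemes, which is the route the paper takes. A smaller technical point: the preliminary reduction ``choose a finite affine cover of $Y$'' implicitly invokes Zariski descent for $\qcoh(-)\otimes_{\qcoh(Y)}\mathcal{C}$, which is a special case of what you are trying to prove; it is indeed easier (pullbacks along open immersions are localizations with fully faithful adjoints, which bypasses the totalization issue), but you should flag that this case is being handled separately rather than folded into the fppf argument.
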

\begin{proof}
    This follows from \cite[Corollary 1.5.4.]{sheaves-of-categories} combined with \cite[Theorem 2.1.1.]{sheaves-of-categories}.
    If \(\mathcal{C}\) is dualizable in \(\prl\), then by rigidity of \(\qcoh(Y)\) it is dualizable in \(\Mod_{\qcoh(Y)}(\prl)\) and we can apply \cite[Lemma 1.3.5.]{sheaves-of-categories}.
\end{proof}
\begin{lem}\label{lem: reduction to algebraic closure}
    Let \(y\from\spec(k)\to Y\) be a point, and \(y'\from\spec(k')\to Y\) be a point obtained by precomposing \(y\) with a morphism \(\spec(k')\to\spec(k)\).
    Then \(\mathcal{C}_y\to\mathcal{C}_{y'}\) is conservative.
\end{lem}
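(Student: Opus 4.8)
The plan is to reduce the assertion to the elementary fact that base change of module categories along a field extension is conservative, using the standard description of fibres as relative tensor products. First I would note that, $k'$ being a field — in particular nonzero, since $y'$ is an actual point — the morphism $\spec(k')\to\spec(k)$ is induced by a ring map $k\to k'$ which is injective because $k$ is a field, so $k\hookrightarrow k'$ is a field extension. Since the pullback $\qcoh(Y)\to\Mod_{k'}$ factors as $\qcoh(Y)\xrightarrow{y^*}\Mod_k\to\Mod_{k'}$, associativity of the Lurie tensor product together with $\Mod_{k'}\simeq\Mod_{k'}\otimes_{\Mod_k}\Mod_k$ gives
\[\mathcal{C}_{y'}\simeq\Mod_{k'}\otimes_{\qcoh(Y)}\mathcal{C}\simeq\Mod_{k'}\otimes_{\Mod_k}\bigl(\Mod_k\otimes_{\qcoh(Y)}\mathcal{C}\bigr)\simeq\Mod_{k'}\otimes_{\Mod_k}\mathcal{C}_y,\]
under which the natural functor $\mathcal{C}_y\to\mathcal{C}_{y'}$ becomes base change along $\Mod_k\to\Mod_{k'}$. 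This reduces the claim to the following: for any presentable $\Mod_k$-linear category $\mathcal{D}$, the base change functor $F\from\mathcal{D}\to\Mod_{k'}\otimes_{\Mod_k}\mathcal{D}$ is conservative.

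For this, I would use the standard identification of $\Mod_{k'}\otimes_{\Mod_k}\mathcal{D}$ with the category of $k'$-module objects in $\mathcal{D}$ — with $k'$ regarded as an algebra object of $\Mod_k$ — under which $F$ is the free-module functor, its right adjoint $U$ is the forgetful functor, and the monad $UF$ is the endofunctor $k'\otimes_k(-)$ of $\mathcal{D}$ given by the $\Mod_k$-action of the underlying $k$-module of $k'$. Since $\mathcal{D}$ and $\Mod_{k'}\otimes_{\Mod_k}\mathcal{D}$ are stable, conservativity of $F$ follows from that of $UF$: if $F(d)\simeq 0$ then $UF(d)\simeq 0$, hence $d\simeq 0$. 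Finally, choosing a $k$-basis of $k'$ — nonempty, as $k'\neq 0$ — exhibits $k'\simeq\bigoplus_{i\in I}k$ in $\Mod_k$ with $I\neq\emptyset$, so $UF(d)\simeq\bigoplus_{i\in I}d$; since $d$ is a retract of this (nonempty) coproduct, its vanishing forces $d\simeq 0$, so $UF$ is conservative.

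There is no real obstacle here; the argument is purely formal. The only points that need care are the two identifications — that forming the fibre commutes with the composite base change $\qcoh(Y)\to\Mod_k\to\Mod_{k'}$, and that base change along an algebra morphism is computed by the free/forgetful adjunction — both of which are standard facts about module categories in $\prl$ (see e.g. \cite{sheaves-of-categories}). It is worth noting that, in contrast with fppf-descent arguments such as \cref{lem: fppf descent sheaves of categories}, this approach does not require $\mathcal{C}$ to be dualizable.
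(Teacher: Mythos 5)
Your proof is correct, and it is essentially the same as the paper's: both hinge on the observation that the unit $d\to UF(d)$ (equivalently, $\id\to GF$ for the free/forgetful adjunction along $k\hookrightarrow k'$) is a split monomorphism, which forces conservativity. The paper packages this abstractly — $F\from\Mod_k\to\Mod_{k'}$ is a ``2-retraction'' (the unit $\id\to GF$ admits a retraction because $k'\simeq k\oplus V$ as $k$-modules), this structure is carried along by $\mathcal{C}_y\otimes_{\Mod_k}(-)$, and 2-retractions are conservative — whereas you unwind the same thing explicitly by identifying $\mathcal{C}_{y'}\simeq\Mod_{k'}\otimes_{\Mod_k}\mathcal{C}_y$ with $k'$-module objects, computing the monad $UF\simeq k'\otimes_k(-)\simeq\bigoplus_{i\in I}(-)$ via a choice of $k$-basis, and invoking the retract of a coproduct. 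The only thing your version makes more visibly explicit is the nonemptiness of $I$, which is exactly where the ``retract'' structure comes from in the paper's formulation; no gap in either direction.
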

\begin{proof}
    This follows from the fact that \(F\from\Mod_k\to\Mod_{k'}\) is a 2-retraction in the sense that it has a right adjoint \(G\) and the map \(\id\to GF\) is a retract.
    This property is preserved under \(\mathcal{C}\otimes_{\Mod_k}-\) and 2-retractions are conservative.
\end{proof}
\begin{lem}
    Let \(Y\) be an eventually coconnective noetherian derived scheme.
    Let \(\mathcal{C}\) be a \(\qcoh(Y)\)-linear presentable category. 
    The functors \(\mathcal{C}\to\mathcal{C}_{y}\) running through geometric points \(y\from \spec(k)\to Y\) are conservative.
\end{lem}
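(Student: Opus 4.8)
The plan is to derive joint conservativity of the fiber functors $\beta_y\from\mathcal{C}\to\mathcal{C}_y$ from the generation statement recalled above, namely that $\qcoh(Y)$ is generated under colimits by the pushforwards $i_{y*}\oo_{\spec(k)}$ along geometric points $y\from\spec(k)\to Y$. Since $\mathcal{C}$ and the $\mathcal{C}_y$ are stable and each $\beta_y$ is colimit-preserving, hence exact, it is enough to show that any $M\in\mathcal{C}$ with $\beta_y(M)\simeq 0$ for every geometric point $y$ already vanishes. As a harmless preliminary reduction I would pass to an affine $Y$: for a finite cover of $Y$ by affine opens $U_i$, \cref{lem: fppf descent sheaves of categories} exhibits $\mathcal{C}$ as the totalization of the \v{C}ech diagram of the $\qcoh(U_i)\otimes_{\qcoh(Y)}\mathcal{C}$, so $\mathcal{C}\to\prod_i\qcoh(U_i)\otimes_{\qcoh(Y)}\mathcal{C}$ is conservative; each geometric point of $Y$ factors through some $U_i$, the fiber of $\qcoh(U_i)\otimes_{\qcoh(Y)}\mathcal{C}$ at it is again $\mathcal{C}_y$, and $U_i$ is still eventually coconnective noetherian, so it suffices to treat $Y=\spec(R)$.

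The heart of the matter is the monad attached to the base-change adjunction $\beta_y\dashv\gamma_y$ (the right adjoint $\gamma_y$ existing by the adjoint functor theorem, as $\mathcal{C}_y$ is presentable). Put $A_y\defined i_{y*}\oo_{\spec(k)}\in\qcoh(Y)$, a commutative algebra object since $i_{y*}$ is lax symmetric monoidal. Restriction of scalars along $R\to k$ identifies $\qcoh(\spec(k))\simeq\Mod_{A_y}(\qcoh(Y))$ with the pair $(i_y^*,i_{y*})$ becoming the free/forgetful adjunction; base-changing this monadic adjunction along the $\qcoh(Y)$-module structure of $\mathcal{C}$ yields $\mathcal{C}_y\simeq\Mod_{A_y}(\mathcal{C})$, with $\beta_y$ the free-module functor and $\gamma_y$ the forgetful functor. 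Hence the composite monad $\gamma_y\circ\beta_y$ is simply the action endofunctor $A_y\otimes_{\qcoh(Y)}(-)$ of $\mathcal{C}$, and so the hypothesis $\beta_y(M)\simeq 0$ forces $A_y\otimes M\simeq\gamma_y\beta_y(M)\simeq 0$ for every geometric point $y$.

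To finish, I would note that $(-)\otimes_{\qcoh(Y)}M\from\qcoh(Y)\to\mathcal{C}$ preserves colimits and annihilates every $A_y$; the full subcategory of objects it annihilates is therefore closed under colimits and contains all the $A_y$, so by the generation statement it is all of $\qcoh(Y)$, whence $M\simeq\oo_Y\otimes M\simeq 0$. The one step where I expect to have to be careful is the monad identification --- checking that the fiber construction really is base change of the free/forgetful adjunction for $A_y$-modules --- while everything else is formal; note also that the hypotheses that $Y$ be eventually coconnective and noetherian are used only to invoke the generation statement. (Alternatively, one could reduce to the case $\mathcal{C}=\qcoh(Y)$ --- where the claim is the classical fact that a quasi-coherent complex on a noetherian scheme is detected by its fibres --- by working with the relative internal homs $\iHom_{\qcoh(Y)}(G,M)$ for $G$ in a generating set of $\mathcal{C}$; but this needs a compatibility of $\iHom$ with the fiber functors, so is not obviously shorter.)
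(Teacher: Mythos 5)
Your proof is correct and follows essentially the same approach as the paper: both arguments rest on writing $\oo_Y$ as built from the pushforwards $A_y=i_{y*}\oo_{\spec(k)}$ under colimits, together with the observation that $A_y\otimes_{\qcoh(Y)}(-)$ factors through $\mathcal{C}\to\mathcal{C}_y$, so the colimit closure argument shows $M\simeq\oo_Y\otimes M\simeq 0$. You make explicit two points the paper glosses over — the affine reduction via \cref{lem: fppf descent sheaves of categories} and the monadic identification $\mathcal{C}_y\simeq\Mod_{A_y}(\mathcal{C})$ giving $\gamma_y\beta_y\simeq A_y\otimes(-)$ — which is a mild sharpening of the paper's terse "$c$ can be written as a colimit of the form $(y_*\otimes\id)\circ(y^*\otimes\id)c$."
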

\begin{proof}
    Since \(\mathcal{C}\) is a \(\qcoh(Y)\)-linear presentable category, it is a stable category.
    This means we only need to check that the functors  \(\mathcal{C}\to\mathcal{C}_{y}\) can detect the zero object.
    Note that \(\oo_{Y}\) can be written as a colimit of objects of the form \(y_*\oo_{\spec(k)}=y_*y^*\oo_{Y}\), in particular \(c\in\mathcal{C}\) can be written as a colimit of the form \((y_*\otimes\id) \circ(y^*\otimes \id) c\), so the claim follows.
\end{proof}
\begin{lem}\label{lem: reduction to individual fibers}
    Let \(Y\) be an eventually coconnective noetherian derived scheme.
    Assume we have an arrow \(f\from\mathcal{C}\to\mathcal{C}'\) of \(\qcoh(Y)\)-linear presentable categories that admits a colimit preserving right adjoint (for example \(\mathcal{C}\) and \(\mathcal{C}'\) are compactly generated and \(f\) preserves compact objects) such that \(f_y\from\mathcal{C}_y\to\mathcal{C}'_y\) is an equivalence for all geometric points \(y\) of \(Y\).
    Then \(f\) is an equivalence.
\end{lem}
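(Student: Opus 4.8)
The plan is to upgrade the colimit-preserving right adjoint $g\from\mathcal{C}'\to\mathcal{C}$ of $f$ provided by the hypothesis to a $\qcoh(Y)$-linear adjunction $f\dashv g$, and then to check separately that the unit $\eta\from\id_{\mathcal{C}}\to gf$ and the counit $\varepsilon\from fg\to\id_{\mathcal{C}'}$ are equivalences. For the second step the key input is the preceding lemma: the fiber functors $L_y\from\mathcal{C}\to\mathcal{C}_y$ (and likewise $\mathcal{C}'\to\mathcal{C}'_y$), as $y$ ranges over the geometric points of $Y$, are jointly conservative.

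First I would note that the right adjoint $g$ is automatically $\qcoh(Y)$-linear. This is the standard fact that, by rigidity of $\qcoh(Y)$, the right adjoint of a colimit-preserving module functor is again a module functor: writing $\qcoh(Y)$ as generated under colimits by dualizable objects $a$, the equivalence $f(-\otimes a)\simeq f(-)\otimes a$ passes to mates to give $g(-\otimes a^\vee)\simeq g(-)\otimes a^\vee$. Hence $f\dashv g$ is an adjunction internal to $\Mod_{\qcoh(Y)}(\prl)$; in particular $\eta$ and $\varepsilon$ are maps of $\qcoh(Y)$-linear functors, and therefore their cofibers $\cofib(\eta)\from\mathcal{C}\to\mathcal{C}$ and $\cofib(\varepsilon)\from\mathcal{C}'\to\mathcal{C}'$ are again $\qcoh(Y)$-linear, since cofibers of module functors are computed pointwise and $-\otimes a$ is exact.

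Now fix a geometric point $y\from\spec(k)\to Y$ and apply the base-change functor $\Mod_k\otimes_{\qcoh(Y)}-\from\Mod_{\qcoh(Y)}(\prl)\to\Mod_{\Mod_k}(\prl)$. Being (the $2$-functorial operation of) extension of scalars along $\qcoh(Y)\to\Mod_k$, it carries the adjunction $f\dashv g$ to an adjunction $f_y\dashv g_y$ with unit $\eta_y$ and counit $\varepsilon_y$; being colimit-preserving it commutes with cofibers, so $\cofib(\eta)_y\simeq\cofib(\eta_y)$ and $\cofib(\varepsilon)_y\simeq\cofib(\varepsilon_y)$. By hypothesis $f_y$ is an equivalence, hence so is $g_y$, and $\eta_y,\varepsilon_y$ are equivalences, so $\cofib(\eta)_y=0$ and $\cofib(\varepsilon)_y=0$ for every $y$. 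Finally, for any $\qcoh(Y)$-linear functor $F$ the evident square relating $F$, $F_y$ and the localizations commutes, i.e. $L_y\circ F\simeq F_y\circ L_y$ (immediate from $F_y=\id_{\Mod_k}\otimes F$ and the description of $L_y$). Applying this to $F=\cofib(\eta)$ gives $L_y\bigl(\cofib(\eta)(c)\bigr)\simeq\cofib(\eta_y)(L_y c)=0$ for all $c\in\mathcal{C}$ and all $y$, so $\cofib(\eta)(c)=0$ by joint conservativity of the $L_y$; thus $\eta$ is an equivalence. The same argument applied to $\cofib(\varepsilon)$ on $\mathcal{C}'$ shows $\varepsilon$ is an equivalence, and hence $f$ is an equivalence.

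I expect the only genuinely delicate points to be the two formal inputs used above: the $\qcoh(Y)$-linearity of the right adjoint, and the fact that base change transports the entire adjunction (unit and counit), not merely the underlying functor. Granting these, the statement is a formal consequence of the joint conservativity of the fiber functors established in the preceding lemma, exactly along the lines of the discussion in \cite[Observation 21.4.4.]{stack-of-restricted-variation}.
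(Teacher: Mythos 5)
Your proof is correct and follows the same strategy as the paper's (very terse) proof: use rigidity of $\qcoh(Y)$ to make the right adjoint $\qcoh(Y)$-linear, transport the adjunction across the fiber functors, and check that unit and counit are equivalences by combining the fiberwise hypothesis with the joint conservativity of the $\mathcal{C}\to\mathcal{C}_y$ established in the preceding lemma. You simply spell out the details that the paper leaves implicit.
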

\begin{proof}
    By rigidity of \(\qcoh(Y)\) the right adjoint is also \(\qcoh(Y)\)-linear.
    We now need to check that the unit and counit are equivalences.
    This can be done on fibers, so we are done.
\end{proof}
For later purposes and for independent interest, lets us dicuss the relationship between the fiber in the categorical sense and the fiber in geometry.
\begin{lem}\label{lem: categorical fiber is fiber}
    Let \(f\from\locsys_{\widehat{G}}\to\spec(\bbZ_\ell[\sqrt{q}])\) be the structure map.
    Consider a \(k\)-point \(y\from \spec(k)\to \spec(\bbZ_\ell[\sqrt{q}])\).
    Then we have an equivalence of categories
    \begin{equation*}
        \ind\perf(\locsys_{\widehat{G}})_y\simeq\ind\perf(\locsys_{\widehat{G}}\times_{\spec(\bbZ_{\ell}[\sqrt{q}])}\spec(k)).
    \end{equation*}
\end{lem}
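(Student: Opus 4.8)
The plan is to reduce the claim to the general compatibility of the Lurie tensor product with base change along affine schemes, using that $\locsys_{\widehat{G}}$ is built as a filtered colimit (over compact open pro-$p$ subgroups $P^e\subset W_E$) of quotient stacks $Z^1(W_E,\widehat{G})_{P^e}/\widehat{G}$, each of which is a quotient of an affine (ind-)scheme of finite presentation over $\bbZ_\ell[\sqrt{q}]$ by an affine algebraic group. First I would note that $\ind\perf(-)$ sends filtered colimits of such stacks along open immersions to filtered colimits of categories, and that $-\otimes_{\qcoh(\spec\bbZ_\ell[\sqrt q])}\Mod_k$ commutes with filtered colimits in $\prl$; likewise fibre product of stacks along the structure map commutes with these colimits. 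Hence it suffices to treat one $\locsys_{\widehat{G},P^e}=Z^1(W_E,\widehat{G})_{P^e}/\widehat{G}$ at a time.

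For a fixed $P^e$, write $Z\defined Z^1(W_E,\widehat{G})_{P^e}$, an affine scheme of finite presentation over $R\defined\bbZ_\ell[\sqrt q]$, and $\locsys_{\widehat{G},P^e}=Z/\widehat{G}$. Then $\ind\perf(Z/\widehat{G})\simeq\qcoh(Z/\widehat{G})$ since $Z/\widehat{G}$ is a quotient of an affine scheme by an affine group (it is perfect/passable in the sense needed, cf.\ the discussion in \cite{sheaves-of-categories}), and by descent $\qcoh(Z/\widehat{G})\simeq\lim_{[n]\in\Delta}\qcoh(Z\times\widehat{G}^{\times n})$. The key step is then the base-change identity
\begin{equation*}
    \qcoh(W)\otimes_{\qcoh(\spec R)}\Mod_k\simeq\qcoh(W\times_{\spec R}\spec k)
\end{equation*}
for $W$ affine over $R$, which holds because $\qcoh(W)=\Mod_{\Gamma(W,\oo_W)}$ as an $R$-linear category and $\Mod_S\otimes_{\Mod_R}\Mod_k\simeq\Mod_{S\otimes_R k}$ for any $R$-algebra $S$. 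Since $-\otimes_{\qcoh(\spec R)}\Mod_k$ is a symmetric monoidal colimit-preserving functor, and since $\qcoh(\spec k)$-linear categories form a presentable $\infty$-category in which the relevant cosimplicial limit computing $\qcoh$-descent is preserved under tensoring with the dualizable object $\Mod_k$ (here one uses that $\qcoh(\spec R)\to\qcoh(\spec k)$ is a morphism of rigid symmetric monoidal categories, so $\Mod_k$ is dualizable over $\qcoh(\spec R)$ and $-\otimes_{\qcoh(\spec R)}\Mod_k$ commutes with totalizations of such Amitsur-type complexes by \cref{lem: fppf descent sheaves of categories}), we get
\begin{equation*}
    \qcoh(Z/\widehat{G})\otimes_{\qcoh(\spec R)}\Mod_k\simeq\lim_{[n]}\qcoh\bigl((Z\times\widehat{G}^{\times n})\times_{\spec R}\spec k\bigr)\simeq\qcoh\bigl((Z/\widehat{G})\times_{\spec R}\spec k\bigr),
\end{equation*}
the last step being descent again, now over $k$. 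Reassembling over $P^e$ gives the claim for $\locsys_{\widehat{G}}$.

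The main obstacle is the commutation of $-\otimes_{\qcoh(\spec R)}\Mod_k$ with the cosimplicial limit: base change of presentable categories does not commute with arbitrary limits, so one genuinely needs the rigidity/dualizability input, i.e.\ that $\qcoh(\spec R)$ is a rigid symmetric monoidal category (it is, being $\qcoh$ of an affine scheme) and hence $\Mod_k$, being dualizable over it, makes $-\otimes_{\qcoh(\spec R)}\Mod_k$ preserve the totalizations appearing in faithfully flat (here even fpqc-trivial, as $\spec k\to\spec R$ need only be a point) descent — precisely the content invoked in \cref{lem: fppf descent sheaves of categories} and \cite[Lemma 1.3.5, Corollary 1.5.4, Theorem 2.1.1]{sheaves-of-categories}. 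A minor secondary point is checking eventual coconnectivity/noetherianity hypotheses so that the fibre $\ind\perf(\locsys_{\widehat{G}})_y$ is the one appearing in the earlier lemmas; this is immediate since each $Z^1(W_E,\widehat{G})_{P^e}$ is of finite presentation over the noetherian ring $\bbZ_\ell[\sqrt q]$.
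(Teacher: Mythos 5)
Your proposal has a genuine gap at the very first step: the identification $\ind\perf(Z/\widehat{G})\simeq\qcoh(Z/\widehat{G})$, which you justify by saying $Z/\widehat{G}$ "is a quotient of an affine scheme by an affine group." That identification is the Ben-Zvi--Francis--Nadler perfect-stack theorem, which holds over a field of characteristic zero but fails over $\bbZ_\ell[\sqrt q]$ (and over $\bbF_\ell$). For $n\geq 2$ the group $\widehat{G}=\GL_n$ is not linearly reductive in residue characteristic $\ell$, and by Hall--Neeman--Rydh the structure sheaf $\oo_{B\widehat{G}}$ is not compact in $D_{qc}(B\widehat{G})$, so $\ind\perf\to\qcoh$ is not an equivalence for the stacks in question. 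Relatedly, although $\perf$ satisfies fppf descent, $\Ind$ does not commute with the cosimplicial totalization, so $\ind\perf(Z/\widehat{G})\not\simeq\lim_{[n]}\qcoh(Z\times\widehat{G}^{\times n})$ in general. This is precisely why the lemma is phrased in terms of $\ind\perf$ rather than $\qcoh$; indeed, the paper explicitly singles out the locus $p^{-1}(\LSt)$ in a later remark as a place where $\ind\perf^\qc$ \emph{does} agree with $\qcoh$, invoking linear reductivity of the stabilizers there --- a hypothesis that fails on all of $\locsys_{\widehat{G}}$. Once this reduction to $\qcoh$-descent is removed, the rest of your argument has nothing to stand on.

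The paper's proof works directly with $\ind\perf$ and never passes through $\qcoh$: it uses Barr--Beck to identify the relevant subcategory with $\Mod_{\bar y_*\oo}(\ind\perf(\locsys_{\widehat{G}}))$, uses \cite[Theorem 4.8.4.6]{ha} to identify the categorical fiber with $\Mod_{f^*y_*\oo}(\ind\perf(\locsys_{\widehat{G}}))$, and then carries out two genuinely nontrivial verifications: first, $f^*y_*\oo\simeq\bar y_*\oo$ (immediate in the torsion case by base change of perfect complexes, and in the rational case requiring \cite[Theorem VIII.5.1]{geometrization} together with a semisimplicity argument); second, that $\ind\perf$ of the geometric fiber is generated under colimits by pullbacks, which over $\bbF_\ell$ requires lifting Weyl modules to $\bbZ_\ell$. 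These ingredients --- the Fargues--Scholze structure theorem for $\perf(\locsys_{\widehat{G}})$ and the highest-weight lifting --- are what make the lemma true, and they do not appear in your proposal.

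A minor secondary point: the reason $\Mod_k$ is dualizable as a $\qcoh(\spec R)$-module is not that "$\qcoh(\spec R)\to\qcoh(\spec k)$ is a morphism of rigid categories," but that $\Mod_k$ is compactly generated (hence dualizable in $\prl$) and $\qcoh(\spec R)$ is rigid, so $\prl$-dualizability upgrades to $\qcoh(\spec R)$-linear dualizability; this is the mechanism in \cref{lem: fppf descent sheaves of categories}. The conclusion is correct but the stated justification is off.
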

\begin{proof}
    Let us first start with the case where \(*=\spec(\bbQ_{\ell}[\sqrt{q}])\) or \(*=\spec(\bbF_{\ell}[\sqrt{q}])\) and \(y\from *\to\spec(\bbZ_{\ell}[\sqrt{q}])\).
    Consider the following cartesian square:
    \begin{equation*}
        \begin{tikzcd}
            \locsys_{\widehat{G}}\times_{\spec(\bbZ_\ell[\sqrt{q}])} * \arrow[d, "\bar{y}"] \arrow[r, "\bar{f}"] & * \arrow[d, "y"] \\
            \locsys_{\widehat{G}} \arrow[r, "f"]                                      & \spec(\bbZ_{\ell}[\sqrt{q}]).               
            \end{tikzcd}
    \end{equation*}
    Since \(\bar{y}_*\from\ind\perf(\locsys_{\widehat{G}}\times_{\spec(\bbZ_\ell[\sqrt{q}])} *)\to\ind\perf(\locsys_{\widehat{G}})\) admits a right adjoint (since \(\bar{y}^*\) preserves compact objects), we can apply the Barr-Beck theorem.
    Therefore we may identify \(\Mod_{\bar{y}_*\oo}(\ind\perf(\locsys_{\widehat{G}}))\) as the full subcategory of \(\ind\perf(\locsys_{\widehat{G}}\times_{\spec(\bbZ_\ell[\sqrt{q}])} *)\) generated by \(\bar{y}^*\) under colimits.
    On the other hand \(\ind\perf(*)=\Mod_{y_*\oo}(\ind\perf(\spec(\bbZ_{\ell}[\sqrt{q}])))\).
    Thus, using \cite[Theorem 4.8.4.6.]{ha} we compute
    \begin{equation*}
        \ind\perf(\locsys_{\widehat{G}})\otimes_{\ind\perf(\spec(\bbZ_{\ell}[\sqrt{q}]))}\ind\perf(*)\simeq\Mod_{f^*y_*\oo}(\ind\perf(\locsys_{\widehat{G}})).
    \end{equation*}
    If \(*=\spec(\bbF_\ell[\sqrt{q}])\), then \(y_*\oo\in\perf(\spec(\bbZ_{\ell}[\sqrt{q}]))\), therefore we may compute \(f^*y_*\oo\) using quasi-coherent sheaves instead of using ind-perfect complexes.
    In this case, we have base-change, so we conclude that \(f^*y_*\oo=\bar{y}_*\oo\).
    If \(*=\spec(\bbQ_{\ell}[\sqrt{q}])\), then \(y_*\oo=\oo[\ell^{-1}]\), so that \(f^*y_*\oo=\oo_{\locsys_{\widehat{G}}}[\ell^{-1}]\).
    We need to check \(\oo_{\locsys_{\widehat{G}}}[\ell^{-1}]\cong \bar{y}_*\oo\), for this it suffices to check that for any perfect complex \(V\in\perf(\locsys_{\widehat{G}})\) we have \(R\Gamma(\bar{y}^*V)\cong R\Gamma(V)[\ell^{-1}]\).
    By \cite[Theorem VIII.5.1.]{geometrization}, we can assume that \(V\) is pulled back from \(\perf(\spec(\bbZ_{\ell}[\sqrt{q}]))/\widehat{G}\), and even that it comes from a \(\widehat{G}\)-representation on a projective \(\bbZ_{\ell}[\sqrt{q}]\)-module.
    Both sides are computed via \(\widehat{G}\)-cohomology, and by semisimplicity in characteristic 0 both sides are concentrated in degree 0 and isomorphic, concluding the computation.

    We are left with checking that \(\ind\perf(\locsys_{\widehat{G}}\times_{\spec(\bbZ_\ell[\sqrt{q}])} *)\) generated by \(\bar{y}^*\) under colimits.
    If \(*=\spec(\bbQ_{\ell}[\sqrt{q}])\), then both sides are generated by retracts from pull backs of objects in \(\perf(\spec(\bbZ_{\ell}[\sqrt{q}]))\), since any \(\widehat{G}\)-representation on a finite dimensional \(\bbQ_{\ell}[\sqrt{q}]\)-module admits an integral lattice.
    The situation is similar when \(*=\spec(\bbF_{\ell}[\sqrt{q}])\).
    In this case not every irreducible representation lifts to \(\bbZ_{\ell}[\sqrt{q}]\), but by highest weight theory it is sufficient to only consider the Weyl representations and they lift.

    The for the case of a general field extension of \(\bbQ_{\ell}[\sqrt{q}]\) or \(\bbF_\ell[\sqrt{q}]\) observe the proof of \cite[Theorem VIII.5.1.]{geometrization} works for any \(\bbZ_{\ell}[\sqrt{q}]\) field, so we can repeat the argument above.
\end{proof}
In general, it is difficult to understand the fibers \(\mathcal{C}_y\).
It would be nice to have a full subcategory instead:
\begin{defn}\label{def: formal completion}
    Let \(Z\subset Y\) be a closed subscheme of a eventually coconnective noetherian derived scheme.
    We write \(\mathcal{C}^\wedge_{Z}\defined\qcoh(Y^\wedge_{Z})\otimes_{\qcoh(Y)}\mathcal{C}\) and call it the \defword{formal completion} of \(\mathcal{C}\) at \(Z\).
    If \(Z=\{y\}\) is a point, we will also refer to it as the \defword{formal fiber}.
\end{defn}
\begin{lem}\label{lem: reduction to formal fibers}
    Assume that we have a morphism \(Y\to S\) between eventually coconnective noetherian derived scheme.
    (If \(\mathcal{C}\) is a category of sheaves, one should think of \(\qcoh(S)\) as the category of its coefficients).
    Then in \cref{lem: reduction to individual fibers} we can equivalently check that for all geometric points \(s\from\spec(k)\to S\), and all \(k\)-valued points \(y\from\spec(k)\to Y\times_S\spec(k)\) we have that \((f_s)^\wedge_y\from(\mathcal{C}_s)^\wedge_y\to(\mathcal{C}'_s)^\wedge_y\) is an equivalence of categories.
\end{lem}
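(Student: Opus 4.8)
The plan is to deduce the statement from \cref{lem: reduction to individual fibers} by purely formal manipulations: associativity of the Lurie tensor product, base change for $\qcoh$ (in the form $\qcoh(Y\times_S\spec(k))\simeq\qcoh(Y)\otimes_{\qcoh(S)}\Mod_k$, valid as all schemes in sight are qcqs), and the triviality that a base change of an equivalence is an equivalence. Throughout write $Y_s\defined Y\times_S\spec(k)$ for a geometric point $s\from\spec(k)\to S$.

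First I would record that $\mathcal{C}_s\defined\Mod_k\otimes_{\qcoh(S)}\mathcal{C}$ is naturally linear over $\qcoh(Y_s)$ and satisfies $\mathcal{C}_s\simeq\qcoh(Y_s)\otimes_{\qcoh(Y)}\mathcal{C}$, and likewise for $\mathcal{C}'_s$ and $f_s$. Hence, for a $k$-point $y\from\spec(k)\to Y_s$, associativity of the relative tensor product along $\spec(k)\to Y_s\to Y$ gives $(\mathcal{C}_s)_y\simeq\mathcal{C}_y$ and $(f_s)_y\simeq f_y$, the geometric point of $Y$ on the right being the composite $\spec(k)\to Y_s\to Y$. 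Since every geometric point of $Y$ is, after composing with $Y\to S$, precisely the datum of a geometric point $s$ of $S$ together with a $k$-point of $Y_s$, and conversely, the family $\{f_y\}$ indexed by geometric points of $Y$ coincides with the family $\{(f_s)_y\}$ indexed by such pairs $(s,y)$. Next I would bridge to the formal fiber: for fixed $(s,y)$ the canonical closed point $\spec(k)\hookrightarrow(Y_s)^\wedge_y$ exhibits $\Mod_k$ as $\Mod_k\otimes_{\qcoh((Y_s)^\wedge_y)}\qcoh((Y_s)^\wedge_y)$, so associativity once more gives $(\mathcal{C}_s)_y\simeq\Mod_k\otimes_{\qcoh((Y_s)^\wedge_y)}(\mathcal{C}_s)^\wedge_y$, compatibly with $f$; in particular, if $(f_s)^\wedge_y$ is an equivalence then so is its base change $(f_s)_y\simeq f_y$.

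Assembling these: if $(f_s)^\wedge_y$ is an equivalence for every geometric point $s$ of $S$ and every $k$-point $y$ of $Y_s$, then by the previous paragraph $f_y$ is an equivalence for every geometric point $y$ of $Y$, hence $f$ is an equivalence by \cref{lem: reduction to individual fibers}, and then each $(f_s)^\wedge_y\simeq\qcoh((Y_s)^\wedge_y)\otimes_{\qcoh(Y)}f$ is an equivalence as a base change of $f$. Conversely, if $f_y$ is an equivalence for all geometric $y$, then $f$ is an equivalence by \cref{lem: reduction to individual fibers}, and all $(f_s)^\wedge_y$ are equivalences by the same base-change observation. So the two conditions are equivalent. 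Every step is formal; the only point meriting a remark is that the fiber products $Y_s$ remain eventually coconnective noetherian derived schemes so that \cref{def: formal completion} applies, and that $f_s$ retains a colimit-preserving right adjoint — the latter is a consequence of the rigidity of $\qcoh$ invoked already in the preceding lemmas, and the former holds under the finiteness hypotheses present in our applications — so I do not anticipate a genuine obstacle.
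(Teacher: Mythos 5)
Your proposal is correct and follows essentially the same route as the paper's proof: identify $\mathcal{C}_y\simeq(\mathcal{C}_s)_y$ via $\qcoh(Y\times_S\spec(k))\simeq\qcoh(Y)\otimes_{\qcoh(S)}\Mod_k$ and associativity, then factor $\spec(k)\to Y\times_S\spec(k)$ through the formal completion at $y$ to exhibit $f_y$ as a base change of $(f_s)^\wedge_y$, and conclude by \cref{lem: reduction to individual fibers}. The only cosmetic difference is that you spell out the converse direction and flag the hypotheses needed to apply \cref{def: formal completion} to $Y_s$, which the paper leaves implicit.
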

\begin{proof}
    In this case, any \(\qcoh(Y)\)-linear presentable category \(\mathcal{C}\) also becomes \(\qcoh(S)\)-linear.
    Let \(y\from\spec(k)\to Y\) be a geometric point of \(Y\), and \(s\from\spec(k)\to Y\to S\) the induced point on \(S\).
    In this case, 
    \begin{align*}
        \mathcal{C}_y&=\Mod_k\otimes_{\qcoh(Y)}\mathcal{C}\\
        &=\Mod_k\otimes_{\qcoh(Y\times_S\spec(k))}\Mod_k\otimes_{\qcoh(S)}\mathcal{C}\\
        &=\Mod_k\otimes_{\qcoh(Y\times_S\spec(k))}\mathcal{C}_{s}\\
        &=\Mod_k\otimes_{\qcoh((Y\times_S\spec(k))^\wedge_y)}\qcoh((Y\times_S\spec(k))^\wedge_y)\otimes_{\qcoh(Y\times_S\spec(k))}\mathcal{C}_{s}\\
        &=\Mod_{k}\otimes_{\qcoh((Y\times_S\spec(k))^\wedge_y)}(\mathcal{C}_s)^\wedge_y,
    \end{align*}
    by observing that \(\qcoh(Y\times_S\spec(k))=\qcoh(Y)\otimes_{\qcoh(S)}\Mod_k\).
    It follows that if all \((f_s)^\wedge_y\) are equivalences of categories, so are all \(f_y\) for all geometric points of \(Y\), so we reduce to \cref{lem: reduction to individual fibers}.
\end{proof}
Consider a \(k\)-valued semisimple parameter where \(k\) is a \(\bbZ_\ell[\sqrt{q}]\)-field.
We can now define \(\D(\bun_G,k)^\wedge_\varphi\) via \cref{def: formal completion}, using the action of \(\qcoh(k\times_{\bbZ_{\ell}[\sqrt{q}]}\locsys_{\widehat{G}}^\coarse)\) on \(\D(\bun_G,k)\).
\begin{lem}\label{lem: reduction to individual parameter}
    Let \(\Lambda\) be a noetherian \(\bbZ_\ell\)-algebra.
    Let \(V\subset\locsys_{\widehat{G}}^\coarse\) be an open subscheme.
    Assume there is a \(\perf(V)\)-linear functor 
    \begin{equation*}
        \bbL_{G}^V\from\coh^\qc_{\nilp}(V)\to\D^V(\bun_G,\Lambda)^\omega
    \end{equation*}
    such that for any algebraically closed field \(k\) over \(\Lambda\) and any semisimple \(L\)-parameter \(\varphi\from W_E\to\widehat{G}(k)\)  the induced functor 
    \begin{equation*}
        \bbL_{G}^\varphi\from\coh^\qc_{\nilp}((\locsys_{\widehat{G},k})^\wedge_{\varphi})\to(\D(\bun_G,k)^\wedge_{\varphi})^\omega
    \end{equation*}
    is an equivalence, then \(\bbL_G^V\) is an equivalence.
\end{lem}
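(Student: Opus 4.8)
The plan is to Ind-extend \(\bbL_G^V\) and then run it through the descent results \cref{lem: reduction to individual fibers} and \cref{lem: reduction to formal fibers}, arranged so that the assumed equivalences \(\bbL_G^\varphi\) become precisely the conditions those lemmas require us to check.

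First I would reduce to the case where \(V\) is an eventually coconnective noetherian derived scheme. Writing \(\locsys_{\widehat G}^{\coarse}\) as the increasing union of the quasi-compact opens \(\locsys_{\widehat G,P^e}^{\coarse}\), each of which is of finite type over \(\bbZ_\ell\) and hence noetherian since \(\Lambda\) is, one obtains a Zariski cover \(\{V\cap\locsys_{\widehat G,P^e}^{\coarse}\}\) of \(V\). Both \(\coh^\qc_\nilp(-)\) (via preimage along \(p\from\locsys_{\widehat G}\to\locsys_{\widehat G}^{\coarse}\)) and \(\D^{-}(\bun_G,\Lambda)\) define sheaves of categories on \(\locsys_{\widehat G}^{\coarse}\) in the sense of \cref{lem: fppf descent sheaves of categories}, since \(\Ind\coh^\qc_\nilp\) and \(\D(\bun_G,\Lambda)\) are compactly generated, hence dualizable in \(\prl\); therefore a functor between them is an equivalence as soon as it becomes one after base change to each member of this cover, and the hypotheses of the lemma are clearly inherited by open subschemes of \(V\). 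So I may and do assume \(V\) noetherian from now on.

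Next I would Ind-extend \(\bbL_G^V\) to a colimit-preserving functor \(\bbL_G^V\from\Ind\coh^\qc_\nilp(V)\to\D^V(\bun_G,\Lambda)\): the source is compactly generated by \(\coh^\qc_\nilp(V)\), and \(\D^V(\bun_G,\Lambda)\) is a Verdier quotient of the compactly generated category \(\D(\bun_G,\Lambda)\) (as \(V\hookrightarrow\locsys_{\widehat G}^{\coarse}\) is open), so it too is compactly generated, with \(\D^V(\bun_G,\Lambda)^\omega\) the idempotent completion of the image of \(\D(\bun_G,\Lambda)^\omega\). Thus the Ind-extension preserves compact objects, admits a colimit-preserving right adjoint, and is an equivalence if and only if its restriction to compact objects is; it therefore suffices to show the Ind-extension is an equivalence. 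Both sides being \(\qcoh(V)\)-linear (as in the discussion preceding the lemma), \cref{lem: reduction to individual fibers} reduces the problem to proving that \((\bbL_G^V)_y\) is an equivalence for every geometric point \(y\) of \(V\), and then \cref{lem: reduction to formal fibers}, applied to the structure map \(V\to\spec(\Lambda)\), reduces it further to proving that \(((\bbL_G^V)_s)^\wedge_y\) is an equivalence for every geometric point \(s\from\spec(k)\to\spec(\Lambda)\) and every \(k\)-point \(y\) of \(V_k\defined V\times_{\spec(\Lambda)}\spec(k)\). Since \(k\) is algebraically closed and \(V_k\) is an open subscheme of the coarse moduli space \(\locsys_{\widehat G,k}^{\coarse}\), such a \(y\) is the same datum as a semisimple parameter \(\varphi\from W_E\to\widehat G(k)\).

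It remains to identify \(((\bbL_G^V)_s)^\wedge_\varphi\) with the Ind-extension of \(\bbL_G^\varphi\); once this is done the hypothesis finishes the proof, since a compact-object-preserving functor of compactly generated categories is an equivalence iff it is so on compact objects. On coefficients, base change of \(\D(\bun_G,-)\) along \(\Lambda\to k\) together with \cref{lem: categorical fiber is fiber} (and its evident analogue for \(\Ind\coh^\qc_\nilp\)) identifies the \(s\)-fibers of source and target with \(\Ind\coh^\qc_\nilp(p^{-1}(V_k))\) and \(\D^{V_k}(\bun_G,k)\) respectively; passing to formal completions at \(\varphi\) via \cref{def: formal completion}, and using that the coarse-space map carries the formal completion of \(\locsys_{\widehat G,k}\) at \(\varphi\) to that of \(\locsys_{\widehat G,k}^{\coarse}\), turns these into \(\Ind\coh^\qc_\nilp((\locsys_{\widehat G,k})^\wedge_\varphi)\) and \(\D(\bun_G,k)^\wedge_\varphi\), which are the source and target of \(\bbL_G^\varphi\). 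I expect the hard part to be checking that these two chains of identifications are compatible with \(\bbL_G^V\) — that forming \(s\)-fibers and then formal completions is functorial on the arrow \(\bbL_G^V\), not merely on its source and target. This is forced by rigidity of \(\qcoh(V)\), exactly as in the proofs of \cref{lem: reduction to individual fibers} and \cref{lem: reduction to formal fibers}: rigidity makes the right adjoint of \(\bbL_G^V\) automatically \(\qcoh(V)\)-linear, so base change along any \(\qcoh(V)\)-linear (equivalently \(\Ind\perf(V)\)- and \(\Ind\perf(p^{-1}(V))\)-linear) localization — restriction to \(V_k\), then to a formal completion — carries the entire adjunction along, and the resulting compatibility of unit and counit can be verified after the reductions already performed. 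With this, \(((\bbL_G^V)_s)^\wedge_\varphi\) is identified with \(\bbL_G^\varphi\), which is an equivalence by assumption, so \(\bbL_G^V\) is an equivalence.
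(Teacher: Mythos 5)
Your overall strategy — Ind-extend, invoke \cref{lem: reduction to individual fibers} and \cref{lem: reduction to formal fibers} to reduce to formal fibers at geometric points, and then match those fibers with the hypothesis via \cref{lem: categorical fiber is fiber} — is the same as the paper's, up to a harmless repackaging: the paper runs \cref{lem: reduction to formal fibers} with \(S=\spec(\bbZ_\ell[\sqrt q])\) and \(Y\) a connected component of \(\locsys_{\widehat G}^{\coarse}\), whereas you pass to a noetherian Zariski cover and take \(S=\spec(\Lambda)\) (which, if one is pedantic, requires replacing \(V\) by \(V\times_{\bbZ_\ell[\sqrt q]}\Lambda\) since \(V\) is not a \(\Lambda\)-scheme).

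There is, however, a real gap that you have mislocated. You flag ``the hard part'' as the functoriality of the two chains of identifications in \(\bbL_G^V\) and say this is ``forced by rigidity of \(\qcoh(V)\).'' Rigidity only buys the formal transport of the adjunction through \(\qcoh(V)\)-linear base change — that is already fully contained in the cited lemmas and needs no separate argument. What you \emph{do} need, and do not supply, is the identification
\[
\D(\bun_G,\Lambda)\otimes_{\Mod_\Lambda}\Mod_k \simeq \D(\bun_G,k),
\]
which you quietly assume under the phrase ``base change of \(\D(\bun_G,-)\) along \(\Lambda\to k\).'' This is exactly the content the paper's proof spends essentially all of its space on: it constructs a fully faithful functor via Barr--Beck, checks the compact generators \(i_{b\natural}\cind_K^{G_b(E)}k\) lie in the image by comparing with \(\D(G_b(E),\Lambda)\otimes_{\Mod_\Lambda}\Mod_k\simeq\D(G_b(E),k)\), and only then concludes. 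Without this (or an argument in its place) the passage from ``formal fiber of \(\bbL_G^V\)'' to ``\(\bbL_G^\varphi\) as in the hypothesis'' is unjustified. Relatedly, your parenthetical ``evident analogue for \(\Ind\coh^\qc_\nilp\)'' of \cref{lem: categorical fiber is fiber} is not obviously evident: that lemma is proved for \(\ind\perf\) using the explicit representation-theoretic generation of \(\perf(\locsys_{\widehat G})\), and a singular-support condition does not automatically commute with the base change without further argument.
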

\begin{proof}
    This follows from \cref{lem: reduction to formal fibers} and \cref{lem: categorical fiber is fiber}.
    We take \(S=\spec(\bbZ_\ell[\sqrt{q}])\) and \(Y\) to be a connected component of \(\locsys_{\widehat{G}}^\coarse\) and then varying over all connected components.
    We implicitly use that \(\D(\bun_G,\Lambda)\otimes_{\Mod_\Lambda}\Mod_k=\D(\bun_G,k)\).
    By Barr-Beck it is easy to see that there is a fully faithful inclusion \(\D(\bun_G,\Lambda)\otimes_{\Mod_\Lambda}\Mod_k\injto\D(\bun_G,k)\), and to check it is essentially surjective it suffices to check that the compact generators \(i_{b\natural}\cind_K^{G_b(E)}k\) running over \(b\in|\bun_G|\) and compact open subgroups \(K\subset G_b(E)\) lie in the essential image.
    This is true, since
    \begin{equation*}
        \begin{tikzcd}
            {\D(G_b(E),\Lambda)\otimes_{\Mod_\Lambda}\Mod_k} \arrow[rr, "i_{b\natural}\otimes_{\Mod_\Lambda}\Mod_k"] \arrow[d] &  & {\D(\bun_G,\Lambda)\otimes_{\Mod_\Lambda}\Mod_k} \arrow[d] \\
            {\D(G_b(E),k)} \arrow[rr, "i_{b\natural}"]                                                                   &  & {\D(\bun_G,k)}                                      
            \end{tikzcd}
    \end{equation*}
    commutes, where the vertical arrows are the natural arrows appearing in the Barr-Beck theorem.
    We conclude by observing that \(\D(G_b(E),\Lambda)\otimes_{\Mod_\Lambda}\Mod_k\simeq\D(G_b(E),k)\) since for example on Hecke algebras we have \(\mathcal{H}(G_b(E),\Lambda)\otimes_{\Lambda}k\cong\mathcal{H}(G_b(E),k)\).
\end{proof}
\begin{lem}
    The functor \(\mathcal{C}^\wedge_Z\to\mathcal{C}\) is fully faithful.
\end{lem}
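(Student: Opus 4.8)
The plan is to identify the functor $\mathcal{C}^\wedge_Z\to\mathcal{C}$ with the base change, along $-\otimes_{\qcoh(Y)}\mathcal{C}$, of the inclusion $g\from\qcoh(Y^\wedge_Z)\hookrightarrow\qcoh(Y)$ of the full subcategory of quasi-coherent sheaves set-theoretically supported on $Z$ (this is the model of $\qcoh(Y^\wedge_Z)$ matching the description of $\perf(\mathfrak{X}^\wedge_{\mathfrak{Z}})$ recorded in the conventions), and then to show that this base change remains fully faithful. The functor $g$ is $\qcoh(Y)$-linear, since objects supported on $Z$ form a $\otimes$-ideal; it preserves colimits, since that subcategory is closed under colimits in $\qcoh(Y)$; and it is fully faithful, being the inclusion of a full subcategory. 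Its right adjoint is the local cohomology functor $\Gamma_Z$.

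The first step is to observe that $\Gamma_Z$ is itself $\qcoh(Y)$-linear and colimit-preserving: as $Y$ is noetherian, Zariski-locally $Z$ is cut out by finitely many functions and $\Gamma_Z\simeq\Gamma_Z\oo_Y\otimes_{\oo_Y}(-)$ for the stable Koszul complex $\Gamma_Z\oo_Y$, which is a filtered colimit of perfect complexes supported on $Z$. Equivalently, this exhibits $\qcoh(Y^\wedge_Z)$ as the category of modules in $\qcoh(Y)$ over the idempotent commutative algebra $\Gamma_Z\oo_Y$ (non-unital unless $Z=Y$). Now I would base change: since $\qcoh(Y)$ is rigid, the $\qcoh(Y)$-linear adjunction $g\dashv\Gamma_Z$ — unit, counit, and triangle identities — is carried by the symmetric monoidal functor $-\otimes_{\qcoh(Y)}\mathcal{C}$ on $\Mod_{\qcoh(Y)}(\prl)$ to an adjunction $G\dashv G^R$, where $G=g\otimes_{\qcoh(Y)}\id_{\mathcal{C}}\from\mathcal{C}^\wedge_Z\to\mathcal{C}$ is the functor in the statement and $G^R=\Gamma_Z\otimes_{\qcoh(Y)}\id_{\mathcal{C}}$. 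The unit of $G\dashv G^R$ is the base change of that of $g\dashv\Gamma_Z$, which is an equivalence because $g$ is fully faithful; hence the unit of $G\dashv G^R$ is an equivalence, i.e.\ $G$ is fully faithful. Alternatively, by \cite[Theorem 4.8.4.6.]{ha} one identifies $\mathcal{C}^\wedge_Z\simeq\Mod_{\Gamma_Z\oo_Y}(\mathcal{C})$ compatibly with the forgetful functor to $\mathcal{C}$, which is fully faithful precisely because $\Gamma_Z\oo_Y$ is idempotent.

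The only substantive input is the assertion that $g\from\qcoh(Y^\wedge_Z)\to\qcoh(Y)$ is fully faithful with a $\qcoh(Y)$-linear, colimit-preserving right adjoint; once that is in place, the argument is the same base-change-of-adjunctions formalism (via rigidity of $\qcoh(Y)$) already invoked in \cref{lem: reduction to individual fibers}. The main point to get right is therefore the chosen model for $\qcoh(Y^\wedge_Z)$ together with the use of noetherianness to ensure $\Gamma_Z$ is smashing (so that $\Gamma_Z$ preserves colimits and is $\qcoh(Y)$-linear, and hence admits a base change along $-\otimes_{\qcoh(Y)}\mathcal{C}$ in $\Mod_{\qcoh(Y)}(\prl)$); everything else is formal.
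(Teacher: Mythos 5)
Your argument is correct and is precisely the expansion of the paper's one-line proof, which simply cites that $\qcoh(Y^\wedge_Z)\to\qcoh(Y)$ is fully faithful with colimit-preserving right adjoint and concludes; you have filled in the details (rigidity of $\qcoh(Y)$, base change of the adjunction, preservation of the unit equivalence) following the same formalism already used in \cref{lem: reduction to individual fibers}.
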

\begin{proof}
    Since \(\qcoh(Y^\wedge_{Z})\to\qcoh(Y)\) is fully faithful with colimit preserving right adjoint the claim follows.
\end{proof}
\begin{lem}\label{lem: formal completion stack}
    Let \(\mathfrak{Y}\) be a derived stack over a qcqs derived scheme \(Y\), let \(f\from\mathfrak{Y}\to Y\) be the structure map.
    Let \(Z\subset Y\) be a cocompact closed subscheme.
    Then the subcategory \(\ind(\perf(\mathfrak{Y}))_Z^\wedge\subset\ind(\perf(\mathfrak{Y}))\) spanned by those ind-perfect sheaves that are set-theoretically supported on \(f^{-1}(Z)\).
    In particular \(\ind(\perf(\mathfrak{Y}))_Z^\wedge=\ind(\perf(\mathfrak{Y}^\wedge_{f^{-1}(Z)}))\).
\end{lem}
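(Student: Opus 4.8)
The plan is to base change the open--closed recollement on $Y$ along the $\qcoh(Y)$-action on $\mathcal{C}\defined\ind(\perf(\mathfrak{Y}))$ and then to recognise the resulting subcategory. Write $U\defined Y\setminus Z$ for the quasi-compact open complement of the cocompact closed $Z$, let $j_0\from U\injto Y$, set $\mathfrak{Y}_U\defined f^{-1}(U)=\mathfrak{Y}\times_Y U$, and let $j\from\mathfrak{Y}_U\injto\mathfrak{Y}$ be the induced quasi-compact open immersion. Under the conventions of this paper $\qcoh(Y^\wedge_Z)$ is the full subcategory $\qcoh_Z(Y)\subset\qcoh(Y)=\ind(\perf(Y))$ of sheaves set-theoretically supported on $Z$, equivalently $\ker(j_0^*\from\qcoh(Y)\to\qcoh(U))$; the inclusion is fully faithful and colimit-preserving with colimit-preserving right adjoint $\Gamma_Z=\fib(\id\to j_{0*}j_0^*)$ (here $j_0$ is qcqs, so $j_{0*}$ preserves colimits --- this is exactly what makes the preceding lemma applicable), and $j_0^*$ exhibits $\qcoh(U)=\Mod_{j_{0*}\oo_U}(\qcoh(Y))$ as a smashing localization. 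Since $f^*$ is symmetric monoidal, base change of module categories (\cite[Theorem 4.8.4.6.]{ha}) along $-\otimes_{\qcoh(Y)}\mathcal{C}$ carries this to a smashing localization of $\mathcal{C}$, so I get a Verdier (cofiber) sequence
\[
\mathcal{C}^\wedge_Z\ \longrightarrow\ \mathcal{C}\ \longrightarrow\ \qcoh(U)\otimes_{\qcoh(Y)}\mathcal{C},
\]
with $\mathcal{C}^\wedge_Z\to\mathcal{C}$ fully faithful by the preceding lemma.

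The one genuinely geometric input I would then use is that $\qcoh(U)\otimes_{\qcoh(Y)}\mathcal{C}\simeq\ind(\perf(\mathfrak{Y}_U))$, compatibly with the restriction functor $j^*\from\ind(\perf(\mathfrak{Y}))\to\ind(\perf(\mathfrak{Y}_U))$ --- that is, $\ind\perf$-descent along the quasi-compact open immersion $j_0$. Granting it, and using that $j_*$ is fully faithful hence conservative, the sequence above identifies $\mathcal{C}^\wedge_Z$ with $\ker(j^*)=\{M\in\ind(\perf(\mathfrak{Y})):j^*M=0\}$, which is precisely the full subcategory of ind-perfect sheaves set-theoretically supported on $f^{-1}(Z)$; this gives the first assertion. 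For the ``in particular'', by the stated convention $\perf(\mathfrak{Y}^\wedge_{f^{-1}(Z)})=\perf_{f^{-1}(Z)}(\mathfrak{Y})$ is the thick subcategory of perfect complexes supported on $f^{-1}(Z)$, so $\ind(\perf(\mathfrak{Y}^\wedge_{f^{-1}(Z)}))$ sits fully faithfully in $\ind(\perf(\mathfrak{Y}))$ as the localizing subcategory it generates, visibly inside the supported subcategory; for the reverse inclusion I would use that $Z=\supp(K)$ for some $K\in\perf(Y)$ (Thomason, since $Z$ is cocompact closed in the qcqs scheme $Y$), whence every supported $M$ satisfies $M\simeq\Gamma_{f^{-1}(Z)}\oo_{\mathfrak{Y}}\otimes M$ with $\Gamma_{f^{-1}(Z)}\oo_{\mathfrak{Y}}=f^*\Gamma_Z\oo_Y$ a filtered colimit of perfect complexes supported on $f^{-1}(Z)$, so tensoring against a perfect presentation of $M$ exhibits $M$ as a filtered colimit of objects of $\perf_{f^{-1}(Z)}(\mathfrak{Y})$; this is the Thomason--Neeman localization theorem.

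The step I expect to be the main obstacle is the geometric base change just invoked --- that $\ind\perf$ satisfies descent along the quasi-compact open immersion $j_0$, so that $\qcoh(U)\otimes_{\qcoh(Y)}\mathcal{C}$ really is $\ind(\perf(\mathfrak{Y}_U))$ with the restriction functor. For $\mathfrak{Y}$ a qcqs scheme this is Thomason--Trobaugh; for a general derived stack $\mathfrak{Y}$ one reduces to that case over an atlas, using descent for $\qcoh$ and the compatibility of $\perf$ with the cover. Everything else --- the recollement on $Y$, its preservation under $-\otimes_{\qcoh(Y)}\mathcal{C}$, and the identification of modules over the local-cohomology idempotent with supported objects --- is formal.
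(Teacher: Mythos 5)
Your proposal is correct and arrives at the same place as the paper, but it approaches the recollement from the open side where the paper works with the closed side directly. The paper first reduces to \(Y\) affine using \cref{lem: fppf descent sheaves of categories}, then uses the adjunction \(j_!\dashv j^*\) between \(\qcoh(Y^\wedge_Z)\) and \(\qcoh(Y)\) together with \(j_!j^*\cong -\otimes j_!\oo\), so that for any \(\qcoh(Y)\)-linear \(\mathcal{C}\) the subcategory \(\mathcal{C}^\wedge_Z\) consists precisely of those \(M\) with \(M\cong M\otimes j_!\oo\). Writing \(Y=\spec R\), \(Z=V(f_1,\dots,f_r)\), the explicit formula \(j_!\oo=\colim_n R/^L(f_1^n,\dots,f_r^n)\) shows this condition is exactly local nilpotence of the \(f_i\)'s, i.e.\ set-theoretic support on \(f^{-1}(Z)\), and the finite stages \(R/^L(f_1^n,\dots,f_r^n)\) are (pullbacks of) perfect complexes supported there, witnessing that supported perfects generate. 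Your version passes instead to the complementary quasi-compact open \(U\), base changes the recollement, and identifies \(\mathcal{C}^\wedge_Z\) as the kernel of the Verdier quotient; your ``in particular'' argument is the same Thomason--Neeman generation statement the paper asserts in one line.

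The step you flag as the main obstacle --- the identification \(\qcoh(U)\otimes_{\qcoh(Y)}\ind\perf(\mathfrak{Y})\simeq\ind\perf(\mathfrak{Y}_U)\) compatibly with \(j^*\) --- is in fact avoidable. Since \(\qcoh(Y)\to\qcoh(U)\) is the smashing localization at the idempotent algebra \(j_{0*}\oo_U\), the base-changed quotient is, by \cite[Theorem 4.8.4.6.]{ha}, automatically \(\Mod_{j_{0*}\oo_U}(\mathcal{C})\) with quotient functor \(M\mapsto M\otimes j_{0*}\oo_U\); its kernel is \(\{M:M\otimes j_{0*}\oo_U=0\}=\{M:M\cong M\otimes j_!\oo\}\), which is the supported subcategory, and you never need to know what the quotient category ``is'' geometrically. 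Once that detour is excised, your argument collapses to the paper's, which works with the idempotent \(j_!\oo\) from the start. Also note that \(Z\) cocompact does not guarantee \(Z\) sits inside a single affine open, so the paper's ``localise around \(Z\) to an open affine'' should be read as passing to an affine fppf cover of \(Y\) via \cref{lem: fppf descent sheaves of categories}, not as finding one affine neighbourhood.
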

\begin{proof}
    We may localise around \(Z\) to an open affine using \cref{lem: fppf descent sheaves of categories}, and thus we can assume that \(Y\) is affine.
    We have a pair of colimit preserving adjoint functors 
    \begin{equation*}
        j_!\from\qcoh(Y^\wedge_y)\rightleftarrows\qcoh(Y):j^*
    \end{equation*}
    and we have that \(j_!j^*\cong -\otimes j_!\oo\).
    It follows that for an arbitrary \(\qcoh(Y)\)-linear presentable stable category \(\mathcal{C}\) that \(\mathcal{C}^\wedge_y\) is the full subcategory spanned by the objects such that \(M\cong M\otimes j_!\oo\).
    Writing \(Y=\spec(R)\) and \(Z=V(f_1,\dots,f_r)\) we can see that \(j_!\oo=\colim_n R/^L(f_1^n,\dots,f_r^n)\).
    Taking \(\mathcal{C}=\ind(\perf(\mathfrak{Y}))\) we thus obtain that each object in \(\ind(\perf(\mathfrak{Y}))^\wedge_Z\) is set-theoretically supported on \(f^{-1}(Z)\) and conversely perfect complexes set-theoretically supported on \(f^{-1}(Z)\) lie in this category and generate it.
\end{proof}
\begin{cor}
    Let \(k\) be a \(\bbZ_\ell[\sqrt{q}]\)-field and let \(\varphi\) be a semisimple \(k\)-valued parameter.
    We identifiy this with a closed point of \(k\times_{\bbZ_\ell[\sqrt{q}]}\locsys_{\widehat{G}}^\coarse\) and we write \(p\from\locsys_{\widehat{G}}\to\locsys_{\widehat{G}}^\coarse\) for the canonical map.
    The formal fiber \(\D(\bun_G,k)^\wedge_\varphi\) admits a canonical action of perfect complexes on the formal completion \(\perf((k\times_{\bbZ_\ell[\sqrt{q}]}\locsys_{\widehat{G}})^{\wedge}_{p^{-1}(\varphi)})\).
\end{cor}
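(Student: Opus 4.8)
The plan is to descend the Fargues--Scholze spectral action on $\D(\bun_G,k)$ along the coarse moduli map and then to commute the formal completion at $\varphi$ past it; the essential input is \cref{lem: formal completion stack}. Write $\mathcal{X}\defined k\times_{\bbZ_\ell[\sqrt q]}\locsys_{\widehat{G}}$ and $\mathcal{X}^\coarse\defined k\times_{\bbZ_\ell[\sqrt q]}\locsys_{\widehat{G}}^\coarse$, so that $p\from\mathcal{X}\to\mathcal{X}^\coarse$ is the base change of the canonical map $\locsys_{\widehat{G}}\to\locsys_{\widehat{G}}^\coarse$.

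First I would recall that the spectral action of \cite[Chapter X]{geometrization} --- available over $\bbZ_\ell$ when $G=\GL_n$, see \cref{rem: remove sqrt q} --- makes $\D(\bun_G,k)$ a module over $\Ind\perf(\mathcal{X})$, and that restricting scalars along the symmetric monoidal pullback $\Ind\perf(\mathcal{X}^\coarse)\to\Ind\perf(\mathcal{X})$ recovers the $\qcoh(\mathcal{X}^\coarse)$-module structure used in \cref{def: formal completion} to define $\D(\bun_G,k)^\wedge_\varphi$. Here $\varphi$ admits an affine noetherian open neighbourhood $Y_0\subseteq\mathcal{X}^\coarse$ --- for instance a connected component of $k\times_{\bbZ_\ell[\sqrt q]}\locsys_{\widehat{G},P^e}^\coarse$ for a compact open $P^e\subseteq W_E$ small enough that $\varphi|_{P^e}=1$ --- on which $\qcoh=\Ind\perf$. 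Since the completion at the single point $\varphi$ only sees $Y_0$, \cref{lem: fppf descent sheaves of categories} lets me replace $\mathcal{X}^\coarse$ by $Y_0$ and $\D(\bun_G,k)$ by its localization $\mathcal{C}_0\defined\qcoh(Y_0)\otimes_{\qcoh(\mathcal{X}^\coarse)}\D(\bun_G,k)$; putting $\mathcal{X}_0\defined p^{-1}(Y_0)$, a qcqs (indeed finite type) stack over $Y_0$, the category $\mathcal{C}_0$ is a module over $\Ind\perf(\mathcal{X}_0)=\qcoh(Y_0)\otimes_{\qcoh(\mathcal{X}^\coarse)}\Ind\perf(\mathcal{X})$.

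Writing $\widehat{Y}$ for the formal completion of $Y_0$ at $\varphi$ and $\widehat{\mathcal{X}}$ for the formal completion of $\mathcal{X}$ at $p^{-1}(\varphi)$ --- which equals that of $\mathcal{X}_0$, as $\mathcal{X}_0$ is open in $\mathcal{X}$ --- I then compute, using associativity of the relative tensor product,
\begin{align*}
\D(\bun_G,k)^\wedge_\varphi
&=\qcoh(\widehat{Y})\otimes_{\qcoh(Y_0)}\mathcal{C}_0
=\bigl(\qcoh(\widehat{Y})\otimes_{\qcoh(Y_0)}\Ind\perf(\mathcal{X}_0)\bigr)\otimes_{\Ind\perf(\mathcal{X}_0)}\mathcal{C}_0\\
&=\Ind\perf(\widehat{\mathcal{X}})\otimes_{\Ind\perf(\mathcal{X}_0)}\mathcal{C}_0,
\end{align*}
the last equality being \cref{lem: formal completion stack} applied to $\mathcal{X}_0\to Y_0$ with closed subscheme $\{\varphi\}$. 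The right-hand side is visibly a module over $\Ind\perf(\widehat{\mathcal{X}})$; equivalently, $\perf(\widehat{\mathcal{X}})=\perf\bigl((k\times_{\bbZ_\ell[\sqrt q]}\locsys_{\widehat{G}})^\wedge_{p^{-1}(\varphi)}\bigr)$ acts on $\D(\bun_G,k)^\wedge_\varphi$, and this action is canonical since it is induced from the canonical spectral action.

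I expect no serious obstacle: the only non-formal ingredient, \cref{lem: formal completion stack}, is already established, and the remainder is bookkeeping --- chiefly the passage to the affine noetherian neighbourhood $Y_0$, which is forced by the fact that $\mathcal{X}^\coarse$ is not quasi-compact but is harmless because completing at the point $\varphi$ is local. The one further point I would verify is that the $\qcoh(\mathcal{X}^\coarse)$-action fixed in \cref{def: formal completion} is indeed the restriction, along $p$, of the spectral action; this is immediate from the constructions.
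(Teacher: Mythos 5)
Your argument is correct and is essentially the one the paper leaves implicit: the corollary is stated right after \cref{lem: formal completion stack} precisely because, after restricting the spectral action along $\qcoh(\locsys_{\widehat{G}}^\coarse)\to\Ind\perf(\locsys_{\widehat{G}})$ and localizing to an affine noetherian neighbourhood of $\varphi$, that lemma together with associativity of the relative tensor product identifies $\D(\bun_G,k)^\wedge_\varphi$ with $\Ind\perf(\widehat{\mathcal{X}})\otimes_{\Ind\perf(\mathcal{X}_0)}\mathcal{C}_0$, exhibiting the asserted module structure. Your bookkeeping about the non-quasi-compactness of $\locsys_{\widehat{G}}^\coarse$ and the compatibility of the two module structures is exactly the care one needs, and nothing is missing.
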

In the affine case, we obtain a very explicit description, that also works outside the noetherian eventually coconnective case.
\begin{lem}\label{lem: explicit description formal completion affine case}
    Let \(\mathcal{C}\) be an \(\perf(R)\)-linear small stable idempotent complete category.
    Let \(Z\subset\spec(R)\) be a cocompact closed subset.
    Then \(\mathcal{C}\otimes_{\perf(\spec(R))}\perf(\spec(R)^\wedge_Z)\) is the full subcategory of \(\mathcal{C}\) spanned by those objects \(c\in\mathcal{C}\) such that \(R\to\End(c)\) makes \(\End(c)\) a quasi-coherent sheaf on \(\spec(R)\) set-theoretically supported on \(Z\).
\end{lem}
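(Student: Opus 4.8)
The plan is to reduce the statement to the tensor-product description of formal completions and then unwind what it means for an object of $\mathcal{C}$ to lie in the image of $\mathcal{C}\otimes_{\perf(\spec R)}\perf(\spec(R)^\wedge_Z)\to\mathcal{C}$. First I would recall from the proof of \cref{lem: formal completion stack} the key computation: writing $Z=V(f_1,\dots,f_r)$, the structure sheaf of the formal completion, viewed as an ind-object, is $j_!\oo=\colim_n R/^{\mathbb{L}}(f_1^n,\dots,f_r^n)$, and $j_!j^*\simeq -\otimes_R j_!\oo$. Passing to the idempotent-complete small category $\mathcal{C}$ and its ind-completion $\ind(\mathcal{C})$, the same formalism shows that $\ind(\mathcal{C})\otimes_{\qcoh(\spec R)}\qcoh(\spec(R)^\wedge_Z)$ is the full subcategory of $\ind(\mathcal{C})$ spanned by those $M$ with $M\simeq M\otimes_R j_!\oo$; restricting to compact objects gives the corresponding full subcategory of $\mathcal{C}$.

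Next I would translate the condition $c\simeq c\otimes_R j_!\oo$ into the Koszul/support condition on $\End(c)$. Since $j_!\oo=\colim_n R/^{\mathbb{L}}(f_1^n,\dots,f_r^n)$, the map $c\to c\otimes_R j_!\oo$ is an equivalence if and only if $\colim_n\, \fib\!\big(c\to c\otimes_R R/^{\mathbb{L}}(f_1^n,\dots,f_r^n)\big)=0$, i.e. each $f_i$ acts \emph{locally nilpotently} on $c$ in the sense that multiplication by $f_i^n$ on $c$ becomes null in the colimit. Because $R$ acts on $c$ through the central map $R\to\End(c)$ (using that the action is $\perf(R)$-linear), $f_i^n$ acting nilpotently on $c$ is equivalent to the image of $f_i$ in the (ordinary, $\pi_0$) commutative ring $\pi_0\End(c)$ being nilpotent — equivalently, the corresponding element of $\End(c)$, viewed as a quasi-coherent sheaf on $\spec R$ via $R\to\pi_0\End(c)$, is set-theoretically supported on $V(f_1,\dots,f_r)=Z$. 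Conversely, if $\End(c)$ is supported on $Z$ then some power of each $f_i$ annihilates $\pi_0\End(c)$, hence annihilates $\id_c$, so $c\otimes_R R/^{\mathbb{L}}(f_i^n)\simeq c\oplus c[1]$ and the colimit identification $c\simeq c\otimes_R j_!\oo$ follows by a standard computation with the Koszul complex.

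The main obstacle I anticipate is making the passage ``$f_i$ acts nilpotently on $c$ $\iff$ $f_i$ is nilpotent in $\pi_0\End(c)$ $\iff$ $\End(c)$ is supported on $Z$'' fully precise at the level of the $\infty$-category, in particular keeping track of the difference between \emph{locally nilpotent} action (the colimit $\colim_n \fib(f_i^n\colon c\to c)$ vanishes) and \emph{nilpotent} action, and checking that for a \emph{single} compact object $c$ these coincide — this uses that $\id_c\in\pi_0\End(c)$ is a single element, so local nilpotence of $f_i$ forces $f_i^n\cdot\id_c=0$ for some finite $n$. Once this is in hand, the quasi-coherence statement is automatic: $\End(c)$ is always a module over $R$ through the central action, and the support condition is exactly $V(\operatorname{Ann}_R(\pi_0\End(c)))\subseteq Z$. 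I would also note that, unlike \cref{lem: formal completion stack}, no noetherian or eventual-coconnectivity hypothesis is needed here because we never leave the small category $\mathcal{C}$ and only use that $Z$ is cocompact (so cut out by finitely many $f_i$, up to refining the cover), together with \cref{lem: fppf descent sheaves of categories} to localise to an affine chart if one wishes to reduce to $Z$ cut out by a finitely generated ideal.
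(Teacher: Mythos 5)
Your proposal is correct and follows essentially the same route as the paper: both identify \(\mathcal{C}\otimes_{\perf(\spec R)}\perf(\spec(R)^\wedge_Z)\) as the full subcategory of \(\mathcal{C}\) on which some power of each \(f_i\) acts by zero, and then observe that this is exactly the condition that \(\End(c)\) be set-theoretically supported on \(Z\). The only cosmetic difference is that you pass through the ind-completion and the idempotent object \(j_!\oo=\colim_n R/^{\mathbb{L}}(f_1^n,\dots,f_r^n)\), whereas the paper tensors the Verdier sequence \(\perf(X^\wedge_Z)\to\perf(X)\to\perf(X\setminus Z)\) with \(\mathcal{C}\); your explicit check that local nilpotence equals nilpotence for a single compact object is a worthwhile detail the paper leaves implicit.
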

\begin{proof}
    Let \(X\defined\spec(R)\).
    From the Verdier sequence 
    \begin{equation*}
        \perf(X^\wedge_Z)\to\perf(X)\to\perf(X\setminus Z),
    \end{equation*}
    we obtain a Verdier sequence 
    \begin{equation*}
        \perf(X^\wedge_Z)\otimes_{\perf(X)}\mathcal{C}\to\perf(X)\otimes_{\perf(X)}\mathcal{C}\to\perf(X\setminus Z)\otimes_{\perf(X)}\mathcal{C}.
    \end{equation*}
    Choosing some ideal \(I=(f_1,\dots,f_r)\subset\pi_0R\) such that \(Z=V(I)\), we may describe \(\perf(X^\wedge_Z)\) as the full subcategory of \(\perf(X)\) where for all \(f_i\) there exists some power \(f_i^n\) that acts by 0 (depending on the object).
    Thus the same description holds for 
    \[\perf(X^\wedge_Z)\otimes_{\perf(X)}\mathcal{C}\to\perf(X)\otimes_{\perf(X)}\mathcal{C}=\mathcal{C}.\]
    This is precisely the condition that \(\End(c)\) becomes a quasi-coherent sheaf set-theoretically supported on \(Z\).
\end{proof}
\begin{rem}
    Assume we have a compactly generated \(\Mod_R\)-linear stable presentable category \(\mathcal{C}\), such that the induced action of \(\perf(R)\) preserves the subcategory \(\mathcal{C}^\omega\) of compact objects.
    Then \(\mathcal{C}\otimes_{\qcoh(\spec(R))}\qcoh(\spec(R)^\wedge_Z)\) is compactly generated by 
    \[\mathcal{C}^\omega\otimes_{\perf(\spec(R))}\perf(\spec(R)^\wedge_Z),\]
    thus by \cref{lem: explicit description formal completion affine case} we can get an explicit description of the compact objects.
\end{rem}
\begin{lem}\label{lem: compact objects in localization compact and ULA}
    Let \(\varphi\) be a semisimple parameter valued in a \(\bbZ_{\ell}[\sqrt{q}]\)-field \(k\).
    The category \(\D(\bun_G,k)^\wedge_{\varphi}\) is compactly generated.
    The compact objects are compact and ULA in \(\D(\bun_G,k)\).
\end{lem}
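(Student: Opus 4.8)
The plan is to leverage the general formalism developed earlier in this section, applied to $\mathcal{C} = \D(\bun_G,k)$ with its $\qcoh(k\times_{\bbZ_\ell[\sqrt q]}\locsys_{\widehat G}^\coarse)$-action, together with the known structure of $\D(\bun_G,k)$ from \cite{geometrization}. First I would recall that $\D(\bun_G,k)$ is compactly generated, with compact generators the objects $i_{b\natural}\cind_K^{G_b(E)}k$ for $b\in B(G)$ and $K\subset G_b(E)$ compact open, and that the $\Ind\perf(\locsys_{\widehat G}^\coarse)$-action preserves the subcategory of compact objects (this is part of the construction of the spectral action in \cite[Chapter X]{geometrization}, and is where the eventual-coconnectivity/noetherianity of the connected components of $\locsys_{\widehat G}^\coarse$ enters). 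Writing $\locsys_{\widehat G}^\coarse$ as a disjoint union of affine schemes $\spec R$, the parameter $\varphi$ lies in one such component; set $X=\spec R$ and let $Z=\{\varphi\}$, a cocompact closed subset. Then $\D(\bun_G,k)^\wedge_\varphi = \qcoh(X^\wedge_Z)\otimes_{\qcoh(X)}\D(\bun_G,k)$ by \cref{def: formal completion}.

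For compact generation: since the $\perf(R)$-action on $\D(\bun_G,k)$ preserves compact objects, the remark following \cref{lem: explicit description formal completion affine case} applies verbatim — the formal completion is compactly generated by $\D(\bun_G,k)^\omega\otimes_{\perf(X)}\perf(X^\wedge_Z)$, and by \cref{lem: explicit description formal completion affine case} these are precisely the compact objects $c$ of $\D(\bun_G,k)$ on which each $f_i$ (for a chosen generating set $(f_1,\dots,f_r)$ of the ideal cutting out $\varphi$) acts nilpotently. Concretely one can take the images of the standard compact generators $i_{b\natural}\cind_K^{G_b(E)}k$ under the idempotent-completion of the localisation functor, or equivalently their $\varphi$-generalized-eigenspace summands; these are a set of compact generators of $\D(\bun_G,k)^\wedge_\varphi$, establishing the first claim.

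For the second claim — that the compact objects of $\D(\bun_G,k)^\wedge_\varphi$ are compact and ULA when viewed in $\D(\bun_G,k)$ — the point is that $\D(\bun_G,k)^\wedge_\varphi \hookrightarrow \D(\bun_G,k)$ is fully faithful (the lemma immediately preceding, since $\qcoh(X^\wedge_Z)\to\qcoh(X)$ is fully faithful with colimit-preserving right adjoint). A compact object $c$ of $\D(\bun_G,k)^\wedge_\varphi$ is a retract of a finite colimit of the generators described above, hence of a finite colimit of compact objects of $\D(\bun_G,k)$; since compactness and the ULA property are both stable under finite colimits and retracts (ULA-ness by \cite[Chapter VII]{geometrization}), and the standard generators $i_{b\natural}\cind_K^{G_b(E)}k$ are compact and ULA in $\D(\bun_G,k)$, it follows that $c$ is compact and ULA there. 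Alternatively, one argues that $c$ is a direct summand of $c'$ where $c' \in \D(\bun_G,k)^\omega$: indeed $c$ is the image of an idempotent on some $c'\in\D(\bun_G,k)^\omega$, and the inclusion $\D(\bun_G,k)^\wedge_\varphi\subset\D(\bun_G,k)$ realises $c$ as the corresponding summand, so compactness and ULA-ness descend to $c$ from $c'$.

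The main obstacle is verifying carefully that the $\Ind\perf(\locsys_{\widehat G}^\coarse)$-action on $\D(\bun_G,k)$ preserves compact objects — equivalently, that the formal-completion functor $\D(\bun_G,k)\to\D(\bun_G,k)^\wedge_\varphi$ preserves compactness — so that the remark after \cref{lem: explicit description formal completion affine case} is applicable and the generators of the completion are genuinely obtained from compact ULA generators upstairs. This is essentially built into the construction of the spectral action (the Hecke operators $T_V$ preserve compact and ULA objects, \cite[Chapter IX]{geometrization}), but one should point to the precise statement; everything else is formal manipulation of the results already recorded in this section.
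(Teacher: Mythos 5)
Your argument for compact generation, and for compactness of the compact objects of $\D(\bun_G,k)^\wedge_\varphi$ when regarded in $\D(\bun_G,k)$, matches what the paper does (the paper dismisses this with ``for formal reasons''), and you correctly identify the input needed, namely that the $\perf(\locsys_{\widehat{G}}^\coarse)$-action preserves compact objects.

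The ULA argument, however, has a genuine gap: the standard compact generators $i_{b\natural}\cind_{K}^{G_b(E)}k$ of $\D(\bun_G,k)$ are \emph{not} ULA. An object $A\in\D(\bun_G,k)$ is ULA if and only if each restriction $i_b^*A$ is admissible, i.e.\ $(i_b^*A)^{K'}$ is a perfect complex of $k$-modules for every compact open pro-$p$ subgroup $K'\subset G_b(E)$; but $(\cind_K^{G_b(E)}k)^{K'}=C_c^\infty(K\backslash G_b(E)/K',k)$ is infinite-dimensional. So compact objects of $\D(\bun_G,k)$ are in general not ULA, and both of your arguments --- ``retract of a finite colimit of the standard generators'' and ``direct summand of a compact object of $\D(\bun_G,k)$'' --- would, if they went through, prove the false statement that \emph{every} compact object of $\D(\bun_G,k)$ is ULA. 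The closure of ULA under retracts and cofibers is fine, but it cannot help if the objects you start from are not ULA; nor can the closure argument see the hypothesis that the object is supported at $\varphi$, which is exactly what your argument fails to use.

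That support hypothesis is precisely what the paper's proof exploits, and it does so in a non-formal way. Given a compact $\pi\in\D(G_b(E),k)^\wedge_\varphi$, each $H^i(\pi^K)$ is finitely generated over the Hecke algebra $\mathcal{H}(G_b(E),K)$; by the finiteness results for the Bernstein center and the Fargues--Scholze map to the spectral center (the citations in the paper's proof), it is then finitely generated over $\speccenter(G)$. A finitely generated $\speccenter(G)$-module that is set-theoretically supported at the single closed point $\varphi$ has finite length, hence is finite-dimensional over $k$. Therefore $\pi^K$ is a perfect complex of $k$-modules for every $K$, which is the admissibility/ULA criterion. It is the combination ``finitely generated over the center'' plus ``supported at a closed point'' that yields finite $k$-dimension; neither statement alone is formal, and there is no purely category-theoretic replacement for this step.
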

\begin{proof}
    For formal reasons \(\D(\bun_G,k)^\wedge_{\varphi}\) is compactly generated, and the compact objects are compact in \(\D(\bun_G,k)\).
    These compact objects are compact in \(\D(\bun_G)\) for formal reasons. 
    Let \(\pi\in\D(\bun_G,k)^\wedge_{\varphi}\) be compact and we claim that it is ULA.
    For this let \(\pi\in\D(G_b(E),k)^\wedge_{\varphi}\) be a compact object.
    We know that then \(H^i(\pi)\neq 0\) only for finitely many \(i\) and each \(H^i(\pi^K)\) is finitely generated over some Hecke algebra \(\mathcal{H}(G_b(E),K)\).
    We know by \cite[Corollary 3.5.]{finiteness-p-adic} and \cite[Theorem 5.2.1.]{categorical-fargues-tori} that the \(H^i(\pi^K)\) are finitely generated over \(\speccenter(G_b)\).
    By \cite[Corollary 2.4.]{finiteness-p-adic} and \cite[Theorem IX.7.2.]{geometrization} it is also finite over \(\speccenter(G)\).
    By definition, considering \(H^i(\pi^K)\) as a sheaf on \(\spec(\speccenter(G))\) is set-theoretically supported \(\varphi\), together with the finiteness we see that as a \(k\)-module finitely generated.
    We conclude that \(\pi^K\) is a perfect complex of \(k\)-modules
    and thus \(\pi\) is ULA.
\end{proof}
\begin{cor}\label{cor: localizing D(Bun_G) at a parameter}
    Let \(\varphi\) be a semisimple parameter valued in an algebraically closed \(\bbZ_{\ell}[\sqrt{q}]\)-field \(k\).
    The category \(\D(\bun_G,k)^\wedge_{\varphi}\) is the full subcategory of \(\D(\bun_G,k)\) spanned by those objects such that any irreducible subquotient of the cohomology sheaves has \(L\)-parameter given by \(\varphi\).
\end{cor}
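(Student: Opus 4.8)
The plan is to identify \(\D(\bun_G,k)^\wedge_\varphi\) with the full subcategory of objects that are ``\(\mathfrak{m}_\varphi\)-torsion'' for the \(\speccenter(G)\)-action, and then translate this torsion condition into the asserted condition on Fargues--Scholze parameters of the irreducible subquotients of the cohomology sheaves.

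\emph{Reduction to a torsion condition.} Since \(\varphi\) is semisimple it is a closed point of \(\locsys_{\widehat{G}}^\coarse\times_{\bbZ_\ell[\sqrt q]}k\), with residue field \(k\) as \(k\) is algebraically closed; it lies in one connected component, contained in some \(\locsys_{\widehat{G},P^e}^\coarse\times_{\bbZ_\ell[\sqrt q]}k=\spec R\) with \(R\) classical Noetherian, so we may replace \(\speccenter(G)\) by \(R\) and \(\D(\bun_G,k)\) by its base change along \(\speccenter(G)\to R\). Writing \(\mathfrak{m}_\varphi=(f_1,\dots,f_r)\subset R\) for the corresponding maximal ideal, so that \(\{\varphi\}=V(\mathfrak{m}_\varphi)\) is cocompact closed, I would invoke the description obtained in the proof of \cref{lem: formal completion stack}: for any \(\qcoh(\spec R)\)-linear presentable stable category \(\mathcal{C}\), the (full) subcategory \(\mathcal{C}^\wedge_\varphi\subset\mathcal{C}\) consists of the objects \(M\) for which \(M\otimes_R(\colim_n R/^L(f_1^n,\dots,f_r^n))\to M\) is an equivalence. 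Since \(\colim_n R/^L(f_1^n,\dots,f_r^n)\) is a model for \(\RGamma_{\{\varphi\}}(R)\), this says exactly that \(M\) restricts to zero on \(\spec R\setminus\{\varphi\}\). Applying this to \(\mathcal{C}=\D(\bun_G,k)\), and using that localisation of \(R\)-modules is \(t\)-exact together with the joint conservativity of the functors \(i_b^*\) for \(b\in B(G)\), I would conclude that \(A\in\D(\bun_G,k)^\wedge_\varphi\) if and only if, for all \(b\) and \(i\), the smooth \(G_b(E)\)-representation \(H^i(i_b^*A)\) (equivalently \(H^i(i_b^!A)\); the renormalised functors differ only by shifts and twists) is \(\mathfrak{m}_\varphi\)-power-torsion as an \(\speccenter(G)\)-module.

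\emph{From torsion to parameters.} By \cite[Theorem VIII.5.1.]{geometrization} we have \(\speccenter(G)=\oo(\locsys_{\widehat{G}}^\coarse)\), and by \cite[Theorem IX.7.2.]{geometrization} the \(\speccenter(G)\)-action on an irreducible \(\sigma\in\D(G_b(E),k)\) is through the character of \(\speccenter(G)\) given by evaluation at the semisimple Fargues--Scholze parameter \(\varphi_\sigma\) (a character, as \(\End(\sigma)=k\)); hence \(\sigma\) is \(\mathfrak{m}_\varphi\)-torsion over \(\speccenter(G)\) iff \(\varphi_\sigma=\varphi\). One implication is then immediate: if \(H^i(i_b^*A)\) is \(\mathfrak{m}_\varphi\)-torsion then so is every subquotient, so every irreducible subquotient has parameter \(\varphi\). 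For the converse I would write \(H^i(i_b^*A)=\colim_\alpha\pi_\alpha\) as the filtered colimit of its finitely generated smooth subrepresentations; each \(\pi_\alpha\) still has all of its irreducible subquotients of parameter \(\varphi\), and by the finiteness results of \cite[Corollary 2.4.]{finiteness-p-adic} and \cite[Theorem IX.7.2.]{geometrization} (as used in \cref{lem: compact objects in localization compact and ULA}) it is finite over \(\speccenter(G)\). A finite module over the Noetherian, hence Jacobson, ring \(R\) all of whose simple subquotients are supported at the closed point \(\varphi\) must have support equal to \(\{\varphi\}\) --- a prime \(\mathfrak{p}\neq\mathfrak{m}_\varphi\) in its support would, upon localising at a minimal such prime, yield a finite-length module admitting a simple subquotient supported at a maximal ideal other than \(\mathfrak{m}_\varphi\) --- and is therefore \(\mathfrak{m}_\varphi\)-power-torsion; thus \(H^i(i_b^*A)\) is \(\mathfrak{m}_\varphi\)-torsion. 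Combining the two implications with the previous paragraph gives the corollary.

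\emph{Main obstacle.} The delicate point is the converse in the last paragraph: promoting the representation-theoretic condition ``all irreducible subquotients have parameter \(\varphi\)'' to the scheme-theoretic torsion condition. This genuinely uses that smooth representations are filtered colimits of finitely generated subrepresentations and that the latter are finite modules over the finite-type (in particular Jacobson) spectral centre \(\speccenter(G)\) --- over a local ring the analogous implication already fails. As a consistency check, restricting to compact objects recovers \cref{lem: explicit description formal completion affine case} and \cref{lem: compact objects in localization compact and ULA}: a compact \(c\) with \(\End(c)\) set-theoretically supported at \(\varphi\), equivalently with \(\mathfrak{m}_\varphi^N c=0\) for some \(N\), is exactly one whose cohomology sheaves \(H^i(i_b^*c)\) are killed by \(\mathfrak{m}_\varphi^N\) and hence have all irreducible subquotients of parameter \(\varphi\).
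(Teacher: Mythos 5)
Your approach differs from the paper's: the paper reduces to compact objects, uses \cref{lem: compact objects in localization compact and ULA} to obtain a filtration with irreducible graded pieces concentrated in single degrees on single strata, and argues via closure under extensions. You instead go directly through the characterization of \(\mathcal{C}^\wedge_\varphi\) as the set-theoretic-support-at-\(\varphi\) (equivalently \(\mathfrak{m}_\varphi\)-torsion) subcategory, transport this through the \(t\)-exact localizations \((-)[1/f]\) and the jointly conservative \(i_b^*\), and then match torsion conditions on the level of cohomology sheaves. This is a legitimate alternative; it has the virtue of handling arbitrary objects uniformly rather than reducing to compacts, and makes the relationship to \cref{lem: formal completion stack} and \cref{lem: explicit description formal completion affine case} transparent.

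There is, however, a genuine gap in the converse. You assert that each finitely generated smooth subrepresentation \(\pi_\alpha\subset H^i(i_b^*A)\) ``is finite over \(\speccenter(G)\)'' and then run a commutative algebra argument. What the finiteness results actually give (and what is used in \cref{lem: compact objects in localization compact and ULA}) is that \(\pi_\alpha^K\) is finite over \(\speccenter(G)\) for each compact open \(K\); \(\pi_\alpha\) itself is of course not. Once you pass to \(\pi_\alpha^K\), the hypothesis --- that irreducible \(G_b(E)\)-subquotients of \(\pi_\alpha\) have parameter \(\varphi\) --- no longer directly controls the simple \(\speccenter(G)\)-module subquotients of \(\pi_\alpha^K\), since \((-)^K\) is only left exact and simple \(R\)-subquotients need not come from \(G_b(E)\)-subquotients. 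To fix it: let \(\mathfrak{m}'\neq\mathfrak{m}_\varphi\) be a maximal ideal in \(\supp_R(\pi_\alpha^K)\) (one exists by the Jacobson property if that support is not \(\{\mathfrak{m}_\varphi\}\)); by Nakayama, \(\pi_\alpha^K/\mathfrak{m}'\pi_\alpha^K\neq 0\), hence \(\pi_\alpha/\mathfrak{m}'\pi_\alpha\neq 0\) is a finitely generated nonzero smooth representation on which \(\speccenter(G)\) acts through \(\speccenter(G)/\mathfrak{m}'\); any irreducible quotient of it is then an irreducible subquotient of \(\pi_\alpha\) with \(\speccenter(G)\) acting through \(\mathfrak{m}'\), i.e.\ with parameter different from \(\varphi\) --- contradiction. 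With this repair your argument goes through.
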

\begin{proof}
    Since \(\D(\bun_G,k)^\wedge_{\varphi}\) is compactly generated, so it suffices to check this claim on compact objects.
    By \cref{lem: compact objects in localization compact and ULA} we may find a filtration such that each quotient lives on a single stratum \(\bun_G^b\), is concentrated in a single cohomological degree and is irreducible as a \(G_b(E)\)-representation.
    For such an irreducible \(G_b(E)\)-representation \(\pi\), the condition that \(\speccenter(G)\to\End(\pi)\) is set-theoretically supported on \(\varphi\) is equivalent to \(\pi\) having \(L\)-parameter given by \(\varphi\).
    This implies that the conditions are necessary.
    They are also sufficient, as \((\D(\bun_G,k)^\wedge_{\varphi})^\omega\) is closed under extensions, so we may reduce complexes concentrated in a single degree.
\end{proof}
We can now formulate a general criterion for when \(\mathbb{L}_G\) is an equivalence.
\begin{lem}\label{lem: general equivalence criterion}
    Let \(\Lambda\) be a \(\bbZ_{\ell}[\sqrt{q},\mu_{p^\infty}]\)-algebra.
    Let \(\psi\from U(E)\to \Lambda^\times\) be a non-degenerate character, and \(\mathcal{W}=i_{1!}\cind_{U(E)}^{G(E)}\psi\) the Gelfrand-Graev representation constructed from the Whittaker datum \((U,\psi)\).
    Let \(p\from\locsys_{\widehat{G}}\to\locsys_{\widehat{G}}^\coarse\) be the canonical map.
    Let \(V\subset\locsys_{\widehat{G}}^\coarse\) be a be an open subscheme, such that \(\nilp|_{p^{-1}(V)}=\{0\}\) and for any algebraically closed field \(k\) over \(\Lambda\) and any semisimple \(L\)-parameter \(\varphi\in V(k)\) the stabilizer \(S_{\varphi}\) is a torus.
    Assume for any algebraically closed field \(k\) over \(\Lambda\) and any semisimple \(L\)-parameter \(\varphi\from W_E\to\widehat{G}(k)\) giving rise to a point in \(V\), we have a direct sum decomposition 
    \begin{equation*}
        (\D(\bun_G,k)^\wedge_{\varphi})^\omega\simeq\bigoplus_{\chi\in X^*(S_{\varphi})}(\D(\bun_G,k)^\wedge_{\varphi})^\omega_\chi
    \end{equation*}
    such that \((\D(\bun_G,k)^\wedge_{\varphi})^\omega_0\subset\D(\bun_G^1,k)^\wedge_{\varphi}\).
    Let us write \(V_k\defined k\times_{\Lambda}V\).
    Assume for each such \(\varphi\) we have 
    \begin{equation*}
        p^{-1}(V_k)\times_{V_k}(V_k)^{\wedge}_{\varphi}\cong N^\wedge_{0}/S_{\varphi},
    \end{equation*}
    where \(N\) is an affine scheme with \(0\in N\) and it admits a trivial \(S_{\varphi}\)-action.
    Fixing such an isomorphism we can construct line bundles \(\oo(\chi,\varphi)\) by pulling back \(\chi\in\perf(BS_{\varphi})\) via \(N^\wedge_{0}/S_{\varphi}\to BS_{\varphi}\).
    Assume that \(\oo(\chi,\varphi)*-\) restricts to an equivalence 
    \[\oo(\chi,\varphi)*-\from(\D(\bun_G,k)^\wedge_{\varphi})^\omega_0\to(\D(\bun_G,k)^\wedge_{\varphi})^\omega_\chi.\]
    Then \(\bbL_{G}^\varphi\) is an equivalence if and only if every Schur-irreducible \(\pi\in\D(G(E))\) with \(L\)-parameter \(\varphi\) is generic.
    In this case, we obtain a \(\Ind\perf(p^{-1}(V))\)-linear equivalence 
    \begin{equation*}
        \Ind\coh_{\nilp}^\qc(p^{-1}(V))\simeq\D^{V}(\bun_G,\Lambda)
    \end{equation*}
    sending \(\oo\) to \(\mathcal{W}^V\).
\end{lem}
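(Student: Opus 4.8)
The plan is to derive this from the reductions of \cref{sec: reduction} together with an analysis of the single Hecke operator \(\oo(\chi,\varphi)*-\) attached to each character of \(S_\varphi\). After the routine reduction to noetherian \(\Lambda\) --- writing \(\Lambda\) as a filtered colimit of finitely generated \(\bbZ_\ell[\sqrt{q},\zeta_{p^N}]\)-subalgebras, everything in sight commuting with such colimits of coefficients --- one first builds \(\bbL_G^V\) from the spectral action: since \(\nilp|_{p^{-1}(V)}=\{0\}\), \cite[Theorem VIII.2.9]{geometrization} identifies \(\coh_{\nilp}^{\qc}(p^{-1}(V))\) with \(\perf^{\qc}(p^{-1}(V))\), whose Ind-completion is generated under colimits and the \(\Ind\perf(p^{-1}(V))\)-action by the structure sheaf \(\oo\); one then takes \(\bbL_G^V\) to be the colimit-preserving \(\Ind\perf(p^{-1}(V))\)-linear functor \(\mathcal{F}\mapsto\mathcal{F}*\mathcal{W}^V\), which preserves compact objects since it does so on the generator. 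By \cref{lem: reduction to individual parameter} (which rests in turn on \cref{lem: reduction to formal fibers} and \cref{lem: categorical fiber is fiber}) it then suffices to prove that \(\bbL_G^\varphi\) is an equivalence for every algebraically closed \(k/\Lambda\) and every semisimple \(\varphi\in V(k)\), and the same lemma feeds this back to the global \(\Ind\perf(p^{-1}(V))\)-linear equivalence with \(\oo\mapsto\mathcal{W}^V\); so the whole statement reduces to the per-parameter claim. Fix such a \(\varphi\) and put \(S\defined S_\varphi\), a torus.

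Next I would split both sides along \(X^*(S)\); write \(\mathcal{D}_\chi\defined(\D(\bun_G,k)^\wedge_\varphi)_\chi\) for the summands of the target. Using \(\nilp=\{0\}\) again, the source is \(\Ind\perf\big(p^{-1}(V_k)\times_{V_k}(V_k)^\wedge_\varphi\big)=\Ind\perf(N^\wedge_0/S)\), and since \(S\) is a torus acting trivially on the affine formal scheme \(N^\wedge_0\) this splits as \(\prod_{\chi\in X^*(S)}\Ind\perf(N^\wedge_0)\), the \(\chi\)-th factor being \(\oo(\chi,\varphi)\otimes(-)\) applied to the monoidal (hence \(\Ind\perf(N^\wedge_0)\)-stable) \(\chi=0\) factor. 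Ind-completing the hypothesised target decomposition gives \(\D(\bun_G,k)^\wedge_\varphi\simeq\prod_\chi\mathcal{D}_\chi\), and \(\oo(\chi,\varphi)*-\) is invertible with inverse \(\oo(-\chi,\varphi)*-\); carrying \(\mathcal{D}_0\) onto \(\mathcal{D}_\chi\), it carries \(\mathcal{D}_{\chi'}\) onto \(\mathcal{D}_{\chi'+\chi}\) for all \(\chi'\). Since \(\bbL_G^\varphi\) is linear over \(\Ind\perf\big(p^{-1}(V_k)\times_{V_k}(V_k)^\wedge_\varphi\big)\) it intertwines the two families \(\oo(\chi,\varphi)*-\); and as \(\mathcal{W}=i_{1!}\cind_{U(E)}^{G(E)}\psi\) is supported on \(\bun_G^1\) while \(\mathcal{D}_0\subset\D(\bun_G^1,k)^\wedge_\varphi\) is the part of \(\D(\bun_G,k)^\wedge_\varphi\) living on that open stratum, one checks \(\mathcal{W}^\wedge_\varphi=\bbL_G^\varphi(\oo)\in\mathcal{D}_0\). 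Linearity then forces \(\bbL_G^\varphi\) to carry the \(\chi\)-th source factor into \(\mathcal{D}_\chi\) compatibly with the translations, so \(\bbL_G^\varphi\) is an equivalence if and only if its restriction \(\bbL_G^{\varphi,0}\from\Ind\perf(N^\wedge_0)\to\mathcal{D}_0\) is.

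It remains to analyse \(\mathcal{D}_0\), which I expect to be the real obstacle. Inside \(\D(\bun_G^1,k)^\wedge_\varphi\simeq\D(G(E),k)^\wedge_\varphi\) the object \(\mathcal{W}^\wedge_\varphi=(\cind_{U(E)}^{G(E)}\psi)^\wedge_\varphi\) is compact --- localisation at \(\varphi\) cuts the Gelfand-Graev representation down to a finitely generated object of the \(\varphi\)-block --- and projective, so \(\Hom(\mathcal{W}^\wedge_\varphi,-)\) is fully faithful from \(\mathcal{D}_0\) into \(\Mod_{\End(\mathcal{W}^\wedge_\varphi)}\) and fits into a commuting square with the fully faithful embedding \(\Ind\perf(N^\wedge_0)\hookrightarrow\Mod_{\oo(N^\wedge_0)}\) coming from the spectral action. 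The ring isomorphism \(\speccenter(G)\cong\End(\mathcal{W})\) of \cref{lem: generic local langlands in families} (local Langlands in families plus uniqueness of the Whittaker model), completed at \(\varphi\), identifies \(\oo(N^\wedge_0)=\speccenter(G)^\wedge_\varphi\) with \(\End(\mathcal{W}^\wedge_\varphi)\) compatibly with the square; hence \(\bbL_G^{\varphi,0}\) is automatically fully faithful, and it is essentially surjective precisely when \(\mathcal{W}^\wedge_\varphi\) generates \(\mathcal{D}_0\). Being compact and projective, \(\mathcal{W}^\wedge_\varphi\) generates \(\mathcal{D}_0\) if and only if its mapping complex into every Schur-irreducible \(\pi\) with \(L\)-parameter \(\varphi\) is nonzero --- reduce to the lowest nonvanishing cohomology, then to an irreducible subquotient, lifting along projectivity --- and \(\Hom(\mathcal{W}^\wedge_\varphi,\pi)=\Hom_{G(E)}(\cind_{U(E)}^{G(E)}\psi,\pi)\neq 0\) exactly when \(\pi\) is generic. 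This is the asserted equivalence criterion, and re-applying \cref{lem: reduction to individual parameter} produces the \(\Ind\perf(p^{-1}(V))\)-linear equivalence \(\Ind\coh_{\nilp}^{\qc}(p^{-1}(V))\simeq\D^V(\bun_G,\Lambda)\) carrying \(\oo\) to \(\mathcal{W}^V\).

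So the only genuinely non-formal ingredient is the identification of \(\mathcal{D}_0\) with \(\Ind\perf(N^\wedge_0)\) via \(\mathcal{W}^\wedge_\varphi\): one must know that \(\mathcal{W}^\wedge_\varphi\) is a compact projective generator of \(\mathcal{D}_0\) with endomorphism ring \(\speccenter(G)^\wedge_\varphi\). This packages uniqueness of Whittaker models and local Langlands in families, and it is here that genericity emerges as exactly the condition making \(\mathcal{W}^\wedge_\varphi\) a generator. The construction of \(\bbL_G^V\), the descent to formal fibres, and the \(X^*(S)\)-splitting are all the formal category theory already developed in \cref{sec: reduction}.
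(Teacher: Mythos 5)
Your proposal follows essentially the same route as the paper: reduce to a single parameter via \cref{lem: reduction to individual parameter}, split both sides along $X^*(S_\varphi)$ using the torus hypothesis and the $\oo(\chi,\varphi)*-$ translations, reduce to the $\chi=0$ block, and identify that block with modules over the spectral centre via \cref{lem: generic local langlands in families} plus the conservativity of the Whittaker co-representing functor, which is where genericity enters. Two steps are sketchier than the paper and one of them is a genuine gap.

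First, the compact projectivity of $\mathcal{W}^\wedge_\varphi$. You write that ``localisation at $\varphi$ cuts the Gelfand--Graev representation down to a finitely generated object of the $\varphi$-block'' and treat it as self-evident; but $\mathcal{W}=\cind_{U(E)}^{G(E)}\psi$ is not compact in $\D(G(E),k)$, and extracting a compact projective from it is precisely the content of \cref{lem: whittaker compact projective}. The paper circumvents this by first shrinking $V$ to be quasi-compact and choosing a pro-$p$ open compact $P^e\subset W_E$ with $V\subset\locsys_{\widehat{G},P^e}^{\coarse}$; then $\mathcal{W}^{P^e}$ lives in only finitely many Bernstein blocks (this uses the isomorphism $\speccenter(G)\cong\berncenter(G)$ of \cite[Corollary 7.7]{local-langlands-in-families} and the compact projectivity of the depth-truncated Whittaker objects from \cite[Corollary 5.12]{locallanglandsinfamilies2}), hence is compact projective. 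One could alternatively argue directly that the $\varphi$-block summand of $\mathcal{W}$ is compact projective, but that requires the same block analysis; it cannot be asserted for free. The rest of your ``$\mathcal{D}_0$-analysis'' — the commuting square with $\Mod_{\speccenter(G)^\wedge_\varphi}$ via \cref{lem: generic local langlands in families}, automatic full faithfulness, and conservativity detected by homs to irreducible constituents — is exactly the paper's argument with $\mathcal{W}^{P^e}_k$ in place of $\mathcal{W}^\wedge_\varphi$.

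Second, a smaller point: your justification that $\mathcal{W}^\wedge_\varphi\in\mathcal{D}_0$ appeals to $\mathcal{D}_0$ being ``the part of $\D(\bun_G,k)^\wedge_\varphi$ living on'' $\bun_G^1$, but the hypothesis of the lemma only gives the inclusion $\mathcal{D}_0\subset\D(\bun_G^1,k)^\wedge_\varphi$, not equality, so support on $\bun_G^1$ alone does not place an object in $\mathcal{D}_0$. In the intended applications (\cref{lem: decomposition D(Bun_G) at irreducible parameters}, \cref{lem: decomposition D(Bun_G) at Langlands-Shahidi type parameter}) equality does hold, and the paper's proof implicitly uses this as well, so this is more a looseness in the lemma's hypotheses than in your argument — but as written your justification reverses the direction of the inclusion. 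The noetherian reduction via filtered colimits is also replaced in the paper by the concrete descent of $\mathcal{W}$ from \cite[Proposition 5.2]{locallanglandsinfamilies2}, which is what makes \cref{lem: whittaker compact projective} available; if you want to keep the filtered-colimit route you would still need that descent step.
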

\begin{proof}
    By \cite[Proposition 5.2.]{locallanglandsinfamilies2} the \(\Lambda[G(E)]\)-module $\mathcal{W}$ descends to a noetherian $\bbZ_\ell[\sqrt{q}]$-algebra (by case changing the ring \(\oo_K[1/p]\) they consider there along \(\bbZ[1/p]\to\bbZ_\ell[\sqrt{q}]\)) and we can replace $\Lambda$ with this ring and \(\mathcal{W}\) with the representation considered there base changed to \(\Lambda\).
    We can therefore assume that all occurring stacks are noetherian.
    It suffices to show the claim for small enough \(V\), so we may assume that \(V\) is quasi-compact.
    Since \(\nilp|_{p^{-1}(V)}=\{0\}\), it suffices to check that 
    \begin{equation}\label{eq: equivalence to show}
        \perf(p^{-1}((k\times_{\Lambda}V)^\wedge_{\varphi}))_0\simeq(\D(\bun_G,k)^\wedge_{\varphi})^\omega_0
    \end{equation}
    is an equivalence, as in the proof of \cite[Lemma 6.2.5.]{categorical-fargues-tori}.
    Note that \(N^\wedge_0\cong (V_k)^{\wedge}_{\varphi}\) and the isomorphism fits into the following commutative diagram 
    \begin{equation*}
        \begin{tikzcd}
            p^{-1}((V_k)^{\wedge}_\varphi) \arrow[r] \arrow[d] & N^\wedge_0/S_\varphi \arrow[d] \\
            (V_k)^{\wedge}_\varphi \arrow[r]                              & N^\wedge_0  .                  
            \end{tikzcd}
    \end{equation*}
    In particular \(\perf(p^{-1}((V_k)^\wedge_\varphi))_0\simeq\perf((V_k)^{\wedge}_{\varphi})\).
    Since we assumed \(V\) to be quasicompact we can find a pro-\(p\) open compact subgroup \(P^e\subset W_E\) such that \(V\subset\locsys_{\widehat{G},P^e}^\coarse\).
    Since \(\mathcal{W}^{P^e}_k\defined\mathcal{W}^{P^e}\otimes_{\Lambda}k\) is compact projective by \cref{lem: whittaker compact projective}, we have a fully faithful functor \(\perf_{\End(\mathcal{W}^{P^e}_k)}\injto\D^{P^e}(\bun_G^1,k)^\omega\) which is the left adjoint of \(\Hom(\mathcal{W}^{P^e}_k,-)\).
    Note that \(\locsys_{\widehat{G},P^e,k}^\coarse\defined k\times_{\Lambda}\locsys_{\widehat{G},P^e}^\coarse\) is an affine scheme, so \(\perf(\locsys_{\widehat{G},P^e,k}^\coarse)\) is generated by the structure sheaf under finite colimits and retracts.
    It follows that the composite 
    \begin{equation*}
        \perf(\locsys_{\widehat{G},P^e,k}^\coarse)\xto{\pi^*}\perf(\locsys_{\widehat{G},P^e,k})\to\D^{P^e}(\bun_G^1,k)^\omega
    \end{equation*}
    lands in the full subcategory \(\perf_{\End(\mathcal{W}^{P^e})}\), thus we obtained the dashed arrow in the following commutative diagram:
    \begin{equation*}
        \begin{tikzcd}
            {\perf(\locsys_{\widehat{G},P^e,k})} \arrow[r]                                    & {\D^{P^e}(\bun_G,k)^\omega}            \\
            {\perf(\locsys_{\widehat{G},P^e,k}^\coarse)} \arrow[u, "\pi^*"] \arrow[r, dotted] & \perf_{\End(\mathcal{W}^{P^e}_k)} \arrow[u, hook].
            \end{tikzcd}
    \end{equation*}
    In fact, the dashed arrow arises from restricting the action of \(\perf(\locsys_{\widehat{G},P^e,k})\) on \(\D^{P^e}(\bun_G^1,k)^\omega\) via the symmetric monoidal functor \(\pi^*\) and observing that the full subcategory \(\perf_{\End(\mathcal{W}^{P^e})}\) gets preserved, then acting on \(\End(\mathcal{W}^{P^e})\).
    Thus by localizing local Langlands in families in the form of \cref{lem: generic local langlands in families} along open immersion \(\locsys_{\widehat{G},P^e,k}^\coarse\subset\locsys_{\widehat{G},k}^\coarse\)  and \cite[Corollary 6.1.2.]{categorical-fargues-tori} the dashed arrow is an equivalence.
    Everything in the diagram carries an action of \(\perf(\locsys_{\widehat{G},P^e,k}^\coarse)\) and the diagram enhances to a square of \(\perf(\locsys_{\widehat{G},P^e,k}^\coarse)\)-linear categories, so we can apply \(\perf((V_k)^{\wedge}_{\varphi})\otimes_{\perf(\locsys_{\widehat{G},P^e,k}^\coarse)}-\) to obtain 
    \begin{equation*}
        \begin{tikzcd}
            {\perf(p^{-1}((V_k)^{\wedge}_{\varphi}))} \arrow[r]                                      & {(\D(\bun_G,k)^\wedge_{\varphi})^\omega_0}                \\
            {\perf((V_k)^{\wedge}_{\varphi})} \arrow[u, "\pi^*"] \arrow[r, dotted] & (\perf_{\End(\mathcal{W}^{P^e}_k)})^\wedge_{\varphi} \arrow[u, hook] .
            \end{tikzcd}
    \end{equation*}
    The dashed arrow remains an equivalence, being the base change of an equivalence.

    We are reduced to show that 
    \begin{equation*}
        (\D(\bun_G^1,k)^\wedge_\varphi)^\omega\simeq(\perf_{\End(\mathcal{W}^{P^e}_k)})^\wedge_{\varphi}.
    \end{equation*}
    It would then follow that the equivalence \(\perf((V_k)^\wedge_\varphi)\simeq(\perf_{\End(\mathcal{W}^{P^e}_k)})^\wedge_{\varphi}\simeq(\D(\bun_G,k)^\wedge_\varphi)^\omega_0\) factors over the essential image of \(\pi^*\) which is \(\perf(p^{-1}((V_k)^\wedge_\varphi))\), giving rise to the equivalence required for \cref{eq: equivalence to show}.

    For this it suffices to show that the right adjoint to \((\perf_{\End(\mathcal{W}^{P^e}_k)})^\wedge_{\varphi}\injto(\D(\bun_G,k)^\wedge_\varphi)^\omega_0\) exits and is conservative.
    The right adjoint is given by \(\Hom(\mathcal{W}^{P^e}_k,-)\).
    Let \(0\neq A\in(\D(\bun_G,k)^\wedge_{\varphi})^\omega_0\).
    We want to show that \(\Hom(\mathcal{W}^{P^e}_k,A)\) does not vanish.
    The top cohomology \(H^i(A)\) has a subquotient that is an irreducible representation \(\pi\), whose \(L\)-parameter is \(\varphi\) and by \(t\)-exactness of \(\Hom(\mathcal{W}^{P^e}_k,-)\) it suffices to show that \(\Hom(\mathcal{W}^{P^e}_k,\pi)\neq 0\), which holds true by assumption, as \(\Hom(\mathcal{W}^{P^e}_k,\pi)=\Hom(\mathcal{W}\otimes_{\Lambda}k,\pi)\), since the \(L\)-parameter of \(\pi\) is trivial on \(P^e\).
    To get the final conclusion, note that \(\coh_{\nilp}^\qc(V)=\perf^\qc(V)\), so the spectral action gives us a \(\perf(V)\)-linear equivalence 
    \begin{equation*}
        \coh_{\nilp}^\qc(V)\simeq\D(\bun_G,\Lambda)^\omega\otimes_{\perf(\locsys_{\widehat{G}})}\perf(V)
    \end{equation*}
    sending \(\oo\) to \(\mathcal{W}^V\).
    It is an equivalence by the previous discussion and \cref{lem: reduction to individual parameter}.
\end{proof}
\begin{lem}\label{lem: whittaker compact projective}
    Consider the representation \(\mathcal{W}\) as constructed in \cite[Proposition 5.2.]{locallanglandsinfamilies2} base changed to \(\Lambda=\oo_K[1/p]\otimes_{\bbZ[1/p]}\bbZ_\ell[\sqrt{q}]\) as discussed in the beginning of the proof of the previous theorem.   
    For any choice of open pro-\(p\) subgroup \(P^e\) of the wild inertia the representation \(\mathcal{W}^{P^e}\) is compact projective.
\end{lem}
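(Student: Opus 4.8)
The plan is to identify $\mathcal{W}^{P^e}$ with an explicit finite direct sum of block-components of the Gelfand--Graev representation and then to quote the classical structure theory of the latter. Write $\Gamma\defined\cind_{U(E)}^{G(E)}\psi$, so that $\mathcal{W}=i_{1!}\Gamma$ in the notation of \cref{lem: general equivalence criterion}. Recall that $\bun_G^1$, the degree-$0$ semistable locus, is an open substack of $\bun_G$, so $i_1$ is an open immersion, $i_{1!}$ is fully faithful with $t$-exact restriction $i_1^{*}$, and $A\mapsto A^{P^e}$ restricts to the analogous functor on $\D(\bun_G^1)=\D(G(E))$. Since $\mathcal{W}$ is supported on $\bun_G^1$ and, as noted in the introduction, the $B(G)$-indexed semi-orthogonal decomposition of $\D(\bun_G)$ is compatible with base change along $\Ind\perf(\locsys_{\widehat{G}}^\coarse)\to\Ind\perf(\locsys_{\widehat{G},P^e}^\coarse)$, the object $\mathcal{W}^{P^e}$ is the extension by zero of the image of $\Gamma$ in $\D(G(E),\Lambda)^{P^e}\defined\D(G(E),\Lambda)\otimes_{\Ind\perf(\locsys_{\widehat{G}}^\coarse)}\Ind\perf(\locsys_{\widehat{G},P^e}^\coarse)$; it therefore suffices (and this is also precisely what is used in \cref{lem: general equivalence criterion}) to show that this image is a finitely generated projective $G(E)$-representation. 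Now $\locsys_{\widehat{G},P^e}^\coarse$ is a quasi-compact open subscheme of $\locsys_{\widehat{G}}^\coarse$, and by the compatibility of the Fargues--Scholze parametrisation with parabolic induction together with its local constancy on each Bernstein variety no Bernstein block of $\GL_n(E)$ straddles its boundary; hence $\D(G(E),\Lambda)^{P^e}$ is the product of the \emph{finitely many} Bernstein blocks $\mathfrak{s}_1,\dots,\mathfrak{s}_m$ whose inertial type is trivial on $P^e$, and $A\mapsto A^{P^e}$ is the projection onto this factor. Consequently the image of $\Gamma$ is $\bigoplus_{j=1}^{m}e_{\mathfrak{s}_j}\Gamma$.

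The remaining input is classical and holds over an arbitrary $\bbZ_\ell$-algebra $\Lambda$ (note $p\in\bbZ_\ell^\times$): first, $\Gamma$ is a projective object of $\D(G(E),\Lambda)^\heart$, since $\Hom_{G(E)}(\Gamma,-)$ is the exact $\psi$-Whittaker functor (see \cite{locallanglandsinfamilies2} or its references, where the exactness ultimately rests on $U(E)$ being an increasing union of pro-$p$ groups); second, for each Bernstein block $\mathfrak{s}$ the component $e_{\mathfrak{s}}\Gamma$ is not merely projective but finitely generated --- indeed a finitely generated projective generator of $\D_{\mathfrak{s}}(G(E),\Lambda)$ --- by the integral Whittaker/Bernstein-centre results of Helm and Moss recalled in \cite{locallanglandsinfamilies2}. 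A finite direct sum of finitely generated projective modules is again finitely generated projective, hence a compact object of $\D(G(E),\Lambda)$; this gives the claim for the image $\bigoplus_j e_{\mathfrak{s}_j}\Gamma$ of $\Gamma$, and hence for $\mathcal{W}^{P^e}$ itself, since $i_{1!}$ preserves compact objects (its right adjoint $i_1^{*}$ admits the colimit-preserving further right adjoint $i_{1*}$) and, by $t$-exactness of $i_1^{*}$, the functor $\Hom(\mathcal{W}^{P^e},-)=\Hom_{G(E)}\!\big(\bigoplus_j e_{\mathfrak{s}_j}\Gamma,\,i_1^{*}(-)\big)$ remains $t$-exact. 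Base changing along $\Lambda\to k$ finally yields the variant for $\mathcal{W}^{P^e}_k$ used elsewhere, as finite generation and projectivity of a module survive $-\otimes_\Lambda k$.

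The genuinely non-formal ingredients are the two classical facts about $\Gamma$ over $\bbZ_\ell$-algebras --- exactness of the Whittaker functor (hence projectivity of $\Gamma$) and finite generation of each $e_{\mathfrak{s}}\Gamma$ --- which for $\GL_n$ are in the literature, and the bookkeeping identifying $\D(G(E),\Lambda)^{P^e}$ with the product over the blocks of inertial type trivial on $P^e$. The step I expect to require the most care is this last identification, together with the fact that only finitely many such blocks occur --- equivalently, that $\locsys_{\widehat{G},P^e}^\coarse$ is quasi-compact, which follows from finite presentation of $Z^1(W_E/P^e,\widehat{G})$ over $\bbZ_\ell$ --- and that passing to the $P^e$-truncation does not enlarge supports and commutes with $i_{1!}$, so that $\mathcal{W}^{P^e}$ is genuinely the extension by zero of a $G(E)$-representation rather than a more complicated perverse sheaf on $\bun_G$. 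Everything else is either formal or a direct citation.
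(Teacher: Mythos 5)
Your proof is correct and takes essentially the same route as the paper: identify $\mathcal{W}^{P^e}$ with a finite sum of Bernstein block components of the Gelfand--Graev representation and invoke the classical block-wise compact projectivity. Where you argue block-by-block and appeal informally to ``compatibility with parabolic induction plus local constancy'' to identify $\D(G(E),\Lambda)^{P^e}$ with a finite product of Bernstein blocks, the paper cites the isomorphism $\speccenter(G)\cong\berncenter(G)$ of \cite[Corollary 7.7]{local-langlands-in-families}, which packages both the clopen decomposition of $\locsys_{\widehat{G},P^e}^\coarse$ by Bernstein idempotents and the finiteness you need; it then observes that $\mathcal{W}^{P^e}$ is a direct summand of $\mathcal{W}^{K_r}$ for a suitable depth cutoff $r$ and cites \cite[Corollary 5.12.]{locallanglandsinfamilies2} once. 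Both versions rest on the same inputs, so this is a difference of packaging rather than substance; still, citing that isomorphism explicitly would tighten the step you yourself flag as requiring the most care, since ``no block straddles the boundary'' together with finiteness of $\{\mathfrak{s}_1,\dots,\mathfrak{s}_m\}$ is exactly what it delivers.
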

\begin{proof}
    The choice of such a \(P^e\) gives rise to a direct summand \(\speccenter(G)_{\leq e}\) of \(\speccenter(G)\).
    Observe that \(\speccenter(G)_{\leq e}\) only has finitely many minimal idempotents and the map \(\speccenter(G)\to\berncenter(G)\) is defined over \(\bbZ_\ell\) induced by the spectral action\footnote{A priori this is only defined over \(\bbZ_\ell[\sqrt{q}]\), however we can remove this by \cref{rem: remove sqrt q}.} is an isomorphism by \cite[Corollary 7.7]{local-langlands-in-families}(while the map in \cite[Conjecture 7.2]{local-langlands-in-families} goes in the other direction, it agrees with the inverse of the map induced by the spectral action by uniqueness of such a map that is compatible with local Langlands, as discussed there).\footnote{The isomorphism there is proven over \(W(\overline{\bbF_\ell})\), however the base change of the map \(\speccenter(G)\to\berncenter(G)\) along \(\bbZ_\ell\to W(\overline{\bbF_\ell})\) is the inverse of their map and one can then deduce the result for \(\bbZ_\ell\) by flat descent.} 
    It follows that \(\mathcal{W}^{P^e}\) lives in only finitely many blocks.
    In particular, there exists some \(r\) such that \(\mathcal{W}^{P^e}\) lies only in blocks corresponding to depth \(\leq r\) so that it is a direct summand of \(\mathcal{W}^{K_r}\), which is compact projective by \cite[Corollary 5.12.]{locallanglandsinfamilies2}.
\end{proof}
\begin{lem}\label{lem: generic local langlands in families}
    The spectral action induces an isomorphism \(\speccenter(G)\cong\End(\mathcal{W})\) with \(\mathcal{W}\) as in the previous lemma.
\end{lem}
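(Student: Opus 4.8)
The plan is to exhibit the map $\speccenter(G)\to\End(\mathcal{W})$ coming from the spectral action as a composite of two isomorphisms: the Fargues--Scholze comparison of $\speccenter(G)$ with the Bernstein centre $\berncenter(G)$, and the realisation of $\berncenter(G)$ as the endomorphism ring of the Gelfand--Graev representation.

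First I would identify the target. Since $1\in B(G)$ gives the open stratum of $\bun_G$, the map $i_1$ is an open immersion, so $i_{1!}$ is fully faithful with $i_1^*i_{1!}=\id$, and hence
\[
\End(\mathcal{W})=\End_{\D(\bun_G)}(i_{1!}\cind_{U(E)}^{G(E)}\psi)=\End_{\D(G(E))}(\cind_{U(E)}^{G(E)}\psi).
\]
Moreover this ring is discrete: by Frobenius reciprocity $\rhom_{G(E)}(\cind_{U(E)}^{G(E)}\psi,-)=\rhom_{U(E)}(\psi,-)$, and since $U(E)$ is a union of pro-$p$ groups while $p$ is invertible in $\Lambda$, the functor $M\mapsto\Hom_{U(E)}(\psi,M)$ is exact; thus $\cind_{U(E)}^{G(E)}\psi$ is a projective object of the category of smooth $\Lambda[G(E)]$-modules and $\End(\mathcal{W})$ coincides with its classical endomorphism ring there.

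Next I would record the factorisation. Since the Hecke operators $T_V$ commute with the functors $i_{b!}$, $i_b^*$ and $i_{b*}$, the spectral action of $\Ind\perf(\locsys_{\widehat{G}})$ on $\D(\bun_G)$ commutes with $i_{1!}$; hence $i_{1!}\from\D(G(E))\to\D(\bun_G)$ is $\speccenter(G)$-linear and the action of $f\in\speccenter(G)$ on $\mathcal{W}$ is $i_{1!}$ applied to its action on $\cind_{U(E)}^{G(E)}\psi$. Therefore $\speccenter(G)\to\End(\mathcal{W})$ equals the composite
\[
\speccenter(G)\longrightarrow\berncenter(G)\longrightarrow\End_{G(E)}(\cind_{U(E)}^{G(E)}\psi),
\]
where the first arrow is the canonical map to the Bernstein centre coming from the spectral action on $\D(G(E))$ and the second is evaluation of the central action at the Gelfand--Graev representation. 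The first arrow is an isomorphism by \cite[Corollary 7.7]{local-langlands-in-families} together with \cref{rem: remove sqrt q} (this is exactly the input already used in \cref{lem: whittaker compact projective}). The second arrow is an isomorphism because $\mathcal{W}$ is, by construction in \cite[Proposition 5.2.]{locallanglandsinfamilies2}, the Gelfand--Graev representation whose endomorphism ring realises the integral Bernstein centre of $\GL_n$; this rests on the uniqueness of Whittaker models (see \cite{local-langlands-in-families,locallanglandsinfamilies2}), and applies over our $\Lambda$ after the reduction to a noetherian $\bbZ_\ell[\sqrt q]$-algebra carried out at the start of the proof of \cref{lem: general equivalence criterion}. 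Composing the two isomorphisms gives $\speccenter(G)\cong\End(\mathcal{W})$.

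The step I expect to be the main obstacle is checking that the copy of the Bernstein centre produced by the Fargues--Scholze spectral action agrees with the one occurring in the Whittaker-model computation of $\End(\mathcal{W})$: both are pinned down by compatibility with the semisimplified local Langlands correspondence for $\GL_n$, so the two isomorphisms above are taken with respect to this common identification and their composite is the map induced by the spectral action.
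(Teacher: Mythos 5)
Your proposal follows the paper's proof almost exactly: both reduce the statement to a composite
\[
\speccenter(G)\longrightarrow\berncenter(G)\longrightarrow\End(\mathcal{W}),
\]
using \cite[Corollary 7.7]{local-langlands-in-families} for the first arrow and the Whittaker-model description of the Bernstein centre for the second (the paper cites \cite[Theorem 5.2.]{integral-whittaker-model} specifically, with an explicit base change from $W(\overline{\bbF_\ell})$ back to $\Lambda$ by descent, which you gesture at somewhat more loosely). Your argument that $\End(\mathcal{W})$ is discrete via projectivity of $\cind_{U(E)}^{G(E)}\psi$ (exactness of $\Hom_{U(E)}(\psi,-)$ since $U(E)$ is pro-$p$ and $p\in\Lambda^\times$) is a mild but legitimate variant of the paper's appeal to \cref{lem: whittaker compact projective}.

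One intermediate claim is wrong and should be fixed. You assert that ``the Hecke operators $T_V$ commute with the functors $i_{b!}$, $i_b^*$ and $i_{b*}$'' and deduce from this that $i_{1!}$ is $\speccenter(G)$-linear. Hecke operators decidedly do \emph{not} commute with the strata functors -- analysing the interaction $i_{b'}^{\ren!}T_V i_{b*}^\ren$ is the entire content of \cref{sec: computation Hecke operators}, and it is nontrivial precisely because $T_V$ mixes strata. The correct reason $i_{1!}$ is $\speccenter(G)$-linear is different: the action of $\speccenter(G)=\oo(\locsys_{\widehat{G}})$ on $\D(\bun_G)$ commutes with every $\D(\bun_G)$-endofunctor because it factors through the Bernstein centre of each stratum (this is the content of \cite[Theorem IX.7.2.]{geometrization}), and since $i_1^*$ is $\speccenter(G)$-linear and $\speccenter(G)$-module structures are rigid, the left adjoint $i_{1!}$ is $\speccenter(G)$-linear as well. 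With that correction the rest of your argument goes through unchanged.
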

\begin{proof}
    By \cref{lem: whittaker compact projective} we have \(\End(\mathcal{W})\) is concentrated in degree 0, so it suffices to check an isomorphism on classical rings.
    By \cite[Corollary 7.7]{local-langlands-in-families} we have that the spectral action induces an isomorphism \(\speccenter(G)\to\berncenter(G)\) over \(\bbZ_\ell\) as discussed in the proof of the previous lemma.
    \cite[Theorem 5.2.]{integral-whittaker-model} tells us that \(\berncenter(G)\to\End(\mathcal{W})\) is an isomorphism\footnote{Again, this isomorphism is proven over \(W(\overline{\bbF_\ell})\), so we have the isomorphism in the proof over \(\Lambda\otimes_{\bbZ_\ell[\sqrt{q}]}W(\overline{\bbF_\ell})[\sqrt{q}]\) via base change and deduce that the isomorphism exists over \(\Lambda\) via descent.}, so in summary the composite \(\speccenter(G)\to\End(\mathcal{W})\) is an isomorphism, and this composite is the map induced by the spectral action.
\end{proof}

        \section{Irreducible case of the conjecture}
Let \(k\) be an algebraically closed \(\bbZ_\ell\)-field.

\begin{lem}\label{lem: decomposition D(Bun_G) at irreducible parameters}
    Let \(\varphi\) be a irreducible parameter.
    We have a direct sum decomposition
    \begin{equation*}
        (\D(\bun_G,k)^\wedge_{\varphi})^\omega\simeq\bigoplus_{\chi\in X^*(S_{\varphi})}(\D(\bun_G,k)^\wedge_{\varphi})^\omega_\chi.
    \end{equation*}
    Here \((\D(\bun_G,k)^\wedge_{\varphi})^\omega_\chi=(\D(\bun_G^{b_{\chi,G}},k)^\wedge_{\varphi})^\omega\) (embedded via \(i_{b!}\)).
\end{lem}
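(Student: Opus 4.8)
The plan is to obtain the decomposition from the connected-component structure of $\bun_G$ together with the support estimate for sheaves with supercuspidal parameter.

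I would first dispatch the preliminaries. Since $\varphi$ is irreducible it is, vacuously, of Langlands--Shahidi type with cuspidal support $\widehat G$ (the non-degeneracy conditions of \cref{lem: langlands-shahidi type explicit} being empty when $r=1$), so \cref{lem: support sheaf with L-parameter of Langlands-Shahidi type} applies to it; and by Schur's lemma applied to the irreducible $W_E$-representation $\varphi$ its stabilizer is $S_\varphi=Z(\widehat G)=\bbG_m$, so $X^*(S_\varphi)\cong\bbZ$, matching $\chi$ with the basic class $b_{\chi,G}$ under the Kottwitz isomorphism $X^*(Z(\widehat G))\cong B(G)_{\mathrm{basic}}$. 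I also use the standard facts that $\bun_G=\bigsqcup_{d\in\bbZ}\bun_G^d$ is the decomposition into connected components indexed by the degree, that each $\bun_G^d$ has a unique basic Harder--Narasimhan stratum, namely its open semistable locus $\bun_G^{b_d}$ with $\kappa(b_d)=d$ (so $b_d=b_{\chi,G}$ for the matching $\chi$), and that $\bun_G^{b_d}\cong[*/G_{b_d}(E)]$.

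The argument then runs in three steps. \emph{(1)} The component decomposition gives $\D(\bun_G,k)=\prod_d\D(\bun_G^d,k)$; by \cref{cor: localizing D(Bun_G) at a parameter} (whose proof, via the semi-orthogonal decomposition into the $\D(G_b(E))$, applies verbatim to each $\bun_G^d$) the subcategory $\D(\bun_G,k)^\wedge_\varphi$ is cut out by the componentwise condition that every irreducible subquotient of the cohomology sheaves have parameter $\varphi$, whence $\D(\bun_G,k)^\wedge_\varphi=\prod_d\D(\bun_G^d,k)^\wedge_\varphi$, and passing to compact objects turns the product into the direct sum $\bigoplus_d(\D(\bun_G^d,k)^\wedge_\varphi)^\omega$. \emph{(2)} As $\bun_G^{b_d}\subset\bun_G^d$ is open with closed complement the non-semistable locus, $i_{b_d!}$ is fully faithful onto the full subcategory of objects set-theoretically supported on $\bun_G^{b_d}$; and \cref{lem: support sheaf with L-parameter of Langlands-Shahidi type} forces every object of $\D(\bun_G,k)^\wedge_\varphi$ to be supported on $\bigsqcup_d\bun_G^{b_d}$, so each $\D(\bun_G^d,k)^\wedge_\varphi$ lies in the essential image of $i_{b_d!}$. \emph{(3)} It remains to identify, through $i_{b_d!}$, the subcategory $\D(\bun_G^d,k)^\wedge_\varphi$ with $\D(\bun_G^{b_d},k)^\wedge_\varphi\subset\D(G_{b_d}(E),k)$: since $i_{b_d!}$ is $t$-exact, $\speccenter(G)$-linear, preserves compact objects, and identifies irreducible smooth $G_{b_d}(E)$-representations with the irreducible objects of $\D(\bun_G^d,k)$ supported on $\bun_G^{b_d}$, the parameter condition of \cref{cor: localizing D(Bun_G) at a parameter} transports across $i_{b_d!}$ into the corresponding condition for $G_{b_d}(E)$-representations, which defines $\D(\bun_G^{b_d},k)^\wedge_\varphi$ once one uses that $\widehat{G_{b_d}}=\widehat G$ for basic $b_d$ and that the $\speccenter(G)$-action on $\D(\bun_G^{b_d},k)$ is the one induced by the Fargues--Scholze parameters of $G_{b_d}$. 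Combining (1)--(3) and re-indexing $d=\chi\in X^*(S_\varphi)$ gives the decomposition, the $\chi$-summand being $(\D(\bun_G^{b_{\chi,G}},k)^\wedge_\varphi)^\omega$ embedded via $i_{b_{\chi,G}!}$.

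The main obstacle is step (3): one must check carefully that $i_{b_d!}$ matches the $\varphi$-completed subcategories, i.e.\ that an object of $\D(\bun_G^d,k)$ supported on $\bun_G^{b_d}$ all of whose cohomological subquotients have parameter $\varphi$ corresponds precisely to a complex of smooth $G_{b_d}(E)$-representations whose irreducible subquotients all have Fargues--Scholze parameter $\varphi$. The remaining steps are bookkeeping with open--closed decompositions and connected components.
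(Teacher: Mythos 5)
Your proof is correct and follows essentially the same route as the paper's: the paper likewise deduces that compact objects of $(\D(\bun_G,k)^\wedge_{\varphi})^\omega$ are supported on the semistable (= basic) locus — citing \cite[Section X.2]{geometrization} directly where you instead specialize \cref{lem: support sheaf with L-parameter of Langlands-Shahidi type} to $L=G$, which amounts to the same input — and then observes that $|\bun_G^{\mathrm{ss}}|\cong X^*(S_\varphi)$ is a discrete set of points, so the category splits. Your steps (1)--(3) just spell out the bookkeeping that the paper leaves implicit.
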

\begin{proof}
    This follows from the fact that for irreducible \(\varphi\) we have \((\D(\bun_G,k)^\wedge_{\varphi})^\omega\subset\D(\bun_G^{\mathrm{ss}},k)\) as argued in \cite[Section X.2.]{geometrization}.
    Then the direct sum decomposition follows from the fact that we have an isomorphism \(X^*(S_{\varphi})\cong|\bun_G^{\mathrm{ss}}|\) sending \(\chi\mapsto b_{\chi,G}\) and that \(|\bun_G^{\mathrm{ss}}|\) is a discrete set of points.
\end{proof}
\begin{lem}\label{lem: hecke stalk irreducible case}
    The functor $\oo(\chi,\varphi)*-$ sends $\D(\bun_G,k)^\wedge_{\varphi})^\omega_0$ to $\D(\bun_G,k)^\wedge_{\varphi})^\omega_\chi$ and vanishes elsewhere.
\end{lem}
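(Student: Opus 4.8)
The plan is to realise the Hecke operator \(\oo(\chi,\varphi)*-\) as a direct summand of a tensor power of the standard Hecke operator \(T_{\std}\), and then to read off where it lands from the geometry of Hecke modifications on the semistable locus. Write \(S_{\varphi}=Z(\widehat{G})\), so that \(X^*(S_{\varphi})\cong\bbZ\), with \(\std\) restricting to the generator \(\chi_1\). An irreducible parameter has cuspidal support \((\widehat{G},\varphi)\), hence is of Langlands-Shahidi type (the condition of \cref{lem: langlands-shahidi type explicit} is vacuous for \(r=1\)), so \cref{lem: isotypic decomposition Langlands-Shahidi type} applies with \(L=G\). For \(\chi=m\chi_1\) with \(m\geq 0\), the representation \(\std^{\otimes m}\) is \(S_{\varphi}\)-isotypic of weight \(m\chi_1\), so the isotypic decomposition collapses to a single term and gives a \(W_E\)-equivariant identification \(T_{\std^{\otimes m}}\simeq\varphi^{\otimes m}\boxtimes(\oo(m\chi_1,\varphi)*-)\); dually \(T_{(\std^\vee)^{\otimes m}}\simeq(\varphi^\vee)^{\otimes m}\boxtimes(\oo(-m\chi_1,\varphi)*-)\). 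Since tensoring with the fixed nonzero space \(\varphi^{\otimes m}\) does not affect supports, for every \(A\) we get \(\supp(\oo(\chi,\varphi)*A)=\supp(T_{\std^{\otimes|m|}}A)\) (resp. \(\supp(T_{(\std^\vee)^{\otimes|m|}}A)\) when \(m<0\)).

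The next step is the support computation. By (the proof of) \cref{lem: decomposition D(Bun_G) at irreducible parameters}, every compact object of \(\D(\bun_G,k)^\wedge_{\varphi}\) is supported on the semistable locus, and under \(X^*(S_{\varphi})\cong|\bun_G^{\mathrm{ss}}|\cong\bbZ\) the summand \((\D(\bun_G,k)^\wedge_{\varphi})^\omega_{\xi}=(\D(\bun_G^{b_{\xi,G}},k)^\wedge_{\varphi})^\omega\) is supported at the single point \(b_{\xi,G}\), the degree-\(\xi\) semistable bundle. Now \(T_{\std^{\otimes m}}A=\rheck_{\natural}\bigl(q^*\mathcal{S}(\std^{\otimes m})\otimes\lheck^*A\bigr)\) is supported, in the \(\bun_G\)-direction, on bundles obtained from \(\mathcal{E}_{b_{\xi,G}}\) by a modification of some dominant type \(\nu\) occurring in \(\mathcal{S}(\std^{\otimes m})\); every such \(\nu\) has \(\sum_i\nu_i=m\) (the weights of \(\std^{\otimes m}\) all have coordinate sum \(m\), and passing to a lower dominant weight subtracts roots, which have coordinate sum \(0\)), so every such modification changes the degree by exactly \(m\), in the direction fixed by the conventions of \cite{geometrization} and consistent with \cref{eq: stalk hecke operator} (there \(T_{\std}\) carries the degree-\(0\) point to the degree-\(1\) point \(\oo(1/n)\)). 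Hence \(T_{\std^{\otimes m}}A\) is supported on bundles of degree \(\xi+m\); as \(\oo(\chi,\varphi)*A\) is again compact it is supported on the semistable locus, so altogether on \(\{b_{\xi+m,G}\}\). Taking \(\xi=0\) gives that \(\oo(\chi,\varphi)*-\) sends \((\D(\bun_G,k)^\wedge_{\varphi})^\omega_0\) into \((\D(\bun_G,k)^\wedge_{\varphi})^\omega_{\chi}\), and since the latter is supported on the single stratum \(\bun_G^{b_{\chi,G}}\) the operator vanishes on every other stratum.

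I do not expect a serious obstacle here; the part needing the most care is the bookkeeping of normalisations — matching the generator \(\chi_1\) of \(X^*(S_{\varphi})\) with the \(+1\) degree shift of \(T_{\std}\) under the (sign-twisted) Kottwitz conventions of the paper, for which \cref{eq: stalk hecke operator} is the anchor, and checking that the isotypic decomposition of \cref{lem: isotypic decomposition Langlands-Shahidi type} really isolates \(\oo(\chi,\varphi)*-\) with the correct index. Alternatively, once \((\D(\bun_G,k)^\wedge_{\varphi})^\omega\) has been identified with sheaves on the discrete set \(|\bun_G^{\mathrm{ss}}|\), one could deduce the statement from the torus computation of \cite[Lemma 5.3.3]{categorical-fargues-tori}.
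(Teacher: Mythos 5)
Your proof is correct, and takes a more hands-on route than the paper's. The paper's own proof is a one-liner, declaring the result ``immediate'' from a result in \cite{categorical-fargues-tori}; that cited result is a computation for tori, and applying it here implicitly uses that the line bundles $\oo(\chi,\varphi)$ come from the map to $BZ(\widehat{G})$ supplied by \cref{lem: locsys irred is gerbe}, so that the twist factors through the centre. You instead argue directly in $\GL_n$: you isolate $\oo(m\chi_1,\varphi)*-$ as the sole $S_\varphi$-isotypic piece of $T_{\std^{\otimes m}}$ via \cref{lem: isotypic decomposition Langlands-Shahidi type}, and then read off the effect on strata from the geometry of the Hecke correspondence, using that every cell supporting the Satake sheaf $\mathcal{S}(\std^{\otimes m})$ shifts the degree by exactly $m$ (all weights of $\std^{\otimes m}$ have coordinate sum $m$, and subtracting roots preserves the sum) together with the fact, from the proof of \cref{lem: decomposition D(Bun_G) at irreducible parameters}, that compact objects of the formal fibre are supported on the semistable locus. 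This is self-contained, anchors the sign conventions via \cref{eq: stalk hecke operator}, and in fact yields the slightly stronger assertion that $\oo(\chi,\varphi)*-$ carries $(\D(\bun_G,k)^\wedge_{\varphi})^\omega_\xi$ into $(\D(\bun_G,k)^\wedge_{\varphi})^\omega_{\xi+\chi}$ for every $\xi$, which is what one actually wants downstream in \cref{lem: general equivalence criterion} to upgrade the map on the $0$-summand to the required equivalence onto the $\chi$-summand by invertibility of $\oo(\chi,\varphi)$. You also flag the paper's torus route as an alternative at the end, so the two are consistent.
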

\begin{proof}
    This is immediate from \cite[Proposition 5.3.3.]{categorical-fargues-tori}.
\end{proof}
\begin{thm}\label{thm: categorical equivalence irreducible}
    Let \(\Lambda\) be a \(\bbZ_\ell[\sqrt{q},\mu_{p^\infty}]\)-algebra.
    Let \(\locsys_{\widehat{G}}^\irred\) denote the open substack consisting of parameters that are irreducible.
    We have a \(\Ind\perf(\locsys_{\widehat{G}}^\irred)\)-linear equivalence 
    \begin{equation*}
        \Ind\coh_{\nilp}^\qc(\locsys_{\widehat{G}}^\irred)\simeq \D(\bun_G,\Lambda)\otimes_{\Ind\perf(\locsys_{\widehat{G}})}\Ind\perf(\locsys_{\widehat{G}}^\irred)
    \end{equation*}
    sending \(\oo\) to \(\mathcal{W}^{\locsys_{\widehat{G}}^\irred}\) preserving compact objects.
\end{thm}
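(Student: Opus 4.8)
The strategy is to specialise the general criterion \cref{lem: general equivalence criterion} to the open subscheme $V\defined\locsys_{\widehat{G}}^{\irred,\coarse}\subset\locsys_{\widehat{G}}^\coarse$. First I would record that this really is an open subscheme with $p^{-1}(V)=\locsys_{\widehat{G}}^\irred$: since irreducibility of a parameter depends only on its semisimplification, $\locsys_{\widehat{G}}^\irred$ is saturated (it equals the preimage of its image in the coarse space), so that image is open. From $\locsys_{\widehat{G}}^\irred=\locsys_{\widehat{G}}\times_{\locsys_{\widehat{G}}^\coarse}V$, base change of $\Ind\perf$ along the open immersion $V\hookrightarrow\locsys_{\widehat{G}}^\coarse$, and associativity of the relative tensor product, one obtains an identification $\D(\bun_G,\Lambda)\otimes_{\Ind\perf(\locsys_{\widehat{G}})}\Ind\perf(\locsys_{\widehat{G}}^\irred)\simeq\D^V(\bun_G,\Lambda)$ carrying $\mathcal{W}^{\locsys_{\widehat{G}}^\irred}$ to $\mathcal{W}^V$; so the asserted equivalence is precisely the conclusion of \cref{lem: general equivalence criterion} for this $V$, and it remains to check the hypotheses of that lemma.

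The hypothesis $\nilp|_{p^{-1}(V)}=\{0\}$ is \cref{lem: nilpotent singular support irreducible}. For a semisimple parameter $\varphi$ over an algebraically closed field $k$ lying in $V$ — necessarily irreducible — \cref{lem: locsys irred is gerbe} shows that $\locsys_{\widehat{G}}^\irred\to\locsys_{\widehat{G}}^{\irred,\coarse}$ is a gerbe banded by $Z(\widehat{G})=\bbG_m$, so $S_\varphi=Z(\widehat{G})$ is a torus; moreover, combining \cref{cor: localize at langlands-shahidi fiber product} with \cref{lem: locsys irred is gerbe}, the fibre product $p^{-1}(V_k)\times_{V_k}(V_k)^\wedge_\varphi$ is a $\bbG_m$-gerbe over $(V_k)^\wedge_\varphi$, which splits (e.g.\ by lifting $\varphi$ to a genuine $\widehat{G}$-valued representation, available for $\GL_n$), hence is of the required form $N^\wedge_0/S_\varphi$ with $N$ the affine connected component of $k\times_{\bbZ_\ell}\locsys_{\widehat{G}}^\coarse$ containing $\varphi$ (taken as the base point $0\in N$) and the $S_\varphi$-action trivial. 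The direct sum decomposition $(\D(\bun_G,k)^\wedge_\varphi)^\omega\simeq\bigoplus_{\chi\in X^*(S_\varphi)}(\D(\bun_G,k)^\wedge_\varphi)^\omega_\chi$ with $(\D(\bun_G,k)^\wedge_\varphi)^\omega_0=(\D(\bun_G^1,k)^\wedge_\varphi)^\omega$ is \cref{lem: decomposition D(Bun_G) at irreducible parameters} — the neutral character $0\in X^*(S_\varphi)$ corresponding, under the identification $X^*(S_\varphi)\cong|\bun_G^{\mathrm{ss}}|$, to the trivial bundle $b=1$ — and the statement that $\oo(\chi,\varphi)*-$ restricts to an equivalence $(\D(\bun_G,k)^\wedge_\varphi)^\omega_0\to(\D(\bun_G,k)^\wedge_\varphi)^\omega_\chi$ is \cref{lem: hecke stalk irreducible case}.

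Finally I would verify the genericity hypothesis of \cref{lem: general equivalence criterion}: every Schur-irreducible $\pi\in\D(G(E))$ with $L$-parameter the irreducible parameter $\varphi$ is generic. Such a $\pi$ is, up to shift, an irreducible smooth representation of $G(E)$ with Fargues--Scholze parameter $\varphi$; and $\varphi$ is of Langlands--Shahidi type (the case $r=1$; equivalently $\locsys_{\widehat{G}}^\irred$ is contained in the Langlands--Shahidi locus, since the cotangent complex of the open immersion $\locsys_{\widehat{G}}^\irred\hookrightarrow\locsys_{\widehat{G}}$ vanishes), so \cref{lem: Langlands-Shahidi type generic} gives that $\pi$ is generic, i.e.\ $\Hom(\cind_{U(E)}^{G(E)}\psi,\pi)\neq 0$. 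Feeding this into \cref{lem: general equivalence criterion} produces the $\Ind\perf(\locsys_{\widehat{G}}^\irred)$-linear equivalence $\Ind\coh_{\nilp}^\qc(\locsys_{\widehat{G}}^\irred)\simeq\D(\bun_G,\Lambda)\otimes_{\Ind\perf(\locsys_{\widehat{G}})}\Ind\perf(\locsys_{\widehat{G}}^\irred)$ sending $\oo$ to $\mathcal{W}^{\locsys_{\widehat{G}}^\irred}$ and preserving compact objects. Once \cref{sec: reduction} and the present section are in place, the argument is purely formal; the only genuinely non-formal ingredients — and hence the real content — are the genericity input \cref{lem: Langlands-Shahidi type generic} for supercuspidal-type parameters and the torus-level Hecke computation underlying \cref{lem: hecke stalk irreducible case} imported from \cite{categorical-fargues-tori}, so no further obstacle needs to be cleared in this case.
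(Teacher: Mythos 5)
Your proposal is correct and follows essentially the same route as the paper's proof: you verify the hypotheses of \cref{lem: general equivalence criterion} for $V=\locsys_{\widehat{G}}^{\irred,\coarse}$ by citing \cref{lem: nilpotent singular support irreducible}, \cref{cor: localize at langlands-shahidi fiber product} together with \cref{lem: locsys irred is gerbe}, \cref{lem: decomposition D(Bun_G) at irreducible parameters}, \cref{lem: hecke stalk irreducible case}, and \cref{lem: Langlands-Shahidi type generic}, exactly as the paper does. The extra detail you supply (saturatedness of $\locsys_{\widehat{G}}^\irred$, the gerbe splitting, and the remark that irreducible parameters are the $r=1$ case of Langlands--Shahidi type) is correct and merely unpacks what the paper leaves implicit.
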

\begin{proof}
    We only need to check the conditions of \cref{lem: general equivalence criterion}.
    That \(\nilp|_{\locsys_{\widehat{G}}^\irred}=\{0\}\) is \cref{lem: nilpotent singular support irreducible}.
    That \(\locsys_{\widehat{G}}^\irred\times_{\locsys_{\widehat{G}}^{\irred,\coarse}}(\locsys_{\widehat{G}}^{\coarse})^{\wedge}_{\varphi}\simeq N^\wedge_{0}/S_{\varphi}\), where \(N\) is an affine scheme with \(0\in N\) and it admits a trivial \(S_{\varphi}\)-action for any \(\varphi\in U(k)\) for \(k\) an algebraically closed \(\Lambda\)-field is \cref{cor: localize at langlands-shahidi fiber product}.
    The direct sum decomposition 
    \begin{equation*}
        (\D(\bun_G,k)^\wedge_{\varphi})^\omega\simeq\bigoplus_{\chi\in X^*(S_{\varphi})}(\D(\bun_G,k)^\wedge_{\varphi})^\omega_\chi
    \end{equation*}
    with the required properties is \cref{lem: hecke stalk irreducible case} and \cref{lem: decomposition D(Bun_G) at irreducible parameters}.
    The genericity statement is \cref{lem: Langlands-Shahidi type generic}.
\end{proof}
\begin{rem}
    The previous theorem already holds for any \(\bbZ_\ell\)-algebra, see also the discussion in \cref{rem: main theorem works over zl}.
\end{rem}
Let \(k\) be an algebraically closed field.
\begin{notn}
    Let \(\varphi\) be a semisimple \(L\)-parameter for valued in \(k\).
    Write \(\mathcal{W}_{\varphi}\) for the image of \(i_{\varphi}\oo*\mathcal{W}\), where \(i\from{\varphi}\times_{\locsys_{\widehat{G}}^\coarse}\locsys_{G}\to\locsys_{\widehat{G}}\) is the closed embedding induced by \(\varphi\).
    This canonically lives in \(\D(\bun_G,k)_{\varphi}\) and not just the formal fiber.
\end{notn}
\begin{lem}\label{lem: whittaker localized at irreducible parameter}
    If \(\varphi\) is an irreducible parameter valued in \(k\), then \(\mathcal{W}_{\varphi}=i_{1!}\pi\), where \(\pi\) is the irreducible supercuspidal representation attached to \(\varphi\) under the local Langlands correspondence, concentrated in degree 0.
\end{lem}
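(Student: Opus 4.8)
The plan is to transport the statement into the category of smooth $G(E)$-representations, where $\mathcal W_\varphi$ becomes a fibre of the Gelfand--Graev representation that local Langlands in families computes. Write $W\defined\cind_{U(E)}^{G(E)}\psi$, so $\mathcal W=i_{1!}W$, and recall that $\mathcal W_\varphi$ is the image of $\mathcal W$ under the localisation functor $q_\varphi\from\D(\bun_G,k)\to\D(\bun_G,k)_\varphi$, i.e.\ $\mathcal W_\varphi\simeq\mathcal W\lotimes_{\speccenter(G)}k(\varphi)$. Now $i_1^*$ intertwines the $\speccenter(G)$-action on $\D(\bun_G,k)$ coming from the spectral action with the $\speccenter(G)$-action on $\D(G(E),k)$ coming from local Langlands in families (\cref{lem: generic local langlands in families}); this is part of the construction of the spectral action and of its compatibility with the recollement in \cite[Chapter IX]{geometrization}. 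Since $\Ind\perf(\locsys_{\widehat G})$ is rigid, the colimit-preserving left adjoint $i_{1!}$ is $\speccenter(G)$-linear as well, hence commutes with $q_\varphi$, giving $\mathcal W_\varphi\simeq i_{1!}\bigl(W\lotimes_{\speccenter(G)}k(\varphi)\bigr)$; it remains to show $W\lotimes_{\speccenter(G)}k(\varphi)\simeq\pi$ in degree $0$.

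For this, recall the spectral action identifies $\speccenter(G)$ with the Bernstein centre $\berncenter(G)_k$ of $\D(G(E),k)$ (\cref{lem: generic local langlands in families}, \cite{local-langlands-in-families}) and that $W$ is the universal Gelfand--Graev module, $\End_{G(E)}(W)=\berncenter(G)_k$ (\cite{integral-whittaker-model}). The irreducible parameter $\varphi$ determines a closed point of the supercuspidal Bernstein component $\mathfrak s$ attached to $\pi$; since tensoring over $\speccenter(G)$ with $k(\varphi)$ only sees that component, $W\lotimes_{\speccenter(G)}k(\varphi)\simeq W_{\mathfrak s}\lotimes_{\berncenter(G)_{k,\mathfrak s}}k(\varphi)$. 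Every irreducible object of a supercuspidal block of $\GL_n$ is generic, so uniqueness of Whittaker models shows $W_{\mathfrak s}$ is a projective generator of that block with endomorphism ring $\berncenter(G)_{k,\mathfrak s}$; the induced equivalence $\D(G(E),k)[\mathfrak s]\simeq\D(\berncenter(G)_{k,\mathfrak s})$ carries $W_{\mathfrak s}$ to $\berncenter(G)_{k,\mathfrak s}$ (here one invokes \cite{secherre-stevens} for the structure of the block over $k$), so $W_{\mathfrak s}\lotimes_{\berncenter(G)_{k,\mathfrak s}}k(\varphi)$ is concentrated in degree $0$ and corresponds to $k(\varphi)$, that is, to the unique irreducible object with parameter $\varphi$, the supercuspidal $\pi$. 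Combining the two paragraphs, $\mathcal W_\varphi\simeq i_{1!}\pi$ with $\pi$ in degree $0$.

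The main obstacle is the first paragraph: pinning down that $i_{1!}$ is $\speccenter(G)$-linear, so that localisation at $\varphi$ commutes with it. One can avoid this using \cref{thm: categorical equivalence irreducible}: under that equivalence $\mathcal W\mapsto\oo$, hence $\mathcal W_\varphi\simeq\bbL_G\bigl(\oo\lotimes_{\speccenter(G)}k(\varphi)\bigr)$, and $\oo\lotimes_{\speccenter(G)}k(\varphi)$ is the pushforward of the structure sheaf of the fibre $BS_\varphi$ of $\locsys_{\widehat G}\to\locsys_{\widehat G}^\coarse$ over $\varphi$, a $Z(\widehat G)$-gerbe by \cref{lem: locsys irred is gerbe}. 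This object lies in the summand indexed by the trivial character of the product decomposition of the formal fibre used in the proof of that theorem, on which the equivalence is modelled by $i_{1!}\circ(W\lotimes_{\speccenter(G)}-)$; this reduces one again to the computation above.
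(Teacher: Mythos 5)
Your proof is correct and takes essentially the same route as the paper: the paper's one-line argument identifies $\mathcal{W}_\varphi$ as the image of $x_\varphi\oo$ under the chain $\Mod_{\speccenter(G)_{\leq e}}\simeq\Mod_{\End(\mathcal{W}^{P^e})}\subset\D(G(E),k)\xto{i_{1!}}\D(\bun_G,k)$, which is exactly your factorisation $\mathcal{W}_\varphi\simeq i_{1!}\bigl(W\lotimes_{\speccenter(G)}k(\varphi)\bigr)$ followed by the block computation. Your third paragraph, offered as a fallback, is unnecessary and not what the paper does, but it is a valid (and non-circular) alternative.
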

\begin{proof}
    Let \(P^e\) be an compact open pro-\(p\) subgroup of \(W_E\), giving rise to \(\speccenter(G)_{\leq e}\), so that \(\varphi\in\spec(\speccenter(G)_{\leq e}\otimes_{\bbZ_\ell[\sqrt{q}]}k)\).
    Let \(x_{\varphi}\from\spec(k)\to\spec(\speccenter(G)_{\leq e}\otimes_{\bbZ_\ell[\sqrt{q}]}k)\) be the point corresponding to \(\varphi\).
    Then \(\mathcal{W}_{\varphi}\) is the image of \(x_{\varphi}\oo\) under the functors 
    \[\Mod_{\speccenter(G)_{\leq e}\otimes_{\bbZ_\ell[\sqrt{q}]}k}\simeq\Mod_{\End(\mathcal{W}^{P^e}\otimes_{\bbZ_\ell[\sqrt{q}]}k)}\subset\D(G(E),k)\xto{i_{1!}}\D(\bun_G,k),\]
    the claim follows easily.
\end{proof}
\begin{lem}\label{lem: lift to fiber}
    Let \(\pi\) be a Schur-irreducible object in \(\D(\bun_G,k)\) with parameter \(\varphi\) and assume that \(\End(\pi)\) is coconnective.
    Then \(\pi\) lifts to an object \(\pi^{\lift}\in\D(\bun_G,k)_{\varphi}\).
\end{lem}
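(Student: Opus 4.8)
The plan is to show that $\pi$ is, in a precise sense, already defined over the residue field at $\varphi$. By \cref{cor: localizing D(Bun_G) at a parameter} the assumption that $\pi$ has parameter $\varphi$ puts $\pi$ in $\D(\bun_G,k)^\wedge_\varphi$. Set $\widehat R\defined\oo\bigl((k\times_{\bbZ_\ell}\locsys_{\widehat{G}}^\coarse)^\wedge_\varphi\bigr)$, a complete noetherian local $k$-algebra with residue field $k$ and maximal ideal $\mathfrak m$; recall from the formalism of \cref{sec: reduction} that $\D(\bun_G,k)^\wedge_\varphi$ is linear over $\Mod_{\widehat R}$, that $\D(\bun_G,k)_\varphi$ is its base change along $\widehat R\to k$, and that the comparison functor $\D(\bun_G,k)_\varphi\to\D(\bun_G,k)^\wedge_\varphi$ is restriction of scalars. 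For a $\Mod_{\widehat R}$-linear category, lifting an object $c$ along this functor is the same as upgrading the canonical $\bbE_1$-$k$-algebra map $\widehat R\to\End(c)$ to a factorization through $\widehat R\to k$; so I want to show that the map $\alpha\from\widehat R\to\End(\pi)$ recording the action of the completed Bernstein centre on $\pi$ factors, as a map of $\bbE_1$-$k$-algebras, through the residue map $\rho\from\widehat R\to k$.

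To extract the hypotheses: since $k$ is algebraically closed and $\pi$ is Schur-irreducible, $H^0\End(\pi)=k$, so $H^0(\alpha)\from\widehat R\to H^0\End(\pi)=k$ is a $k$-algebra map out of a local ring with residue field $k$, hence is $\rho$. Because $\End(\pi)$ is coconnective, the inclusion of its bottom cohomology $\iota\from k=H^0\End(\pi)\to\End(\pi)$ is a map of $\bbE_1$-rings and fits in a fibre sequence $k\xto{\iota}\End(\pi)\to\tau^{\geq 1}\End(\pi)$. Composing $\alpha$ with the projection to $\tau^{\geq 1}\End(\pi)$ yields a $k$-linear map from the discrete object $\widehat R$ into a complex concentrated in cohomological degrees $\geq 1$, which vanishes (over a field, $\rhom_k(\widehat R,-)$ preserves the property of being concentrated in positive cohomological degrees). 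Thus $\alpha$ is already equivalent to $\iota\circ\rho$ as a map of underlying $k$-modules — in particular it annihilates $\mathfrak m$.

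The substantive step — which I expect to be the main obstacle — is to promote this to an equivalence of $\bbE_1$-$k$-algebra maps. For that I would run obstruction theory along the Postnikov tower $\End(\pi)\simeq\lim_n\tau^{\leq n}\End(\pi)$, which (by coconnectivity) starts at $\tau^{\leq 0}\End(\pi)=k$ and whose $n$-th stage exhibits $\tau^{\leq n}\End(\pi)$ as a square-zero extension of $\tau^{\leq n-1}\End(\pi)$ by $H^n\End(\pi)[-n]$ with $n\geq 1$. Both $\alpha$ and $\iota\circ\rho$ lie over $\rho$ on $H^0$; and given a common lift over $\tau^{\leq n-1}\End(\pi)$ the further lifts over $\tau^{\leq n}\End(\pi)$ form a torsor over $\maps_{\widehat R}\bigl(L^{\bbE_1}_{\widehat R/k},\rho^{*}H^n\End(\pi)[-n]\bigr)$, and this space is contractible because the $\bbE_1$-cotangent complex of the classical ring $\widehat R$ is connective while $H^n\End(\pi)[-n]$ sits in cohomological degree $n\geq 1$; the torsor is non-empty since $\alpha$ exhibits a lift. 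Passing to the limit gives $\alpha\simeq\iota\circ\rho$, so $\widehat R$ acts on $\pi$ through $k$, which yields the lift $\pi^{\lift}\in\D(\bun_G,k)_\varphi$. The points requiring care are the $\bbE_1$-coherence of this deformation argument (one must use the $\bbE_1$-cotangent complex, which stays connective precisely because $\widehat R$ is discrete) and the precise bookkeeping behind the first paragraph's reduction; the coconnectivity of $\End(\pi)$ enters exactly in keeping every Postnikov layer in positive cohomological degree.
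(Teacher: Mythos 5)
Your overall strategy is right: you correctly reduce the lemma to showing that the $\bbE_1$-algebra map $\widehat R\to\End(\pi)$ factors (as an $\bbE_1$-map) through the residue map $\widehat R\to k$, you correctly use Schur-irreducibility and algebraic closedness to identify $H^0\End(\pi)=k$, and you correctly single out the connectivity of the source and the coconnectivity of $\End(\pi)$ as the hypotheses that must make the factorization automatic. But the obstruction-theory step has a genuine gap. The cohomological truncations $\tau^{\leq n}\End(\pi)$ for $n\geq 1$ are not $\bbE_1$-rings: writing $S=\End(\pi)$ with $H^0(S)=k$, the multiplication $H^i(S)\otimes H^j(S)\to H^{i+j}(S)$ does not restrict to $\tau^{\leq n}S$ when $i,j\leq n$ but $i+j>n$ (already $S=k[x]/(x^3)$, $|x|=1$, gives a counterexample with $n=1$). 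Equivalently, $\tau^{\leq n}$ (i.e.\ homotopical $\tau_{\geq -n}$) is not lax monoidal for $n\geq 1$, unlike the connective cover $\tau_{\geq 0}=\tau^{\leq 0}$. So there is no tower of square-zero extensions to run your deformation argument against, and the $\bbE_1$-cotangent-complex computation is not well-posed. (As a minor notational point, for a coconnective object one has $\End(\pi)\simeq\colim_n\tau^{\leq n}\End(\pi)$, not $\lim$, and the transition maps go $\tau^{\leq n-1}\to\tau^{\leq n}$ — the ``wrong way'' for a lifting-along-square-zero-extension argument even if the truncations were rings.)

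The paper's proof sidesteps all of this with the adjunction between the (monoidal) inclusion of connective $\bbE_1$-rings and the connective cover $\tau_{\geq 0}$, which is the right adjoint and is lax monoidal. Since $\speccenter(G)$ (and $\speccenter(G)/m_\varphi$) are connective and $\tau_{\geq 0}\End(\pi)=\pi_0\End(\pi)\cong k$ by coconnectivity plus Schur-irreducibility, one gets $\map_{\bbE_1}(\speccenter(G),\End(\pi))\simeq\map_{\bbE_1}(\speccenter(G),k)$, and likewise for $\speccenter(G)/m_\varphi$; the whole lifting problem then becomes a statement about classical ring maps, where it is solved by the definition of $m_\varphi$. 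This is both a correct replacement for your step 3 and considerably shorter — in particular, it also makes your explicit module-level computation (that the composite $\widehat R\to\End(\pi)\to\tau^{\geq 1}\End(\pi)$ vanishes) unnecessary.
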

\begin{proof}
    Let \(m_{\varphi}\) be the maximal ideal of \(\speccenter(G)\) attached to \(\varphi\).
    Then we have a morphism \(\speccenter(G)\to\End(\pi)\) and a lift exists if we can find a dashed arrow making the following diagram commutative: 
    \begin{equation*}
        \begin{tikzcd}
            \speccenter(G) \arrow[rr] \arrow[rd] &                                               & \End(\pi) \\
                                                 & \speccenter(G)/m_{\varphi} \arrow[ru, dashed] &          
            \end{tikzcd}
    \end{equation*}
    Since \(\speccenter(G)\) and \(\speccenter(G)/m_{\varphi}\) are connective as ring spectra, it suffices to find a dashed arrow making the following diagram commutative:
    \begin{equation*}
        \begin{tikzcd}
            \speccenter(G) \arrow[rr] \arrow[rd] &                                               & \tau^{\leq 0}\End(\pi)=\pi_0\End(\pi)\cong k \\
                                                 & \speccenter(G)/m_{\varphi} \arrow[ru, dashed] &          
            \end{tikzcd}
    \end{equation*}
    In this case, the dashed arrow exists and is induced by \(\varphi\).
\end{proof}
\begin{rem}
    The hypothesis that \(\End(\pi)\) is coconnective holds if \(\pi\) is of the form \(i_{b!}A\) with \(A\) concentrated in a single degree or if \(\pi\) is in the heart of a(ny) \(t\)-structure.
    The first case is the only one we need.
\end{rem}
\begin{lem}\label{lem: hecke operator on whittaker}
    Let \(\varphi\) be an irreducible parameter valued in \(k\).
    Then \(\oo(\chi,\varphi)*\mathcal{W}_{\varphi}\) is concentrated in degree 0.
\end{lem}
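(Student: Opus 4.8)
\emph{Plan of proof.} I would first rewrite the object in question: by \cref{lem: whittaker localized at irreducible parameter}, $\mathcal W_\varphi = i_{1!}\pi$ with $\pi$ the supercuspidal representation attached to $\varphi$, concentrated in degree $0$, so the task is to show each Hecke stalk $\oo(\chi,\varphi)*i_{1!}\pi$ is concentrated in degree $0$. Write $S_\varphi = Z(\widehat G) = \bbG_m$, so $X^*(S_\varphi) = \bbZ$. There are two easy reductions. Since $T_{\det}$ is, up to the Galois twist by $\det\circ\varphi$, just the pullback along the $\otimes\oo(1)$-automorphism of $\bun_G$, it is $t$-exact; by \cref{lem: isotypic decomposition Langlands-Shahidi type} one has $T_{\det}\cong(\det\circ\varphi)\boxtimes(\oo(n,\varphi)*-)$, so $\oo(n,\varphi)*-$ is $t$-exact and we may assume $0\le\chi\le n$. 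For such $\chi$ the representation $\wedge^\chi$ of the standard representation of $\widehat G$ is $S_\varphi$-isotypic of weight $\chi$, so by \cref{lem: isotypic decomposition Langlands-Shahidi type} again $\oo(\chi,\varphi)*-$ is a $W_E$-equivariant retract of $T_{\wedge^\chi\mathrm{std}}$; hence it suffices to prove that $T_{\wedge^\chi\mathrm{std}}(i_{1!}\pi)$ is concentrated in degree $0$.

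By \cref{lem: hecke stalk irreducible case} this sheaf is supported on the single semistable stratum $\bun_G^{b_{\chi,G}}$, so it is $i_{b_{\chi,G}!}$ of a complex $A$ of smooth $G_{b_{\chi,G}}(E)$-representations, and by the standard description of the stalks of Hecke operators $A$ is, up to the twist and shift built into the normalisation of $\mathcal S$, the compactly supported cohomology (at infinite level, with $\pi$-coefficients) of the $(\GL_n,\mu)$ local Shimura variety attached to the minuscule cocharacter $\mu=(1^\chi,0^{n-\chi})$ and the basic element $b_{\chi,G}$; for $\chi=1$ this is precisely \cref{eq: stalk hecke operator}. When $k$ has characteristic $0$ this cohomology is concentrated in its middle degree by Harris--Taylor (equivalently, by \cite[Theorem 1.8]{t-exact} applied to $\varphi$), which after the normalising shift is degree $0$; this is the use of a result intertwined with ours that is flagged in the introduction.

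When $k$ has characteristic $\ell$ the middle-degree concentration is not classical, and this is the main obstacle. The plan is the lifting argument used throughout \cref{sec: t-exactness}: an irreducible $\overline{\bbF_\ell}$-parameter of $\GL_n$ is monomial and hence lifts to an irreducible parameter $\widetilde\varphi$ over $\overline{\bbZ_\ell}$; using \cite{local-langlands-in-families} and \cref{lem: whittaker localized at irreducible parameter} integrally, $\oo(\chi,\varphi)*\mathcal W_\varphi$ is the derived mod-$\ell$ reduction of $\oo(\chi,\widetilde\varphi)*\mathcal W_{\widetilde\varphi}$, whose $K$-invariants are perfect complexes of $\overline{\bbZ_\ell}$-modules by \cref{lem: compact objects in localization compact and ULA}; it then remains to rule out $\ell$-torsion in these cohomology groups, which for the supercuspidal part of the Lubin--Tate tower reduces the claim to the characteristic $0$ case. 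Equivalently, since \cref{thm: categorical equivalence irreducible} holds integrally, the assertion is the $t$-exactness on strata of $\bbL_G^{\widetilde\varphi}$ and follows from the characteristic-$0$ $t$-exactness by exactly such a reduction; I would therefore carry out this reduction here once and reuse it in \cref{sec: t-exactness}. The genuine difficulty is thus entirely in positive characteristic and amounts to the torsion-freeness statement.
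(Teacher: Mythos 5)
Your characteristic-zero step and the preliminary reductions (rewriting $\mathcal W_\varphi=i_{1!}\pi$ via \cref{lem: whittaker localized at irreducible parameter}, realising $\oo(\chi,\varphi)*-$ as a retract of a Hecke operator, invoking \cite[Theorem 1.8.]{t-exact}) match the paper. The gap is in positive characteristic: you correctly flag that the whole difficulty sits there, but you then leave it as ``it remains to rule out $\ell$-torsion'' and declare the claim to ``amount to the torsion-freeness statement.'' That is not a proof, and it is also not the right target: you give no argument for torsion-freeness of the integral cohomology of the Lubin--Tate tower (a genuinely hard statement), and the paper never proves or needs it.

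The missing idea is to use \cref{thm: categorical equivalence irreducible} \emph{in characteristic $\ell$ first}, before any lifting. That theorem (already established independently of this lemma) gives $\D(\bun_G^{b_{\chi,G}},k)_\varphi\simeq\Mod_k$, so every object there is a sum of shifts of a single irreducible $\rho$, and an object with $\pi_0\End=k$ must be $\rho[i]$ for a \emph{single} integer $i$. Since $\pi_0\End(\oo(\chi,\varphi)*\mathcal W_\varphi)=k$ (again by the equivalence, after lifting an irreducible subquotient to the fiber via \cref{lem: lift to fiber} and comparing endomorphisms in the fiber and the formal fiber), one concludes that $\oo(\chi,\varphi)*\mathcal W_\varphi$ is concentrated in one a priori unknown degree $i$. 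Only then does one lift $\varphi$ and $\mathcal W$ to a DVR $\oo$ with residue field $k$: the mod-$\ell$ reduction of $\oo(\chi,\varphi')*\mathcal W_{\varphi'}$ being concentrated in degree $i$ forces the integral object to be concentrated in degree $i$, and inverting $\ell$ identifies $i=0$ by the characteristic-zero case. No torsion-freeness enters, because single-degree concentration mod $\ell$ is established categorically rather than deduced from the generic fibre. Without this step your reduction to characteristic zero does not close.
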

\begin{proof}
    If \(k\) is of characteristic 0, this follows from the fact that \(\oo(\chi,\varphi)\) is a direct summand of an Hecke operator \(T_V\) and \cite[Theorem 1.8.]{t-exact} and \cref{lem: whittaker localized at irreducible parameter}.
    Now let \(k\) be of characteristic \(\ell\).
    Note that \(\oo(\chi,\varphi)*\mathcal{W}_{\varphi}\in\D(\bun_G^{b_{\chi,G}},k)_{\varphi}\) and by \cref{thm: categorical equivalence irreducible} we know that \(\D(\bun_G^{b_{\chi,G}},k)_{\varphi}=\Mod_k\).
    In particular, there is a \(\pi\in\D(\bun_G^{b_{\chi,G}},k)_{\varphi}\) such that any object is of the form \(\bigoplus_i \pi^{(J_i)}[i]\) (for some potentially infinite \(J_i\)) and up to shift this object is uniquely determined by the property that \(\pi_0\End(\pi)=k\).
    Let \(\rho\) be a non-zero irreducible subquotient of \(H^i(\oo(\chi,\varphi)*\mathcal{W}_{\varphi})\) for some \(i\).
    Note that \(\pi_0\End_{\D(\bun_G^{b_{\chi,G}})_{\varphi}^\wedge}(\oo(\chi,\varphi)*\mathcal{W}_{\varphi})=k\) again by \cref{thm: categorical equivalence irreducible}.
    We claim that \(\pi_0\End_{\D(\bun_G^{b_{\chi,G}},k)_{\varphi}}(\rho)=k\), then \(\oo(\chi,\varphi)*\mathcal{W}_{\varphi}\cong\rho[i]\) for some \(i\).
    By \cref{lem: lift to fiber} we can lift \(\rho\) to \(\rho^\lift\in\D(\bun_G^{b_{\chi,G}})_\varphi\).
    There is a canonical morphism \(\pi_0\End_{\D(\bun_G^{b_{\chi,G}},k)_\varphi}(\rho^\lift)\to\pi_0\End_{\D(\bun_G^{b_{\chi,G}},k)}(\rho)\), which we claim is an isomorphism.
    Note that \cref{thm: categorical equivalence irreducible} gives us the following compatible equivalences: 
    \begin{equation*}
        \begin{tikzcd}
            \D(\bun_G^{b_{\chi,G}},k)_\varphi \arrow[r] \arrow[d, "{\oo(-\chi,\varphi)*-}"'] & \D(\bun_G^{b_{\chi,G}},k)^\wedge_\varphi \arrow[d, "{\oo(-\chi,\varphi)*-}"] \\
            \D(\bun_G^{1},k)_\varphi \arrow[r]                                         & \D(\bun_G^{1},k)^\wedge_{\varphi} .                                      
            \end{tikzcd}
    \end{equation*}
    In particular, it suffices to check that 
    \[
        \pi_0\End_{\D(\bun_G^{1},k)_\varphi}(\oo(-\chi,\varphi)*\rho^\lift)\to\pi_0\End_{\D(\bun_G^{1},k)^\wedge_{\varphi}}(\oo(-\chi,\varphi)*\rho)
    \]
    is an isomorphism, where \(\oo(-\chi,\varphi)*\rho^\lift\) is indeed a lift of \(\oo(-\chi,\varphi)*\rho\) to \(\D(\bun_G^{1},k)_\varphi\),
    since we already know \(\pi_0\End_{\D(\bun_G^{1})^\wedge_{\varphi}}(\oo(-\chi,\varphi)*\rho)=k\). 
    Under \cref{thm: categorical equivalence irreducible} this translates to the statement that if \(\mathcal{F}\in\Mod_k\), \(y\from\spec(k)\to \locsys_{\widehat{G}}^\coarse\) is a closed point, and we have \(\pi_0\End_{\qcoh((\locsys_{\widehat{G}}^\coarse)^\wedge_{\varphi})}(y_*\mathcal{F},y_*\mathcal{F})=k\), then 
    \[\pi_0\End_{\qcoh(k)}(\mathcal{F})\to\pi_0\End_{\qcoh((\locsys_{\widehat{G}}^\coarse)^\wedge_{\varphi})}(y_*\mathcal{F},y_*\mathcal{F})\] 
    is an isomorphism.
    However, the only way we can have \(\pi_0\End_{\qcoh((\locsys_{\widehat{G}}^\coarse)^\wedge_{\varphi})}(y_*\mathcal{F},y_*\mathcal{F})=k\) if \(\mathcal{F}=k[i]\) for some \(i\), in this case we easily obtain the claimed isomorphism.
    To conclude, we know that \(\oo(\chi,\varphi)*\mathcal{W}_{\varphi}=\rho[i]\) for some \(i\).

    We can find a discrete valuation ring \(\oo\) such that \(\oo/\ell=k\), \(\oo[1/\ell]\) is a field of characteristic 0 and \(\varphi\) lifts to \(\oo\).
    We fix such a lift and call it \(\varphi'\) and call \(\varphi'[1/\ell]\) the corresponding parameter for \(\oo[1/\ell]\).
    We can lift \(\mathcal{W}\) to \(\oo\) and obtain \(\mathcal{W}_{\varphi'}\), which satisfies \(\mathcal{W}_{\varphi'}/\ell=\mathcal{W}_{\varphi}\) and \(\mathcal{W}_{\varphi'}[1/\ell]=\mathcal{W}_{\varphi'[1/\ell]}\).
    Note that \(\spec(\oo)\times_{\locsys_{\widehat{G}}^\coarse}\locsys_{\widehat{G}}\) is a \(B\bbG_m\)-gerbe over \(\oo\), so it splits.
    Observe that \(\qcoh(\spec(\oo)\times_{\locsys_{\widehat{G}}^\coarse}\locsys_{\widehat{G}})\) canonically acts on \(\Mod_{\oo}\otimes_{\qcoh(\locsys_{\widehat{G}}^\coarse)}\D(\bun_G,\oo)\), so we can define \(\oo(\chi,\varphi')*\mathcal{W}_{\varphi'}\) such that \(\oo(\chi,\varphi')*\mathcal{W}_{\varphi'}/\ell=\oo(\chi,\varphi)*\mathcal{W}_{\varphi}\) and that \(\oo(\chi,\varphi')*\mathcal{W}_{\varphi'}[1/\ell]=\oo(\chi,\varphi'[1/\ell])*\mathcal{W}_{\varphi'[1/\ell]}\).
    Since \(\oo(\chi,\varphi')*\mathcal{W}_{\varphi'}/\ell\) is concentrated in degree \(i\), so is \(\oo(\chi,\varphi')*\mathcal{W}_{\varphi'}\).
    Thus, to determine \(i\) it suffices to determine 
    \[\oo(\chi,\varphi')*\mathcal{W}_{\varphi'}[1/\ell]=\oo(\chi,\varphi'[1/\ell])*\mathcal{W}_{\varphi'[1/\ell]},\]
    which is concentrated in degree 0 as discussed before, since we are now in characteristic 0.
\end{proof}
\begin{cor}\label{cor: t-exact irreducible case}
    Let \(\varphi\) be an irreducible parameter valued in \(k\).
    Then \(\oo(\chi,\varphi)*-\) is \(t\)-exact on \(\D(\bun_G,k)^\wedge_{\varphi}\).
\end{cor}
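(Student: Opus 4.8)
The plan is to prove that $\oo(\chi,\varphi)*-$ is right $t$-exact on $\D(\bun_G,k)^\wedge_\varphi$ for every $\chi\in X^*(S_\varphi)$. Since $\oo(\chi,\varphi)\otimes\oo(-\chi,\varphi)=\oo(0,\varphi)$ is the unit of the spectral action, the functor $F\defined\oo(\chi,\varphi)*-$ is an auto-equivalence of $\D(\bun_G,k)^\wedge_\varphi$ with inverse $\oo(-\chi,\varphi)*-$. If both $F$ and $F^{-1}$ are right $t$-exact, then $F$ preserves $\D^{\leq 0}$ exactly, hence — passing to right orthogonals — also preserves $\D^{\geq 0}$, so $F$ is $t$-exact. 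Thus it suffices to establish right $t$-exactness of all the functors $\oo(\pm\chi,\varphi)*-$.

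To prove right $t$-exactness I would first exhibit a convenient generating set of the connective part lying in the heart. For $\psi\in X^*(S_\varphi)$ put $\mathcal{W}_{\varphi,\psi}\defined\oo(\psi,\varphi)*\mathcal{W}_\varphi$. By \cref{lem: hecke stalk irreducible case} this lies in the summand $(\D(\bun_G,k)^\wedge_\varphi)_\psi=\D(\bun_G^{b_{\psi,G}},k)^\wedge_\varphi$ of \cref{lem: decomposition D(Bun_G) at irreducible parameters}; by \cref{lem: hecke operator on whittaker} it is concentrated in degree $0$, hence lies in the heart of the perverse $t$-structure; and it is compact, being the image under the Hecke operator $\oo(\psi,\varphi)*-$ of the compact object $\mathcal{W}_\varphi=i_{1!}\pi$ ($\pi$ supercuspidal, by \cref{lem: whittaker localized at irreducible parameter}), cf. \cref{lem: compact objects in localization compact and ULA}. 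Moreover $\mathcal{W}_\varphi$ generates $\D(\bun_G^1,k)^\wedge_\varphi$ as a localizing subcategory: under \cref{thm: categorical equivalence irreducible}, together with \cref{cor: formal completion langlands-shahidi} and \cref{lem: locsys irred is gerbe}, this summand is identified with ind-perfect complexes on the formal completion of $\locsys_{\widehat{G}}^{\irred,\coarse}$ at $\varphi$, that is, with torsion modules over a complete local ring, which is generated by its residue field — and $\mathcal{W}_\varphi=i_{1!}\pi$, with $\pi$ the unique irreducible object of its block, corresponds to that residue field. Applying the equivalence $\oo(\psi,\varphi)*-$ shows $\mathcal{W}_{\varphi,\psi}$ generates $\D(\bun_G^{b_{\psi,G}},k)^\wedge_\varphi$, so $\{\mathcal{W}_{\varphi,\psi}\}_{\psi\in X^*(S_\varphi)}$ generates $\D(\bun_G,k)^\wedge_\varphi$.

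Now I would invoke the standard description of the connective part of a $t$-structure on a compactly generated stable category that is compatible with filtered colimits: $(\D(\bun_G,k)^\wedge_\varphi)^{\leq 0}$ is then the smallest full subcategory containing $\{\mathcal{W}_{\varphi,\psi}\}_\psi$ and closed under colimits and extensions (the non-negative shifts of these generators being themselves colimits). Since $\oo(\chi,\varphi)*\mathcal{W}_{\varphi,\psi}=\mathcal{W}_{\varphi,\chi+\psi}$ again lies in the heart, hence in $\D^{\leq 0}$, and $\oo(\chi,\varphi)*-$ is colimit-preserving and exact, it carries $\D^{\leq 0}$ into $\D^{\leq 0}$; that is, $\oo(\chi,\varphi)*-$ is right $t$-exact, and likewise $\oo(-\chi,\varphi)*-$. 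By the first paragraph $\oo(\chi,\varphi)*-$ is $t$-exact. The point requiring care — and the main obstacle — is the identification of $\D^{\leq 0}$ with the colimit- and extension-closure of $\{\mathcal{W}_{\varphi,\psi}\}_\psi$: beyond the generation statement it uses that the perverse $t$-structure restricted to $\D(\bun_G,k)^\wedge_\varphi$ is compatible with filtered colimits, which holds since the support lies in the semistable locus, where (as $\langle 2\rho_G,\nu_b\rangle=0$ for $G=\GL_n$ and $b$ basic) the perverse $t$-structure is the standard $t$-structure on the orthogonal summands $\D(G_b(E),k)$.
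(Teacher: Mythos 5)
Your proof is correct and is essentially the argument the paper has in mind; the paper's one-line proof invokes \cref{lem: hecke operator on whittaker} and defers the remaining bookkeeping to the external reference, and your write-up fills in exactly that bookkeeping (reduction to right $t$-exactness via the auto-equivalence trick, generation by $\mathcal{W}_{\varphi,\psi}$ in the heart, identification of the connective part with the colimit-and-extension closure). One small remark: the identification of $\D^{\leq 0}$ with that closure does not follow from compatibility with filtered colimits alone — one also needs that the $\mathcal{W}_{\varphi,\psi}$ generate the abelian heart under colimits and extensions, which in your argument is supplied (implicitly) by the identification of each summand with torsion modules over a complete local noetherian ring, whose heart is generated by its residue field; it would be cleaner to foreground that ingredient rather than attribute the step only to filtered-colimit compatibility.
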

\begin{proof}
    This follows from \cref{lem: hecke operator on whittaker}, noting that \(\oo(\chi,\varphi)*\mathcal{W}_{\varphi}\) generates \(\D(\bun_G^{c_1=\chi},k)^\wedge_{\varphi}\), see also the discussion in the proof of \cite[Theorem 1.8.]{t-exact}.
\end{proof}

	\section{Explicit computation of Hecke operators at parameters of Langlands-Shahidi type}\label{sec: computation Hecke operators}

\begin{notn}
    \phantom{.}
    \begin{itemize}
        \item We write \(W_G\) for the Weyl group of \(\GL_n\), we identify this with automorphisms of the set \(\{1,\dots,n\}\).
        \item We define \(\oo(a|b)\defined\oo(a/b)^{\oplus\gcd(a,b)}\) for vector bundles on the Fargues-Fontaine curve.
        
        \item Given \(b\in B(\mathrm{GL}_n)\) we write \(\numin(b)\) for the minimal slope of \(b\) and \(\numax(b)\) for the maximal slope of \(b\).
        We use a similar notation for vector bundles, we define \(\numin(0)=\infty\).
        Any vector bundle \(\mathcal{E}=\mathcal{E}'\oplus\mathcal{E}^{\min}\) such that 
        \(\mathcal{E}^{\min}\) is semistable and \(\numin(\mathcal{E}')>\numax(\mathcal{E}^{\min})\).
        We write \(\rkmin(\mathcal{E})\defined\rk(\mathcal{E}^{\min})\) and \(\degmin(\mathcal{E})\defined\deg(\mathcal{E}^{\min})\).
        Analogously we will write \(\degmin(b)=\degmin(\mathcal{E}_b)\) and \(\rkmin(b)=\rkmin(\mathcal{E}_b)\). 
        
        \item We write \(\chi\succeq\chi'\) for \(\chi,\chi'\in\bbZ^r\) if we have \(\chi(i)\geq\chi'(i)\) for each \(i\).
        \item We write \(|\chi|\defined\chi(1)+\dots+\chi(r)\).
        
        \item We have a modulus character \(\delta_P\) and for \(\theta\in B(M)\) we have \(\delta_{P,\theta}\) as in \cite[Section 3.1]{imaihamann}.
        
        \item We will be careful distinguishing the symbols ``\(\oplus\)'' and ``\(\times\)''.
        When we write \(\mathcal{E}\oplus\mathcal{F}\) we mean this as a \(\GL_{\rk(\mathcal{E})+\rk(\mathcal{F})}\)-bundle, whereas when we write \(\mathcal{E}\times\mathcal{F}\) we mean this as a \(\GL_{\rk(\mathcal{E})}\times\GL_{\rk(\mathcal{F})}\) bundle.

        \item We write \(\ind_{P(E)}^{G(E)}\) for the induction and \(\res_{P(E)}^{G(E)}\) for the restriction, and write
        \begin{equation*}
        \nind_{P(E)}^G(E)\cong\ind_{P(E)}^{G(E)}(-\otimes\delta_P^{1/2}) \text{ and } \nres_{P(E)}^{G(E)}\cong\delta^{-1/2}_P\res_{P(E)}^{G(E)}
        \end{equation*}
        for the normalized induction respectively normalized restriction.
        
        \item We let \(\eta_{M,b}\) denote the natural embedding of a parameter in \(\widehat{L}\) to a parameter in \(\widehat{M_b}\).
        
        \item Given \(\mu\) a dominant cocharacter of \(G\), we write \(T_{\mu}\defined T_{\Delta(\mu)}\), where \(\Delta(\mu)\) is the standard representation of \(\widehat{G}\) attached to \(\mu\).
        
        \item We write \(\std=(1,0^{(n-1)})\) for the cocharacter giving rise to the standard representation of \(\GL_n\) and \(\stdd=(0^{(n-1)},-1)\) for the cocharacter giving rise to the dual of the standard representation for \(\GL_n\).
        
        \item Given a split torus \(T\) with isomorphism \(T\cong\bbG_m^r\) we identify \(X^*(T)\) and \(X_*(T)\) with \(\bbZ^r\) and also functions \(\{1,\dots,r\}\to\bbZ\) and write \(\chi_c\) for the function that is \(\chi_c(c)=1\) and \(\chi_c(i)=0\) otherwise.
        
        \item We fix will always use the isomorphism \(\bbG_m\cong Z(\GL_n)\) sending \(x\mapsto\mathrm{diag}(x,\dots,x)\).
        This gives rise to an isomorphism \(\bbG_m^{\rk(Z(\widehat{L}))}\cong Z(\widehat{L})\). 
        
        \item For \(\chi\in X^*(Z(\widehat{L}))\) we write \(b_{\chi,L}\) for the image of \(\chi\) under the composite
        \begin{equation*}
            X^*(Z(\widehat{L}))\cong B(L)_{\mathrm{basic}}\to B(G).
        \end{equation*}
        \item We write $B(G)_L\defined\im(B(L)_{\mathrm{basic}}\to B(G))$.
        
        \item Given a semi-standard parabolic group \(P\) we write \(\overline{P}\) for the corresponding opposite parabolic.
        
        \item We write \(P^b\) for the semi-standard parabolic subgroup of \(G\) attached to \(\nu_b\) such that the action of \(\nu_b\) on \(\mathrm{Lie}(P^b)\) is non-negative.
    \end{itemize}
    We will omit the coefficients, they will be in a fixed algebraically closed \(\bbZ_{\ell}[\sqrt{q}]\)-field \(k\) of arbitrary characteristic. Let \(b,b'\in B(G)\).
    For a cocharacter \(\mu\) we write \(b\xto{\mu}b'\) if there is a modification of meromorphy \(\mu\) between \(\mathcal{E}_b\dasharrow\mathcal{E}_{b'}\).
    We use a similar notation for \(G\)-bundles.
\end{notn}
Let us start right off with the statement of the main theorem:
\begin{thm}\label{thm: explict hecke ! stalk}
    Let \(\phi\) be an parameter of Langlands-Shahidi type with cuspidal support \((\widehat{L},\varphi)\) and write \(\varphi=\varphi_1\times\dots\times\varphi_r\). This allows us to define \(\oo(\chi,\varphi)\). Let \(n_i\defined\dim(\varphi_i)\) and let \(b,b'\in B(G)_L\). 
    Let \(b=\oo(a_1|n_{1})\oplus\dots\oplus\oo(a_r|n_{r})\) such that \(a_i/n_{i}\geq a_j/n_{j}\) for \(i\geq j\) and let \(w\in\Sigma_r\) be a permutation such that \(a'_{w(i)}/n_{w(i)}\geq a'_{w(j)}/n_{w(j)}\) for \(i\geq j\) where \(a'_j=a_j+\chi(j)\).
    
    We restrict \(i_{b'}^{\ren!}\oo(\chi,\varphi)*i_{b*}^{\ren}\) to a functor
    \begin{equation*}
        i_{b'}^{\ren!}\oo(\chi,\varphi)*i_{b*}^{\ren}\from\D(G_b(E))^\wedge_{\eta_{G,b}\circ\varphi}\to\D(G_{b'}(E))
    \end{equation*}
    then we have the following properties:
    \begin{enumerate}
        \item[(1)] The functor's essential image is contained in \(\D(G_{b'}(E))^\wedge_{\eta_{G,b'}\circ\varphi^{w}}\) if 
            \[b'=\oo(a'_{w(1)}|n_{w(1)})\oplus\dots\oplus\oo(a'_{w(r)}|n_{w(r)}).\]
        \item[(2)] If the condition on \(b'\) from \((1)\) is not met then the functor is \(0\).
        
        \item[(3)] In the situation of \((1)\) the functor \(i_{b'}^{\ren!}\oo(\chi,\varphi)*i_{b*}^{\ren}\) to a functor
    \begin{equation*}
        i_{b'}^{\ren!}\oo(\chi,\varphi)*i_{b*}^{\ren}\from\D(G_b(E))^\wedge_{\eta_{G,b}\circ\varphi}\to\D(G_{b'}(E))^\wedge_{\eta_{G,b'}\circ\varphi^w}
    \end{equation*}
    is an equivalence of categories.
    \end{enumerate}
\end{thm}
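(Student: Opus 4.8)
The plan is to reduce the statement, by a double induction, to the already-settled irreducible case \cref{lem: hecke stalk irreducible case}, following the combinatorial strategy of \cite[Section 6]{nguyen}. Two preliminary reductions come first. The objects $\oo(\chi,\varphi)$ are \emph{line} bundles on $p^{-1}(\LSt)\times_{\LSt}\LSt^{\wedge}_{\phi}\cong BZ(\widehat L)\times\LSt^{\wedge}_{\phi}$, and the spectral action is monoidal, so $\oo(\chi,\varphi)*-\cong\oo(\chi',\varphi)*\bigl(\oo(\chi'',\varphi)*-\bigr)$ whenever $\chi=\chi'+\chi''$, and each $\oo(\chi,\varphi)*-$ is an auto-equivalence of $\D(\bun_G,k)^\wedge_{\phi}$ with inverse $\oo(-\chi,\varphi)*-$. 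Composing with a suitable power of the easily-computed operator $\oo(\chi_{\det},\varphi)*-$ (a direct summand of $T_{\det}$, with $\chi_{\det}(i)=n_i$, which shifts $\bigoplus_j\oo(a_j|n_j)$ to $\bigoplus_j\oo(a_j+n_j|n_j)$) reduces us to the case $\chi\succeq 0$, and I would then induct on $|\chi|$. The base case $\chi=0$ is immediate, since $\oo(0,\varphi)*-=\mathrm{id}$, so $i_{b'}^{\ren!}i_{b*}^{\ren}$ is the identity when $b=b'$ (with $w=\mathrm{id}$) and vanishes otherwise.

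For the inductive step, write $\chi=\chi'+\chi_c$ with $\chi(c)>0$, so $\oo(\chi,\varphi)*-$ factors as $\oo(\chi',\varphi)*\bigl(\oo(\chi_c,\varphi)*-\bigr)$. By \cref{cor: hom galois representation}, the atomic operator $\oo(\chi_c,\varphi)*-$ is, up to tensoring with the fixed perfect complex $K=\rhom_{W_E}(\varphi_c,\varphi_c)$ and a Tate twist and shift absorbed by the renormalizations, the $\varphi_c$-isotypic part of $T_{\std}$; this is exactly where the Langlands--Shahidi condition is used, through $\rhom_{W_E}(\varphi_i,\varphi_j)=0$ for $i\neq j$ (\cref{lem: langlands-shahidi type explicit}). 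It then suffices to compute the stalks $i_{b'}^{\ren!}T_{\std}i_{b*}^{\ren}$ for $b,b'\in B(G)_L$. If $b\xto{\std}b'$ fails --- equivalently, the Newton polygon of $\mathcal E_{b'}$ is not obtained from that of $\mathcal E_b$ by raising one unit of degree inside the permitted window --- the functor vanishes for support reasons. Otherwise the triple $(b,b',\std)$ is Hodge--Newton reducible, and $i_{b'}^{\ren!}T_{\std}i_{b*}^{\ren}$ can be expressed, up to the shifts and twists built into the renormalized $i_{?}^{\ren}$ and the modulus characters $\delta_{P,\theta}$ of \cite[Section 3.1]{imaihamann}, through Hecke operators $i_{\theta'}^{\ren!}T^{M}_{\mu_M}i_{\theta*}^{\ren}$ on a proper Levi $M=\GL_{m_1}\times\GL_{m_2}$, with $\theta,\theta'\in B(M)$ reductions of $b,b'$ and $\mu_M$ a reduction of $\std$. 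The combinatorial heart, due to \cite[Section 6]{nguyen}, is that the factorization $\chi=\chi_{c_1}+\dots+\chi_{c_{|\chi|}}$ can be ordered so that at \emph{every} intermediate step the relevant triple is either Hodge--Newton reducible or manifestly vanishing, never ``irreducibly hard''; iterating the Hodge--Newton reduction then strictly shrinks the cuspidal support and one lands, factor by factor, in the irreducible case \cref{lem: hecke stalk irreducible case} (i.e.\ \cite[Proposition 5.3.3]{categorical-fargues-tori}).

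Running this induction, each atomic step is an equivalence onto a single block of $\D(\bun_G,k)^\wedge_\phi$ with all other stalks vanishing; composing, $\oo(\chi,\varphi)*-$ restricted to $\D(\bun_G^b,k)^\wedge_{\eta_{G,b}\circ\varphi}$ lands in a single block $\D(\bun_G^{b'},k)^\wedge_{\eta_{G,b'}\circ\varphi^{w}}$, with $b'$ and $w$ read off by transporting the Newton polygon of $\mathcal E_b$ through the modification --- add $\chi(j)$ to the degree of the $j$-th semistable piece $\oo(a_j|n_j)$ and re-sort the resulting slopes --- which is precisely the description in (1), and the vanishing of $i_{b'}^{\ren!}\oo(\chi,\varphi)*i_{b*}^{\ren}$ for every other $b'$ is (2). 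For (3): since $\oo(\chi,\varphi)*-$ is an auto-equivalence of $\D(\bun_G,k)^\wedge_{\phi}$ with inverse $\oo(-\chi,\varphi)*-$, and the same analysis applied to $-\chi$ carries $b'$ back into $b$ (the re-sorting for $-\chi$ starting from $b'$ is $w^{-1}$, as $a'_j-\chi(j)=a_j$), both composites are the identity block by block, so the restriction is an equivalence of the stated formal-fiber categories.

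The step I expect to be the main obstacle is the combinatorial bookkeeping inside the induction: choosing the order of $\chi=\chi_{c_1}+\dots+\chi_{c_{|\chi|}}$ so that non-Hodge--Newton-reducible configurations never occur, and keeping careful track of the shifts, Tate twists and modulus characters $\delta_{P,\theta}$, $\delta_b$ produced each time the computation is pushed down to $M$ and back up. The other technical linchpin --- which I would isolate as a separate proposition earlier in the section --- is the Hodge--Newton decomposition for the stalks $i_{b'}^{\ren!}T_\mu i_{b*}^{\ren}$ themselves, identifying them with Levi Hecke operators carrying exactly the right normalizations.
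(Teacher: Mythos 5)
You have the right skeleton --- reduce to $\chi\succeq 0$ via $T_{\det}$, induct on $|\chi|$, factor off a single $\chi_c$, recognize $\oo(\chi_c,\varphi)*-$ as the $\varphi_c$-isotypic piece of $T_{\std}$ using \cref{cor: hom galois representation}, and push down to a Levi via Hodge--Newton reducibility (\cref{cor: hodge-newton renormalized reducible reduction}). The paper also runs a \emph{double} induction (see \cref{inductive hypothesis}): on $|\chi|$ \emph{and} on the rank $n$ of $G$, the latter being what makes the Levi reduction in \cref{lem: reduce r} actually close. You implicitly rely on this but don't isolate it.

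The genuine gap is the central combinatorial claim: that one can order $\chi=\chi_{c_1}+\dots+\chi_{c_{|\chi|}}$ so that ``at every intermediate step the relevant triple is either Hodge--Newton reducible or manifestly vanishing.'' This is \emph{false}, and the paper explicitly says so after \cref{lem: all modification reducible condition}: ``Sadly, this will not be the case in general, and there will be some $b\in B(G)$ where we cannot find such a $c$\dots.'' For a fixed $\chi\succeq 0$ and a fixed intermediate source $b_{\chi\chi_c^{-1},L}$, there do exist targets $b'$ with a genuine modification $b_{\chi\chi_c^{-1},L}\xto{\std}b'$ (so no support-theoretic vanishing) that is neither Hodge--Newton nor $\omega$-Hodge--Newton reducible; no re-ordering of the $\chi_{c_i}$ fixes this, because you must control \emph{all} stalks of $\oo(\chi,\varphi)*i_{1*}^{\ren}$ simultaneously, not just the one at $b_{\chi,L}$. (The worked example at the end of the section with $L=\GL_3\times\GL_4$, $\chi=(2,2)$, exhibits exactly such a non-reducible target $b_{(1,3),L}$.) The paper's crucial extra tool is the probing argument \cref{lem: P true up to finite set}: it shows the non-reducible stalks vanish \emph{indirectly}, by hitting them with previously-computed Hecke operators and deriving a contradiction from the semi-orthogonal decomposition or from $L$-parameter considerations. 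This lemma, together with the case analysis in \cref{lem: r=2} and \cref{lem: r=3} (and the auxiliary computation \cref{lem: case of (1 -1)} needed in the $r=2$ case) is the missing ingredient in your proposal, not mere ``bookkeeping.''

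Two smaller points. First, your argument for part (3) via invertibility of $\oo(\chi,\varphi)*-$ is too quick: once (1) and (2) are known it does follow that the restricted functor is fully faithful, but essential surjectivity onto $\D(G_{b'}(E))^\wedge_{\eta\circ\varphi^w}$ is nonformal; the paper's \cref{lem: hecke functor is equivalence of categories} proves conservativity of the adjoint by producing a generic irreducible parabolic induction with the given parameter (using \cref{cor: t-exact irreducible case} and \cite[Th\'eor\`eme~7.24]{lift-irreducible}). Second, the dichotomy ``either $b\xto{\std}b'$ fails, or $(b,b',\std)$ is Hodge--Newton reducible'' which you state explicitly is incorrect in general; \cref{lem: modifications increase min and max slopes} characterizes reducibility by $\numin(b)=\numin(b')$ or $\numax(b)=\numax(b')$, and modifications violating both occur.
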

\begin{rem}
    For $b=1$ this $w$ is precisely the one considered in \cite[Proposition 4.23.]{imaihamann}, where we identify $W_{L,b}$ with a subset of $W_G$ as explained before \cite[Lemma 4.20.]{imaihamann}.
    Compare also with the element \(w\) constructed in \cite[Section 4.1.]{bm-o-parametrization}.
\end{rem}
\begin{rem}
    The reader might have expected a formula talking about \(i_{b'}^{\ren*}\oo(\chi,\varphi)*i_{b!}^\ren\).
    Indeed, the same holds true for this functor, see \cref{cor: explict hecke * stalk}.
    The reason for considering \(\func{b}{b'}{\chi}{\varphi}\) comes from the fact that for miniscule \(\mu\) we have 
    \begin{equation*}
        s_{b'}^*i_{b'}^!T_\mu i_{b*}s_{b*}\pi\cong\delta_{b'}^{-1}\otimes\rhom_{G_b(E)}(R\Gamma_c(G,b,b',\mu),\pi)[-2d_{b'}],
    \end{equation*}
    where
    \begin{equation*}
        R\Gamma_c(G,b,b',\mu)\defined \colim_{\substack{K\subset G_{b'}(E)\\ \text{compact open}}}R\Gamma_c(\sht(G,b,b',\mu)/K,k[d_{\mu}])(d_{\mu}/2)
    \end{equation*}
    with \(\sht(G,b,b',\mu)\) being the moduli of modifications \(\mathcal{E}_b\dashrightarrow\mathcal{E}_{b'}\) bounded by \(\mu\), \(d_\mu=\langle2\rho_G,\mu\rangle\) and \(d_{b'}=\langle2\rho_G,\nu_{b'}\rangle\)
    This fact is used in the proof of \cref{cor: hodge-newton renormalized reducible reduction}.
    If we work with \(i_{b'}^{\ren*}\oo(\chi,\varphi)*i_{b!}^\ren\), then we need to work with the formula
    \begin{equation*}
        s_{b'}^*i_{b'}^*T_\mu i_{b!}s_{b*}\pi\cong R\Gamma_c(G,b,b',\mu)\otimes_{\mathcal{H}(G_b(E))}^L(\pi\otimes\delta_b)[2d_b].
    \end{equation*}
    This possible, and the proof of \cref{cor: hodge-newton renormalized reducible reduction} and therefore the rest of this section goes through with little modification.
    Hoever the literature prefers working with \(\rhom_{G_b}(-,-)\) instead of \(-\otimes_{\mathcal{H}(G_b(E))}^L-\), which is ultimately we decided to consider \(\func{b}{b'}{\chi}{\varphi}\).
\end{rem}
We want to prove this theorem by induction on \(|\chi|\), but for this we need to show that it is enough to prove it for \(\chi\succeq 0\), this is the statement of the following lemma:
\begin{lem}\label{lem: non-negative slopes suffice}
    Assume that \(\func{1}{b'}{\chi}{\varphi}\) satisfies \cref{thm: explict hecke ! stalk} for all \(\chi\succeq 0\) and all \(b'\) without negative slopes.
    Then \cref{thm: explict hecke ! stalk} holds in general.
\end{lem}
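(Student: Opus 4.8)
The proof rests on two facts about the Hecke operator $T_{\det}$ attached to $\det\from\widehat{G}\to\GL_1$. Geometric Satake identifies $\mathcal{S}(\det)$ with the skyscraper on the point-valued Schubert cell of the central cocharacter $(1,\dots,1)$ of $G$, unshifted since $\langle 2\rho_G,(1^n)\rangle=0$, and a modification of $G$-bundles of this type is canonically the twist by the degree-one line bundle $\oo(1)$ on the Fargues--Fontaine curve. Hence $T_{\det}$ is, up to a $1$-dimensional Galois twist, the auto-equivalence of $\D(\bun_G)$ given by precomposition with $\mathcal{E}\mapsto\mathcal{E}(1)$, with inverse $T_{\det^{-1}}$. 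Writing $\tau\from B(G)\to B(G)$ for the induced bijection --- which adds $1$ to every Harder--Narasimhan slope, so $\oo(a_1|n_1)\oplus\dots\oplus\oo(a_r|n_r)\mapsto\oo(a_1+n_1|n_1)\oplus\dots\oplus\oo(a_r+n_r|n_r)$ --- and using that twisting by a line bundle gives $G_b\cong G_{\tau(b)}$, the operator $T_{\det}$ restricts to equivalences $\D(G_b(E))^\wedge_{\eta_{G,b}\circ\varphi}\xto{\sim}\D(G_{\tau(b)}(E))^\wedge_{\eta_{G,\tau(b)}\circ\varphi}$; the renormalizations match because $\langle 2\rho_G,\nu_{\tau(b)}\rangle=\langle 2\rho_G,\nu_b\rangle$ and the modulus characters are intertwined by $G_b\cong G_{\tau(b)}$. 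Finally, since $\det|_{S_\varphi}$ is the character $\chi_{\det}=(n_1,\dots,n_r)$, \cref{lem: isotypic decomposition Langlands-Shahidi type} gives $T_{\det}\cong\oo(\chi_{\det},\varphi)*-$ up to a Galois twist, and more generally $T_{\det}^{m}\cong\oo(m\chi_{\det},\varphi)*-$ for all $m\in\bbZ$.

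Next I would establish a shift-invariance statement. Since $\oo(\chi,\varphi)$ and $\oo(\chi',\varphi)$ are pulled back from $BS_\varphi$ one has $\oo(\chi,\varphi)*\oo(\chi',\varphi)*-\cong\oo(\chi+\chi',\varphi)*-$, and the operators $\oo(\chi,\varphi)*-$ commute with one another and with $T_{\det}$. Combining this with the previous paragraph, $\func{\tau^{N_1}b}{\tau^{N_2}b'}{\chi+(N_2-N_1)\chi_{\det}}{\varphi}$ unwinds, up to transport-of-structure equivalences and shifts/twists, to $i_{b'}^{\ren!}\circ T_{\det}^{-N_2}\circ(\oo(\chi,\varphi)*-)\circ T_{\det}^{N_2-N_1}\circ T_{\det}^{N_1}\circ i_{b*}^{\ren}$, and $T_{\det}^{-N_2}T_{\det}^{N_2-N_1}T_{\det}^{N_1}=\id$, so this functor differs from $\func{b}{b'}{\chi}{\varphi}$ only by pre- and post-composition with invertible functors. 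One checks from the combinatorics that the permutation $w$, the target $b'$, and the cuspidal supports predicted by \cref{thm: explict hecke ! stalk} are the same for $(b,b',\chi)$ and for $(\tau^{N_1}b,\tau^{N_2}b',\chi+(N_2-N_1)\chi_{\det})$ --- twisting by $\oo(1)$ does not change the slope orderings defining $w$. Hence \cref{thm: explict hecke ! stalk} holds for one triple if and only if it holds for the other, and since every entry $n_i$ of $\chi_{\det}$ is positive we may from now on assume $\chi\succeq 0$ and that $b$ and $b'$ have no negative slopes.

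It remains to reduce the source $b$ to $1$. Write $b=\oo(a_1|n_1)\oplus\dots\oplus\oo(a_r|n_r)$ with $a_i/n_i\geq a_j/n_j$ for $i\geq j$ and set $\psi\defined(a_1,\dots,a_r)\in\bbZ^r$; then $b_{\psi,L}=b$, and $\psi\succeq 0$ because $b$ has no negative slopes. Since $\psi$ is already sorted, the permutation attached by \cref{thm: explict hecke ! stalk} to $(1,b,\psi)$ is $\id$, so by hypothesis $\func{1}{b}{\psi}{\varphi}\from\D(G(E))^\wedge_\varphi\to\D(G_b(E))^\wedge_{\eta_{G,b}\circ\varphi}$ is an equivalence, while $\func{1}{b''}{\psi}{\varphi}=0$ for $b''\neq b$. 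Combining this with \cref{lem: support sheaf with L-parameter of Langlands-Shahidi type} (which places $\oo(\psi,\varphi)*i_{1*}^{\ren}A$ in $\D(\bun_G,k)^\wedge_\varphi$, hence with support in $B(G)_L$) and a dévissage along the Harder--Narasimhan stratification, the vanishing of all costalks $i_{b''}^{\ren!}\bigl(\oo(\psi,\varphi)*i_{1*}^{\ren}A\bigr)$ with $b''\neq b$ forces $\oo(\psi,\varphi)*i_{1*}^{\ren}A$ to be supported on $\bun_G^b$ and to lie in the essential image of $i_{b*}^{\ren}$; as $i_b^{\ren!}\circ i_{b*}^{\ren}\cong\id$, the functor $i_{b*}^{\ren}i_b^{\ren!}$ is the identity on such objects. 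Therefore
\begin{equation*}
\func{b}{b'}{\chi}{\varphi}\circ\func{1}{b}{\psi}{\varphi}\cong i_{b'}^{\ren!}\oo(\chi,\varphi)*i_{b*}^{\ren}i_b^{\ren!}\oo(\psi,\varphi)*i_{1*}^{\ren}\cong i_{b'}^{\ren!}\oo(\chi,\varphi)*\oo(\psi,\varphi)*i_{1*}^{\ren}\cong\func{1}{b'}{\chi+\psi}{\varphi}.
\end{equation*}
As $\chi+\psi\succeq 0$ and $b'$ has no negative slopes, the hypothesis applies to $\func{1}{b'}{\chi+\psi}{\varphi}$, and the pair $(w,b')$ and the cuspidal supports it predicts agree with those for $(b,b',\chi)$, both being governed by the slopes $(a_j+\chi(j))/n_j$. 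Since $\func{1}{b}{\psi}{\varphi}$ is an equivalence, $\func{b}{b'}{\chi}{\varphi}\cong\func{1}{b'}{\chi+\psi}{\varphi}\circ(\func{1}{b}{\psi}{\varphi})^{-1}$; it is an equivalence onto $\D(G_{b'}(E))^\wedge_{\eta_{G,b'}\circ\varphi^w}$ when $b'$ meets the condition of (1), and it vanishes (because $\func{1}{b'}{\chi+\psi}{\varphi}$ does and $\func{1}{b}{\psi}{\varphi}$ is essentially surjective) otherwise. This gives (1)--(3) for the arbitrary triple $(b,b',\chi)$.

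The main obstacle is the identity $i_{b*}^{\ren}i_b^{\ren!}\circ\bigl(\oo(\psi,\varphi)*i_{1*}^{\ren}\bigr)\cong\oo(\psi,\varphi)*i_{1*}^{\ren}$: upgrading the pointwise vanishing $\func{1}{b''}{\psi}{\varphi}=0$ for $b''\neq b$ to the assertion that $\oo(\psi,\varphi)*i_{1*}^{\ren}$ genuinely factors through the stratum $\bun_G^b$ is what the dévissage above must accomplish, and this is the place where one uses that one has already localised at $\varphi$. The remaining work is bookkeeping: carrying the shifts $\langle 2\rho_G,\nu_{(-)}\rangle$, the modulus characters $\delta_{(-)}$, and the $1$-dimensional Galois twist relating $T_{\det}$ with $\oo(\chi_{\det},\varphi)*-$ consistently through the manipulations of the first two paragraphs.
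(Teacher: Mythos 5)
Your proof is correct and follows essentially the same route as the paper's: first use (powers of) $T_{\det}$, identified with the auto-equivalence twisting by $\oo(1)$, to reduce to $\chi\succeq 0$ and $b,b'$ with non-negative slopes, and then reduce the source $b$ to $1$ by choosing $\psi$ with $b=b_{\psi,L}$ so that the hypothesis forces $\oo(\psi,\varphi)*i_{1*}^{\ren}$ to be a $*$-extension from the single stratum $\bun_G^b$, yielding the factorization $\func{b}{b'}{\chi}{\varphi}\circ\func{1}{b}{\psi}{\varphi}\cong\func{1}{b'}{\chi+\psi}{\varphi}$. Your explicit choice of $\psi=(a_1,\dots,a_r)$ (with $w=\id$ because the slopes are already sorted) and your flagging of the dévissage step are just more detailed renderings of the paper's "we can find a $\xi$" and the asserted identity $i_{b*}^\ren i_b^{\ren!}\oo(\xi,\varphi)*i_{1*}^\ren\cong\oo(\xi,\varphi)*i_{1*}^\ren$.
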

\begin{proof}
    We want to compute \(\func{b}{b'}{\chi}{\varphi}\) for general \(b,b'\in B(G)\) and \(\chi\in X^*(Z(\widehat{L}))\).
    We have \(\oo(\chi,\varphi)*T_{\det^{\otimes n}}-\cong\oo(\chi',\varphi)*-\), for some \(\chi'\) such that \(\chi'(i)>\chi(i)\) for all \(i\).
    In particular choosing \(n\) big enough we obtain that \(\chi'\succeq 0\).
    Note, \(T_{\det}\) identifies with \(q^*\) where \(p\from\bun_G\to\bun_{G_b}\cong\bun_{G}\) with \(b=\oo(1)^n\). The first map is the isomorphism \cite[Corollary III.4.3.]{geometrization} and the second map comes from the isomorphism \(G\cong G_b\).
    This is because if \(\mathcal{E}\) has a modification to \(\mathcal{E}'\) bounded by \(\mu_{\det}\), then \(\mathcal{E}'\cong\mathcal{E}\otimes\oo(1)\) (see also \cite[Théorème 6.3.]{nguyen-central-twist}).
    Thus we only need to obtain \cref{thm: explict hecke ! stalk} for \(\func{b}{b'}{\chi}{\varphi}\) with \(\chi\succeq 0\) and \(b,b'\in B(G)\) such that \(\numin(b),\numin(b')\geq 0\).
    This means we need to understand \(i_{b'}^{\ren!}\oo(\chi,\varphi)*i_{b*}^\ren\).
    By the assumption we can find a \(\xi\) such that \(i_b^{\ren!}\oo(\xi,\varphi)*i_{1*}^\ren\from\D(G(E))^\wedge_{\eta_{G,1}\circ\varphi}\to\D(G_{b}(E))^\wedge_{\eta_{G,b}\circ\varphi}\) is an equivalence of categories and \(i_{b''}^{\ren!}\oo(\xi,\varphi)*i_{1*}^\ren=0\) for \(b''\neq b'\).
    Thus we have  \(i_{b*}^\ren i_b^{\ren!}\oo(\xi,\varphi)*i_{1*}\cong\oo(\xi,\varphi)*i_{1*}^\ren\).
    Therefore, to compute the claim about \(i_{b'}^{\ren!}\oo(\chi,\varphi)*i_{b*}^\ren\) it suffices to check it for \(i_{b'}^{\ren!}\oo(\chi\xi,\varphi)*i_{1*}^\ren\), but we know this by the assumption.
\end{proof}
\begin{custom}{Inductive Hypothesis}\label{inductive hypothesis}
    From now on, assume the following inductive hypothesis:
    \begin{enumerate}
        \item Assume that \cref{thm: explict hecke ! stalk} holds for all \(\chi\succeq 0\) and \(|\chi|<s\) for a natural number \(s\).
        \item Assume that \cref{thm: explict hecke ! stalk} holds for all parameters of Langlands-Shahidi type for \(\GL_{n'}\) with \(n'<n\).
    \end{enumerate}
\end{custom}
\begin{rem}\label{rem: main thm trivial cases}
    Before proceeding, let us include some simple remarks that one should always keep in mind.
    \begin{itemize}
        \item If \(i_{b'}^*T_\mu i_{b!}\pi\neq 0\), then \(b\xto{\mu}b'\).
        \item By \cref{lem: support sheaf with L-parameter of Langlands-Shahidi type}, \(i_{b'}^{\ren!}\oo(\chi,\varphi)*i_{b*}^{\ren}=0\) if at least one of \(b\) or \(b'\) are not in \(B(G)_L\).
        \item If \(\chi\succeq 0\), then \(\func{1}{b}{\chi}{\varphi}=0\) for \(b\) with \(\numin(b)<0\). Writing \(\func{1}{b}{\chi}{\varphi}\) as a direct summand of \(i_{b}^{\ren!}T_{\std}^{|\chi|}i_{1*}^\ren\) it suffices by induction on \(|\chi|\) to check that if \(b\xto{\std}b'\) with \(\numin(b)\geq 0\), then \(\numin(b')\geq 0\).
        This can be easily seen by the Harder-Narasimhan formalism.
    \end{itemize}
    In particular, if there is no modification \(b\xto{\std}b'\) or one of \(b\) or \(b'\) are not in \(B(G)_L\), then \(\func{b}{b'}{\chi_c}{\varphi}=0\) and \cref{thm: explict hecke ! stalk} is trivially true.
    Finally if \(\chi\succeq 0\) and \(\numin(b')<0\), then \(\func{1}{b'}{\chi}{\varphi=0}\) and \cref{thm: explict hecke ! stalk} is trivially true.
\end{rem}
Let us continue with the inductive step.
Given \(\chi\) with \(|\chi|=s\) we can write
\begin{align*}
    i_{b}^{\ren!}\oo(\chi,\varphi)*i_{1!}^\ren&\cong i_{b}^{\ren!}\oo(\chi_c,\varphi)*\oo(\chi\chi_c^{-1},\varphi)*i_{1!}^\ren\\
    &\cong i_{b}^{\ren!}\oo(\chi_c,\varphi)*i_{b_{\chi\chi_c^{-1},L}*}^\ren i_{b_{\chi\chi_c^{-1},L}}^{\ren!}\oo(\chi\chi_c^{-1},\varphi)*i_{1*}^\ren
\end{align*}
where we used Inductive Hypothesis \cref{inductive hypothesis}(1) in the last isomorphism.
By induction we can fully compute \(i_{b_{\chi\chi_c^{-1},L}}^{\ren!}\oo(\chi\chi_c^{-1},\varphi)*i_{1*}^\ren\).
Thus we can prove \cref{thm: explict hecke ! stalk} if we know that it behaves well under composition of Hecke operators and if we can compute \(i_{b}^{\ren!}\oo(\chi_c,\varphi)*i_{b_{\chi\chi_c^{-1},L}*}^\ren\).
For the first claim, we have the following lemma:
\begin{lem}\label{lem: combine chi}
    Fix \(b,b',b''\in B(G)_L\) and \(\chi,\chi'\).
    Assume that \cref{thm: explict hecke ! stalk} holds for the functor \(i_{\beta}^{\ren!}\oo(\chi,\varphi)*i_{b*}^{\ren}\) for any \(\beta\in B(G)_L\) and the functor \(i_{b''}^{\ren!}\oo((\chi')^\sigma,\varphi)*i_{b'*}^{\ren}\) for any \(\sigma\in\Sigma_r\) such that \(L^\sigma\subset\widehat{G_{b'}}\).
    Then we have \cref{thm: explict hecke ! stalk} for \(i_{b''}^{\ren!}\oo(\chi\chi',\varphi)*i_{b*}^{\ren}\).
\end{lem}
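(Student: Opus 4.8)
The plan is to present $i_{b''}^{\ren!}\oo(\chi\chi',\varphi)*i_{b*}^{\ren}$ as a composition of the two functors for which \cref{thm: explict hecke ! stalk} is assumed, and then to check that assertions (1)--(3) propagate along such compositions. The first input is monoidality: by \cref{lem: isotypic decomposition Langlands-Shahidi type} the $\oo(\chi,\varphi)$ are pulled back along the map $f$ there from the characters of $S_\varphi=Z(\widehat{L})$, and since $\chi\otimes\chi'\cong\chi\chi'$ in $\perf(BS_\varphi)$ and the spectral action is a module action, one gets a natural isomorphism $\oo(\chi\chi',\varphi)*-\cong\oo(\chi',\varphi)*\bigl(\oo(\chi,\varphi)*-\bigr)$, hence $i_{b''}^{\ren!}\oo(\chi\chi',\varphi)*i_{b*}^{\ren}\cong i_{b''}^{\ren!}\oo(\chi',\varphi)*\bigl(\oo(\chi,\varphi)*i_{b*}^{\ren}\bigr)$.

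Next I would feed in the hypotheses on the first factor. Write $b=\oo(a_1|n_1)\oplus\dots\oplus\oo(a_r|n_r)$ with the $a_i/n_i$ non-decreasing, and let $w_1\in\Sigma_r$ reorder the $(a_i+\chi(i))/n_i$ into non-decreasing order; then parts (1)--(2) of \cref{thm: explict hecke ! stalk} for $i_\beta^{\ren!}\oo(\chi,\varphi)*i_{b*}^{\ren}$ say this functor vanishes for all $\beta$ except the prescribed intermediate modification $b'=\oo(a_{w_1(1)}+\chi(w_1(1))|n_{w_1(1)})\oplus\dots\oplus\oo(a_{w_1(r)}+\chi(w_1(r))|n_{w_1(r)})$, and part (3) makes $i_{b'}^{\ren!}\oo(\chi,\varphi)*i_{b*}^{\ren}$ an equivalence $E_1\from\D(G_b(E))^\wedge_{\eta_{G,b}\circ\varphi}\xrightarrow{\sim}\D(G_{b'}(E))^\wedge_{\eta_{G,b'}\circ\varphi^{w_1}}$. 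Exactly as in the displayed computation preceding this lemma, the vanishing of $i_\beta^{\ren!}$ of $\oo(\chi,\varphi)*i_{b*}^{\ren}\pi$ for all $\beta\neq b'$ together with the semi-orthogonal decomposition of $\D(\bun_G)$ into the $\D(G_\beta(E))$ gives $\oo(\chi,\varphi)*i_{b*}^{\ren}\cong i_{b'*}^{\ren}\circ E_1$. So we are reduced to analysing $i_{b''}^{\ren!}\oo(\chi',\varphi)*i_{b'*}^{\ren}$ on the target of $E_1$, which as a subcategory equals $\D(G_{b'}(E))^\wedge_{\eta_{G,b'}\circ\varphi}$ because $\varphi^{w_1}$ and $\varphi$ are conjugate and the formal fibre only depends on the semisimplification.

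The one delicate step is matching $\oo(\chi',\varphi)$ to the second family of hypotheses. The $\widehat{L}$-embedding $\eta_{G,b'}$ attached to the displayed presentation of $b'$ differs from the standard block-diagonal one by the permutation $w_1$, so when $\oo(\chi',\varphi)*-$ is restricted along $i_{b'*}^{\ren}$ and read off in the coordinates on $Z(\widehat{L})$ adapted to $b'$, it becomes the operator $\oo((\chi')^{\sigma},\varphi)*-$ for the permutation $\sigma$ induced by $w_1$, which satisfies $\widehat{L}^{\sigma}\subset\widehat{G_{b'}}$. Thus the second family of hypotheses applies and \cref{thm: explict hecke ! stalk} holds for $i_{b''}^{\ren!}\oo(\chi',\varphi)*i_{b'*}^{\ren}$, with attached permutation $w_2$ reordering the $(a_{w_1(i)}+(\chi\chi')(w_1(i)))/n_{w_1(i)}$ into non-decreasing order.

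To finish, I would compose, using $i_{b''}^{\ren!}\oo(\chi\chi',\varphi)*i_{b*}^{\ren}\cong\bigl(i_{b''}^{\ren!}\oo(\chi',\varphi)*i_{b'*}^{\ren}\bigr)\circ E_1$. The composite vanishes unless $b''$ is the element attached by \cref{thm: explict hecke ! stalk} to $(b',(\chi')^{\sigma})$; since $w_2w_1$ reorders the $(a_i+(\chi\chi')(i))/n_i$ into non-decreasing order and any such reordering is an admissible choice, this is precisely the element attached to $(b,\chi\chi')$, and $w_2w_1$ is an admissible attached permutation. In the non-vanishing case the composite is the composite of the equivalence $E_1$ with the equivalence supplied by part (3) for $(b',(\chi')^{\sigma})$, postcomposed with the fully faithful $i_{b'*}^{\ren}$, hence an equivalence $\D(G_b(E))^\wedge_{\eta_{G,b}\circ\varphi}\xrightarrow{\sim}\D(G_{b''}(E))^\wedge_{\eta_{G,b''}\circ\varphi^{w_2w_1}}$. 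This establishes (1)--(3) for $i_{b''}^{\ren!}\oo(\chi\chi',\varphi)*i_{b*}^{\ren}$. The main obstacle is the third paragraph: tracking the $\widehat{L}$-embedding and the $Z(\widehat{L})$-coordinates through $E_1$ precisely enough to rewrite $\oo(\chi',\varphi)$ restricted along $i_{b'*}^{\ren}$ as $\oo((\chi')^{\sigma},\varphi)$; the rest — monoidality, the support/semi-orthogonality reduction, and transitivity of sorting — is routine.
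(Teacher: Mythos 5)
Your proposal is correct and follows essentially the same route as the paper's proof: factor $\oo(\chi\chi',\varphi)$ as a composite of the two line-bundle operators, use hypothesis (2) for the first factor together with the semi-orthogonal decomposition to rewrite $\oo(\chi,\varphi)*i_{b*}^{\ren}$ as $i_{b'*}^{\ren}$ of an equivalence, handle the permutation twist via $\oo(\chi,\varphi)=\oo(\chi^{\sigma},\varphi^{\sigma})$ and $\eta_{G,b'}\circ\varphi=\eta_{G,b'}\circ\varphi^{\sigma}$, and compose. The bookkeeping of the attached permutation as $w_2w_1$ is exactly what the paper does implicitly.
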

\begin{proof}
    By definition $\oo(\chi,\varphi)=\oo(\chi^\sigma,\varphi^\sigma)$.
    We also have $\eta_{G,b}\circ\varphi=\eta_{G,b}\circ\varphi^\sigma$.
    Twisting by \(\sigma\) we see that we have \cref{thm: explict hecke ! stalk} for any \(i_{b''}^{\ren!}\oo(\chi',\varphi^\sigma)*i_{b'*}^{\ren}\) for any \(\sigma\in\Sigma_r\) such that \(L^\sigma\subset\widehat{G_{b'}}\).
    Let \(b=\oo(a_1|n_{1})\oplus\dots\oplus\oo(a_r|n_{r})\) such that \(a_i/n_{i}\geq a_j/n_{j}\) for \(i\geq j\) and let \(w\in\Sigma_r\) be a permutation such that \(a'_{w(i)}/n_{w(i)}\geq a'_{w(j)}/n_{w(j)}\) for \(i\geq j\) where \(a'_j=a_j+\chi(j)\).
    \cref{thm: explict hecke ! stalk}(2) with \(i_{\beta}^{\ren!}\oo(\chi,\varphi_{1}\times\dots\times\varphi_r)*i_{b*}^{\ren}\) for any \(\beta\) implies that we have an isomorphism
    \begin{equation*}
        \oo(\chi,\varphi)*i_{b*}^\ren\cong i_{b'*}^\ren i_{b'}^{\ren!}\oo(\chi,\varphi)*i_{b*}^\ren
    \end{equation*}
    for \(b'=\oo(a'_{w(1)}|n_{w(1)})\oplus\dots\oplus\oo(a'_{w(r)}|n_{w(r)})\) and \cref{thm: explict hecke ! stalk}(1) tells us that \(i_{b'}^{\ren!}\oo(\chi)*i_{b*}^{\ren}\) lands in \(\D(G_{b'}(E))^\wedge_{\eta_{M,b'}\circ\varphi^{w}}\).
    We have that \(L^w\subset\widehat{G_{b'}}\).
    Then we have
    \begin{equation*}
        i_{b''}^{\ren!}\oo(\chi\chi',\varphi)*i_{b*}^\ren=i_{b''}^{\ren!}\oo(\chi,\varphi)*\oo(\chi',\varphi)*i_{b*}^\ren=i_{b''}^{\ren!}\oo(\chi,\varphi)*i_{b'*}^\ren i_{b'}^{\ren!}\oo(\chi',\varphi)i_{b*}^\ren .
    \end{equation*}
\end{proof}
We are left with computing \(i_{b}^{\ren!}\oo(\chi_c,\varphi)*i_{b_{\chi\chi_c^{-1},L}*}^\ren\).
Recall that by \cref{cor: hom galois representation}, we can write \(\rhom_{W_E}(\varphi_c,\varphi_c)\otimes\oo(\chi_c)*-\cong\rhom_{W_E}(\varphi_c,T_{\std}(-))\).
As a first step we therefore want to understand \(i_{b}^{\ren!}T_{\std}*i_{b_{\chi\chi_c^{-1},L}*}^\ren\).
This is particularly simple in the Hodge-Newton reducible case, let us recall the relevant definitions.

\begin{defn}
    A triple \((b,b',\mu)\) is called \defword{Hodge-Newton reducible for \(\GL_{m_1}\times\GL_{m_2}\)} if we have \(\theta,\theta'\in B(M)\) whose image in \(B(G)\) are \(b\) and \(b'\) respectively, \(\mu\) has a \(G\)-dominant reduction \(\mu_M\), we have \(\kappa(\theta')-\kappa(\theta)=\mu_M^{\sharp_M}\), where $\mu_M^{\sharp_M}$ is the image of $\mu_M$ under the map $X^*(\widehat{T})\to X^*(Z(\widehat{G}))=\pi_1(G)$, and when writing \(\theta'=(\theta'_1,\theta'_2)\) and \(\theta=(\theta_1,\theta_2)\) with \(\theta_i,\theta'_i\in B(\GL_{m_i})\) we have \(\numax(\theta'_2)<\numin(\theta'_1)\) and \(\numax(\theta_2)\leq\numin(\theta_1)\).
    We call such a triple \((\theta,\theta',\mu_M)\) a reduction to \(M\).

    We say it is \defword{\(\omega\)-Hodge-Newton reducible for  \(\GL_{m_1}\times\GL_{m_2}\)} if we have \(\theta,\theta'\in B(M)\) whose image in \(B(G)\) are \(b\) and \(b'\) respectively, \(\mu\) has a \(M\)-dominant reduction \(\mu_M\) such that \(\omega\mu_M\) is \(G\)-dominant where \(\omega\in W_G\) is the permutation which switches the first \(m_1\) numbers with the last \(m_2\) numbers, we have \(\kappa(\theta')-\kappa(\theta)=\mu_M^{\sharp_M}\) and when writing \(\theta=(\theta_1,\theta_2)\) and \(\theta'=(\theta'_1,\theta'_2)\) with \(\theta_i,\theta'_i\in B(\GL_{m_i})\) we have \(\numax(\theta_2)<\numin(\theta_1)\) and \(\numax(\theta'_2)\leq\numin(\theta'_1)\).
    Similarly, we will call such a triple \((\theta,\theta',\mu_M)\) a reduction to \(M\).
\end{defn}
\begin{rem}\label{rem: reducible definition}
    We will only use this condition for \(\mu=\std\) and one time for \(\mu=\stdd\) in the proof of \cref{lem: hecke functor is equivalence of categories}.
    For \(\std\) note that \((\std,0)\) is the unique \(G\)-dominant reduction to \(M\).
    In this case, writing \(\theta=(\theta_1,\theta_2)\) and \(\theta'=(\theta'_1,\theta'_2)\) in the definition, we can explicitly unwind the condition of Hodge-Newton reduciblity.
    Then Hodge-Newton reducible translates to \(\theta_2=\theta'_2\) and \(\kappa(\theta'_1)-\kappa(\theta_1)=1\) as well as \(\numax(\theta'_2)>\numin(\theta'_1)\) and \(\numax(\theta_2)\geq\numin(\theta_1)\).
    
    In the case where we are \(\omega\)-Hodge-Newton reducible, \((0,\std)\) is the unique reduction to \(M\) such that \(\omega(0,\std)=\std\).
    This is the \(M\)-dominant reduction of \(\std\) such that applying \(\omega\) to it is \(G\)-dominant.
    The condition on \(\theta\) and \(\theta'\) translates to \(\theta_1=\theta'_1\) and \(\kappa(\theta'_2)-\kappa(\theta_2)=1\) as well as \(\numax(\theta_2)>\numin(\theta_1)\) and \(\numax(\theta'_2)\geq\numin(\theta'_1)\).
    To shorten the notation we say a pair \((b,b')\) is \defword{reducible} if \((b,b',\std)\) is Hodge-Newton or \(\omega\)-Hodge-Newton reducible.
    Most often this is used in the situation where there is a modification \(b\xto{\std}b'\), in this case we call the modification reducible.
\end{rem}
In this situation, the Hecke operators are parabolically induced, more precisely we have:
\begin{thm}\label{cor: hodge-newton renormalized reducible reduction}
    Let \(M\defined\GL_{m}\times\GL_{m'}\) with \(m+m'=n\) and \(\mu\) miniscule and \(b,b'\in B(G)\).
    Let \(P_{b}\) the parabolic in $G_b$ corresponding to the parabolic \(M\overline{P^b}\subset G\),
    define \(P_{b'}\) similarly.
    \begin{enumerate}
        \item Assume that \((b,b',\mu)\) is Hodge-Newton reducible with reduction \((\theta,\theta',\mu_M)\). Let \(\pi\in\D(G_{b}(E))\). Then we have 
        \begin{equation*}
            i_{b'}^{\ren!}T_\mu i_{b*}^\ren \pi\cong i_{\theta'}^{\ren!}T_{\mu_M} i_{\theta*}^\ren\nres^{G_{b}(E)}_{\overline{P_{b}}(E)}\pi[\langle2\rho_G,\nu_{b'}-\mu-\nu_b\rangle-\langle2\rho_M,\nu_{\theta'}-\mu_M-\nu_{\theta}\rangle].
        \end{equation*}
        \item Assume that \((b,b',\mu)\) is \(\omega\)-Hodge-Newton reducible with reduction \((\theta,\theta',\mu_M)\).  Let \(\pi\in\D(G_{b}(E))\). Then we have     \begin{equation*}
            i_{b'}^{\ren!}T_\mu  i_{b*}^\ren\pi\cong\nind_{P_{b'}(E)}^{G_{b'}(E)}i_{\theta'}^{\ren!}T_{\mu_M}i_{\theta*}^\ren\pi[\langle2\rho_G,\nu_{b'}-\mu-\nu_b\rangle-\langle2\rho_M,\nu_{\theta'}-\mu_M-\nu_{\theta}\rangle].
        \end{equation*}
    \end{enumerate}
\end{thm}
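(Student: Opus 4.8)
The plan is to reduce to the cohomology of the shtuka space and then invoke a Hodge--Newton decomposition of that space. Since $\mu$ is miniscule, we may use the formula recalled in the preceding remark: unwinding the renormalizations in $i_{b'}^{\ren!}$ and $i_{b*}^{\ren}$ and substituting $s_{b'}^{*}i_{b'}^{!}T_{\mu}i_{b*}s_{b*}(-)\cong\delta_{b'}^{-1}\otimes\rhom_{G_{b}(E)}(R\Gamma_{c}(G,b,b',\mu),-)[-2\langle2\rho_{G},\nu_{b'}\rangle]$ with $(-)=\pi\otimes\delta_{b}^{-1/2}$ gives
\[
i_{b'}^{\ren!}T_{\mu}i_{b*}^{\ren}\pi\cong\delta_{b'}^{-1/2}\otimes\rhom_{G_{b}(E)}\big(R\Gamma_{c}(G,b,b',\mu),\ \pi\otimes\delta_{b}^{-1/2}\big)[-\langle2\rho_{G},\nu_{b}+\nu_{b'}\rangle],
\]
up to a Tate twist that cancels against the one in the analogous formula for $M,\theta,\theta',\mu_{M}$. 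So it is enough to describe the $G_{b}(E)\times G_{b'}(E)$-module $R\Gamma_{c}(G,b,b',\mu)$ in the Hodge--Newton situation and then compare the two $\rhom$'s.

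The geometric input: because $\mu=\std$ (or $\stdd$) is miniscule, a modification $\mathcal{E}_{b}\dashrightarrow\mathcal{E}_{b'}$ bounded by $\mu$ is small, and the numerical conditions in the definition of Hodge--Newton reducibility --- $\kappa(\theta')-\kappa(\theta)=\mu_{M}^{\sharp_{M}}$ together with the separation of the slopes of $\theta_{1},\theta_{2}$ and of $\theta'_{1},\theta'_{2}$ --- force it to be compatible with the canonical two-step slope filtrations for $M=\GL_{m}\times\GL_{m'}$. This is the usual Hodge--Newton argument (vanishing of $\Hom$ and $\mathrm{Ext}^{1}$ between semistable bundles on the Fargues--Fontaine curve with sufficiently separated slopes), run over the curve in the style of the slope analysis underlying \cite[Section~4]{imaihamann} and \cite[Section~6]{nguyen}. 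In the ordinary Hodge--Newton case the slopes of $\theta'_{1},\theta'_{2}$ are strictly separated, so $G_{b'}\cong M_{\theta'}$ and the filtration of $\mathcal{E}_{b'}$ is automatic; the type-$M$ reduction of $\mathcal{E}_{b}$ varies over the partial flag variety $G_{b}/P_{b}$, and one obtains a $G_{b}(E)\times G_{b'}(E)$-equivariant identification of $R\Gamma_{c}(G,b,b',\mu)$ with the (unnormalized) parabolic induction from $P_{b}(E)$ of $R\Gamma_{c}(M,\theta,\theta',\mu_{M})$ (inflated along $P_{b}(E)\to M_{\theta}(E)$), up to a cohomological shift. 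In the $\omega$-Hodge--Newton case the roles of $b,b'$ are swapped: $G_{b}\cong M_{\theta}$, while $G_{b'}$ may strictly contain $M_{\theta'}$, and $R\Gamma_{c}(G,b,b',\mu)$ is the parabolic induction from $P_{b'}(E)$ of $R\Gamma_{c}(M,\theta,\theta',\mu_{M})$ on the $G_{b'}(E)$-side, again up to a shift.

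Feeding this in finishes both cases. In case (1), Bernstein's second adjunction turns $\rhom_{G_{b}(E)}$ out of a parabolic induction from $P_{b}(E)$ into a Jacquet module along $\overline{P_{b}}$, i.e. produces $\nres^{G_{b}(E)}_{\overline{P_{b}}(E)}\pi$; comparing with the $M$-version of the shtuka formula gives $i_{\theta'}^{\ren!}T_{\mu_{M}}i_{\theta*}^{\ren}\nres^{G_{b}(E)}_{\overline{P_{b}}(E)}\pi$ up to the claimed shift. In case (2), the parabolic induction sits in the $G_{b'}(E)$-variable, hence commutes with $\rhom_{G_{b}(E)}(-,\pi)$ and, once normalized, yields $\nind_{P_{b'}(E)}^{G_{b'}(E)}$ applied to $i_{\theta'}^{\ren!}T_{\mu_{M}}i_{\theta*}^{\ren}\pi$. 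In either case one checks that the modulus characters $\delta_{b},\delta_{b'},\delta_{\theta},\delta_{\theta'}$ and the ones converting between unnormalized and normalized parabolic functors cancel to the trivial character, and that the shifts coming from the renormalizations $i^{\ren}$, from $\langle2\rho_{G},\mu\rangle$ vs $\langle2\rho_{M},\mu_{M}\rangle$, from $\langle2\rho_{G},\nu_{b'}\rangle$ vs $\langle2\rho_{M},\nu_{\theta'}\rangle$, and from the shtuka-space dimension count collapse to $\langle2\rho_{G},\nu_{b'}-\mu-\nu_{b}\rangle-\langle2\rho_{M},\nu_{\theta'}-\mu_{M}-\nu_{\theta}\rangle$; here one uses $\nu_{b}=\nu_{\theta}$, $\nu_{b'}=\nu_{\theta'}$, $\kappa(\theta')-\kappa(\theta)=\mu_{M}^{\sharp_{M}}$, and the compatibilities of the characters $\delta_{P,\theta}$ of \cite[Section~3.1]{imaihamann}.

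The main obstacle is the geometric step: proving the Hodge--Newton decomposition of $\sht(G,b,b',\mu)$ with the correct equivariance, and in particular pinning down which parabolic ($P_{b}$ or $P_{b'}$) carries the induction on which side --- the conceptual heart of the argument. The rest is bookkeeping: keeping track of all normalizing characters and cohomological shifts so that they telescope to the clean expression in the statement, routine but error-prone.
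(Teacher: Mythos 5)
Your proposal is correct and follows essentially the same route as the paper: reduce to the shtuka-cohomology formula for $i_{b'}^{\ren!}T_\mu i_{b*}^\ren$, apply the Hodge--Newton decomposition of $R\Gamma_c(G,b,b',\mu)$ as a parabolic induction of $R\Gamma_c(M,\theta,\theta',\mu_M)$, and convert $\rhom$ out of an induction into a Jacquet module via second adjunction, then track the modulus characters and shifts. The one practical difference is that what you single out as the main obstacle — the equivariant Hodge--Newton decomposition of $\sht(G,b,b',\mu)$ — is not reproved in the paper but imported directly from \cite[Proposition 5.1, Corollary 5.2]{nguyen} (following \cite[Theorem 4.12]{imaihamann}), so the paper's proof consists essentially of the bookkeeping of twists and shifts you sketch.
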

\begin{proof}
    This follows as in the proof of \cite[Theorem 4.12.]{imaihamann} using \cite[Proposition 5.1.]{nguyen} and \cite[Corollary 5.2.]{nguyen} and a careful analysis of the twists appearing.
    Let us do this for (1), (2) is similar.
    Let \(\sht(G,b,b',\mu)\to\spd(\breve{E})\) the moduli space of modifications \(\mathcal{E}_b\dashrightarrow\mathcal{E}_{b'}\) bounded by \(\mu\), where \(\breve{E}\) is the maximal unramified extension of \(E\).
    This carries a Weil-descent datum, so it obtains a \(W_E\times G_b(E)\times G_{b'}(E)\) action by the action of \(G_b(E)\) on \(\mathcal{E}_b\).
    We write 
    \[
        R\Gamma_c(G,b,b',\mu)\defined \colim_{\substack{K\subset G_{b'}(E)\\ \text{compact open}}}R\Gamma_c(\sht(G,b,b',\mu)/K,k[d_{\mu}])(d_{\mu}/2)
    \]
    where \(d_\mu=\langle 2\rho_G,\mu\rangle\).
    We have 
    \begin{equation*}
        s_{b'}^*i_{b'}^!T_\mu i_{b*}s_{b*}\pi\cong\delta_{b'}^{-1}\otimes\rhom_{G_b(E)}(R\Gamma_c(G,b,b',\mu),\pi)[-2d_{b'}].
    \end{equation*}
    Write \(d_{\mu}\defined\langle2\rho,\mu\rangle\), \(d_{\mu_{M}}\defined\langle2\rho_M,\mu_M\rangle\) \(d_b\defined\langle2\rho_G,\nu_b\rangle\), \(d_\theta\defined\langle2\rho_M,\nu_\theta\rangle\) and \(\delta\defined\delta_{\theta'}\delta_{b'}^{-1}\) (here we use the isomorphism \(M_{\theta'}\cong G_{b'}\)).
    Now using \cite[Proposition 5.1.]{nguyen} and \cite[Corollary 5.2.]{nguyen} we deduce that we have
    \begin{equation*}
        R\Gamma_c(G,b,b',\mu)\cong\delta\otimes\ind_{P_b(E)}^{G_b(E)}R\Gamma_c(M,\theta,\theta',\mu_M)(-\frac{d_{\mu_M}-d_\mu}{2})[-(d_{\mu_M}-d_\mu)-(2d_{b'}-2d_{\theta'})].
    \end{equation*}
    The Tate twist and additional shift comes from the different normalization of Tate twists and shifts in the definition of \(R\Gamma_c(G,b,b',\mu)\) for differing \(G\).
    We obtain 
    \begin{align*}
        &s_{b'}^*i_{b'}^!T_\mu i_{b*}s_{b*}\pi\\
        \cong&\delta_{b'}^{-1}\rhom_{G_b(E)}(R\Gamma_c(G,b,b',\mu),\pi)[-2d_{b'}]\\
        \cong&\delta_{b'}^{-1}\rhom_{G_b(E)}(\delta\otimes\ind_{P_b(E)}^{G_b(E)}R\Gamma_c(M,\theta,\theta',\mu_M),\pi)(\frac{d_{\mu_M}-d_\mu}{2})[d_{\mu_M}-d_\mu-2d_{\theta'}]\\
        \cong&\delta_{b'}^{-1}\delta^{-1}\rhom_{M_{\theta}(E)}(R\Gamma_c(M,\theta,\theta',\mu_M),\delta_{P_b}^{1/2}\nres_{\overline{P_b}(E)}^{G_b(E)}\pi)(\frac{d_{\mu_M}-d_\mu}{2})[d_{\mu_M}-d_\mu-2d_{\theta'}]\\
        \cong&\delta_{\theta'}^{-1}\delta_{P,\theta'}^{1/2}\rhom_{M_{\theta}(E)}(R\Gamma_c(M,\theta,\theta',\mu_M),\delta_{P,\theta}^{-1/2}\delta_{P_b}^{1/2}\nres_{\overline{P_b}(E)}^{G_b(E)}\pi)[d_{\mu_M}-d_\mu-2d_{\theta'}]\\
        \cong&\delta_{P,\theta'}^{1/2}\otimes s_{\theta'}^*i_{\theta'}^!T_{\mu_M} i_{\theta*}s_{\theta*}\delta_{P,\theta}^{-1/2}\delta_{P_{b}}^{1/2}\nres^{G_{b}(E)}_{\overline{P_{b}}(E)}\pi[d_{\mu_M}-d_\mu],
    \end{align*}
    where we used \cite[Proposition 4.10.]{imaihamann} for the forth isomorphism and \cite[Corollary 1.3.]{finiteness-p-adic} for the third isomorphism.
    We have
    \begin{align*}
        &i_{b'}^{\ren!}T_{\mu_M} i_{b*}^\ren\pi\\
        \cong &\delta_{b'}^{1/2}\otimes s_{b'}^*i_{b'}^!T_{\mu_M} i_{b*}s_{b*}\pi\otimes\delta_{b}^{-1/2}[\langle 2\rho_G,\nu_{b'}-\nu_b\rangle]\\
        \cong&\delta_{b'}^{1/2}\delta_{P,{\theta'}}^{-1/2}\otimes s_{\theta'}^*i_{\theta'}^!T_{\mu_M} i_{\theta*}s_{\theta*}\delta_{\theta}^{-1/2}\delta_{\theta}^{1/2}\delta_{P,\theta}^{-1/2}\delta_{P_{b}}^{1/2}\nres_{\overline{P_{b}}(E)}^{G_b(E)}\pi\otimes\delta_{b}^{-1/2}[d_{\mu_M}-d_\mu+d_{b'}-d_b]\\
        \cong&\delta_{b'}^{1/2}\delta_{P,{\theta'}}^{-1/2}\otimes s_{\theta'}^*i_{\theta'}^!T_{\mu_M} i_{\theta*}s_{\theta*}\delta_{\theta}^{-1/2}\delta_{\theta}^{1/2}\delta_{P,\theta}^{-1/2}\delta_{P_{b}}^{1/2}\delta_{b}^{-1/2}|_{M_b}\nres_{\overline{P_{b}}(E)}^{G_b(E)}\pi[d_{\mu_M}-d_\mu+d_{b'}-d_b].
    \end{align*}
    In the last line we used that \(\delta_{b}^{-1/2}\) is trivial on the unipotent radical of \(\overline{P_b}\).
    Let us compute \(\delta_{\theta}^{1/2}\delta_{P,\theta}^{-1/2}\delta_{P_{b}}^{1/2}\delta_{b}^{-1/2}|_{M_b}\).
    Since this representation factors through \(M^{\mathrm{ab}}\), we can compute this via the Langlands correspondence for tori.
    We see that the \(L\)-parameter \(\varphi'\) of this is
    \begin{equation*}
        \varphi'(w)=(2\rho_{\widehat{M}}-2\rho_{\widehat{M_{\theta}}}+2\rho_{\widehat{G}}-2\rho_{\widehat{M}}+2\rho_{\widehat{M_{\theta}}}-2\rho_{\widehat{G_{b}}}+2\rho_{\widehat{G_{b}}}-2\rho_{\widehat{G}})(\sqrt{q})^{|w|}=1
    \end{equation*}
    Thus we obtain that \(\delta_{\theta}^{1/2}\delta_{P,\theta}^{-1/2}\delta_{P_{b}}^{1/2}\delta_{b}^{-1/2}|_{M_b}=1\) and observe \(\delta_{b'}^{1/2}\delta_{P,{\theta'}}^{-1/2}\otimes s_{\theta'}^*i_{\theta'}^![2d_{\theta'}]=i_{\theta'}^{\ren!}\).
    Accounting for the shifts we get that 
    \begin{align*}
        &\delta_{b'}^{1/2}\delta_{P,{\theta'}}^{-1/2}\otimes s_{\theta'}^*i_{\theta'}^!T_{\mu_M} i_{\theta*}s_{\theta*}\delta_{\theta}^{-1/2}\delta_{\theta}^{1/2}\delta_{P,\theta}^{-1/2}\delta_{P_{b}}^{1/2}\delta_{b}^{-1/2}|_{M_b}\nres_{\overline{P_{b}}(E)}^{G_b(E)}\pi[d_{\mu_M}-d_\mu+d_{b'}-d_b]\\
        \cong&i_{\theta'}^{\ren!}T_{\mu_M} i_{\theta*}^\ren\nres^{G_{b}(E)}_{\overline{P_{b}}(E)}\pi[d_{\mu_M}-d_\mu+d_{b'}-d_b-(d_{\theta'}-d_\theta)]
    \end{align*}
    as claimed.
\end{proof}
We now have a way to compute \(i_{b'}^{\ren!}T_{\std}i_{b*}^\ren\) at least when the \((b,b')\) are reducible.
This is enough to see that one only needs to check \cref{thm: explict hecke ! stalk}(1) and (2):
\begin{lem}\label{lem: hecke functor is equivalence of categories}
    Let \(\chi\succeq 0\).
    Let \(b\defined b_{\chi,L}\).
    If \cref{thm: explict hecke ! stalk}(1) and (2) hold, then the functor 
    \begin{equation*}
        i_{b}^{\ren!}\oo(\chi,\varphi)*i_{1*}^{\ren}\from\D(G(E))^\wedge_{\eta_{G,1}\circ\varphi}\to\D(G_{b}(E))^\wedge_{\eta_{G,b}\circ\varphi^{w}}
    \end{equation*}
    is an equivalence of categories, i.e. we obtain \cref{thm: explict hecke ! stalk}(3).
\end{lem}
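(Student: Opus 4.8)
The plan is to exhibit a two-sided inverse to $F\defined i_b^{\ren!}\oo(\chi,\varphi)*i_{1*}^{\ren}$, namely a suitable restriction of $\oo(-\chi,\varphi)*-$. The key preliminary point is that $\oo(\chi,\varphi)=f^*\chi$ is an $\otimes$-invertible line bundle on the formal completion $(k\times_{\bbZ_\ell}\locsys_{\widehat{G}})^\wedge_\varphi$, with inverse $\oo(-\chi,\varphi)$ (\cref{lem: isotypic decomposition Langlands-Shahidi type}); since $\D(\bun_G,k)^\wedge_\varphi$ is a module category over perfect complexes on this formal completion (the spectral action restricted to the formal fiber, see the discussion around \cref{cor: localizing D(Bun_G) at a parameter}), the operator $\oo(\chi,\varphi)*-$ is an auto-equivalence of $\D(\bun_G,k)^\wedge_\varphi$ with inverse $\oo(-\chi,\varphi)*-$, and $\oo(0,\varphi)*-\cong\id$. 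I will use this invertibility freely.

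First I would unwind \cref{thm: explict hecke ! stalk}(1),(2) for the source bundle $1=\bigoplus_i\oo(0|n_i)$: there $a_i=0$ and $a'_i=\chi(i)\ge 0$, so the output bundle is $b_{\chi,L}=b$ and the output category is $\D(G_b(E))^\wedge_{\eta_{G,b}\circ\varphi^{w}}$; part (1) guarantees $F$ takes values there, and part (2) gives $i_{b'}^{\ren!}\oo(\chi,\varphi)*i_{1*}^\ren=0$ for every $b'\neq b$. The next step is to upgrade this to the assertion that $\oo(\chi,\varphi)*i_{1*}^\ren A$ is \emph{cleanly} supported on the single locally closed stratum $\bun_G^b$. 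For this I would note that for compact $A$ the object $i_{1*}^\ren A$ is ULA and Hecke transforms preserve ULA-ness, so $\oo(\chi,\varphi)*i_{1*}^\ren A$ — a finite-(co)limit/perfect-twist construction out of $T_\std$, by \cref{cor: hom galois representation} — is ULA; for ULA objects the $*$-stalk and the $!$-costalk along a stratum vanish together, so part (2) kills all the stalks as well. Consequently $i_{b*}^\ren i_b^{\ren!}(\oo(\chi,\varphi)*i_{1*}^\ren A)\cong \oo(\chi,\varphi)*i_{1*}^\ren A$, and — checking on compact generators and extending by colimits — one obtains an isomorphism of functors $i_{b*}^\ren\circ F\cong\oo(\chi,\varphi)*i_{1*}^\ren$. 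Running the identical argument with source $b$ and weight $-\chi$ (whose output bundle is $\oo^n$, the relevant sorting permutation being trivial since all the coordinates become $0$) shows that $G\defined i_1^{\ren!}\oo(-\chi,\varphi)*i_{b*}^\ren$ is well-defined as a functor $\D(G_b(E))^\wedge_{\eta_{G,b}\circ\varphi^w}\to\D(G(E))^\wedge_{\eta_{G,1}\circ\varphi}$ and that $i_{1*}^\ren\circ G\cong\oo(-\chi,\varphi)*i_{b*}^\ren$.

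With these two identities established, the conclusion is a short formal computation. Using $i_1^{\ren!}i_{1*}^\ren\cong\id$ and $i_b^{\ren!}i_{b*}^\ren\cong\id$ (both reduce to $i_b^!i_{b*}\cong\id$ and $s_b^*s_{b*}\cong\id$ once the shifts and modulus twists $\delta_b^{\pm1/2}$ are cancelled), together with $\oo(\chi,\varphi)*\oo(-\chi,\varphi)*-\cong\id$, one gets
\begin{align*}
G\circ F&\cong i_1^{\ren!}\oo(-\chi,\varphi)*\oo(\chi,\varphi)*i_{1*}^\ren\cong i_1^{\ren!}i_{1*}^\ren\cong\id,\\
F\circ G&\cong i_b^{\ren!}\oo(\chi,\varphi)*\oo(-\chi,\varphi)*i_{b*}^\ren\cong i_b^{\ren!}i_{b*}^\ren\cong\id,
\end{align*}
so $F$ is an equivalence — which is exactly \cref{thm: explict hecke ! stalk}(3) for source $1$.

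I expect the only genuinely non-formal point, and hence the main obstacle, to be the passage from ``all $!$-costalks but one vanish'' (which is what part (2) literally provides) to ``cleanly supported on the one stratum'': this is where ULA-ness of Hecke transforms has to be invoked, so care is needed to ensure the relevant sheaves are ULA before using it, and to handle general (non-compact) $A$ by reducing to compact generators. A secondary bookkeeping nuisance is tracking the renormalization shifts and modulus characters in $i_{b?}^{\ren}$ and $i_b^{\ren?}$, and confirming that the $\speccenter(G)$- and $\speccenter(G_b)$-supports in play really are the points $\varphi$ and $\eta_{G,b}\circ\varphi^{w}$ claimed, which rests on compatibility of Fargues--Scholze parameters under the $G_b\leftrightarrow G$ comparison.
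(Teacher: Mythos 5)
Your first half — full faithfulness of $F=i_b^{\ren!}\oo(\chi,\varphi)*i_{1*}^\ren$ from the vanishing of all the other costalks, so that $i_{b*}^\ren\circ F\cong\oo(\chi,\varphi)*i_{1*}^\ren$ — is exactly what the paper does. The gap is in the second half. To define your candidate inverse $G=i_1^{\ren!}\oo(-\chi,\varphi)*i_{b*}^\ren$ and prove $i_{1*}^\ren\circ G\cong\oo(-\chi,\varphi)*i_{b*}^\ren$, you need \cref{thm: explict hecke ! stalk}(1),(2) for the functors $i_{b'}^{\ren!}\oo(-\chi,\varphi)*i_{b*}^\ren$, i.e.\ for source $b\neq 1$ and a weight $-\chi$ which is not $\succeq 0$. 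That instance of the theorem is not among the hypotheses of the lemma, and in the paper's logical structure it is only ever deduced \emph{from} part (3) for source $1$ and $\chi\succeq 0$: see \cref{lem: non-negative slopes suffice}, where the passage from the source-$1$, $\chi\succeq 0$ case to the general case explicitly invokes the equivalence statement (3). So "running the identical argument with source $b$ and weight $-\chi$" is circular — it assumes the very clean-support statement in the opposite direction that the lemma is supposed to produce. (Invertibility of $\oo(\chi,\varphi)*-$ on all of $\D(\bun_G,k)^\wedge_\varphi$ does not help here: the issue is precisely whether $\oo(-\chi,\varphi)*i_{b*}^\ren$ is cleanly supported on the stratum $1$.)

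The paper closes this gap by a genuinely non-formal argument that your proposal omits: having full faithfulness, it suffices to show the right adjoint $i_1^{\ren*}\oo(\chi^{-1},\varphi)i_{b!}^\ren$ is conservative. This is done by induction on $|\chi|$, peeling off one $\chi_c^{-1}$ chosen so that $(b_{\chi,L},b_{\chi\chi_c,L},\stdd)$ is Hodge--Newton reducible; \cref{cor: hodge-newton renormalized reducible reduction} then reduces conservativity to the conservativity of $\nres_{\overline{P_b}}^{G_b}$ on $\D(G_b(E))^\wedge_{\eta\circ\varphi^w}$, which is checked by exhibiting, via \cref{cor: t-exact irreducible case} and the irreducibility of the relevant normalized parabolic induction, an irreducible generic object of that block receiving a nonzero map from any nonzero object. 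This representation-theoretic input is the actual content of essential surjectivity and cannot be replaced by the formal two-sided-inverse computation you propose.
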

\begin{proof}
    From what we have proven so far, we know that \(i_{b'}^{\ren!}\oo(\chi,\varphi)*i_{1*}^{\ren}=0\) for \(b'\neq b\), thus \(i_b^{\ren!}\) is fully faithful on the essential image of \(\oo(\chi,\varphi)*i_{1*}^{\ren}\), which is a composition of fully faithful functors, thus \(i_{b}^{\ren!}\oo(\chi,\varphi)*i_{1*}^{\ren}\) is fully faithful.
    It has a right adjoint functor given by \(i_{1}^{\ren*}\oo(\chi^{-1},\varphi)i_{b!}^\ren\).
    It suffices to show that this functor is conservative.
    We now induct on \(|\chi|\), the base case is clear.

    For the inductive step, by adjunction we know that \(i_{1}^{\ren*}\oo(\chi^{-1},\varphi)i_{b!}^\ren\) vanishes unless \(b=b_{\chi,L}\).
    Note that \(i_{1}^{\ren!}\oo((\chi\chi_c)^{-1},\varphi)*i_{b_{\chi\chi_c^{-1}}*}^\ren\cong i_{1}^{\ren!}\oo(\chi^{-1},\varphi)*\oo(\chi_c^{-1},\varphi)*i_{b_{\chi\chi_c}*}^\ren\).
    Let \(b'\defined b_{\chi\chi_c,L}\) and we see that \(\oo(\chi_c^{-1},\varphi)*i_{b'*}^\ren\) admits a filtration with filtered pieces given by \(i_{\beta!}^\ren i_{\beta}^{\ren!}\oo(\chi_c^{-1},\varphi)*i_{b'*}^\ren\) with \(\beta\in B(G)\).
    Then \(i_{1}^{\ren!}\oo(\chi^{-1},\varphi)*-\cong i_{1}^{\ren*}\oo(\chi^{-1},\varphi)*-\) kills every filtered piece except
    \(i_{b_{\chi,L}!}^\ren i_{b_{\chi,L}}^{\ren!}\oo(\chi_c^{-1},\varphi)*i_{b'*}^\ren\).
    Thus we need to check that \(i_{b_{\chi,L}!}^\ren i_{b_{\chi,L}}^{\ren!}\oo(\chi_c^{-1},\varphi)*i_{b'*}^\ren\) is conservative.
    Note, we can always choose \(c\) such that \((b_{\chi,L},b',\stdd)\) is Hodge-Newton reducible, namely take \(c\) such that \(\chi(c)>0\) and \(\chi(c)/n_c\) is minimal among the positive slopes of \(b_{\chi,L}\).
    So, assume that \((b_{\chi,L},b',\stdd)\) is Hodge-Newton reducible.
    Then by \cref{cor: hodge-newton renormalized reducible reduction}, we need to check that \(\nres_{\overline{P_b}}^{G_b}\) is conservative on \(\D(G_{b}(E))^\wedge_{\eta_{M,b}\circ\varphi^{w}}\).
    For this, it suffices to check that for any \(\pi\in\D(G_{b}(E))^\wedge_{\eta_{M,b}\circ\varphi^{w}}\), there is a \(\rho\in\D(M_{\theta}(E))\) such that \(\rhom(\nind_{P_b}^{G_b}\rho,\pi)\neq 0\).
    Now, let \(Q\) be the parabolic attached to \(L\), consider the transfer \(Q^b\) to \(G_b\) with Levi quotient \(L^b\).
    We know these exist since \(b\in\im(B(L)_{\mathrm{basic}}\to B(G))\).
    We now let \(\rho=\nind_{Q^b}^{M_{\theta}}\rho_0\), where \(\rho_0\) is an irreducible supercuspidal representation whose \(L\)-parameter is \(\varphi^w\). 
    Such a representation exists, this follows from \cref{cor: t-exact irreducible case}.
    Then \(\nind_{P_b}^{G_b}\rho\cong\nind_{Q^b}^{G_b}\rho_0\) is an irreducible representation by \cite[THÉORÈME 7.24]{lift-irreducible} whose parameter is \(\eta_{M,b}\circ\varphi^{w}\).
    Also note, any non-zero \(\pi\in\D(G_{b}(E))^\wedge_{\eta_{M,b}\circ\varphi^{w}}\) admits a non-zero map from a irreducible subquotient of \(\nind_{Q^b}^{G_b}\rho_0\) by considering an irreducible subrepresentation in a non-zero cohomology group of \(\pi\), but there is only one, which is itself.
    In particular, we get that \(\rhom(\nind_{P_b}^{G_b}\rho,\pi)\neq 0\).
\end{proof}
To compute \(\func{b}{b'}{\chi_c}{\varphi}\), we need to pass to \(\varphi_c\)-isotypic components and check this results in \cref{thm: explict hecke ! stalk}.
The following lemma shows that this indeed works as expected.
\begin{lem}\label{lem: reduce r}
    Fix \(\chi\succeq 0\) with \(|\chi|=s\).
    Then \cref{thm: explict hecke ! stalk} with \(\func{1}{b'}{\chi}{\varphi}\) holds for those \(b'\) such that there is a \(\xi\succeq 0\) with \(|\xi|=|\chi|-1\), 
    and \((b_{\xi,L},b')\) is reducible.
\end{lem}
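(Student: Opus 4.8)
The plan is to peel off one unit from $\chi$, reduce the resulting length-one Hecke operator to a Hecke operator on a proper Levi of $\GL_n$ by means of Hodge--Newton reducibility, and then invoke the inductive hypothesis for that smaller general linear group.

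First I would dispose of the trivial case: if $b'\notin B(G)_L$ then $\func{1}{b'}{\chi}{\varphi}=0$ by \cref{rem: main thm trivial cases}, so assume $b'\in B(G)_L$. Let $\xi\succeq 0$ be the character furnished by the hypothesis; since $|\xi|=s-1$ and $\xi\preceq\chi$ we have $\chi=\xi\chi_c$ for a unique $c$ with $\chi(c)\geq 1$, and I set $b_0\defined b_{\xi,L}$. Applying \cref{lem: combine chi} with $b=1$, intermediate bundle $b_0$, $\chi'=\chi_c$ and $b''=b'$ — whose first hypothesis is \cref{inductive hypothesis}(1), available since $|\xi|<s$ — it suffices to establish \cref{thm: explict hecke ! stalk} for $i_{b'}^{\ren!}\oo((\chi_c)^\sigma,\varphi)*i_{b_0*}^\ren$ for every $\sigma\in\Sigma_r$ with $\widehat{L}^\sigma\subset\widehat{G_{b_0}}$. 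The argument below applies uniformly in $\sigma$; fixing one such $\sigma$ and using the identity $\oo(\eta,\varphi)=\oo(\eta^\sigma,\varphi^\sigma)$, the operator in question takes the form $i_{b'}^{\ren!}\oo(\chi_{c'},\psi)*i_{b_0*}^\ren$ with $|\chi_{c'}|=1$ and $\psi$ a permutation of $\varphi_1\times\dots\times\varphi_r$, hence again of Langlands--Shahidi type with the same cuspidal support; and reducibility of $(b_0,b')$ is a condition on the bundles alone (\cref{rem: reducible definition}), so it is unaffected by this permutation.

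Next I would extract $\oo(\chi_{c'},\psi)*-$ from geometric Satake. By \cref{lem: isotypic decomposition Langlands-Shahidi type} there is a $W_E$-equivariant splitting $T_{\std}\cong\bigoplus_i \psi_i\boxtimes\bigl(\oo(\chi_i,\psi)*-\bigr)$, and \cref{lem: langlands-shahidi type explicit} gives $\rhom_{W_E}(\psi_{c'},\psi_i)=0$ for $i\neq c'$; hence, by \cref{cor: hom galois representation}, applying $\rhom_{W_E}(\psi_{c'},-)$ to $T_{\std}$ yields $K\otimes\bigl(\oo(\chi_{c'},\psi)*-\bigr)$ with $K=\rhom_{W_E}(\psi_{c'},\psi_{c'})$ a perfect complex with nonzero $H^{0}$. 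Since tensoring with such a $K$ detects vanishing, support on $\speccenter(G)$, and equivalences, and since $i_{b'}^{\ren!}$, $i_{b_0*}^\ren$ and $\rhom_{W_E}(\psi_{c'},-)$ are exact and commute, it is enough to compute $i_{b'}^{\ren!}T_{\std}i_{b_0*}^\ren$ together with its residual $W_E$-action. If there is no modification $b_0\xto{\std}b'$ this functor vanishes by \cref{rem: main thm trivial cases}; otherwise reducibility of $(b_0,b')$ means $(b_0,b',\std)$ is Hodge--Newton or $\omega$-Hodge--Newton reducible, with a reduction $(\theta,\theta',\mu_M)$ to $M=\GL_{m_1}\times\GL_{m_2}$ and $1\leq m_1,m_2<n$, so that \cref{cor: hodge-newton renormalized reducible reduction} applies: up to an explicit shift, $i_{b'}^{\ren!}T_{\std}i_{b_0*}^\ren$ equals $i_{\theta'}^{\ren!}T_{\mu_M}i_{\theta*}^\ren\nres_{\overline{P_{b_0}}(E)}^{G_{b_0}(E)}(-)$ in the Hodge--Newton case and $\nind_{P_{b'}(E)}^{G_{b'}(E)}i_{\theta'}^{\ren!}T_{\mu_M}i_{\theta*}^\ren(-)$ in the $\omega$-Hodge--Newton case. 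For $\mu=\std$ the reduction $\mu_M$ is $(\std,0)$ or $(0,\std)$, so $T_{\mu_M}$ is the operator $T_{\std}$ of the single factor $\GL_{m_j}$ in which the modification takes place, tensored with the identity on the other factor, and $\psi_{c'}$ together with the whole $W_E$-action is carried by that factor.

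Finally, \cref{inductive hypothesis}(2) applies to $\GL_{m_j}$ with the restricted parameter $\psi^{(j)}$ — again of Langlands--Shahidi type — and provides \cref{thm: explict hecke ! stalk} for $i_{\theta'_j}^{\ren!}\oo(\chi_{c'},\psi^{(j)})*i_{\theta_j*}^{\ren}$; re-extracting the $\psi_{c'}$-isotypic part on $\GL_{m_j}$ (which is nonzero precisely when the summand $\psi_{c'}$ lies in the factor $\GL_{m_j}$, matching \cref{thm: explict hecke ! stalk}(2)) and then transporting back across the passive factor ($\theta_i=\theta'_i$), across $\nres$ or $\nind$, and across the first-step equivalence yields \cref{thm: explict hecke ! stalk} for $i_{b'}^{\ren!}\oo(\chi_{c'},\psi)*i_{b_0*}^\ren$. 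Here the vanishing statements survive because $\nres$ and $\nind$ are compatible with Fargues--Scholze parameters and with the passage between $\bun_M$ and $\bun_G$ (using \cref{lem: support sheaf with L-parameter of Langlands-Shahidi type}), the formal-completion categories match under the same compatibility, and in the $\omega$-Hodge--Newton case the functor remains an equivalence because the normalized parabolic induction of the relevant irreducible supercuspidal representation is irreducible, exactly as in the proof of \cref{lem: hecke functor is equivalence of categories}. The hard part will be this last transport step: one must check that the permutation $w$ attached to the $\GL_n$-datum $(1,b',\chi)$ is the composite of the permutation attached to the $\GL_{m_j}$-datum with the reordering induced by the Hodge--Newton reduction, and that the shifts and modulus characters produced by \cref{cor: hodge-newton renormalized reducible reduction} cancel against the renormalizations built into $i_{(-)*}^{\ren}$ and $i_{(-)}^{\ren!}$, so that the essential image lands exactly in the formal completion predicted by \cref{thm: explict hecke ! stalk}.
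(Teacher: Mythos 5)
Your proposal is correct and follows essentially the same route as the paper: reduce via \cref{lem: combine chi} to the single-step operator $\func{b_{\xi,L}}{b'}{\chi_c}{\varphi}$, apply \cref{cor: hodge-newton renormalized reducible reduction} to pass to the Levi $M$, extract the $\varphi_c$-isotypic component via \cref{cor: hom galois representation}, invoke \cref{inductive hypothesis}(2) for the smaller general linear groups, and finish with \cref{lem: hecke functor is equivalence of categories}. The bookkeeping you flag as the ``hard part'' (matching $w$, shifts, and moduli) is exactly what the paper handles by decomposing $\nres_{\overline{P_b}(E)}^{G_b(E)}\pi$ into $\sigma$-indexed pieces and tracking the resulting twists.
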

\begin{proof}
    Choose a \(\xi\) as in the assumptions.
    Then by assumption we may assume that there is a reducible modification \(b_{\xi,L}\xto{\std}b'\), so \((b_{\xi,L},b',\std)\) is Hodge-Newton reducible or \(\omega\)-Hodge-Newton reducible and \(\numin(b')\geq 0\).
    Assume that \((b_{\xi,L},b',\std)\) is Hodge-Newton reducible with reduction \((\theta,\theta',\mu)\). 
    By \cref{lem: combine chi} we are reduced to check \cref{thm: explict hecke ! stalk} with \(\func{b_{\xi,L}}{b'}{\chi_c}{\varphi^\sigma}\) for any \(\sigma\in\Sigma_r\) such that \(L^\sigma\subset\widehat{G_{b_{\xi,L}}}\), where \(\chi_c=\chi\xi^{-1}\).
    By \cref{lem: combine chi} and Inductive Hypothesis \cref{inductive hypothesis}(1) it is enough to check \cref{thm: explict hecke ! stalk} for \(\func{b_{\xi,L}}{b'}{\chi_c}{\varphi}\) for any \(c\). We want to compute \(\func{b_{\xi,L}}{b'}{\chi_c}{\varphi}\).
    Let \(\pi\in\D(G_b(E))^\wedge_{\eta_{G,b}\circ\varphi}\).
    Using \cref{cor: hodge-newton renormalized reducible reduction}(1) and \cref{cor: hom galois representation} we see that 
    \begin{align*}
        &i_{b'}^{\ren!}\oo(\chi_c,\varphi) i_{b_{\xi,L}*}^\ren\pi\otimes K\\
        \cong &i_{b'}^{\ren!}\rhom_{W_E}(\varphi_c,T_\std i_{b_{\xi,L}*}^\ren\pi)\\
        \cong&\rhom_{W_E}(\varphi_c,i_{\theta'}^{\ren!}T_\mu i_{\theta*}^\ren\nres_{\overline{P_{b}}(E)}^{G_b(E)}\pi)[\langle2\rho_M,\nu_{\theta'}+\mu_M-\nu_{\theta}\rangle-\langle2\rho_G,\nu_{b'}+\mu-\nu_{b_{\xi,L}}\rangle]
    \end{align*}
    for \(K=\rhom_{W_E}(\varphi_c,\varphi_c)\).
    By consideration of parameters we have a product decomposition
    \begin{equation*}
        \nres_{\overline{P_{b}}(E)}^{G_b(E)}\pi=\prod_{\sigma\in\Sigma_r(b,\varphi,M)}\pi_\sigma,
    \end{equation*}
    where \(\sigma\in \Sigma_r(b,\varphi,M)\) if \(\widehat{L}^\sigma\subset \widehat{M_b}\), \(\eta_{G,b}\circ\varphi\cong\eta_{G,b}\circ\varphi^\sigma\) and we have
    \begin{equation*}
        b=\oo(a_1|n_{\sigma(1)})\oplus\dots\oplus\oo(a_r|n_{\sigma(r)})
    \end{equation*}
    with \(a_i/n_{\sigma(i)}\geq a_j/n_{\sigma(j)}\) for \(i\geq j\) and \(\pi_\sigma\in\D(M_\theta(E))^\wedge_{\eta_{M,\theta}\circ\varphi^\sigma}\).

    By compatibility of the spectral action with products of groups, Inductive Hypothesis \cref{inductive hypothesis}(2), \cref{thm: explict hecke ! stalk}(2) and \cref{thm: explict hecke ! stalk}(1) for \(\func{\theta}{\theta'}{\chi}{\varphi'}\) for all \(\varphi'\) a direct summand of \(\varphi\) and all \(\chi\succeq 0\) we compute that 
    \begin{align*}
        \rhom_{W_E}(\varphi_c,i_{\theta'}^{\ren!}T_\mu i_{\theta*}^\ren\pi_\sigma)&\cong i_{\theta'}^{\ren!}\oo(\chi_{\sigma(c)},\varphi^\sigma)* i_{\theta*}^\ren\pi_\sigma\otimes K
    \end{align*}
    vanishes for \(\theta'\neq\theta_{\xi^\sigma\chi_{\sigma(c)},L^\sigma}\) \footnote{The additional twists by \(\sigma\) exist since \(\varphi_{\pi_\sigma}=\eta_{M,\theta}\circ\varphi^\sigma\) and the supercuspidal support of \(\varphi^\sigma\) is \(L^\sigma\).} and lands in \(\D(M_{\theta'}(E))^\wedge_{\eta_{M,\theta'}\circ\varphi^{w\sigma}}\) with \(w\) satisfying 
    \begin{equation*}
        \xi^\sigma\chi_{\sigma(c)}(i)/n_{w\sigma(i)}\geq \xi^\sigma\chi_{\sigma(c)}(j)/n_{w\sigma(j)}
    \end{equation*}
    for \(i\geq j\).
    Thus we obtain \cref{thm: explict hecke ! stalk}(2) and \cref{thm: explict hecke ! stalk}(1) for \(\func{b_{\xi,L}}{b'}{\chi_c}{\varphi}\).
    Using \cref{lem: hecke functor is equivalence of categories} it follows that \cref{thm: explict hecke ! stalk} holds.
\end{proof}
In light of the previous lemma it is clear that we should analyse when there is a reducible modification \(b\xto{\std}b'\).
\begin{lem}\label{lem: modifications increase min and max slopes}
    If we have a modification \(b\xto{\std}b'\), then we have \(\numin(b')\geq\numin(b)\) and \(\numax(b')\geq b\).
    Moreover, in this situation \((b,b',\std)\) is Hodge-Newton reducible if and only if \(\numin(b)=\numin(b')\) and it is \(\omega\)-Hodge-Newton reducible if and only if \(\numax(b)=\numax(b')\).
\end{lem}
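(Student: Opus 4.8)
The plan is to argue entirely within the Harder--Narasimhan formalism for vector bundles on the Fargues--Fontaine curve. First I would recall that a modification $b\xto{\std}b'$ is the same datum as an inclusion of vector bundles $\mathcal{E}_b\hookrightarrow\mathcal{E}_{b'}$ with torsion cokernel of length one (this matches the normalisation $\kappa(\oo(1))=1$ recorded in the conventions), and that the two facts I will use repeatedly are $\Hom(\mathcal{F},\mathcal{G})=0$ whenever $\numin(\mathcal{F})>\numax(\mathcal{G})$, and $\numax(\mathcal{G})\le\numax(\mathcal{E})$ for any nonzero subsheaf $\mathcal{G}\subseteq\mathcal{E}$ of a bundle (the maximal destabilising subbundle of $\mathcal{G}$ is a subsheaf of $\mathcal{E}$, and its saturation in $\mathcal{E}$ is a subbundle of slope at least that of $\mathcal{G}$). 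The monotonicity is then immediate: applying the last fact to $\mathcal{E}_b\subseteq\mathcal{E}_{b'}$ gives $\numax(b)\le\numax(b')$, and dualising the inclusion to $\mathcal{E}_{b'}^{\vee}\hookrightarrow\mathcal{E}_b^{\vee}$ and using that $\numin(\mathcal{E})=-\numax(\mathcal{E}^{\vee})$ gives $\numin(b)\le\numin(b')$.

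For the characterisation of reducibility I would set $\lambda\defined\numin(b)$ and let $\mathcal{E}_b^{>\lambda}\subseteq\mathcal{E}_b$ be the penultimate step of the Harder--Narasimhan filtration, so that $\mathcal{E}_b/\mathcal{E}_b^{>\lambda}=:\mathcal{E}_b^{=\lambda}$ is semistable of slope $\lambda$ and $\numin(\mathcal{E}_b^{>\lambda})>\lambda$; likewise $\mathcal{E}_{b'}^{>\lambda}$, $\mathcal{E}_{b'}^{=\lambda}$ relative to $\numin(b')$. Assuming $\numin(b)=\numin(b')=\lambda$, I would put $\mathcal{A}\defined\mathcal{E}_b\cap\mathcal{E}_{b'}^{>\lambda}$ and $\mathcal{B}\defined\mathcal{E}_b/\mathcal{A}$; since $\mathcal{B}$ embeds into $\mathcal{E}_{b'}^{=\lambda}$ it is a vector bundle. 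The vanishing of $\Hom(\mathcal{E}_b^{>\lambda},\mathcal{E}_{b'}^{=\lambda})$ forces $\mathcal{E}_b^{>\lambda}\subseteq\mathcal{A}$, so $\mathcal{A}/\mathcal{E}_b^{>\lambda}$ fits into $0\to\mathcal{A}/\mathcal{E}_b^{>\lambda}\to\mathcal{E}_b^{=\lambda}\to\mathcal{B}\to 0$ with semistable outer terms of slope $\lambda$, hence is itself semistable of slope $\lambda$; together with $\numin(\mathcal{E}_b^{>\lambda})>\lambda$ this yields $\numin(\mathcal{A})\ge\lambda$. A degree count (the total cokernel has length one, $\mathcal{B}\cong\mathcal{E}_{b'}^{=\lambda}$, and $\deg\mathcal{E}_{b'}-\deg\mathcal{E}_b=1$) shows $\mathcal{A}\subset\mathcal{E}_{b'}^{>\lambda}$ has colength exactly one and the same rank. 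Taking $\theta_1,\theta_1'$ the isocrystals of $\mathcal{A},\mathcal{E}_{b'}^{>\lambda}$ and $\theta_2=\theta_2'$ that of $\mathcal{B}\cong\mathcal{E}_{b'}^{=\lambda}$, with $M=\GL_{m_1}\times\GL_{m_2}$, $m_1=\rk(\mathcal{A})$, $m_2=\rk(\mathcal{B})$ and $\mu_M=(\std,0)$, I would verify every condition in the definition: $\numax(\theta_2)=\lambda\le\numin(\mathcal{A})=\numin(\theta_1)$, $\numax(\theta_2')=\lambda<\numin(\theta_1')$, $\kappa(\theta')-\kappa(\theta)=(1,0)=\mu_M^{\sharp_M}$, and $\mu_M$ is $G$-dominant. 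Conversely, given a reduction $(\theta,\theta',\mu_M=(\std,0))$, the second factor is unmodified so $\theta_2=\theta_2'$, and as $\mathcal{E}_b$ (resp.\ $\mathcal{E}_{b'}$) has $\mathcal{E}_{\theta_1}$ (resp.\ $\mathcal{E}_{\theta_1'}$) as a subbundle with quotient $\mathcal{E}_{\theta_2}$ (resp.\ $\mathcal{E}_{\theta_2'}$) of no larger (resp.\ strictly smaller) slopes, the minimal slope is read off the common second factor, giving $\numin(b)=\numin(\theta_2)=\numin(\theta_2')=\numin(b')$. The statements about $\numax$ and $\omega$-Hodge--Newton reducibility I would then deduce by applying $(-)^{\vee}$, which interchanges minimal with maximal slopes, subbundles with quotient bundles, and the two factors of $M$, hence Hodge--Newton reducibility with $\omega$-Hodge--Newton reducibility.

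The step I expect to be the real obstacle is the forward direction for reducibility: producing the Levi reduction $(\theta,\theta',\mu_M)$ and, in particular, the lower bound $\numin(\mathcal{A})\ge\lambda$, which rests on $\mathcal{A}/\mathcal{E}_b^{>\lambda}$ being forced to be semistable of slope $\lambda$ when squeezed between two such bundles. The rest is routine dévissage through the Harder--Narasimhan filtration, together with careful bookkeeping of the duality involution and of the Tate-twist-free normalisations.
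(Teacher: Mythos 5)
Your proposal is correct and is, at bottom, the same argument the paper has in mind — a direct Harder--Narasimhan analysis of the colength-one inclusion $\mathcal{E}_b\hookrightarrow\mathcal{E}_{b'}$. For the monotonicity the paper composes with a nonzero map $\oo(\numax(b))\to\mathcal{E}_b\hookrightarrow\mathcal{E}_{b'}$, resp.\ $\mathcal{E}_b\hookrightarrow\mathcal{E}_{b'}\surjto\oo(\numin(b'))$; your variant via subsheaves and duality is the same fact in another guise. For the reducibility characterisation the paper just says ``immediately follows from \cref{rem: reducible definition}'', i.e.\ it silently unwinds the definition and leaves the real content (constructing the $M$-reduction $(\theta,\theta',\mu_M)$ when $\numin(b)=\numin(b')$, and reading off the minimal slope from a given reduction) to the reader; your write-up supplies exactly that content, by intersecting $\mathcal{E}_b$ with the HN step $\mathcal{E}_{b'}^{>\lambda}$, showing $\mathcal{E}_b^{>\lambda}\subseteq\mathcal{A}$ from the slope vanishing of $\Hom$, forcing $\mathcal{B}\cong\mathcal{E}_{b'}^{=\lambda}$ by a degree count, and then dualising for the $\omega$-case. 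So you have not taken a different route; you have carried out the dévissage that the paper compresses into one line. One small wording slip: in the sentence ``fits into $0\to\mathcal{A}/\mathcal{E}_b^{>\lambda}\to\mathcal{E}_b^{=\lambda}\to\mathcal{B}\to 0$ with semistable outer terms of slope $\lambda$, hence is itself semistable'' the premise and the conclusion overlap; what you actually want to say is that $\mathcal{B}$ is squeezed between a quotient of the semistable $\mathcal{E}_b^{=\lambda}$ (giving $\numin\geq\lambda$) and a subsheaf of the semistable $\mathcal{E}_{b'}^{=\lambda}$ (giving $\numax\leq\lambda$), hence is semistable of slope $\lambda$, and then the degree count transfers this to $\mathcal{A}/\mathcal{E}_b^{>\lambda}$. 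With that clarified, the argument is sound.
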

\begin{proof}
    If we have a modification \(b\xto{\std}b'\), we obtain an injection \(\mathcal{E}_b\injto\mathcal{E}_{b'}\) that is generically an isomorphism.
    Then we obtain an injection \(\oo(\numax(b))\injto\mathcal{E}_b\injto\mathcal{E}_{b'}\) showing that \(\numax(b')\geq\numax(b)\).

    We similarly have a non-zero map \(\mathcal{E}_b\injto\mathcal{E}_{b'}\surjto\oo(\numin(b'))\), this shows that \(\numin(b)\leq\numin(b')\).
    The claim about reducibility immediately follow from \cref{rem: reducible definition}.
\end{proof}
\begin{rem}\label{rem: modifications from basic b}
    Assume \(b\) is basic and consider \(b'\) such that we have a modification \(b\xto{\std}b'\).
    We know that \(\nu(b')-\nu(b)\leq\mu^{\sharp_G}\) and any \(b'\) satisfying this equality is realized.
    This shows that \(\mathcal{E}_{b'}=\mathcal{F}\oplus\mathcal{E}_{b'}^{\min}\) with \(\mathcal{E}_{b'}^{\min}\) semistable such that \(\numin(\mathcal{F})>\numin(\mathcal{E}_{b'}^{\min})\).
    We know that \(\numin(\mathcal{F})> \frac{\deg(\oo(\numin(b))^k)+1}{\rk(\oo(\numin(b))^k)}\) and \(\frac{\deg(\oo(\numin(b))^k)+1}{\rk(\oo(\numin(b))^k)}\geq \numin(\mathcal{E}_{b'}^{\min})\) with equality if and only if \(\mathcal{F}=0\).
    If \(\mathcal{F}\neq 0\), then \(\rkmin(b')<\rkmin(b)\).
\end{rem}
In the situation where \(b\) is not basic, this allows us to find situations where all modifications reducible.
\begin{lem}\label{lem: all modification reducible condition}
    Assume \(b\) is not basic.
    Write \(\mathcal{E}_b=\mathcal{E}\oplus\oo(\numin(b))^k\) such that \(\numin(\mathcal{E})>\numin(b)\).
    Assume that \(\numin(\mathcal{E})\geq\frac{\deg(\oo(\numin(b))^k)+1}{\rk(\oo(\numin(b))^k)}\).
    If we have a modification \(b\xto{\std}b'\) then it is reducible or \(\rkmin(b')<\rkmin(b)\).
\end{lem}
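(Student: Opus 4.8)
The idea is to analyze the modification $\mathcal{E}_b\hookrightarrow\mathcal{E}_{b'}$ --- a length-one upper modification at a single point of the Fargues--Fontaine curve --- relative to the decomposition $\mathcal{E}_b=\mathcal{E}\oplus\oo(\numin(b))^k$, where I abbreviate $k=\rkmin(b)$; since $b$ is not basic, $\mathcal{E}\neq0$ and $\oo(\numin(b))^k$ is genuinely the minimal Harder--Narasimhan summand of $\mathcal{E}_b$. I first record that the hypothesis amounts to $\numin(\mathcal{E})\geq\numin(b)+\tfrac1k$, since $\tfrac{\deg(\oo(\numin(b))^k)+1}{\rk(\oo(\numin(b))^k)}=\numin(b)+\tfrac1k$. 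The whole argument is then organized around the dichotomy of whether $\mathcal{E}$ stays a saturated subbundle of $\mathcal{E}_{b'}$.

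If $\mathcal{E}$ is \emph{not} saturated in $\mathcal{E}_{b'}$, I would take its saturation $\widetilde{\mathcal{E}}$; a short computation (using that $\mathcal{E}_{b'}/\mathcal{E}_b$ has length one and that $\mathcal{E}$ is saturated in $\mathcal{E}_b$) shows $\widetilde{\mathcal{E}}/\mathcal{E}$ has length one, $\widetilde{\mathcal{E}}\cap\mathcal{E}_b=\mathcal{E}$ and $\widetilde{\mathcal{E}}+\mathcal{E}_b=\mathcal{E}_{b'}$, so that $\mathcal{E}\xto{\std}\widetilde{\mathcal{E}}$ is a modification and $\mathcal{E}_{b'}/\widetilde{\mathcal{E}}\cong\oo(\numin(b))^k$. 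Now \cref{lem: modifications increase min and max slopes} gives $\numin(\widetilde{\mathcal{E}})\geq\numin(\mathcal{E})>\numin(b)$, hence $\mathrm{Ext}^1(\oo(\numin(b))^k,\widetilde{\mathcal{E}})=0$ and the extension $0\to\widetilde{\mathcal{E}}\to\mathcal{E}_{b'}\to\oo(\numin(b))^k\to0$ splits; so $\oo(\numin(b))^k$ is still a direct summand of $\mathcal{E}_{b'}$, whence $\numin(b')=\numin(b)$, and $(b,b',\std)$ is Hodge--Newton reducible by \cref{lem: modifications increase min and max slopes}.

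If $\mathcal{E}$ \emph{is} saturated in $\mathcal{E}_{b'}$, I would set $\mathcal{Q}\defined\mathcal{E}_{b'}/\mathcal{E}$, a rank-$k$ bundle which receives a length-one modification $\oo(\numin(b))^k\xto{\std}\mathcal{Q}$ (the induced map $\mathcal{E}_b/\mathcal{E}\to\mathcal{E}_{b'}/\mathcal{E}$ is injective with cokernel $\mathcal{E}_{b'}/\mathcal{E}_b$). Since $\oo(\numin(b))^k$ is basic for $\GL_k$, \cref{rem: modifications from basic b} applies and yields $\mathcal{Q}=\mathcal{F}_{\mathcal{Q}}\oplus\mathcal{Q}^{\min}$ with $\numin(\mathcal{F}_{\mathcal{Q}})>\numin(b)+\tfrac1k\geq\numin(\mathcal{Q}^{\min})$, equality iff $\mathcal{F}_{\mathcal{Q}}=0$, and $\rk(\mathcal{Q}^{\min})<k$ whenever $\mathcal{F}_{\mathcal{Q}}\neq0$. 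In the case $\mathcal{F}_{\mathcal{Q}}=0$, the bundle $\mathcal{Q}$ is semistable of slope $\numin(b)+\tfrac1k\leq\numin(\mathcal{E})$, so $\mathrm{Ext}^1(\mathcal{Q},\mathcal{E})=0$, $\mathcal{E}_{b'}\cong\mathcal{E}\oplus\mathcal{Q}$, and $\numax(b')=\numax(\mathcal{E})=\numax(b)$, so $(b,b',\std)$ is $\omega$-Hodge--Newton reducible by \cref{lem: modifications increase min and max slopes}. In the case $\mathcal{F}_{\mathcal{Q}}\neq0$, I want to show $\mathcal{E}_{b'}^{\min}\cong\mathcal{Q}^{\min}$, which gives $\rkmin(b')=\rk(\mathcal{Q}^{\min})<k=\rkmin(b)$. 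Writing $s\defined\numin(\mathcal{Q}^{\min})$, the kernel of the surjection $q\from\mathcal{E}_{b'}\to\mathcal{Q}\to\mathcal{Q}^{\min}$ is an extension of $\mathcal{F}_{\mathcal{Q}}$ by $\mathcal{E}$, so $\numin(\ker q)\geq\min(\numin(\mathcal{E}),\numin(\mathcal{F}_{\mathcal{Q}}))>s$ --- this is exactly where the hypothesis $\numin(\mathcal{E})\geq\numin(b)+\tfrac1k>s$ enters. Splitting off the minimal Harder--Narasimhan summand $\mathcal{E}_{b'}=\mathcal{E}_{b'}'\oplus\mathcal{E}_{b'}^{\min}$, the map $q$ kills $\mathcal{E}_{b'}'$ (no nonzero maps from the higher slopes to the semistable $\mathcal{Q}^{\min}$), hence factors through a surjection $\bar q\from\mathcal{E}_{b'}^{\min}\to\mathcal{Q}^{\min}$ with $\ker q=\mathcal{E}_{b'}'\oplus\ker\bar q$; thus $\ker\bar q$ is a \emph{direct summand} of $\ker q$, so $\numin(\ker\bar q)\geq\numin(\ker q)>s$ if it is nonzero, while at the same time $\ker\bar q$ is a subbundle of the semistable bundle $\mathcal{E}_{b'}^{\min}$ of slope $\leq s$, forcing $\ker\bar q=0$ and hence $\bar q$ an isomorphism.

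The step I expect to be the main obstacle is this last one. A saturated subbundle of $\mathcal{E}_{b'}$ can perfectly well have minimal slope below $\numin(\mathcal{E}_{b'})$ (for instance $\oo\hookrightarrow\oo(1)^{\oplus2}$), so one cannot simply estimate the slopes of subbundles of $\mathcal{E}_{b'}^{\min}$; the fix is to use that $q$ factors through the minimal Harder--Narasimhan \emph{summand}, which turns the relevant kernel into a genuine direct summand, for which the slope inequality $\numin(\ker\bar q)\geq\numin(\ker q)$ does hold. The remaining ingredients --- the length-one bookkeeping for modifications, the splitting of extensions via vanishing of $H^1$ of non-negative-slope bundles on the Fargues--Fontaine curve, and the two translations of reducibility --- are routine and are exactly supplied by \cref{lem: modifications increase min and max slopes} and \cref{rem: modifications from basic b}.
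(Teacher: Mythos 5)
Your proof is correct and takes essentially the same route as the paper: your dichotomy on whether \(\mathcal{E}\) stays saturated in \(\mathcal{E}_{b'}\) is exactly the paper's case division \(\mu=(\std,0)\) versus \(\mu=(0,\std)\) for the induced \(P\)-reduction (which the paper obtains by citing a lemma on modifications of \(P\)-reductions rather than by hand via saturation), and the subsequent slope/\(\mathrm{Ext}^1\)-vanishing arguments match. The only divergence is in the sub-case \(\mathcal{F}_{\mathcal{Q}}\neq0\), where the paper splits off \(\mathcal{Q}^{\min}\) directly as a direct summand by the same kind of \(\mathrm{Ext}\)-vanishing, while you argue via the factorization of \(q\) through the minimal Harder--Narasimhan summand; both are valid.
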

\begin{proof}
    Since \(b\) was not basic, \(\mathcal{E}_b=\mathcal{E}\oplus\oo(\numin(b))^k\) for some \(k>0\) such that \(\numin(\mathcal{E})>\numin(b)\) and \(\mathcal{E}
    \neq 0\).
    This induces a \(P\)-reduction on \(\mathcal{E}_b\) where \(P\) is the standard parabolic with Levi \(M=\GL_{\rk(\mathcal{E})}\times\GL_{\rk(\oo(\numin(b))^k)}\).
    Using \cite[Lemma 2.4]{P-reduction-modification}, the bundle \(\mathcal{E}_{b'}\) admits a \(P\)-reduction \(\mathcal{E}_{b',P}\) such that \(\mathcal{E}\times\oo(\numin(b))^k\xto{\mu}\mathcal{E}_{b',P}\times^PM\), where \(\mu\) is a reduction of \(\std\) to \(M\).
    We find a \(\GL_{\rk(\mathcal{E})}\)-bundle \(\mathcal{E}'\) and a \(\GL_{\rk(\oo(\numin(b))^k)}\)-bundle \(\mathcal{E}''\) such that \(\mathcal{E}'\times\mathcal{E}''\cong\mathcal{E}_{b',P}\times^PM\).
    We now have two cases.

    \begin{enumerate}
        \item Assume \(\mu=(\std,0)\).
        In this case \(\oo(\numin(b))^k\cong\mathcal{E}''\) and we have a modification \(\mathcal{E}\xto{\std}\mathcal{E}'\).
        By \cref{lem: modifications increase min and max slopes} we have \(\numin(\mathcal{E}')\geq\numin(\mathcal{E})\).
        Therefore the extension problem splits and we obtain \(\mathcal{E}_{b'}\cong\mathcal{E}'\times\oo(\numin(b))^k\) and \(\numin(b')=\numin(b)\) so the modification is reducible.
        \item Assume \(\mu=(0,\std)\).
        In this case \(\mathcal{E}\cong\mathcal{E}'\). We have a modification \(\oo(\numin(b))^k\xto{\std}\mathcal{E}''\).
        As discussed in \cref{rem: modifications from basic b} we have \(\mathcal{E}''=\mathcal{F}\oplus\mathcal{E}_{b'}^{\min}\) and \(\numin(\mathcal{F})> \frac{\deg(\oo(\numin(b))^k)+1}{\rk(\oo(\numin(b))^k)}\).
        In the case where \(\mathcal{F}\neq 0\), by consideration of slopes \(\mathcal{E}_{b'}^{\min}\) splits off as a direct summand and we have \(\numin(\mathcal{E}''/\mathcal{E}_{b'}^{\min})>\numin(\mathcal{E}_{b'}^{\min})\).
        We see that \(\rkmin(b')<\rkmin(b)\).
        In case where \(\mathcal{F}=0\), then \(\mathcal{E}_{b'}^{\min}=\mathcal{E}''\) and by our assumption on the slopes we see that \(\mathcal{E}_{b'}=\mathcal{E}'\oplus\mathcal{E}_{b'}^{\min}\).
        We conclude that \(\numax(b')=\numax(b)\) so the modification is reducible.
    \end{enumerate}
\end{proof}
Summarizing the discussion so far we see that by induction we can prove \cref{thm: explict hecke ! stalk} at least for those \(\chi\succeq 0\) such that for any \(b\in B(G)\) there is a \(c\in\{1,\dots,r\}\) such that \(\chi(c)>0\) and \((b_{\chi\chi_c,L},b)\) are reducible or we are in one of the cases discussed in \cref{rem: main thm trivial cases}.
Sadly, this will not be the case in general, and there will be some \(b\in B(G)\) where we cannot find such a \(c\) and that are not covered by \cref{rem: main thm trivial cases}.
However, for \(b=b_{\chi,L}\) one can find such a \(c\) as discussed above, so we are left with checking the vanishing of certain stalks \(\func{1}{b}{\chi}{\varphi}\).
This can be checked by probing them with previously computed Hecke operators, more precisely we have the following lemma:
\begin{lem}\label{lem: P true up to finite set}
    Fix a \(\chi\succeq 0\) and fix a finite subset \(S\subset |\bun_G|\) with closure \(\overline{S}\subset|\bun_G|\).
    Assume the following:
    \begin{enumerate}
        \item we have \(\numin(b)\geq 0\) for all \(b\in S\);
        \item we have \(\func{1}{b}{\chi}{\varphi}=0\) for all \(b\in\overline{S}\setminus S\);
        \item\label{item: xi property} the statement of \cref{thm: explict hecke ! stalk} holds for \(\func{1}{b'}{\xi}{\varphi}\) for all \(b'\) and all \(\xi\) such that \(b_{\xi,L}\in S\);
        \item for any \(\xi\) as in \cref{item: xi property} there exists a \(1\leq c\leq r\) (which might depend on \(\xi\)), satisfying \(\chi(c),\xi(c)>0\) and \(b_{\chi\chi_{c}^{-1},L}\notin\overline{\{b_{\xi\chi_{c}^{-1},L}\}}\setminus b_{\xi\chi_c^{-1},L}\).
    \end{enumerate}
    Then \(\func{1}{b}{\chi}{\varphi}=0\) for all \(b\in S\).
    The same statement holds true if instead we change the assumptions (2) and (4) into 
    \begin{enumerate}
        \item[(2')] we have \(\func{1}{b}{\chi}{\varphi}=0\) for all \(b\) such that \(\overline{\{b\}}\cap S=\emptyset\);
        \item[(4')] for any \(\xi\) as in \cref{item: xi property} there exists a \(1\leq c\leq r\) (which might depend on \(\xi\)), satisfying \(\chi(c),\xi(c)>0\) and \(b_{\xi\chi_{c}^{-1},L}\notin\overline{\{b_{\chi\chi_{c}^{-1},L}\}}\setminus b_{\chi\chi_c^{-1},L}\).
    \end{enumerate}
\end{lem}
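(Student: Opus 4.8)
The plan is to reduce the vanishing of $\func{1}{b}{\chi}{\varphi}$ for $b\in S$ to that of a single \emph{one-step} Hecke operator, and then to control that operator using the already-established cases of \cref{thm: explict hecke ! stalk} together with the closure combinatorics packaged in hypotheses (2) and (4). We must show that the functor $\func{1}{b}{\chi}{\varphi}\from\D(G(E))^\wedge_{\varphi}\to\D(G_b(E))$ is zero for every $b\in S$. By \cref{rem: main thm trivial cases} it already vanishes at any $b$ with $b\notin B(G)_L$, with $\numin(b)<0$, or admitting no iterated $\std$-modification from the trivial bundle; since by hypothesis (1) every $b\in S$ has $\numin(b)\ge 0$, these trivial cases are disjoint from $S$, so we may ignore them.

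Now the factorisation. For any $c$ with $\chi(c)>0$ we have $\chi\chi_c^{-1}\succeq 0$ and $|\chi\chi_c^{-1}|<|\chi|$, whence $\oo(\chi,\varphi)*i_{1*}^{\ren}\cong\oo(\chi_c,\varphi)*\bigl(\oo(\chi\chi_c^{-1},\varphi)*i_{1*}^{\ren}\bigr)$. By Inductive Hypothesis \cref{inductive hypothesis}(1) (or hypothesis (3)) \cref{thm: explict hecke ! stalk} holds for $\func{1}{b''}{\chi\chi_c^{-1}}{\varphi}$ for all $b''$, so $\oo(\chi\chi_c^{-1},\varphi)*i_{1*}^{\ren}$ is concentrated at $b':=b_{\chi\chi_c^{-1},L}$, where taking $i_{b'}^{\ren!}$ is an equivalence. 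Hence $\oo(\chi,\varphi)*i_{1*}^{\ren}\cong\oo(\chi_c,\varphi)*i_{b'*}^{\ren}\circ(\text{equivalence})$ and
\[
\func{1}{b}{\chi}{\varphi}\;\cong\;\func{b'}{b}{\chi_c}{\varphi}\circ(\text{equivalence}).
\]
By \cref{cor: hom galois representation}, after tensoring with the perfect complex $\rhom_{W_E}(\varphi_c,\varphi_c)$, the one-step operator $\func{b'}{b}{\chi_c}{\varphi}$ is the $\varphi_c$-isotypic part of $i_b^{\ren!}T_{\std}i_{b'*}^{\ren}$, which by \cref{rem: main thm trivial cases} vanishes unless there is a modification $b'\xto{\std}b$. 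We are thus reduced to controlling, for $b\in S$, the operators $\func{b_{\chi\chi_c^{-1},L}}{b}{\chi_c}{\varphi}$ with $b_{\chi\chi_c^{-1},L}\xto{\std}b$.

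The probing step is that the \emph{same} one-step operator $\oo(\chi_c,\varphi)*i_{\beta*}^{\ren}$ is already transparent when $\beta=b_{\xi\chi_c^{-1},L}$ with $b_{\xi,L}\in S$: applying hypothesis (3) to $\xi=(\xi\chi_c^{-1})\chi_c$ exactly as above shows $\oo(\chi_c,\varphi)*i_{b_{\xi\chi_c^{-1},L}*}^{\ren}$ is concentrated at $b_{\xi,L}$, an equivalence there. For each $\xi$ with $b_{\xi,L}\in S$ we take the index $c=c(\xi)$ of hypothesis (4), so $\chi(c),\xi(c)>0$ and $b_{\chi\chi_c^{-1},L}\notin\overline{\{b_{\xi\chi_c^{-1},L}\}}\setminus b_{\xi\chi_c^{-1},L}$. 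Comparing the two sources $b_{\chi\chi_c^{-1},L}$ and $b_{\xi\chi_c^{-1},L}$, the closure condition — combined with \cref{lem: modifications increase min and max slopes}, \cref{lem: all modification reducible condition} and \cref{rem: modifications from basic b}, which pin down where a $\std$-modification sends a point in the closure order of $|\bun_G|$ — forces every $b$ with $b_{\chi\chi_c^{-1},L}\xto{\std}b$ and $\func{b_{\chi\chi_c^{-1},L}}{b}{\chi_c}{\varphi}\ne 0$ either to lie in $\overline{S}\setminus S$, where the stalk vanishes by hypothesis (2), or to be a point $b_{\xi,L}\in S$ whose carried parameter is incompatible with the one produced by the $\xi$-description of $\oo(\chi_c,\varphi)*i_{b_{\chi\chi_c^{-1},L}*}^{\ren}$, so that the matching isotypic component is again zero. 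Running this over all $\xi$ with $b_{\xi,L}\in S$ exhausts $S$ and yields $\func{1}{b}{\chi}{\varphi}=0$ for every $b\in S$. Under (2$'$), (4$'$) the argument is identical after interchanging $b_{\chi\chi_c^{-1},L}$ and $b_{\xi\chi_c^{-1},L}$ in the closure comparison, equivalently after probing $\oo(\chi,\varphi)*i_{1*}^{\ren}$ from the other side using that $\oo(\xi,\varphi)*-$ is conservative on its essential image.

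The main obstacle is this closure–combinatorial step: one must check that the relations between $b_{\chi\chi_c^{-1},L}$ and $b_{\xi\chi_c^{-1},L}$ imposed by hypothesis (4)/(4$'$) really do trap every potentially nonzero stalk either on $\overline{S}\setminus S$ or at a point handled by a parameter mismatch. This rests on a careful bookkeeping of which $b$ arise as $\std$-modification targets of $b_{\chi\chi_c^{-1},L}$ (their Harder–Narasimhan polygons, via the lemmas just cited) and of the permutations $w$, $\sigma$ attached to the composite $\oo(\chi_c,\varphi)\circ\oo(\chi\chi_c^{-1},\varphi)$, so that \cref{lem: combine chi} and the isotypic projection of \cref{cor: hom galois representation} are applied consistently. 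Everything else — the factorisations and the passage between the $\func{\cdot}{\cdot}{\cdot}{\cdot}$ and the representation-theoretic side — is formal given the already-established cases of \cref{thm: explict hecke ! stalk}.
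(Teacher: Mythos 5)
Your reduction to the one-step operators $\func{b_{\chi\chi_c^{-1},L}}{b}{\chi_c}{\varphi}$ is consistent with the paper's setup, but the step you yourself flag as ``the main obstacle'' is a genuine gap, and it is not the route the paper takes. Hypothesis (4) constrains the two \emph{sources} $b_{\chi\chi_c^{-1},L}$ and $b_{\xi\chi_c^{-1},L}$; it says nothing directly about which \emph{targets} $b$ of a $\std$-modification out of $b_{\chi\chi_c^{-1},L}$ can carry a nonzero stalk. Knowing that $\oo(\chi_c,\varphi)*i_{b_{\xi\chi_c^{-1},L}*}^{\ren}$ is concentrated at $b_{\xi,L}$ gives no information about the \emph{different} functor $\oo(\chi_c,\varphi)*i_{b_{\chi\chi_c^{-1},L}*}^{\ren}$ at the same point $b_{\xi,L}$, and there is in general no ``parameter incompatibility'' available there: the whole reason the lemma exists is that in the non-reducible cases (the sets $S$ appearing in \cref{lem: r=2} and \cref{lem: r=3}) the stalk at $b_{\xi,L}\in S$ cannot be killed by Harder--Narasimhan combinatorics or by $L$-parameter support alone.

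The paper's actual mechanism is a contradiction argument by morphisms. One picks $b\in S$ extremal (most special for (2),(4); most generic for (2'),(4')) among those with $\func{1}{b}{\chi}{\varphi}\pi\neq 0$. Hypothesis (2) then guarantees, via excision, a nonzero map $i_{b*}^{\ren}i_b^{\ren!}\oo(\chi,\varphi)*i_{1*}^{\ren}\pi\to\oo(\chi,\varphi)*i_{1*}^{\ren}\pi$. Hypothesis (3), through the equivalence of \cref{thm: explict hecke ! stalk}(3) for a $\xi$ with $b_{\xi,L}=b$, rewrites the source as $\oo(\xi,\varphi)*i_{1*}^{\ren}\rho$ for some $\rho$, producing a nonzero morphism $\oo(\xi,\varphi)*i_{1*}^{\ren}\rho\to\oo(\chi,\varphi)*i_{1*}^{\ren}\pi$. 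Acting by the invertible object $\oo(\chi_c^{-1},\varphi)$ and invoking Inductive Hypothesis \cref{inductive hypothesis}(1) for $\xi\chi_c^{-1}$ and $\chi\chi_c^{-1}$, both sides become $*$-extensions from the single strata $b_{\xi\chi_c^{-1},L}$ and $b_{\chi\chi_c^{-1},L}$; only now does hypothesis (4) enter, killing the map either by the semi-orthogonal decomposition (when the two points are distinct) or by comparison of $L$-parameters (when they coincide). Hypothesis (4) is thus a statement about morphisms between globally defined Hecke translates, not about stalkwise vanishing. Your proposal never constructs this morphism, never uses the extremality of $b$ in $S$ (which is what distinguishes the roles of (2) and (2')), and consequently cannot close the argument.
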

\begin{rem}
    Before embarking on the proof, let us give the intuition for what the meaning of this lemma is.
    First of all, \(S\) is some set of exceptional \(b\in B(G)\) where we do not know \(\func{1}{b}{\chi}{\varphi}=0\) yet.
    Now towards the reason why each individual item is listed.
    \begin{enumerate}
        \item[(1)] This will always be obviously true in the cases we consider.
        \item[(2)] This ensures that the semi-orthogonal decomposition forces us to have a non-zero morphism
        \begin{equation*}
            i_{b*}^\ren  i_{b}^{\ren!}\oo(\chi,\varphi)*i_{1*}^{\ren}\pi\to\oo(\chi,\varphi)*i_{1*}^\ren\pi
        \end{equation*}
        for any \(b\in S\) that are closed in \(S\).
        Observe that in applications of this lemma we have \cref{thm: explict hecke ! stalk} for all \(b\notin S\), in this case we also need to check that \(b\nleq b_{\chi,L}\) for \(b\in S\), as we expect the unique non-vanishing stalk of \(\oo(\chi,\varphi)*i_{1*}^\ren\) to be \(b_{\chi,L}\), so this condition also ensures that the non-vanishing stalk does not interfere with the other \(b\in S\).
        \item[(3)] This ensures that for each \(b\in S\) and any \(\xi\) such that \(b=b_{\xi,L}\) we know
            \begin{equation*}
                \oo(\xi,\varphi)*i_{1*}^\ren\cong i^\ren_{b*}i_{b}^{\ren!}\oo(\xi,\varphi)*i_{1*}^\ren.
            \end{equation*}
            In addition, varying over such \(\xi\) this ensures that \(i_{b}^{\ren*}\oo(\xi,\varphi)*i_{1*}^\ren\rho\) for \(\rho\in\D(G(E))\) exhaust the category \(\D(G_b(E))^\wedge_{\eta_{G,b}\circ\varphi^w}\).
            Intuitively this category is exhausted by stalks of well-understood Hecke operators, and in addition the Hecke operators are indeed skyscraper sheaves.
        \item[(4)] This guarantees that for some \(1\leq c\leq r\) any morphism
           \[
            \oo(\xi\chi_c^{-1},\varphi)*i_{1*}^{\ren}\rho\to\oo(\chi\chi_c^{-1},\varphi)*i_{1*}^{\ren}\pi
        \]
        must vanish by considering the semi-orthogonal decomposition or by considering \(L\)-parameters.
    \end{enumerate}
    
    The motivation for (2') and (4') are analogous to (2) and (4), except that we consider \(b\in S\) that is open in \(S\).
    Since \(|\bun_G|\) is a poset, we can consider the opposite topological space \(|\bun_G|^\op\) determined by the fact they have identical points and opposite closure relations.
    Then Item (2') is really the condition \(b\notin\overline{S}\setminus S\) when we consider \(S\subset|\bun_G|^\op\).
    In the same vain Item (4') is dual to Item (4).
    Item (4') excludes nontrivial morphisms            
    \[
        \oo(\chi\chi_c^{-1},\varphi)*i_{1*}^{\ren}\pi\to\oo(\xi\chi_c^{-1},\varphi)*i_{1*}^{\ren}\rho.
    \]
    Observe that the arrow is reversed.
    Similar to the situation in (2), we will check condition (2') by checking that \cref{thm: explict hecke ! stalk} holds for all \(b\notin S\) and that \(b\ngeq b_{\chi,L}\) for \(b\in S\).
\end{rem}
\begin{proof}[Proof of \cref{lem: P true up to finite set}]
    We want to show that \(i_{b}^{\ren!}\oo(\chi,\varphi)*i_{1*}^{\ren}=0\). Assume we have \(i_{b}^{\ren!}\oo(\chi,\varphi)*i_{1*}^{\ren}\pi\neq 0\) for some \(\pi\in \D(G(E))\) and assume \(b\) is most special with this property in \(S\).
    We will prove the claim via induction on \(|\chi|\).
    The case \(|\chi|=0\) is clear, as then \(\chi=0\).
    Note that (2) implies that for \(b'\in\overline{\{b\}}\) we have \(i_{b'}^{\ren!}\oo(\chi,\varphi)*i_{1*}^{\ren}=0\).
    In this case, we obtain a non-trivial morphism 
    \begin{equation*}
        i_{b*}^\ren  i_{b}^{\ren!}\oo(\chi,\varphi)*i_{1*}^{\ren}\pi\to\oo(\chi,\varphi)*i_{1*}^\ren\pi.
    \end{equation*}
    We have \(\oo(\xi,\varphi)*i_{1*}^\ren\cong i^\ren_{b*}i_{b}^{\ren!}\oo(\xi,\varphi)*i_{1*}^\ren\) for all \(\xi\) such that \(b_{\xi,L}\in S\) by (3).
    Addtionally, using \cref{thm: explict hecke ! stalk}(3), we find a \(\rho\) and a \(\xi\) such that \(\oo(\xi,\varphi)*\rho\cong i_{b*}^\ren  i_{b}^{\ren!}\oo(\chi,\varphi)*i_{1*}^{\ren}\pi\) and thus we obtain a non-trivial morphism 
    \begin{equation*}
        \oo(\xi,\varphi)*i_{1*}^{\ren}\rho\to\oo(\chi,\varphi)*i_{1*}^{\ren}\pi .
    \end{equation*}
    Now choose \(c\) as in the assumptions, so we obtain a non-trivial morphism 
    \[
        \oo(\xi\chi_c^{-1},\varphi)*i_{1*}^{\ren}\rho\to\oo(\chi\chi_c^{-1},\varphi)*i_{1*}^{\ren}\pi.
    \]
    By Inductive Hypothesis \cref{inductive hypothesis}(1) we see that this gives a non-trivial morphism 
    \begin{equation*}
        i_{b_{\xi\chi_c^{-1}}*}^\ren i_{b_{\xi\chi_c^{-1}}}^{\ren!}\oo(\xi\chi_c^{-1},\varphi)*i_{1*}^{\ren}\rho\to i_{b_{\chi\chi_c^{-1}}*}^\ren i_{b_{\chi\chi_c^{-1}}}^{\ren!}\oo(\chi\chi_c^{-1},\varphi)*i_{1*}^{\ren}\pi .
    \end{equation*}
    However, using \((4)\) we see that this is not possible.
    In case that we have the stronger condition \(b_{\xi\chi_c^{-1},L}\notin \overline{\{b_{\chi\chi_{c}^{-1},L}\}}\), this follows from the semi-orthogonal decomposition on \(\D(\bun_G)\).
    If \(b_{\xi\chi_c^{-1},L}=b_{\chi\chi_{c}^{-1},L}\), then \(\xi\chi_c^{-1}\neq\chi\chi_c^{-1}\) as \(b_{\xi,L}\neq b_{\chi,L}\) so that \(\xi\neq\chi\) and such a map cannot exist by consideration of parameters. This means such a \(b\) does not exist. 
    The statement with assumptions (2') and (4') are proved analoguously, choosing \(b\) to be a most generic element of \(S\) such that \(\func{1}{b}{\chi}{\varphi}\neq 0\) instead to obtain a non-trivial morphism 
    \begin{equation*}
        \oo(\chi,\varphi)*i_{1*}^\ren\pi\to i_{b*}^\ren  i_{b}^{\ren*}\oo(\chi,\varphi)*i_{1*}^{\ren}\pi\cong i_{b!}^\ren i_{b}^{\ren!}\oo(\chi,\varphi)*i_{1*}^{\ren}\pi .
    \end{equation*}
\end{proof}
\begin{rem}\label{rem: condition 4''}
    It is clear from the proof that one can replace condition (4') with the more precise
    \begin{center}
        (4'') There is no non-trivial map     \(\oo(\chi\chi_c^{-1},\varphi)*i_{1*}^{\ren}\pi\to\oo(\xi\chi_c^{-1},\varphi)*i_{1*}^{\ren}\rho.\)
    \end{center}
    Observe that (4') implies (4'').
\end{rem}
Let us now start with the base case for Inductive Hypothesis \cref{inductive hypothesis}(1).
\begin{lem}\label{lem: base case chi}
    We have \cref{thm: explict hecke ! stalk} for \(\func{1}{b}{\chi_c}{\varphi}\) for all \(1\leq c\leq r\) and all \(b\).
\end{lem}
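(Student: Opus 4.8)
The plan is to reduce the computation to modifications of the trivial bundle, the Hodge--Newton reduction of \cref{cor: hodge-newton renormalized reducible reduction}, and the already-established irreducible case (\cref{lem: hecke stalk irreducible case}, \cref{lem: decomposition D(Bun_G) at irreducible parameters}). By \cref{lem: non-negative slopes suffice} and \cref{rem: main thm trivial cases} it suffices to treat those $b$ admitting a modification $1\xto{\std}b$ with $b\in B(G)_L$; all remaining cases are trivial, and such $b$ automatically satisfy $\numin(b)\ge 0$. A short Harder--Narasimhan argument shows the $b$ with $1\xto{\std}b$ are exactly those with $\mathcal{E}_b\cong\oo(1/k)\oplus\oo^{n-k}$ for $1\le k\le n$; writing the corresponding basic class of $L$ as $\bigoplus_i\oo(d_i/n_i)$ and matching slopes forces the degree $1$ to be concentrated in a single block, so $b\in B(G)_L$ if and only if $k=n_{c'}$ for some $c'$, in which case $b=b_{\chi_{c'},L}$. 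If $r=1$ then $L=G$, $\varphi$ is irreducible, and the claim is immediate from \cref{lem: hecke stalk irreducible case}, \cref{lem: decomposition D(Bun_G) at irreducible parameters} and \cref{cor: hom galois representation}; so assume $r\ge 2$, whence $\numin(b_{\chi_{c'},L})=0$ and $n_{c'}<n$ for all $c'$.

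Fix $b=b_{\chi_{c'},L}$ and put $k=n_{c'}$. By \cref{lem: modifications increase min and max slopes} the triple $(1,b,\std)$ is Hodge--Newton reducible for $M:=\GL_k\times\GL_{n-k}$, with reduction $(\theta,\theta',\mu_M)=\bigl((1,1),(\oo(1/k),1),(\std,0)\bigr)$, so \cref{cor: hodge-newton renormalized reducible reduction}(1) gives, up to a shift,
\[
    i_b^{\ren!}T_\std i_{1*}^\ren\pi \;\cong\; \Bigl(\bigl(i_{\oo(1/k)}^{\ren!}T_{\std}i_{1*}^\ren\bigr)\boxtimes\id\Bigr)\bigl(\nres^{G(E)}_{\overline{P}(E)}\pi\bigr),
\]
where $\overline P$ is the standard parabolic of $G$ with Levi $M$, the first factor is the Hecke operator for $\GL_k$, and we used $T_{(\std,0)}=T_\std\boxtimes T_0$ with $T_0=\id$. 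By \cref{cor: hom galois representation} for $\GL_k$, the object $i_b^{\ren!}\oo(\chi_c,\varphi)*i_{1*}^\ren\pi$ is a direct summand of $\rhom_{W_E}(\varphi_c,i_b^{\ren!}T_\std i_{1*}^\ren\pi)$ (the summand cut out by $H^0\rhom_{W_E}(\varphi_c,\varphi_c)=k$), so it suffices to analyse the latter.

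Decompose $\nres^{G(E)}_{\overline P(E)}\pi=\prod_\sigma \rho_\sigma\boxtimes\rho'_\sigma$ by cuspidal support, where $\rho_\sigma\in\D(\GL_k(E))$ has cuspidal support $\prod_{i\in I_\sigma}\GL_{n_i}$ with $\sum_{i\in I_\sigma}n_i=k$, and $\rho'_\sigma$ has cuspidal support $\prod_{i\notin I_\sigma}\GL_{n_i}$ (Bernstein--Zelevinsky, using compatibility of Fargues--Scholze parameters with $\nres$). Applying $\rhom_{W_E}(\varphi_c,-)$ and \cref{cor: hom galois representation} for $\GL_k$ to each factor, together with the Langlands--Shahidi vanishing of \cref{lem: langlands-shahidi type explicit}, the contribution of $\rho_\sigma\boxtimes\rho'_\sigma$ is zero unless $c\in I_\sigma$, in which case it equals, up to tensoring with $\rhom_{W_E}(\varphi_c,\varphi_c)$ and a shift, $i_{\oo(1/k)}^{\ren!}\bigl(\oo(\chi_{[c]},\varphi_{I_\sigma})*i_{1*}^\ren\rho_\sigma\bigr)\boxtimes\rho'_\sigma$, with $\varphi_{I_\sigma}:=\prod_{i\in I_\sigma}\varphi_i$ (of Langlands--Shahidi type for $\GL_k$) and $\chi_{[c]}$ the indicator of $c$. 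Since $k<n$, Inductive Hypothesis \cref{inductive hypothesis}(2) — or \cref{lem: hecke stalk irreducible case} and \cref{lem: decomposition D(Bun_G) at irreducible parameters} when $|I_\sigma|=1$ — shows $i_\beta^{\ren!}\oo(\chi_{[c]},\varphi_{I_\sigma})*i_{1*}^\ren$ vanishes for $\beta\ne b_{\chi_{[c]},\prod_{i\in I_\sigma}\GL_{n_i}}=\oo(1/n_c)\oplus\bigoplus_{i\in I_\sigma\setminus\{c\}}\oo^{n_i}$, which equals $\oo(1/k)$ if and only if $I_\sigma=\{c\}$, i.e. $k=n_c$.

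Hence: if $k\ne n_c$ every summand vanishes, giving $\func{1}{b}{\chi_c}{\varphi}=0$, which is \cref{thm: explict hecke ! stalk}(2); and if $k=n_c$, i.e. $b=b_{\chi_c,L}$, only the split $I_\sigma=\{c\}$ survives, and $i_{\oo(1/n_c)}^{\ren!}\oo(\chi_{[c]},\varphi_c)*i_{1*}^\ren$ is, by \cref{lem: hecke stalk irreducible case} and \cref{lem: decomposition D(Bun_G) at irreducible parameters} for $\GL_{n_c}$, an equivalence onto the formal fibre $\D((\GL_{n_c})_{\oo(1/n_c)}(E))^\wedge_{\varphi_c}$. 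Therefore $\rhom_{W_E}(\varphi_c,i_b^{\ren!}T_\std i_{1*}^\ren\pi)$, and with it its retract $\func{1}{b}{\chi_c}{\varphi}(\pi)$, lies in $\D((\GL_{n_c})_{\oo(1/n_c)}(E)\times\GL_{n-n_c}(E))^\wedge_{\eta_{G,b}\circ\varphi^w}=\D(G_b(E))^\wedge_{\eta_{G,b}\circ\varphi^w}$ with $w$ the permutation placing $c$ in the maximal-slope block, as in the statement; this is \cref{thm: explict hecke ! stalk}(1), and then \cref{thm: explict hecke ! stalk}(3) follows from \cref{lem: hecke functor is equivalence of categories}. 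The main work is the Hodge--Newton bookkeeping for $b=1$ (where $P^1=G$ is degenerate, so ``$\overline P$'' must be read as the standard parabolic with Levi $M$) and keeping track of the shifts, twists and of the permutation $w$; granting \cref{cor: hodge-newton renormalized reducible reduction} and the irreducible case, these are routine.
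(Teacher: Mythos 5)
Your argument is correct and is essentially the paper's: the paper's own proof is a two-line observation that every relevant modification \(1\xto{\std}b\) is reducible, followed by an appeal to \cref{lem: reduce r}, whose proof is exactly the Hodge--Newton reduction via \cref{cor: hodge-newton renormalized reducible reduction}, isotypic decomposition, and induction-on-\(n\) computation that you carry out by hand in this special case. Your explicit treatment of the edge case \(\mathcal{E}_b\cong\oo(1/n)\) (only in \(B(G)_L\) when \(r=1\), and in fact not a reducible modification of the trivial bundle) is if anything slightly more careful than the paper's blanket claim that the modification is ``always reducible''.
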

\begin{proof}
    In this case, we only need to consider the set of \(b\) such that \(1\xto{\std}b\).
    Since \(1\) is semistable this happens if \(\nu_b\leq\std^\sharp\).
    In this case the modification is always reducible. 
    So, by \cref{lem: reduce r} we are done.
\end{proof}
Now let us check the base case for Inductive Hypothesis \cref{inductive hypothesis}(2).
\begin{lem}
    If \(\varphi\) is a parameter for \(\GL_1\), then we have \cref{thm: explict hecke ! stalk} for \(\func{1}{b}{\chi}{\varphi}\) for all \(\chi\) and \(b\)
\end{lem}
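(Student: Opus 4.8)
The plan is to reduce everything to the torus computation of \cite[Lemma 5.3.3.]{categorical-fargues-tori}, exploiting that for $G=\GL_1$ the stack $\bun_{\GL_1}$ has no unstable points and every parameter is irreducible. First I would record the degeneracies forced by $G=\GL_1$: the only Levi is $L=G$, so $r=1$, $n_1=1$, $W_G$ is trivial and $\Sigma_r=\{e\}$; in particular $w=e$ and $\varphi^w=\varphi$ throughout the statement of \cref{thm: explict hecke ! stalk}. Every parameter $\varphi\from W_E\to k^\times$ is one-dimensional, hence irreducible, of Langlands--Shahidi type with cuspidal support $(\GL_1,\varphi)$, and $S_\varphi=\bbG_m=Z(\widehat{G})$ by \cref{lem: residual gerbes langlands-shahidi}. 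Since every $\bbG_m$-bundle on the Fargues--Fontaine curve is semistable, $\bun_{\GL_1}=\coprod_{d\in\bbZ}\bun_{\GL_1}^d$ with each $\bun_{\GL_1}^d\simeq[*/\underline{E^\times}]$ an open and closed substack, $G_d(E)=E^\times$, and $B(\GL_1)=B(G)_L=\bbZ$, the integer $d$ corresponding to the line bundle $\oo(d)$.

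Next I would pin down the operator $\oo(\chi,\varphi)*-$ in two steps. It is the pullback $f^*\chi$ of a character of $S_\varphi=\bbG_m$ along $f\from(k\times_{\bbZ_\ell}\locsys_{\widehat{G}})^\wedge_\varphi\to BS_\varphi$, hence an invertible object of $\perf((k\times_{\bbZ_\ell}\locsys_{\widehat{G}})^\wedge_\varphi)$ with inverse $\oo(-\chi,\varphi)$; since the spectral action is an action of this symmetric monoidal category, $\oo(\chi,\varphi)*-$ is an auto-equivalence of $\D(\bun_{\GL_1},k)^\wedge_\varphi$ with inverse $\oo(-\chi,\varphi)*-$. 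To identify which auto-equivalence, I would feed the isotypic decomposition \cref{lem: isotypic decomposition Langlands-Shahidi type} the weight-$\chi$ character $V_\chi\in\rep(\bbG_m)$: since $\Hom_{\bbG_m}(\psi,V_\chi)$ is $k$ for $\psi=\chi$ and $0$ otherwise, we obtain a $W_E$-equivariant identification $T_{V_\chi}\cong\oo(\chi,\varphi)*-$, with $V_\chi$ corresponding to the cocharacter $\chi\from\bbG_m\to\GL_1$. By \cite[Lemma 5.3.3.]{categorical-fargues-tori} the Hecke operator $T_{V_\chi}$ on $\bun_{\GL_1}$ is, up to a harmless $W_E$-equivariant twist by a character of $W_E$, the equivalence induced by $\mathcal{E}\mapsto\mathcal{E}\otimes\oo(\chi)$ on bundles, hence restricts to an equivalence $\D(\bun_{\GL_1}^d,k)\xto{\sim}\D(\bun_{\GL_1}^{d+\chi},k)$, compatibly with $\kappa(\oo(1))=1$.

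Finally I would match this with \cref{thm: explict hecke ! stalk}. Writing $b=\oo(a_1|1)=\oo(a_1)$ we have $a'_1=a_1+\chi$, so the distinguished target is $b'=\oo(a'_{w(1)}|n_{w(1)})=\oo(a_1+\chi)$, i.e. the degree-$(a_1+\chi)$ substack. Since $\oo(\chi,\varphi)*-$ is an auto-equivalence of $\D(\bun_{\GL_1},k)^\wedge_\varphi$ carrying the $d$-th stratum onto the $(d+\chi)$-th, for every $\pi\in\D(G_b(E))^\wedge_\varphi$ the object $\oo(\chi,\varphi)*i_{b*}^\ren\pi$ is supported on $\bun_{\GL_1}^{b'}$; hence $i_{b''}^{\ren!}\oo(\chi,\varphi)*i_{b*}^\ren$ vanishes for $b''\neq b'$ (part (2)), and for $b''=b'$ it is an equivalence onto $\D(G_{b'}(E))^\wedge_{\eta_{G,b'}\circ\varphi^w}$ (parts (1) and (3)). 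The renormalizations are immaterial here: $G=\GL_1$ has no roots, so $\langle 2\rho_G,\nu_b\rangle=0$, and $\delta_b$ is (at worst) a character of $E^\times$, so the renormalized functors differ from the plain clopen-immersion functors $i_{b*},i_{b'}^{!},s_*,s^*$ only by a character twist that commutes with $\oo(\chi,\varphi)*-$. The only point needing care --- and the closest thing to an obstacle --- is the bookkeeping of conventions (the sign of the Kottwitz point, the direction of the degree shift, the identification of $b_{\chi,G}$ with the integer $\chi$); there is no genuine difficulty beyond \cite{categorical-fargues-tori} and \cref{lem: isotypic decomposition Langlands-Shahidi type}.
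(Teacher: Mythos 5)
Your proposal is correct and follows essentially the same route as the paper: the paper's proof simply notes that every $\GL_1$-parameter is irreducible and cites \cref{lem: hecke stalk irreducible case}, which itself is ``immediate from'' the torus computation in \cite{categorical-fargues-tori}, exactly the result your argument bottoms out at. You have merely unwound the degeneracies ($r=1$, trivial Weyl group, trivial renormalizations, the identification $\oo(\chi,\varphi)\cong T_{V_\chi}$) that the paper leaves implicit.
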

\begin{proof}
    In this case \(\varphi\) is irreducible and then it is immediate from \cref{lem: hecke stalk irreducible case}.
    Alternatively, this also follows from \cite[Theorem 6.4.1.]{categorical-fargues-tori}.
\end{proof}
We continue with the inductive step in \cref{inductive hypothesis}(1)
The simplest case is \(r=1\), which in fact does not require any induction.
\begin{lem}\label{lem: r=1}
    For all \(\chi\), \(b\) and all irreducible \(\varphi\) we have \cref{thm: explict hecke ! stalk} for \(\func{1}{b}{\chi}{\varphi}\).
\end{lem}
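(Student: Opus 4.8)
The plan is to read this off from the two irreducible-case results of the previous section together with \cref{thm: categorical equivalence irreducible}, since no induction is needed. For $\varphi$ irreducible the cuspidal support is $\widehat{L}=\widehat{G}$, hence $r=1$, the permutation $w\in\Sigma_1$ is trivial, and $X^*(Z(\widehat{L}))=X^*(Z(\widehat{G}))$ is identified with $\bbZ$ via the fixed isomorphism $\bbG_m\cong Z(\GL_n)$; under this identification $\chi\in\bbZ$ corresponds to the basic class $b_{\chi,L}=b_{\chi,G}=\oo(\chi|n)\in B(G)_{\mathrm{basic}}$ (using the sign convention $\kappa(\oo(1))=1$). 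Writing the source as $b=\oo(0|n)=1$, so that in the notation of \cref{thm: explict hecke ! stalk} one has $a_1=0$, $n_1=n$, $a'_1=\chi$, the only target $b'$ not excluded by part (2) is $b'=\oo(\chi|n)$, and $\varphi^w=\varphi$.

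First I would dispose of the trivial cases: if $b'\notin B(G)_L=B(G)_{\mathrm{basic}}$ then $\func{1}{b'}{\chi}{\varphi}=0$ by \cref{rem: main thm trivial cases}, so the statement holds vacuously, and we may assume $b'=\oo(m|n)$ for some $m\in\bbZ$. By \cref{lem: decomposition D(Bun_G) at irreducible parameters} we have $(\D(\bun_G,k)^\wedge_{\varphi})^\omega\simeq\bigoplus_{\xi\in\bbZ}(\D(\bun_G^{\oo(\xi|n)},k)^\wedge_{\varphi})^\omega$, and under the splitting $s_{\oo(\xi|n)}$ the $\xi$-th summand is $\D(G_{\oo(\xi|n)}(E))^\wedge_{\eta_{G,\oo(\xi|n)}\circ\varphi}$ by compatibility of the Fargues--Scholze correspondence with passage from $\widehat{G_b}$ to $\widehat{G}$; in particular the zeroth summand, embedded via $i_{1*}$, is the source $\D(G(E))^\wedge_{\eta_{G,1}\circ\varphi}$ of our functor. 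By \cref{lem: hecke stalk irreducible case} and \cref{thm: categorical equivalence irreducible}, $\oo(\chi,\varphi)*-$ identifies this zeroth summand with the $\chi$-th summand and annihilates the others; composing with $i_{b'}^{\ren!}$, which for basic $b'$ is — up to the twist by $\delta_{b'}^{1/2}$ and the shift by $[\langle2\rho_G,\nu_{b'}\rangle]$ built into the renormalization — inverse to the embedding of the $m$-th summand, we conclude that $\func{1}{b'}{\chi}{\varphi}$ vanishes unless $m=\chi$, which gives part (2) and the exclusion clause of part (1), and that for $b'=\oo(\chi|n)$ it is an equivalence onto $\D(G_{b'}(E))^\wedge_{\eta_{G,b'}\circ\varphi}=\D(G_{b'}(E))^\wedge_{\eta_{G,b'}\circ\varphi^w}$, which gives parts (1) and (3).

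There is essentially no obstacle here; the only things to watch are bookkeeping points. First, the renormalized functors $i_{b'}^{\ren!}$ and $i_{1*}^{\ren}$ differ from the plain $i_{b'}^{!}$ and $i_{1*}$ underlying the two cited lemmas only by shifts and by twists by modulus characters, which affect neither vanishing, nor the essential image, nor whether the functor is an equivalence. Second, one checks that the grading of $(\D(\bun_G,k)^\wedge_{\varphi})^\omega$ by $X^*(S_\varphi)\cong|\bun_G^{\mathrm{ss}}|$ appearing in \cref{lem: decomposition D(Bun_G) at irreducible parameters} agrees with the grading by $X^*(Z(\widehat{L}))=\bbZ$ entering the definition of $\oo(\chi,\varphi)$, under $\chi\mapsto\oo(\chi|n)$; this is again just the identification of $B(G)_{\mathrm{basic}}$ with $\bbZ$ fixed at the start.
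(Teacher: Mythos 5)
Your proof is correct and takes essentially the same route as the paper: the paper's proof of this lemma is the one-liner "This is immediate from Lemma~\ref{lem: hecke stalk irreducible case}," and your write-up simply unpacks that, using the same decomposition from Lemma~\ref{lem: decomposition D(Bun_G) at irreducible parameters} and the same Hecke-stalk computation. The only stylistic difference is that you also invoke Theorem~\ref{thm: categorical equivalence irreducible} to get the equivalence in part (3); this is harmless but not needed, since the invertibility of $\oo(\chi,\varphi)*-$ (composing with $\oo(-\chi,\varphi)*-$) already upgrades Lemma~\ref{lem: hecke stalk irreducible case} to an equivalence from the zeroth to the $\chi$-th summand.
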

\begin{proof}
    This is immediate from \cref{lem: hecke stalk irreducible case}.
\end{proof}
We continue with the discussion of the inductive step in \cref{inductive hypothesis}(1) in the case where \(\chi(i)>0\) for all \(i\in\{1,\dots,r\}\).
\begin{lem}\label{lem: chi greater 0}
    We have \cref{thm: explict hecke ! stalk} for \(\func{1}{b}{\chi}{\varphi}\) for all \(\chi\succeq 0\) such that \(|\chi|=s\) and \(\chi(i)>0\) for all \(i\).
\end{lem}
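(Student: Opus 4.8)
The plan is to establish parts (1) and (2) of \cref{thm: explict hecke ! stalk} for every functor $\func{1}{b}{\chi}{\varphi}$; part (3) concerns only $b=b_{\chi,L}$ and then follows from \cref{lem: hecke functor is equivalence of categories}. The case $r=1$ is \cref{lem: r=1}, so assume $r\ge 2$, and run the induction on $|\chi|=s$ from \cref{inductive hypothesis}. The point of the hypothesis $\chi(i)>0$ for all $i$ is that $\chi-\chi_c\succeq 0$ for \emph{every} $c\in\{1,\dots,r\}$; hence $\oo(\chi-\chi_c,\varphi)$ is already understood via \cref{inductive hypothesis}(1), and by \cref{lem: combine chi} we may factor
\begin{equation*}
\oo(\chi,\varphi)*i_{1*}^\ren \;\cong\; \oo(\chi_c,\varphi)*i_{b_{\chi-\chi_c,L}*}^\ren\circ\bigl(i_{b_{\chi-\chi_c,L}}^{\ren!}\oo(\chi-\chi_c,\varphi)*i_{1*}^\ren\bigr)
\end{equation*}
for any $c$, with the inner factor completely computed by the inductive hypothesis.

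By \cref{lem: reduce r} the statement already holds for $\func{1}{b'}{\chi}{\varphi}$ whenever $(b_{\chi-\chi_c,L},b')$ is reducible for some $c$, and by \cref{rem: main thm trivial cases} it holds trivially when $b'\notin B(G)_L$, when $\numin(b')<0$, or when there is no modification $b_{\chi-\chi_c,L}\xto{\std}b'$. In particular $b_{\chi,L}$ itself is covered: choosing $c$ with $\chi(c)/n_c$ minimal among all slopes (possible with $\chi(c)>0$ since $r\ge 2$), the modification $b_{\chi-\chi_c,L}\xto{\std}b_{\chi,L}$ raises $\numin$ and preserves $\numax$, hence is reducible by \cref{lem: modifications increase min and max slopes} and \cref{rem: reducible definition}. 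So it remains to show $\func{1}{b}{\chi}{\varphi}=0$ for the \emph{exceptional} $b$: those in $B(G)_L$ with $\numin(b)\ge 0$, reachable from $1$ by $s$ successive $\std$-modifications, and not made reducible by any admissible choice of $c$. By \cref{lem: all modification reducible condition} and \cref{rem: modifications from basic b} these form a finite set $S$, each element having strictly smaller $\rkmin$ than the relevant intermediate $b_{\xi,L}$.

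We feed $S$ into \cref{lem: P true up to finite set}. Condition (1) is immediate; condition (3) holds by \cref{inductive hypothesis}(1) and \cref{lem: reduce r}; conditions (2) resp.\ (2') hold because we already have the full \cref{thm: explict hecke ! stalk} for every $b'\notin S$, while each $b'\in S$ is neither $\le$ nor $\ge b_{\chi,L}$ in $|\bun_G|$ --- here $\chi(i)>0$ for all $i$ makes $b_{\chi,L}$ balanced enough that its Harder-Narasimhan polygon is crossed by that of any $b'\in S$, again via \cref{lem: modifications increase min and max slopes}. Thus everything reduces to condition (4) resp.\ (4'): for each $\xi\succeq 0$ with $|\xi|=s$ and $b_{\xi,L}\in S$, we must exhibit an index $c$ with $\chi(c),\xi(c)>0$ such that $b_{\chi-\chi_c,L}$ and $b_{\xi-\chi_c,L}$ are incomparable in $|\bun_G|$ (or, if they coincide, $\chi\ne\xi$ forces different $L$-parameters), which rules out the offending morphism by the semi-orthogonal decomposition on $\D(\bun_G)$, cf.\ \cref{rem: condition 4''}.

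This last point is the main obstacle, and it is pure Harder-Narasimhan combinatorics for $\GL_n$: one must separate two polygons by decreasing a single slope by $1/n_c$. I would argue that, precisely because every coordinate of $\chi$ is strictly positive, one may decrease whichever slope of $\chi$ witnesses a comparison $b_{\xi,L}\le b_{\chi,L}$ (or the reverse) while remaining in the $\succeq 0$ regime, and that this perturbation pushes the two polygons apart; a short case distinction on whether the witnessing inequality sits at the minimal slope, at the maximal slope, or at an interior breakpoint --- tracking how $\numin$ and $\numax$ move under $\std$-modifications with \cref{lem: modifications increase min and max slopes} --- produces the required $c$. Once (1)--(4) are checked, \cref{lem: P true up to finite set} gives $\func{1}{b}{\chi}{\varphi}=0$ for all $b\in S$; together with the reducible cases this proves parts (1) and (2), and \cref{lem: hecke functor is equivalence of categories} then supplies part (3).
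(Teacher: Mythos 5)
Your overall plan — prove parts (1) and (2), get (3) from \cref{lem: hecke functor is equivalence of categories}, dispatch the reducible and trivially-vanishing cases via \cref{lem: reduce r} and \cref{rem: main thm trivial cases}, and feed the leftover finite set $S$ into \cref{lem: P true up to finite set} — is exactly the paper's strategy. However, there is a genuine gap in your verification of condition (3) of \cref{lem: P true up to finite set}. You write that condition (3) ``holds by \cref{inductive hypothesis}(1) and \cref{lem: reduce r}''. But condition (3) requires \cref{thm: explict hecke ! stalk} to hold for $\func{1}{b'}{\xi}{\varphi}$ whenever $b_{\xi,L}\in S$, and any such $\xi$ has $|\xi|=|\chi|=s$ since it is reached by $s$ steps of $\std$-modifications from $1$. \cref{inductive hypothesis}(1) covers only $|\chi|<s$, so it says nothing here. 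The paper closes this gap by a \emph{secondary} induction: defining $\nmin(\chi)\defined\min\{n_c\mid\chi(c)/n_c=\numin(b_{\chi,L})\}$ and inducting on $N=\nmin(\chi)$. One checks, via your own observation that exceptional $b'\in S$ have $\rkmin(b')<\rkmin(b_{\chi\chi_c^{-1},L})=\nmin(\chi)$, that if $b'=b_{\xi,L}$ then $\nmin(\xi)<N$, so the inner inductive hypothesis at level $N-1$ supplies condition (3). Without introducing this second induction, your application of \cref{lem: P true up to finite set} does not go through.

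A secondary, smaller concern: your treatment of condition (4) is sketched rather than carried out — you say a case distinction ``produces the required $c$'' but do not pin it down. The paper's choice is very concrete and you should reproduce it: take $d$ with $\xi(d)/n_d=\numin(b_{\xi,L})$; since $r\geq 2$ and $\chi(i)>0$ for all $i$ one has
\begin{equation*}
\numax(b_{\xi\chi_d^{-1},L})=\numax(b)>\numax(b_{\chi,L})\geq\numax(b_{\chi\chi_d^{-1},L}),
\end{equation*}
so $b_{\xi\chi_d^{-1},L}\nleq b_{\chi\chi_d^{-1},L}$, which is exactly condition (4). No appeal to (4') or \cref{rem: condition 4''} is needed in this lemma — those are reserved for the harder cases with $\chi(i)=0$ for some $i$ treated in \cref{lem: r=2} and \cref{lem: r=3}.
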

\begin{proof}
    We may assume \(r\defined\rk(Z(\widehat{L}))\geq 2\) since \(r=1\) is covered by \cref{lem: r=1}.

    Let us define \(\nmin(\chi)\defined \min\{n_c\in \{n_1,\dots,n_r\}|\chi(c)/n_c=\numin(b_{\chi,L})\}\).
    We need to compute \(\func{1}{b}{\chi}{\varphi}\).
    Take an index \(1\leq c\leq r\) minimizing \(\chi_c/n_c\).
    If there are many such indices choose one which minimizes \(n_c\).
    By assumption \(\numin(b_{\chi\chi_c^{-1},L})\geq 0\).
    Also observe that \(\rkmin(b_{\chi\chi_c^{-1},L})=\nmin(\chi)\).
    One can check that \(b_{\chi\chi_c^{-1},L}\) satisfies the condition of \cref{lem: all modification reducible condition} using \(r\geq 2\) and our choice of \(c\), so any modification \(b_{\chi\chi_c^{-1},L}\xto{\std}b\) is reducible or satisfies \(\rkmin(b)<\rkmin(b_{\chi\chi_c^{-1},L})\).

    We now do an induction on \(N\) in the following statement:
    \begin{center}
        We have \cref{thm: explict hecke ! stalk} for \(\func{1}{b}{\chi}{\varphi}\) for all \(\chi\succeq 0\) such \(\nmin(\chi)\leq N\), \(\chi(i)>0\) and \(|\chi|=s\).
    \end{center}
    The base case happens for those \(\chi\) such that \(\nmin(\chi)=\min\{n_1,\dots,n_r\}\).
    In this case all modifications \(b_{\chi\chi_c^{-1},L}\xto{\std}b\) such that \(b\in B(G)_L\) are reducible by \cref{lem: all modification reducible condition} and we conclude via \cref{lem: reduce r}.
    
    Now for the inductive step on \(N\) we can assume the following:
    \begin{center}
        (\(*\)): \cref{thm: explict hecke ! stalk} holds for \(\func{1}{b}{\xi}{\varphi}\) for any \(\xi\succeq 0\) such that \(\xi(i)>0\), \(|\xi|=s\) and \(\nmin(\xi)<N\).
    \end{center}
    For a \(\chi\) such that \(\nmin(\chi)=N\) we apply \cref{lem: P true up to finite set} to the set 
    \begin{equation*}
    S=\{b\in B(G)_L| \text{there is no reducible modification }b_{\chi\chi_c^{-1},L}\xto{\std}b\}.
    \end{equation*}
    \begin{enumerate}
        \item[(1)] This holds by assumption.
        \item[(2)] This holds by the fact that we have \cref{thm: explict hecke ! stalk}(2) for \(\func{1}{b}{\chi}{\varphi}\) for all \(b\notin S\).
        Observe that we implicitly use that \(b_{\chi,L}\notin \overline{S}\setminus S\) since we have \(\numax(b_{\chi\chi_c^{-1},L})=\numax(b_{\chi,L})\) (using \(r\geq 2\)), in particular we have \(b\nleq b_{\chi,L}\) for \(b\in S\).
        \item[(3)] This can be assumed to hold by (\(*\))\footnote{Recall that we cannot conclude by using Inductive Hypothesis \cref{inductive hypothesis}(1) since \(|\xi|=s\).}, as any \(b'\in S\) satisfies \(\rkmin(b')<\rkmin(b_{\chi\chi_c^{-1},L})\).
        This means if \(b'=b_{\xi,L}\), then \(|\xi|=|\chi|\), \(\xi(i)>0\) for all \(i\in\{1,\dots,r\}\) and we have \(\nmin(\xi)<\rkmin(b_{\chi\chi_c^{-1},L})=\nmin(\chi)=N\).
        \item[(4)] This holds by choosing \(1\leq d\leq r\) to be an index such that \(\xi(d)/n_d=\numin(b_{\xi,L})\).
        Then, since \(r\geq 2\) here, we have 
        \begin{equation*}
            \numax(b_{\xi\chi_d^{-1},L})=\numax(b)>\numax(b_{\chi,L})\geq\numax(b_{\chi\chi_d^{-1},L}),
        \end{equation*}
        in particular \(b_{\xi\chi_d^{-1},L}\nleq b_{\chi\chi_d^{-1},L}\).
    \end{enumerate}
\end{proof}
We are left with the cases where \(\chi(i)=0\) for some \(i\in\{1,\dots,r\}\).
Here we need to treat the case \(r=2\) separately.
\begin{lem}\label{lem: r=2}
    Assume \(r=2\).
    Then we obtain \cref{thm: explict hecke ! stalk} for \(\func{1}{b}{\chi}{\varphi}\) for all \(\chi\succeq 0\) such that \(\chi(i)=0\) for some index \(i\) and \(|\chi|=s\).
\end{lem}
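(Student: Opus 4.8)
The plan is to run the same inductive machine used in \cref{lem: chi greater 0}; the point special to $r=2$ with $\chi$ supported on one coordinate is that, up to Newton‑polygon bookkeeping, there is at most one stratum whose Hecke stalk is not directly controlled by \cref{lem: reduce r}. By relabelling the two factors we may assume $\chi=(s,0)$ with $s=|\chi|$; if $s\leq 1$ this is \cref{lem: base case chi}, so assume $s\geq 2$. Since $\chi-\chi_2=(s,-1)\not\succeq 0$, the only decomposition available for the inductive step is $\chi=\xi\chi_1$ with $\xi\defined(s-1,0)$; by Inductive Hypothesis \cref{inductive hypothesis}(1), \cref{thm: explict hecke ! stalk} holds for $\func{1}{b'}{\xi}{\varphi}$ for every $b'$, so with $b_{\xi,L}=\oo((s-1)|n_1)\oplus\oo(0|n_2)$ we have $\oo(\xi,\varphi)*i_{1*}^\ren\cong i_{b_{\xi,L}*}^\ren i_{b_{\xi,L}}^{\ren!}\oo(\xi,\varphi)*i_{1*}^\ren$ with the last functor an equivalence. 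By \cref{cor: hom galois representation} the functor $\oo(\chi_1,\varphi)*i_{b_{\xi,L}*}^\ren$ is, up to tensoring with the perfect complex $\rhom_{W_E}(\varphi_1,\varphi_1)$, the $\varphi_1$‑isotypic part of $T_\std i_{b_{\xi,L}*}^\ren$, so it suffices to understand the stalks $i_b^{\ren!}T_\std i_{b_{\xi,L}*}^\ren$ and their $\varphi_1$‑components.

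Next I would classify the modifications $b_{\xi,L}\xto{\std}b$ with $b\in B(G)_L$. If $b\notin B(G)_L$, or if there is no such modification, then $\func{1}{b}{\chi}{\varphi}=0$ by \cref{lem: support sheaf with L-parameter of Langlands-Shahidi type} and \cref{rem: main thm trivial cases}, and $b\neq b_{\chi,L}$, so \cref{thm: explict hecke ! stalk} is trivially true. If there is a modification, \cref{lem: modifications increase min and max slopes} shows it is reducible precisely when $\numin(b)=0$ or $\numax(b)=(s-1)/n_1$; when it is reducible, \cref{lem: reduce r} applies (its proof, via \cref{cor: hodge-newton renormalized reducible reduction}, reduces the stalk to $T_\std$ on $\GL_{n_1}$ tensored with the identity on $\GL_{n_2}$, hence to the irreducible case, which holds over $\GL_{n_1}$ by \cref{lem: r=1} and Inductive Hypothesis \cref{inductive hypothesis}(2)), giving \cref{thm: explict hecke ! stalk} for $\func{1}{b}{\chi}{\varphi}$.

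It then remains to treat the $b$ admitting a \emph{non}‑reducible modification $b_{\xi,L}\xto{\std}b$. Writing $\mathcal{E}_b=\oo(a_1|n_1)\oplus\oo(a_2|n_2)$ with $a_1+a_2=s$, non‑reducibility forces $a_1,a_2\geq 1$ and $a_2/n_2>(s-1)/n_1\geq a_1/n_1$; since the maximal destabilising summand $\oo((s-1)|n_1)$ of $\mathcal{E}_{b_{\xi,L}}$ (rank $n_1$) must embed into $\mathcal{E}_b$ and, by the slope inequality, can only meet the rank‑$n_2$ summand $\oo(a_2|n_2)$, an elementary analysis of colength‑one upper modifications forces $a_1=s-1$, $a_2=1$, $n_1>n_2(s-1)$. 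Thus the only remaining stratum is the single bundle $b_{\mathrm{exc}}\defined\oo((s-1)|n_1)\oplus\oo(1|n_2)$, which moreover occurs only for finitely many $s$ (and none when $n_1\leq n_2$).

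Finally I would apply \cref{lem: P true up to finite set} to $\chi=(s,0)$ with $S=\{b_{\mathrm{exc}}\}$. Hypothesis (1) is clear; hypothesis (3) holds because $b_{\mathrm{exc}}=b_{(s-1,1),L}$ and $(s-1,1)\succeq 0$ has both entries positive, so \cref{thm: explict hecke ! stalk} for $\func{1}{b''}{(s-1,1)}{\varphi}$ is exactly \cref{lem: chi greater 0}; hypotheses (2)/(2') reduce, using the previous paragraph, to the location of $b_{\chi,L}$ relative to $b_{\mathrm{exc}}$ in $|\bun_G|$, which is read off from the two concave, same‑endpoint Newton polygons of $b_{\chi,L}$ and $b_{\mathrm{exc}}$ — according to whether these polygons cross one invokes the form $(2),(4)$ or the form $(2'),(4')$ of the lemma — and hypothesis (4) (resp. $(4')$) is checked by taking $c=1$ and comparing the maximal (resp. minimal) slopes of $b_{(s-1,0),L}$ and the corresponding $b_{\xi'\chi_1^{-1},L}$. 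This gives \cref{thm: explict hecke ! stalk}(1),(2) for $\func{1}{b}{\chi}{\varphi}$ for all $b$, and \cref{lem: hecke functor is equivalence of categories} then upgrades it to part (3). The main obstacle is the purely combinatorial step of this paragraph and the previous one: isolating $b_{\mathrm{exc}}$ as the unique problematic stratum, and pinning down which of the two dual forms of \cref{lem: P true up to finite set} applies via the Newton‑polygon comparison; once that bookkeeping is in place, the rest is an assembly of the lemmas already proved.
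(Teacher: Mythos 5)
Your overall strategy coincides with the paper's: reduce to $\chi=\xi\chi_c$ with $\xi=\chi\chi_c^{-1}$, dispose of all strata admitting a reducible modification from $b_{\xi,L}$ via \cref{lem: reduce r}, identify the single exceptional non-reducible stratum $b_{(s-1,1),L}$ (with $1/n_2>(s-1)/n_1$), and kill it with \cref{lem: P true up to finite set}. That part of your bookkeeping matches the paper.

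The gap is in your last step, specifically in the sub-case $\xi'=(s-1,1)=(1,1)$, i.e.\ $\chi=2\chi_c$. You claim hypothesis (4) resp.\ (4') can always be verified by comparing maximal resp.\ minimal slopes of $b_{\chi\chi_1^{-1},L}=b_{(1,0),L}$ and $b_{\xi'\chi_1^{-1},L}=b_{(0,1),L}$. But here $\numin(b_{(0,1),L})=0=\numin(b_{(1,0),L})$, so the minimal-slope comparison is inconclusive, and in fact (4') is \emph{false}: since $n_1>n_2$, the Newton polygon of $\oo(1|n_2)\oplus\oo^{n_1}$ lies above that of $\oo(1|n_1)\oplus\oo^{n_2}$, so $b_{(0,1),L}\in\overline{\{b_{(1,0),L}\}}\setminus\{b_{(1,0),L}\}$. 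One cannot escape to the pair $(2),(4)$ either: condition $(2)$ requires $b_{\chi,L}=b_{(2,0),L}\notin\overline{\{b_{(1,1),L}\}}$, which fails exactly when $n_2<n_1\leq 2n_2$. So in that range neither dual form of \cref{lem: P true up to finite set} applies with purely geometric (Newton-polygon) hypotheses. The paper resolves this with the representation-theoretic input of condition (4'') from \cref{rem: condition 4''}, namely \cref{lem: case of (1 -1)}: $\rhom(\oo((1,0),\varphi)*i_{1*}^\ren\pi,\oo((0,1),\varphi)*i_{1*}^\ren\rho)=0$, proved by factoring $\func{1}{1}{(1,-1)}{\varphi}$ as a composite of already-computed stalk functors through the reducible pair $(b_{(1,0),L},1,\stdd)$. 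Your proposal contains no substitute for this lemma, so the case $\chi=2\chi_c$ with $n_{c'}<n_c\leq 2n_{c'}$ is not covered. For $s\geq 3$ your minimal-slope check does work (it is exactly the paper's verification of (4') in its case (2.2)), so the gap is confined to, but genuinely present in, the $|\chi|=2$ stratum of the induction.
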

\begin{proof}

    We need to compute \(\func{1}{b}{\chi}{\varphi}\).
    Let \(c\in\{1,2\}\) be an index such that \(\chi(c)>0\).
    We write \(c'\defined 3-c\), so that \(\{c,c'\}=\{1,2\}\).
    In particular \(\chi(c')=0\). 
    In this case \(\mathcal{E}_{\chi\chi_c^{-1},L}\cong\oo(\chi(c)-1|n_c)\oplus\oo^{n_{c'}}\).
    This defines a \(P\)-reduction (for \(P\) the standard parabolic for the Levi \(M\defined\GL_{n_{c}}\times\GL_{n_{c'}}\)) on \(\mathcal{E}_{b_{\chi\chi_c^{-1}}}\), so using \cite[Lemma 2.4]{P-reduction-modification} \(\mathcal{E}_b\) admits a \(P\)-reduction \(\mathcal{E}_{b,P}\) such that \(\oo(\chi(c)-1|n_c)\times\oo^{n_{c'}}\xto{\mu}\mathcal{E}_{b,P}\times^P M\), where \(\mu\) is a reduction of \(\std\) to \(M\). 
    
    There are two cases to consider, \(\mu=(\std,0)\) and \(\mu=(0,\std)\).
    
    \begin{enumerate}
        \item If \(\mu=(\std,0)\), then  we obtain that \(\mathcal{E}_{b,P}\times^P M\cong \mathcal{E}'\times\oo^{n_{c'}}\) with \(\oo(\chi(c)-1|n_c)\xto{\std}\mathcal{E}'\).
        Thus we have \(\mathcal{E}'\cong\oo(\lambda)^m\) for some \(m>0\) with \(\lambda=\chi(c)/n_c\).
        In this case \(b=b_{\chi,L}\), then \(b_{\chi\chi_c^{-1},L}\xto{\std}b_{\chi,L}\) is a reducible modification, so we may apply \cref{lem: reduce r} to reduce to \cref{lem: r=1}.
        
    \item If \(\mu=(0,\std)\), then the \(P\)-reduction \(\mathcal{E}_{b,P}\) satisfies 
        \begin{equation*}
            \mathcal{E}_{b,P}\times^P M=\oo(\chi(c)-1|n_c)\times\oo(1/n_{c'}).
        \end{equation*}
        If \(1/n_{c'}\leq(\chi(c)-1)/n_c\) we have \(\mathcal{E}_{b}\cong\oo(\chi(c)-1|n_c)\times\oo(1/n_{c'})\).
        In this case, \(b_{\chi\chi_c^{-1},L}\xto{\std}b_{\chi,L},\) is a reducible modification, so we reduce to \cref{lem: r=1} by \cref{lem: reduce r}.
        We are left with the case \(1/n_{c'}>(\chi(c)-1)/n_c\).
        It follows that \(\numax(b)\leq 1/n_{c'}\).

        We have now exhausted the analysis of cases where \(b_{\chi\chi_c^{-1},L}\xto{\std}b\) and \((b_{\chi\chi_c^{-1},L},b,\std)\) is reducible.
        We are left in the situation where \(\mathcal{E}_b=\oo(\xi(c')|n_{c'})\oplus\oo(\xi(c)|n_c)\) with \(\xi(c')/n_{c'}\geq \xi(c)/n_c>0\) and \(\xi(1)+\xi(2)=\chi(c)\) and the modification is not reducible.
        Since \(\numax(b)\leq 1/n_{c'}\), we see that \(\xi(c')=1\), then necessarily \(\xi(c)=\chi(c)-1\).
        We want to apply \cref{lem: P true up to finite set} to the singleton set \(S=\{b_{\xi,L}\}\).
        We now have three cases:
    \end{enumerate}
        \begin{enumerate}
            \item[(2.1)] First, consider when \(\xi(1)=\xi(2)=1\). 
            We want to check the conditions of \cref{lem: P true up to finite set} using condition (4'') discussed in \cref{rem: condition 4''}.
            \begin{enumerate}
                \item[(1)] This holds by assumption.
                \item[(2')] By the disussion above we have \cref{thm: explict hecke ! stalk}(2) for any \(b\notin S\) and we use that \(b_{\xi,L}\ngeq b_{\chi,L}\) since \(\numin(b_{\xi,L})>0=\numin(b_{\chi,L})\).
                \item[(3)] This is \cref{lem: chi greater 0}.
                \item[(4'')] This is \cref{lem: case of (1 -1)}.
            \end{enumerate}

            \item[(2.2)] Now assume \(\xi\neq(1,1)\).
            We want to check the conditions of \cref{lem: P true up to finite set}.
            Conditions (1), (2') and (3) hold for the same reasons as in the case when \(\xi=(1,1)\), and we can use the geometric condition (4').
            Condition (4') follows from 
            \begin{equation*}
                \numin(b_{\xi\chi_c^{-1}})=\frac{\chi(c)-2}{n_c}>0=\numin(b_{\chi\chi_c^{-1},L}),
            \end{equation*}
            using the fact that \(\xi\neq (1,1)\).
            
    \end{enumerate}
\end{proof}
In the proof of the preceeding theorem we needed the following lemma.
\begin{lem}\label{lem: case of (1 -1)}
    Assume that \(L=\GL_{n_1}\times\GL_{n_2}\), then 
    \begin{equation*}
        \rhom(\oo((1,0),\varphi)*i_{1*}^\ren\pi,\oo((0,1),\varphi)*i_{1*}^\ren\rho)=0,
    \end{equation*}
    similarly \(\rhom(\oo((0,1),\varphi)*i_{1*}^\ren\pi,\oo((1,0),\varphi)*i_{1*}^\ren\rho)=0\) for \(\pi,\rho\in\D(G(E))^\wedge_{\eta_{G,1}\circ\varphi}\).
\end{lem}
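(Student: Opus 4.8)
The plan is to reduce both vanishing statements to a single statement about the recollement on $\bun_G$, by transporting one Hecke operator onto the other object. The key formal input is that convolution with $\oo(\chi,\varphi)$ is an auto‑equivalence of $\D(\bun_G,k)^\wedge_{\varphi}$ with inverse $\oo(-\chi,\varphi)*-$, since $\oo(\chi,\varphi)$ is a line bundle (pulled back from $BS_{\varphi}$ as in \cref{lem: isotypic decomposition Langlands-Shahidi type}), hence invertible in $\Ind\perf$. Applying $\oo((0,-1),\varphi)*-$ therefore gives
\begin{equation*}
    \rhom\big(\oo((1,0),\varphi)*i_{1*}^\ren\pi,\ \oo((0,1),\varphi)*i_{1*}^\ren\rho\big)\cong\rhom\big(\oo((1,-1),\varphi)*i_{1*}^\ren\pi,\ i_{1*}^\ren\rho\big),
\end{equation*}
so it suffices to prove the right‑hand side vanishes; the second vanishing is obtained identically after applying $\oo((-1,0),\varphi)*-$, with $(1,-1)$ replaced by $(-1,1)$.

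First I would identify the support of the transported source. By \cref{lem: base case chi} we have $\oo((1,0),\varphi)*i_{1*}^\ren\pi=i_{b_1*}^\ren X_1$, where $b_1=b_{(1,0),L}$ (underlying bundle $\oo(1|n_1)\oplus\oo(0|n_2)$) and $X_1$ lies in the block of $\D(G_{b_1}(E))$ predicted by \cref{thm: explict hecke ! stalk}; thus $\oo((1,-1),\varphi)*i_{1*}^\ren\pi\cong\oo((0,-1),\varphi)*i_{b_1*}^\ren X_1$. Since $|(0,1)|=1<s$, Inductive Hypothesis \cref{inductive hypothesis}(1) gives \cref{thm: explict hecke ! stalk} for $\func{b_{(1,-1),L}}{b_1}{(0,1)}{\varphi}$: the bundle of $b_{(1,-1),L}$ is $\oo(1|n_1)\oplus\oo(-1|n_2)$, and adding the character $(0,1)$ turns its $\oo(-1|n_2)$‑summand into $\oo(0|n_2)$, recovering $b_1$, so $\oo((0,1),\varphi)*i_{b_{(1,-1),L}*}^\ren(-)$ is an equivalence onto the relevant block $i_{b_1*}^\ren\big(\D(G_{b_1}(E))^\wedge_{(\cdots)}\big)$ and vanishes on every other stratum. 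Applying the inverse auto‑equivalence, $\oo((0,-1),\varphi)*i_{b_1*}^\ren X_1\cong i_{b_{(1,-1),L}*}^\ren Y$ for some $Y$; in particular $\oo((1,-1),\varphi)*i_{1*}^\ren\pi$ is supported on the single stratum $b_{(1,-1),L}$.

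It remains to show $\rhom(i_{b_{(1,-1),L}*}^\ren Y,\ i_{1*}^\ren\rho)=0$. The bundle $\oo(1|n_1)\oplus\oo(-1|n_2)$ has degree $0$, so $b_{(1,-1),L}$ lies in the same connected component of $\bun_G$ as the trivial bundle, but it is not semistable, hence $b_{(1,-1),L}>1$ strictly and $\bun_G^1\cap\overline{\bun_G^{b_{(1,-1),L}}}=\emptyset$. Consequently $i_1^{\ren*}\,i_{b_{(1,-1),L}*}^\ren=0$, and since $i_1\colon\bun_G^1\hookrightarrow\bun_G$ is an open immersion with $i_{1*}^\ren\rho=i_{1*}(s_*\rho)$, adjunction gives
\begin{equation*}
    \rhom(i_{b_{(1,-1),L}*}^\ren Y,\ i_{1*}^\ren\rho)\cong\rhom(i_1^{\ren*}i_{b_{(1,-1),L}*}^\ren Y,\ s_*\rho)=0.
\end{equation*}
The case of $\oo((-1,1),\varphi)*i_{1*}^\ren\pi$ is entirely parallel, the relevant bundle $\oo(-1|n_1)\oplus\oo(1|n_2)$ again having degree $0$ and being non‑semistable.

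The main point — which I would flag as the crux — is the reduction in the first paragraph: one should resist comparing the two Hecke‑modified sheaves directly, since they sit on strata that are comparable in $B(G)$ and can a priori interact through the link between them, and instead use invertibility of $\oo(\chi,\varphi)*-$ to land on a sheaf concentrated on a non‑semistable stratum of the trivial‑degree component, paired against $i_{1*}^\ren$ of the open semistable stratum, where the recollement forces the vanishing. The only non‑formal ingredient is that the transported sheaf remains concentrated on a single non‑semistable stratum, which is exactly \cref{thm: explict hecke ! stalk} for a character of size one, supplied by the inductive hypothesis.
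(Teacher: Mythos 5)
Your proof follows the same overall plan as the paper's: reduce via invertibility of $\oo(\chi,\varphi)$ to showing $i_1^{\ren!}\oo((1,-1),\varphi)*i_{1*}^\ren=0$, factor through the stalk at $b_1=b_{(1,0),L}$ via \cref{lem: base case chi}, and then conclude from the fact that the remaining object is supported away from the open stratum. The reformulation via $\rhom$ against $i_{1*}^\ren$ and the recollement at the end is equivalent to the paper's direct vanishing of $\func{1}{1}{(1,-1)}{\varphi}$.

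The one concrete issue is your justification of $\func{b_{(1,-1),L}}{b_1}{(0,1)}{\varphi}$ by citing Inductive Hypothesis~(1). As stated, IH~(1) reads ``\cref{thm: explict hecke ! stalk} holds for all $\chi\succeq 0$ and $|\chi|<s$,'' which taken literally would include arbitrary source $b$, but that reading is not what the induction actually closes on. The induction establishes stalks with source $1$; the reduction to general $b,b'$ in \cref{lem: non-negative slopes suffice} strictly increases $|\chi|$ (via the $\det$‑twist and passage to a $\xi$ with $b_{\xi,L}=b$), so it cannot be invoked while keeping the bound $|\chi|<s$ intact. Every actual use of IH~(1) in the paper is for source $1$ stalks (e.g.\ $\func{1}{b_{\xi,L}}{\xi}{\varphi}$ in \cref{lem: reduce r} and $\func{1}{\cdot}{\xi\chi_c^{-1}}{\varphi}$ in \cref{lem: P true up to finite set}); stalks with source $\ne 1$ are obtained from \cref{cor: hodge-newton renormalized reducible reduction} together with IH~(2). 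Indeed the paper's own proof of this very lemma gets the analogous input — \cref{thm: explict hecke ! stalk} for $\func{b_1}{1}{(0,-1)}{\varphi}$, a source‑$b_1$ stalk — from the Hodge–Newton corollary, not IH~(1). Your step should therefore be justified the same way: $(b_{(1,-1),L},b_1,\std)$ is $\omega$‑Hodge–Newton reducible (the maximal slopes $1/n_1$ agree, cf.\ \cref{lem: modifications increase min and max slopes}), and \cref{cor: hodge-newton renormalized reducible reduction} plus IH~(2) for $\GL_{n_1}\times\GL_{n_2}$ supplies $\func{b_{(1,-1),L}}{b_1}{(0,1)}{\varphi}$. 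With that substitution the rest of your argument — invertibility giving $\oo((0,-1),\varphi)*i_{b_1*}^\ren X_1\cong i_{b_{(1,-1),L}*}^\ren Y$, and then $i_1^{\ren*}i_{b_{(1,-1),L}*}^\ren=0$ because $\bun_G^1$ is open and disjoint from $\overline{\bun_G^{b_{(1,-1),L}}}$ — is correct and matches the paper's computation $\func{1}{1}{(1,-1)}{\varphi}=\func{b_1}{1}{(0,-1)}{\varphi}\circ\func{1}{b_1}{(1,0)}{\varphi}=0$.
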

\begin{proof}
    It is easy to see that this is equivalent to the following claim:
    \begin{center}
        (\(*\)) Let \(\chi=(1,-1)\),
        then \(i_1^{\ren!}\oo(\chi,\varphi)*i_{1*}^\ren=0\).
        The same holds for \(\chi=(-1,1)\).
    \end{center}
    Let \(b\) correspond to \(\oo(1/n_1)\oplus\oo^{n_2}\).
    We have \cref{thm: explict hecke ! stalk} for \(\func{b}{1}{(0,-1)}{\varphi}\) by applying \cref{cor: hodge-newton renormalized reducible reduction}, as in the proof of \cref{lem: reduce r}.
    We also have \cref{thm: explict hecke ! stalk} \(\func{1}{b}{(1,0)}{\varphi}\) for any \(b\) by \cref{lem: base case chi}.
    In particular it follows that
    \begin{equation*}
        \oo((1,0),\varphi)*i_{1*}^\ren=i_{1*}^\ren\func{1}{b}{(1,0)}{\varphi}.
    \end{equation*}
    Writing
    \begin{align*}
        \func{1}{1}{(1,-1)}{\varphi}&=i_{1}^{\ren!}\oo((0,-1),\varphi)*\oo((1,0),\varphi)*i_{1*}^\ren\\
        &=\func{b}{1}{(0,-1)}{\varphi}\func{b}{1}{(1,0)}{\varphi}
    \end{align*}
    one easily concludes (\(*\)).
    We obtain the case for \(\chi=(-1,1)\) by switching the roles of \(n_1\) and \(n_2\).
\end{proof}
We can finally discuss the case \(r\geq 3\).
\begin{lem}\label{lem: r=3}
    Assume \(r\geq 3\).
    Then we obtain \cref{thm: explict hecke ! stalk} for \(\func{1}{b}{\chi}{\varphi}\) for all \(\chi\succeq 0\) such that \(\chi(i)=0\) for some index \(i\) and \(|\chi|=s\).
\end{lem}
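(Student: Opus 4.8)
The plan is to combine \cref{lem: reduce r} with the probing lemma \cref{lem: P true up to finite set}, following the scheme of the proof of \cref{lem: chi greater 0} but now exploiting the vanishing coordinate of $\chi$ together with the hypothesis $r\geq 3$. We may assume $s\geq 1$, since for $\chi=0$ the statement is trivial, and we fix an index $i_0$ with $\chi(i_0)=0$. First I would dispose of all $b$ for which \cref{thm: explict hecke ! stalk} already follows from \cref{lem: reduce r} or \cref{rem: main thm trivial cases}: the $b$ for which some $c$ has $\chi(c)>0$ and $(b_{\chi\chi_c^{-1},L},b)$ reducible, together with the $b$ unreachable by a $\std$-modification, the $b\notin B(G)_L$, and the $b$ with $\numin(b)<0$. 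As recorded in the discussion preceding \cref{lem: P true up to finite set}, the bundle $b_{\chi,L}$ itself falls into the first class, so the unique expected non-vanishing stalk is already accounted for; the remaining $b$ are all $\neq b_{\chi,L}$, and for them it only remains to prove the vanishing $\func{1}{b}{\chi}{\varphi}=0$.

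To treat the reducible modifications concretely one argues as in \cref{lem: reduce r}: the Hodge--Newton reduction \cref{cor: hodge-newton renormalized reducible reduction} together with \cref{cor: hom galois representation} rewrites the relevant functors through Hecke operators for a Levi $\GL_{m_1}\times\GL_{m_2}$ of strictly smaller rank, to which Inductive Hypothesis \cref{inductive hypothesis}(2) applies, and \cref{lem: hecke functor is equivalence of categories} upgrades parts (1)--(2) to part (3). The hypothesis $r\geq 3$ enters through \cref{lem: all modification reducible condition}: for a suitable choice of $c$, the bundle $b_{\chi\chi_c^{-1},L}$ still has a non-trivial slope filtration consisting of the slope-$0$ block plus at least one further positive block, so every modification out of $b_{\chi\chi_c^{-1},L}$ is forced to be reducible or to strictly decrease $\rkmin$ — the slope bound needed to invoke \cref{lem: all modification reducible condition} being guaranteed by the minimality built into the choice of $c$.

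For the genuinely exceptional $b$ — those admitting no reducible modification $b_{\chi\chi_c^{-1},L}\xto{\std}b$ for any admissible $c$ — I would apply \cref{lem: P true up to finite set} to the finite set $S$ of such $b$, running an inner induction on $\nmin(\chi)$, exactly as in \cref{lem: chi greater 0}. The base case, where $\nmin(\chi)$ is minimal, is settled because \cref{lem: all modification reducible condition} then makes every relevant modification reducible and \cref{lem: reduce r} finishes. For the inductive step, the hypotheses of \cref{lem: P true up to finite set} are verified as there: (1) holds by construction of $S$; (2') uses that \cref{thm: explict hecke ! stalk}(2) is already known for all $b\notin S$ and that, since $r\geq 3$, $\numax$ is unchanged by the modification defining $b_{\chi\chi_c^{-1},L}$, so $b\ngeq b_{\chi,L}$ for $b\in S$; (3) holds because every $b_{\xi,L}\in S$ has $\nmin(\xi)<\nmin(\chi)$ (still with $|\xi|=s$), so $\xi$ is covered by the inner inductive hypothesis or, once $\xi$ has no vanishing coordinate, by \cref{lem: chi greater 0}; and (4') — or condition (4'') of \cref{rem: condition 4''} — follows by choosing, for each bad $\xi$, an index $d$ realising $\numin(b_{\xi,L})$ and comparing maximal slopes exactly as in \cref{lem: chi greater 0}, the coincidence case being ruled out by a comparison of $L$-parameters as in \cref{lem: r=2}. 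Finally \cref{lem: hecke functor is equivalence of categories} yields part (3) for $b_{\chi,L}$, completing the induction on $s$.

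The main obstacle is the Harder--Narasimhan polygon bookkeeping underlying the third paragraph: one must check that the inner induction is well-founded, i.e. that every exceptional $b_{\xi,L}\in S$ genuinely satisfies $\nmin(\xi)<\nmin(\chi)$, and one must treat carefully the boundary case $\chi(c)=1$, where $\chi\chi_c^{-1}$ has $c$-coordinate $0$ and the slope-$0$ block of $b_{\chi\chi_c^{-1},L}$ is strictly larger than that of $b_{\chi,L}$, so that the comparisons of $\numin$ and $\numax$ — and hence the verifications of (2') and (4') — have to be re-derived, possibly with a different choice of $c$. This is precisely the combinatorial analysis built on \cref{lem: modifications increase min and max slopes} and \cref{lem: all modification reducible condition} carried out in \cite[Section 6]{nguyen}, specialised here to $\GL_n$ and parameters of Langlands--Shahidi type.
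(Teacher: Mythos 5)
Your high-level strategy---dispose of reducible modifications via \cref{cor: hodge-newton renormalized reducible reduction} and \cref{lem: reduce r}, then probe the remaining exceptional \(b\) with \cref{lem: P true up to finite set}---is the right one and matches the paper. But there are three genuine gaps in the execution, and at least one of them is fatal as written.

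First, you invoke \cref{lem: all modification reducible condition}, but its slope hypothesis \(\numin(\mathcal{E})\geq\frac{\deg(\oo(\numin(b))^k)+1}{\rk(\oo(\numin(b))^k)}\) is not ``guaranteed by the minimality built into the choice of \(c\)'' here: since \(\chi(i)=0\) for some \(i\), we have \(\numin(b_{\chi\chi_c^{-1},L})=0\), so the bound needed is \(\numin(\mathcal{E})\geq 1/m\) where \(m\) is the rank of the slope-zero block. When \(\chi(c)=1\) the component \(c\) joins the zero block and the bound can easily fail. The paper does not use \cref{lem: all modification reducible condition} in this case; instead it runs a direct \(P\)-reduction analysis via \cite[Lemma 2.4]{P-reduction-modification}, distinguishing \(\mu=(\std,0)\) from \(\mu=(0,\std)\) and concluding that the only exceptional \(b\) are those with \(\numin(b)>0\) and a \(P\)-reduction of the form \(\mathcal{E}\times\oo(1/m)\). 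You acknowledge the boundary case \(\chi(c)=1\) is delicate but leave it to ``bookkeeping''; that bookkeeping is precisely where the lemma you are leaning on breaks.

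Second, your inner induction on \(\nmin(\chi)\) is both unnecessary and ill-founded. The exceptional set \(S\) consists of \(b\) with \(\numin(b)>0\); consequently every \(\xi\) with \(b_{\xi,L}\in S\) satisfies \(\xi(i)>0\) for all \(i\), so condition (3) of \cref{lem: P true up to finite set} is immediately supplied by \cref{lem: chi greater 0} with no auxiliary induction. Moreover, for \(\chi\) with a vanishing coordinate we have \(\numin(b_{\chi,L})=0\), so \(\nmin(\chi)\) is the smallest \(n_i\) among indices with \(\chi(i)=0\), and it is not clear this decreases along the moves you describe, nor what the base case would be.

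Third, you do not separate the case \(\chi=\chi_c^a\) (a single positive coordinate) from the case of at least two positive coordinates. This split is essential: the paper's verification of condition (4) uses \(\numax(b_{\chi\chi_c^{-1},L})=\numax(b_{\chi,L})\), which requires \(\chi\neq\chi_c^a\); in the case \(\chi=\chi_c^a\) one must instead show directly, via a slope comparison using \(r\geq 3\) (two distinct zero-indices \(i_1\neq i_2\) force \(\numin(b)=0\), hence reducibility), that no exceptional \(b\) exist at all. Your proposed verification of (4\('\))/(4\(''\)) by ``choosing an index \(d\) realising \(\numin(b_{\xi,L})\)'' is the argument from \cref{lem: chi greater 0} and does not account for this degeneration. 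The switch from conditions (2),(4) (used by the paper here) to (2\('\)),(4\('\)) is recoverable, but it does not save you from the \(\chi=\chi_c^a\) issue.
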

\begin{proof}

    We need to compute \(\func{1}{b}{\chi}{\varphi}\).
    Let \(1\leq c\leq r\) be an index such that \(\chi(c)>0\) and \(\chi(c)/n_c\) is minimal among the positive slopes of \(b_{\chi,L}\).
    Consider the \(P\)-reduction coming from \(\mathcal{E}_{b_{\chi\chi_c^{-1},L}}\cong\mathcal{E}\oplus\oo^m\) where \(m=\sum_{\chi\chi_c^{-1}(i)=0}n_i\) and \(\numin(\mathcal{E})>0\).
    Again, we will have to consider \(\mathcal{E}_{b,P}\) such  that \(\mathcal{E}\times\oo^m\xto{\mu}\mathcal{E}_{b,P}\times^PM\), where \(\mu\) is a reduction of \(\std\) to \(M\).
    
    The two cases to consider are \(\mu=(\std,0)\) and \(\mu=(0,\std)\).
    \begin{enumerate}
        \item  If \(\mu=(\std,0)\) we obtain that \(\mathcal{E}_b=\mathcal{E}'\oplus\oo^m\) for some \(\mathcal{E}'\) such that \(\mathcal{E}\xto{\std}\mathcal{E}'\).
        This is because in this situation \(\numin(\mathcal{E})\geq\numin(\mathcal{E})\), and \(\numin(\mathcal{E}')\geq0\), so the extension problem splits.
        In this case, \(b_{\chi\chi_c^{-1},L}\xto{\std}b\) is a reducible modification, so we may apply \cref{lem: reduce r}.
        
        \item If \(\mu=(0,\std)\), we obtain that \(\mathcal{E}_{b,P}\times^P M\cong \mathcal{E}\times\mathcal{E}'\) with \(\oo^m\xto{\std}\mathcal{E}'\).
        Thus we have \(\mathcal{E}'\cong\mathcal{E}''\oplus\oo(\lambda)^l\) for some \(l>0\) with \(\lambda=0\) and \(\numin(\mathcal{E}'')>0\) or \(\mathcal{E}''=0\).
    \end{enumerate}
        \begin{enumerate}
            \item[(2.1)] The case \(\mathcal{E}''=0\) will be discussed further below.
            
            \item[(2.2)] If \(\lambda=0\), then \(b_{\chi\chi_c^{-1},L}\xto{\std}b\) is a reducible modification
            and we may apply \cref{lem: reduce r}.
    \end{enumerate}
    Continuing from the above one obtains that \(\lambda=1/m\) (and \(l=1\)).  
    We obtain that \cref{thm: explict hecke ! stalk} for \(\func{1}{b}{\chi}{\varphi}\) holds for all \(b\) except possibly for those \(b\) such that \(b_{\chi\chi_c^{-1},L}\xto{\mu}b\), and additionally \(\mathcal{E}_b\) admits a \(P\)-reduction \(\mathcal{E}_{b,P}\) such that \(\mathcal{E}_{b,P}\times^P M=\mathcal{E}\times\oo(\lambda)\), this implies that \(\numin(b)>0\).

    \begin{itemize}
            \item We first treat the case where \(\chi=\chi_c^a\) for some \(a\geq 1\).
            In this case, \(\mathcal{E}=\oo(a-1|n_c)\).
            If \(\frac{a-1}{n_c}\geq\lambda\), then \(\mathcal{E}_b=\oo(a-1|n_c)\oplus\oo(\lambda)\), which is reducible so we may apply \cref{cor: hodge-newton renormalized reducible reduction}.
            Assume that \(\frac{a-1}{n_c}<\lambda\), then \(\lambda\geq\numax(b)\) and we see that \(\lambda=\frac{1}{n-n_c}\). Since \(b\in \im(B(L)_{\mathrm{basic}}\to B(G))\) and \(r\geq 3\), there are at least two different indices \(i_1\neq i_2\) such that \(\chi(i_1)=\chi(i_2)=0\).
            Observe that \(n_{i_1}<n-n_c\). Then \(\lambda\geq\numax(b)\) implies that \(\numin(b)=0\), since otherwise we would have a slope of at least \(\frac{1}{n_{i_1}}>\lambda\). 
            Then the modification is reducible, so we conclude by \cref{lem: reduce r}
            \item We are left with the case that there are two different indices \(i_1\neq i_2\) such that \(\chi(i_1),\chi(i_2)>0\).
            In this case, we see that \(\numax(\mathcal{E})=\numax(\mathcal{E}_{b_{\chi\chi_c^{-1},L}})\) and also that \(\numax(b)\geq\numax(b_{\chi\chi_c^{-1},L})\).
            If the modification is reducible, then we conclude by \cref{lem: reduce r}.
            We have now exhausted the simple analysis of cases where \(b_{\chi\chi_c^{-1},L}\xto{\std}b\) is reducible, and we are left with the non-reducible modifications.
                
            Consider the set \(S\) of non-reducible modifications, explicitly this is
            \begin{equation*}
                S=\{b\in B(G)|\numin(b)>0,\numax(b)>\numax(b_{\chi,L}),b_{\chi\chi_c^{-1},L}\xto{\std}b\}.
            \end{equation*}
            This set is finite.
            We want to apply \cref{lem: P true up to finite set}.
            Let us check the conditions.
            \begin{enumerate}
                \item[(1)] This holds by assumption.
                \item[(2)] We have \cref{thm: explict hecke ! stalk}(2) for \(\func{1}{b}{\chi}{\varphi}\) for all \(b\notin S\).
                We also use that \(b\nleq b_{\chi,L}\) for \(b\in S\) since we have \(\numax(b)>\numax(b_{\chi,L})\) for \(b\in S\).
                \item[(3)] This is \cref{lem: chi greater 0}.
                \item[(4)] We have \(\numax(b_{\xi\chi_{i_1}^{-1},L})=\numax(b_{\xi,L})\) or \(\numax(b_{\xi\chi_{i_2}^{-1},L})=\numax(b_{\xi,L})\).
                Up to renaming \(i_1\) and \(i_2\) we can assume the first one is the case so that we have 
                \begin{equation*}
                    \numax(b_{\xi\chi_{i_1}^{-1},L})=\numax(b)>\numax(b_{\chi\chi_c^{-1},L})=\numax(b_{\chi,L})\geq\numax(b_{\chi\chi_{i_1}^{-1},L}),
                \end{equation*}
                using \(r\geq 3\) and \(\chi\neq\chi_c^a\) to conclude that \(\numax(b_{\chi\chi_c^{-1},L})=\numax(b_{\chi,L})\).            
                In particular \(b_{\xi\chi_{i_1}^{-1},L}\nleq b_{\chi\chi_{i_1}^{-1},L}\).
            \end{enumerate}
        \end{itemize}

\end{proof}
\begin{proof}[Proof of \cref{thm: explict hecke ! stalk}]
    This is now immediate from \cref{lem: non-negative slopes suffice}, \cref{lem: r=1}, \cref{lem: chi greater 0} and \cref{lem: r=3}.

\end{proof}
\begin{cor}
    The functor \(\func{b}{b'}{\chi}{\varphi}\) is \(t\)-exact.
\end{cor}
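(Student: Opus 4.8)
The plan is to re-run, step by step, the inductive argument of this section that established \cref{thm: explict hecke ! stalk}, this time propagating $t$-exactness with respect to the standard $t$-structures on the categories of smooth representations (restricted to the relevant formal fibres). By \cref{thm: explict hecke ! stalk} the functor $\func{b}{b'}{\chi}{\varphi}$ is either zero — in which case it is trivially $t$-exact — or an equivalence $\D(G_b(E))^\wedge_{\eta_{G,b}\circ\varphi}\to\D(G_{b'}(E))^\wedge_{\eta_{G,b'}\circ\varphi^w}$, so only the second case needs work. Since $t$-exactness is stable under composition of functors, it is enough to verify it for the elementary pieces produced by the reduction steps of the section.

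First I would dispose of the formal reductions. The functor $T_{\det}$ used in \cref{lem: non-negative slopes suffice} is pullback along an isomorphism of $v$-stacks, hence an invertible $t$-exact functor, so one may assume $\chi\succeq 0$; then \cref{lem: combine chi} together with the iterated factorisation $\oo(\chi,\varphi)*i_{b*}^\ren\cong\oo(\chi_c,\varphi)*\oo(\chi\chi_c^{-1},\varphi)*i_{b*}^\ren$ (and the identification $\oo(\chi\chi_c^{-1},\varphi)*i_{b*}^\ren\cong i_{b_{\chi\chi_c^{-1},L}*}^\ren\func{b}{b_{\chi\chi_c^{-1},L}}{\chi\chi_c^{-1}}{\varphi}$ coming from \cref{thm: explict hecke ! stalk}) writes $\func{b}{b'}{\chi}{\varphi}$ as a composition of one-step functors $\func{\beta}{\beta'}{\chi_c}{\varphi^\sigma}$ with $|\chi_c|=1$; the twists $\varphi\mapsto\varphi^\sigma$ occurring here leave the $t$-structures unchanged. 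Finally, by \cref{lem: isotypic decomposition Langlands-Shahidi type} and \cref{cor: hom galois representation}, for a fixed pair $(\beta,\beta')$ the Hecke functor $i_{\beta'}^{\ren!}T_{\std}i_{\beta*}^\ren$ is equivariantly of the form $\varphi_{i_0}\boxtimes\bigl(\func{\beta}{\beta'}{\chi_{i_0}}{\varphi}\bigr)$ for the unique index $i_0$ (if any) making this summand non-zero, and vanishes otherwise; hence $\func{\beta}{\beta'}{\chi_c}{\varphi}$ is either zero (for $c\neq i_0$) or a $k$-linear direct summand of $i_{\beta'}^{\ren!}T_{\std}i_{\beta*}^\ren$ with no intervening shift. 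The whole statement thus reduces to: $i_{\beta'}^{\ren!}T_{\std}i_{\beta*}^\ren$ is $t$-exact on $\D(G_\beta(E))^\wedge_{\eta_{G,\beta}\circ\varphi}$ whenever it is non-zero.

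This I would prove by induction on $n$; the base case $n=1$, and more generally the case $r=1$ of irreducible $\varphi$, is exactly \cref{cor: t-exact irreducible case} (for tori one can alternatively invoke \cite[Theorem 6.4.1.]{categorical-fargues-tori}), once one observes that on the semistable strata the renormalised functors $i_{\beta*}^\ren$ and $i_{\beta'}^{\ren!}$ are $t$-exact equivalences. For the inductive step, the analysis carried out in \cref{lem: reduce r}, \cref{lem: chi greater 0}, \cref{lem: r=2} and \cref{lem: r=3} shows that every non-zero $i_{\beta'}^{\ren!}T_{\std}i_{\beta*}^\ren$ which arises corresponds to a reducible modification $\beta\xto{\std}\beta'$, i.e.\ $(\beta,\beta',\std)$ is Hodge–Newton or $\omega$-Hodge–Newton reducible — the remaining $\beta'$ contribute the zero functor, which is $t$-exact. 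In that situation \cref{cor: hodge-newton renormalized reducible reduction} identifies $i_{\beta'}^{\ren!}T_{\std}i_{\beta*}^\ren$, up to a cohomological shift, with $i_{\theta'}^{\ren!}T_{\mu_M}i_{\theta*}^\ren\circ\nres^{G_\beta(E)}_{\overline{P_\beta}(E)}$ (respectively $\nind^{G_{\beta'}(E)}_{P_{\beta'}(E)}\circ i_{\theta'}^{\ren!}T_{\mu_M}i_{\theta*}^\ren$) for the smaller Levi $M=\GL_{m_1}\times\GL_{m_2}$. Here normalised induction and restriction are exact on smooth representations, hence $t$-exact; $\nres^{G_\beta(E)}_{\overline{P_\beta}(E)}$ carries $\D(G_\beta(E))^\wedge_{\eta_{G,\beta}\circ\varphi}$ into a product of categories $\D(M_\theta(E))^\wedge_{\eta_{M,\theta}\circ\varphi^\sigma}$ as in the proof of \cref{lem: reduce r}; and on each such factor $i_{\theta'}^{\ren!}T_{\mu_M}i_{\theta*}^\ren$ is $t$-exact by the inductive hypothesis applied to the $\GL_{m_j}$-factors of $M$, since the reductions of $\varphi$ are again of Langlands–Shahidi type.

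The one genuinely computational point — and the step I expect to be the main obstacle — is that the net cohomological shift is zero: the shift appearing in \cref{cor: hodge-newton renormalized reducible reduction}, the shift introduced when passing between $T_{\std}$ and $\oo(\chi_c,\varphi)$ via \cref{cor: hom galois representation} (as in the proof of \cref{lem: reduce r}), and the shifts concealed in the renormalisations $i_{(-)}^{\ren?}$ must all cancel. I would organise this by writing $2\rho_G=2\rho_M+2\rho_{G/M}$ and reducing the claim to an identity among the pairings of $2\rho_{G/M}$ with $\std$, with $\nu_\beta$ and $\nu_{\beta'}$, and with their reductions $\nu_\theta,\nu_{\theta'}$ to $M$; this identity is forced by the explicit description of (ordinary and $\omega$-)Hodge–Newton reducibility recorded in \cref{rem: reducible definition}, and it is precisely the cancellation the renormalised functors were designed to encode — so I expect it to drop out once the bookkeeping of modulus characters and Tate twists from the proof of \cref{cor: hodge-newton renormalized reducible reduction} is arranged around it. Granting this, $i_{\beta'}^{\ren!}T_{\std}i_{\beta*}^\ren$ is $t$-exact in all non-vanishing cases, which completes the induction and hence the proof.
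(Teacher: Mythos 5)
Your overall strategy — eliminate $T_{\det}$, factor $\oo(\chi,\varphi)*{-}$ into one-step pieces, push each piece through \cref{cor: hodge-newton renormalized reducible reduction} to a smaller Levi, base case \cref{cor: t-exact irreducible case} — is the paper's. But there is a genuine gap at the pivot of your inductive step: the assertion that \emph{every} non-zero one-step functor $i_{\beta'}^{\ren!}T_{\std}i_{\beta*}^{\ren}$ arising in the factorisation corresponds to a reducible modification is false unless the peeling-off index $c$ is chosen carefully, and your proposal never makes that choice. Concretely, take $L=\GL_3\times\GL_4$ and $\chi=(2,2)$ and factor $\oo((2,2),\varphi)*i_{1*}^{\ren}$ as $\oo(\chi_1,\varphi)*\oo((1,2),\varphi)*i_{1*}^{\ren}$, peeling off $c=1$. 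By \cref{thm: explict hecke ! stalk} the one-step functor $\func{b_{(1,2),L}}{b_{(2,2),L}}{\chi_1}{\varphi}$ is a non-zero equivalence, yet $\mathcal{E}_{b_{(1,2),L}}=\oo(1/3)\oplus\oo(1/2)^{\oplus 2}$ and $\mathcal{E}_{b_{(2,2),L}}=\oo(2/3)\oplus\oo(1/2)^{\oplus 2}$ share neither the minimal slope ($1/3\neq 1/2$) nor the maximal one ($1/2\neq 2/3$); by \cref{lem: modifications increase min and max slopes} the modification $b_{(1,2),L}\xto{\std}b_{(2,2),L}$ is therefore neither Hodge--Newton nor $\omega$-Hodge--Newton reducible, and \cref{cor: hodge-newton renormalized reducible reduction} simply does not apply to this piece — so your induction does not close.

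What the paper does instead is run an outer induction on $|\chi|$ and, at each step, select $c$ so that $\chi(c)/n_c$ is a \emph{minimal positive slope} of $b_{\chi,L}$; by \cref{lem: modifications increase min and max slopes} this forces $(b_{\chi\chi_c^{-1},L},b_{\chi,L},\std)$ to be reducible (in the example, $c=2$ gives $(b_{(2,1),L},b_{(2,2),L},\std)$, which is $\omega$-Hodge--Newton reducible because $\numax$ is preserved). With that choice, the one-step functor is a shift-free direct summand of $i_{b_{\chi,L}}^{\ren!}T_{\std}i_{b_{\chi\chi_c^{-1},L}*}^{\ren}$ by \cref{lem: isotypic decomposition Langlands-Shahidi type}, \cref{cor: hodge-newton renormalized reducible reduction} applies, and an inner induction on $r=\rk(Z(\widehat{L}))$ (not on $n$) with base case $r=1$ given by \cref{cor: t-exact irreducible case} finishes. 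Your concern about the residual cohomological shift was reasonable but is a red herring: the shift in \cref{cor: hodge-newton renormalized reducible reduction} cancels against the renormalisations exactly as you predicted, so the real missing idea is the choice of $c$.
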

\begin{proof}
    As in the proof of \cref{lem: non-negative slopes suffice}, it suffices to check it for \(\chi\succeq 0\) and \(b'\) without negative slopes and \(b=1\), using the identification \(T_{\det}\cong q^*\) where \(p\from \bun_G\to\bun_{G_b}\cong\bun_G\) with \(b=\oo(1)^n\). 
    The first map is the isomorphism \cite[Corollary III.4.3.]{geometrization} and the second map comes from the isomorphism \(G\cong G_b\).
    This functor is \(t\)-exact.

    The only non-trivial case to consider is the \(t\)-exactness of \(\func{1}{b_{\chi,L}}{\chi}{\varphi}\), when \(\chi\succeq 0\).
    We now induct on \(|\chi|\).
    Choose a \(c\in\{1,\dots,r\}\) such that \(\chi(c)>0\).
    Using \cref{thm: explict hecke ! stalk} may rewrite this as 
    \begin{equation*}
        \func{1}{b_{\chi,L}}{\chi}{\varphi}=\func{b_{\chi\chi_c^{-1},L}}{b_{\chi,L}}{\chi_c}{\varphi}\func{1}{b_{\chi\chi_c^{-1},L}}{\chi\chi_c^{-1}}{\varphi}.
    \end{equation*}
    By induction \(\func{1}{b_{\chi\chi_c^{-1},L}}{\chi\chi_c^{-1}}{\varphi}\) is a \(t\)-exact equivalence of categories, so it suffices to check that \(\func{b_{\chi\chi_c^{-1},L}}{b_{\chi,L}}{\chi_c}{\varphi}\) is \(t\)-exact.
    By \cref{lem: isotypic decomposition Langlands-Shahidi type}, this is a direct summand of \(i_{b_{\chi}}^{\ren!}T_{\std}*i_{b_{\chi\chi_c^{-1},L}*}^\ren\).
    If we choose \(c\) to be an index such that \(\chi(c)/n_c\) is a minimal positive slope of \(b_{\chi,L}\), then \((b_{\chi\chi_c^{-1},L},b_{\chi,L},\std)\) is reducible.
    Recall that both normalized restriction and induction are \(t\)-exact, so using \cref{cor: hodge-newton renormalized reducible reduction}, and an induction on \(r\defined\rk(Z(\widehat{L}))\) we can conclude.
    Here the base case \(r=1\) is given by \cref{cor: t-exact irreducible case}.
\end{proof}
\begin{cor}\label{cor: explict hecke * stalk}
    \cref{thm: explict hecke ! stalk} also holds for \(i_{b'}^{\ren*}\oo(\chi,\varphi)*i_{b!}^{\ren}\).
    In addition this functor is \(t\)-exact.
\end{cor}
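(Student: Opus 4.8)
The plan is to derive the statement for $i_{b'}^{\ren*}\oo(\chi,\varphi)*i_{b!}^{\ren}$ from the already-proven \cref{thm: explict hecke ! stalk} by a formal adjunction argument. The renormalized functors are arranged so that $i_{b!}^{\ren}$ is left adjoint to $i_b^{\ren!}$ and $i_{b'}^{\ren*}$ is left adjoint to $i_{b'*}^{\ren}$, while $\oo(\chi,\varphi)*-$ is an equivalence with inverse $\oo(-\chi,\varphi)*-$. Composing, $i_{b'}^{\ren*}\oo(\chi,\varphi)*i_{b!}^{\ren}$ is the left adjoint of $\func{b'}{b}{-\chi}{\varphi}=i_{b}^{\ren!}\oo(-\chi,\varphi)*i_{b'*}^{\ren}$. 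This is also what the two stalk formulas in the Remark after \cref{thm: explict hecke ! stalk} express: the $*/!$ operator is computed by $R\Gamma_c(G,b,b',\mu)\otimes_{\mathcal{H}(G_b(E))}^L-$ and the $!/*$ operator by $\rhom_{G_b(E)}(R\Gamma_c(G,b,b',\mu),-)$, which form an adjoint pair.

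I would then apply \cref{thm: explict hecke ! stalk} to $\func{b'}{b}{-\chi}{\varphi}$ — legitimate since that theorem holds for every character in $X^*(Z(\widehat L))$, $-\chi$ included, by \cref{lem: non-negative slopes suffice}. It says this functor is either zero or an equivalence $\D(G_{b'}(E))^\wedge_{\eta_{G,b'}\circ\varphi}\to\D(G_b(E))^\wedge_{\eta_{G,b}\circ\varphi^{\tilde w}}$. Passing to left adjoints: the left adjoint of zero is zero, and the left adjoint of an equivalence is its inverse equivalence; combined with \cref{lem: support sheaf with L-parameter of Langlands-Shahidi type} to dispose of the case $b,b'\notin B(G)_L$ (as in \cref{rem: main thm trivial cases}), this yields parts (1)--(3) for $i_{b'}^{\ren*}\oo(\chi,\varphi)*i_{b!}^{\ren}$, once one checks that the combinatorial datum $(w,\varphi^w)$ attached to $(b,b',\chi)$ is the one obtained from the datum $(\tilde w,\varphi^{\tilde w})$ attached to $(b',b,-\chi)$ upon inverting the equivalence. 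That check is a bookkeeping exercise with Newton polygons: the relation ``$b'=\bigoplus_i\oo(a_{w(i)}+\chi(w(i))|n_{w(i)})$ sorted'' is inverse to ``$b=\bigoplus_i\oo(a'_{\tilde w(i)}-\chi(\tilde w(i))|n_{\tilde w(i)})$ sorted'', and the identifications $\D(G_b(E))^\wedge_{\eta_{G,b}\circ\varphi^{\tilde w}}\simeq\D(G_b(E))^\wedge_{\eta_{G,b}\circ\varphi}$ (and likewise on the $b'$ side) use $\oo(\chi,\varphi)=\oo(\chi^\sigma,\varphi^\sigma)$ exactly as in the proof of \cref{lem: combine chi}.

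For $t$-exactness, in the non-trivial case $i_{b'}^{\ren*}\oo(\chi,\varphi)*i_{b!}^{\ren}$ is the inverse of $\func{b'}{b}{-\chi}{\varphi}$, which is $t$-exact by the previous corollary, and the inverse of a $t$-exact equivalence is $t$-exact; the zero functor is trivially $t$-exact. The only point requiring genuine care is the Newton-polygon bookkeeping identifying the two combinatorial descriptions (together with confirming that the renormalized adjunctions carry no stray shifts or twists); neither is deep. As flagged in the Remark after \cref{thm: explict hecke ! stalk}, an alternative is to re-run \cref{cor: hodge-newton renormalized reducible reduction} and the remaining lemmas of this section with $R\Gamma_c(G,b,b',\mu)\otimes_{\mathcal{H}(G_b(E))}^L-$ in place of $\rhom_{G_b(E)}(R\Gamma_c(G,b,b',\mu),-)$, since the combinatorial and geometric lemmas use neither convention; this avoids the bookkeeping at the cost of repeating the argument.
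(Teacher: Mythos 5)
Your argument is correct, but it is not the one the paper uses. The paper's proof is a one-line appeal to Verdier duality: compact objects of \(\D(G_b(E))^\wedge_{\eta_{G,b}\circ\varphi}\) are compact and ULA in \(\D(\bun_G)\) by \cref{lem: compact objects in localization compact and ULA}, Verdier duality exchanges \(i_{b!}^\ren\) with \(i_{b*}^\ren\) and \(i_{b'}^{\ren*}\) with \(i_{b'}^{\ren!}\) on such objects (and intertwines \(\oo(\chi,\varphi)\) with \(\oo(-\chi,\varphi^\vee)\) acting on the \(\varphi^\vee\)-block), so the \(*/!\)-statement on compacts follows from \cref{thm: explict hecke ! stalk} applied to the contragredient parameter, and one extends to all objects and checks \(t\)-exactness using that filtered colimits are \(t\)-exact. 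You instead use the purely formal adjunction \(i_{b'}^{\ren*}\oo(\chi,\varphi)*i_{b!}^{\ren}\dashv\func{b'}{b}{-\chi}{\varphi}\), which is valid: the renormalizations are symmetric enough that the shifts and twists cancel in the adjunction, all functors involved are \(\perf(\locsys_{\widehat{G}}^\coarse)\)-linear and hence preserve the formal completions, and passing to left adjoints turns ``zero or equivalence'' into ``zero or inverse equivalence'' block by block. Your route avoids the ULA input and the passage to \(\varphi^\vee\), and gives \(t\)-exactness directly (the inverse of a \(t\)-exact equivalence is \(t\)-exact) without restricting to compacts; the price is the combinatorial matching you flag — identifying the permutation \(w\) attached to \((b,b',\chi)\) with the one recovered from \((b',b,-\chi)\), and checking that for fixed source block there is exactly one source \((b',\sigma)\) for which \(\func{b'}{b}{-\chi}{\varphi^\sigma}\) is nonzero with target \(\eta_{G,b}\circ\varphi\) — which is routine but should be written out, exactly in the style of \cref{lem: combine chi}. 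Either mechanism is an acceptable formal reduction to \cref{thm: explict hecke ! stalk}.
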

\begin{proof}
    Since \(i_{b'}^{\ren*}\oo(\chi,\varphi)*i_{b!}^{\ren}\) preserves compact objects and the inclusion 
    \[\D(G_b(E))^\wedge_{\eta_{G,b}\circ\varphi}\subset\D(G_b(E))\] preserves compact objects and sends compact objects to compact and ULA objects by \cref{lem: compact objects in localization compact and ULA}, we can deduce the claim using Verdier duality.
    For \(t\)-exactness, note filtered colimits are \(t\)-exact, so it suffices to check \(t\)-exactness on compact objects.
\end{proof}
\begin{rem}
    Using \cref{thm: splitting semi-orthogonal} one can show that in fact we have a natural isomorphism \(i_{b'}^{\ren*}\oo(\chi,\varphi)*i_{b!}^{\ren}\cong\func{b}{b'}{\chi}{\varphi}\) when restricted to \(\D(G_b(E))^\wedge_\varphi\).
\end{rem}
\begin{rem}
    Let us see how this arguments works out in practice.
    We take as our Levi the group \(L=\GL_2\times\GL_3\times\GL_3\).
    We will fix some parameter \(\varphi\) of Langlands-Shahidi type with cuspidal support \(\widehat{L}\) and write \(\varphi=\varphi_1\times\varphi_2\times\varphi_3\) for the reduction through \(\widehat{L}\).
    We want to unterstand \(\oo((2,2,2))*i_{1*}^\ren\).
    \begin{figure}[h]
        \begin{tikzpicture}
            \draw (0,0) -- (2,2) -- (8,6);
            \draw[orange] (5,4) -- (8,5);
            \draw[red] (5,4) -- (6,5);
            \draw[blue] (6,5) -- (8,6); 
        \end{tikzpicture}
        \caption{\label{fig 1}\(b_{(2,2,1),L}\) with the smallest slope marked orange and a modification of the smallest slope}
    \end{figure}
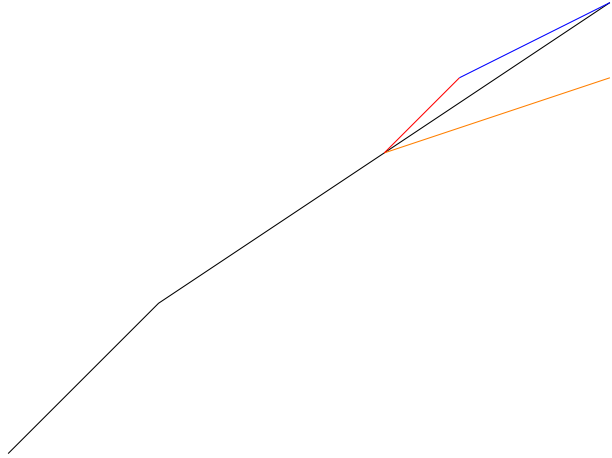
    Here the black line is the Newton polygon for \(b_{(2,2,2),L}\).
    We want to take some index \(c\) responsible for a smallest slope and decrase the value by \(1\), here we can take \(c=3\).
    Then we write \(\oo((2,2,2),\varphi)*i_{1*}^\ren=\oo((0,0,1),\varphi)*\oo((2,2,1),\varphi)*i_{1*}^\ren\).
    By induction we know that \(\oo((2,2,1),\varphi)*i_{1*}^\ren\) is a \(*\)-extension from \(b_{(2,2,1),L}\).
    By construction this has a Newton polygon whose smallest slope comes from the index \(c\) we have chosen, this see \cref{fig 1}.
    Since we fully understand \(\oo((2,2,1),\varphi)*i_{1*}^\ren\), we are left to compute \(\oo((0,0,1),\varphi)*i_{b_{(2,2,1),L}*}^\ren\).
    Since \(\oo((0,0,1),\varphi)\) splits off from \(T_{\std}\) when restricted to \(\D(\bun_G)^\wedge_\varphi\), we want to understand the set of modifications \(b_{(2,2,1),L}\xto{\std}b\).
    We filter \(\mathcal{E}_{b_{(2,2,1),L}}\) by the black part and the orange part.
    Then \(\mathcal{E}_b\) admits a 2-step filtration such that the first graded piece is a modification of the black part bouned by \(\std\) and the second graded piece is given by the orange part or the black parts stay the same, and the orange part gets modified.
    The latter case is shown in the graphic.
    We have marked the biggest slope of the vector bundle modifying the orange part with red and the smallest slope with blue.

    When the black parts get modified, we are in the reducible situation, which is understood by \cref{lem: reduce r}, the same thing happens if either the red slope is not bigger than the biggest black slope, or the blue slope is equal to the orange slope.

    In this example we see that \(b=b_{(1,2,3),L}\), and the situation looks like \cref{fig 2}.
    \begin{figure}[h]
        \begin{tikzpicture}
            \draw (0,0) -- (2,2) -- (5,4) -- (8,5);
            \draw (0,0) -- (3,3) -- (6,5) -- (8,6);
            \draw[dotted] (2,2) -- (2,0);
        \end{tikzpicture}
        \caption{\label{fig 2}\(b_{(1,2,3),L}\) and \(b_{(2,2,1),L}\), the dotted line shows that the modification is \(\omega\)-Hodge-Newton reducible for \(\GL_2\times\GL_6\)}
    \end{figure}
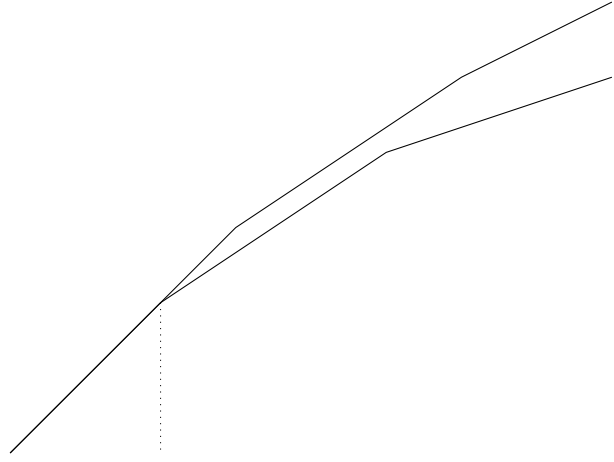
    The application of \cref{lem: reduce r} now works like this.
    We want to show that 
    \(i_{b_{(1,2,3),L}}^{\ren!}\oo((2,2,2),\varphi)*i_{1*}^\ren\pi=0\) 
    for any \(\pi\in\D(G(E))^\wedge_{\varphi}\).
    Then by writing \(\oo((2,2,2),\varphi)*i_{1*}^\ren=\oo((0,0,1),\varphi)*\oo((2,2,1),\varphi)*i_{1*}^\ren\) and induction on \(|\chi|\) it suffices to show that \(i_{b_{(1,2,3),L}}^{\ren!}\oo((0,0,1),\varphi)*i_{b_{(2,2,1),L}*}^\ren\pi=0\) for any \(\pi\in\D(\GL_2(E)\times D^\times_{2/3}\times D^\times_{1/3})^\wedge_{\varphi_1\times\varphi_2\times\varphi_3}\).
    Since the triple \((b_{(2,2,1),L},b_{(1,2,3),L},\std)\) is \((\omega)\)-Hodge-Newton reducible, we can compute 
    \begin{equation*}
        i_{b_{(1,2,3),L}}^{\ren!}T_{\std}i_{b_{(2,2,1),L}*}^\ren\pi=\nind^{\GL_3\times D^\times_{1/3}\times D^\times_{1/2}}_{\GL_2\times\GL_1\times D^\times_{1/3}\times D^\times_{1/2}}i_{((2),(1,1,2),L')}^{\ren!}T_{(0,\std)}i_{b_{((2,2,1)),L}*}^\ren\pi.
    \end{equation*}
    On the right hand-side we are working with \(\bun_M\) with \(M=\GL_2\times\GL_6\), in particular we consider \(b_{((2,2,1)),L}\in B(M)\) by writing \(\mathcal{E}_{b_{((2,2,1)),L}}=\oo(1)^{\oplus 2}\times\oo(2/3)\oplus\oo(1/3)\).
    Here \(L'=\GL_2\times(\GL_1\times\GL_2\times \GL_3)\), this is a Levi of \(M\) and \((0,\std)\) refers to the cocharacter \((0,0,1,0,\dots,0)\).
    We are reduced to checking that the \(\varphi_1\times\varphi_2\times\varphi_3\)-isotypic part of \(i_{b_{((2),(1,1,2),L')}}^{\ren!}T_{(0,\std)}i_{b_{((2,2,1)),L}*}^\ren\pi\) vanishes.\footnote{In this case one can shortcut the argument by observing that anything on \(b_{((2),(1,1,2),L')}\) must have a parameter that reduces to \(\widehat{L'}\), contradicting that \(\varphi\) has cuspidal support \(\widehat{L}\).}
    For our claimes it suffices to compute this for those \(\pi\) of the form \(\pi=\pi_1\boxtimes\pi_2\) with \(\pi_1\in\D(\GL_2(E))^\wedge_{\varphi_1}\) and \(\pi_2\in\D(D^\times_{2/3}\times D^\times_{1/3})^\wedge_{\varphi_2\times\varphi_3}\).
    Since the formation of Hecke operators is compatible with products of groups, we compute
    \begin{equation*}
        i_{b_{((2),(1,1,2),L')}}^{\ren!}T_{(0,\std)}i_{b_{((2,2,1)),L}*}^\ren\pi_1\boxtimes\pi_2=\pi_1\boxtimes i_{\oo(1)\oplus\oo(2/3)\oplus\oo(1/2)}^{\ren!}T_{\std}i_{\oo(2/3)^{\oplus 2}*}^\ren\pi_2.
    \end{equation*}
    Since \(i_{\oo(1)\oplus\oo(2/3)\oplus\oo(1/2)}^{\ren!}T_{\std}i_{\oo(2/3)^{\oplus 2}*}^\ren\pi_2\) is a Hecke operator on \(\bun_{\GL_6}\) and \(\pi_2\) has a Langlands-Shahidi type parameter we can by induction assume that we can compute this completely, in this case we see that the \(\varphi_2\times\varphi_3\) isotypic component vanishes, so the entire term vanishes.

    The more tricky situation happens when the modification is not reducible.
    Let us take \(L=\GL_3\times\GL_4\), \(\varphi=\varphi_1\times\varphi_2\) as above and \(\chi=(2,2)\).
    We write 
    \begin{equation*}
        \oo((2,2))*i_{1*}^\ren=\oo((0,1))*\oo((2,1))*i_{1*}
    \end{equation*}
    and by induction are left with understanding \(\oo((0,1))*i_{b_{(2,1),L}*}^\ren\).
    We do the same analysis as above, however, we now also have a non-reducible modification, see \cref{fig 3}.
    \begin{figure}[h]
        \begin{tikzpicture}
            \draw (0,0) -- (3,2) -- (7,4);
            \draw[orange] (3,2) -- (7,3);
            \draw[red] (3,2) -- (4,3);
            \draw[dashed,blue] (4,3) -- (7,4);
            \draw[dashed] (0,0) -- (4,3);
        \end{tikzpicture}
        \caption{\label{fig 3}Analysis of a modification of \(b_{(2,1),L}\) as above. Observe that the red slope is bigger than the biggest black slope, and the blue dashed slope is bigger than the orange one. The dashed line is \(b_{(3,1),L}\), and this is non-reduciblerelative to \(b_{(2,1),L}\) and we see that it is neither Hodge-Newton nor \(\omega\)-Hodge-Newton reducible.}
    \end{figure}
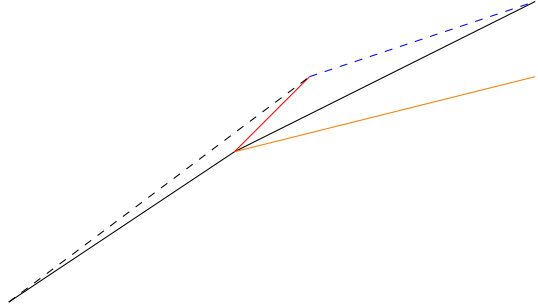
    We want to show that \(\func{b_{(2,1),L}}{b_{(1,3),L}}{(0,1)}{\varphi}\pi\) vanishes for \(\pi\in\D(D_{2/3}^\times\times D_{1/4}^\times)^\wedge_{\varphi_1\times\varphi_2}\).
    We see that no application of (\(\omega\)-)Hodge-Newton reducibilty can help us as above.
    Observe however for such a situation to occur that the blue line is necessarily shorter than the orange line.
    It follows that this situation does not occur for those \(\chi\) such that \(\chi(1)/3\leq\chi(2)/4\).
    In particular we can say this does not occur for \(\chi=(1,3)\).
    In this case, by induction we may compute that \(\oo((1,3),\varphi)*i_{1*}^\ren\) is always a \(*\)-extension from \(b_{(1,3),L}\).
    We want to apply \cref{lem: P true up to finite set} to \(S=\{b_{(1,3),L}\}\).
    Specifically, if \(\func{b_{(2,1),L}}{b_{(1,3),L}}{(0,1)}{\varphi}\pi\) would not vanish we would obtain non-zero representations \(\pi,\rho\in \D(G(E))\) and a non-trivial morphism 
    \begin{equation*}
        \oo((1,3),\varphi)*i_{1*}^\ren\rho\to\oo((2,2),\varphi)*i_{1*}^\ren\pi.
    \end{equation*}
    This would yield a non-trivial morphism 
    \begin{equation*}
        \oo((0,3),\varphi)*i_{1*}^\ren\pi\to\oo((1,2),\varphi)*i_{1*}^\ren\rho.
    \end{equation*}
    By induction the left hand side is a \(*\)-extension from \(b_{(0,3),L}\) and the right hand side is a \(*\)-extension \(b_{(1,2),L}\), however this is impossible by considering the semi-orthogonal decomposition and considering the closure relations between strata.
\end{rem}

	\section{Proof of the conjecture}\label{sec: proof}
\begin{lem}\label{lem: decomposition D(Bun_G) at Langlands-Shahidi type parameter}
    Let \(\varphi\) be a parameter of Langlands-Shahidi type with cuspidal support \(\widehat{L}\) with coefficients in an algebraically closed \(\bbZ_\ell\)-field \(k\).
    We have a direct sum decomposition
    \begin{equation*}
        (\D(\bun_G,k)^\wedge_{\varphi})^\omega\simeq\bigoplus_{\chi\in X^*(S_{\varphi})}(\D(\bun_G,k)^\wedge_{\varphi})^\omega_\chi.
    \end{equation*}
    Here \((\D(\bun_G,k)^\wedge_{\varphi})^\omega_\chi=(\D(\bun_G^b,k)^\wedge_{\eta^\alg_{G,b_{\chi,L}}\circ\varphi^{w_\chi}})^\omega_\chi\) (embedded via \(i_{b!}\)), where \((b_{\chi,L},w_{\chi})\) is the image of \(\chi\) under 
    \begin{equation*}
        X^*(S_{\varphi})=X^*(Z(\widehat{L}))\cong B(L)_{\mathrm{basic}}\subset B(G)\times W_G
    \end{equation*}
    constructed in \cite[Lemma 4.23.]{imaihamann}, where we identify $W_{L,b}$ with a subset of $W_G$ as explained before \cite[Lemma 4.20.]{imaihamann}.
\end{lem}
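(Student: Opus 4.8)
The plan is to realise the decomposition using the Hecke/spectral action of the line bundles $\oo(\chi,\varphi)$, whose $*$- and $!$-stalks are now completely understood by \cref{thm: explict hecke ! stalk} and \cref{cor: explict hecke * stalk}, as the ``matrix coefficients'' linking the factors. First I would set up the candidate factors. Since $S_\varphi=Z(\widehat L)$ is a torus (\cref{lem: residual gerbes langlands-shahidi}) we have $X^*(S_\varphi)=\mathrm{Irr}(S_\varphi)$, and the line bundles $\oo(\chi,\varphi)=f^*\chi$ of \cref{lem: isotypic decomposition Langlands-Shahidi type} are tensor-invertible objects of $\perf(p^{-1}(\LSt^{\wedge}_\varphi))$ with $\oo(0,\varphi)=\oo$ and $\oo(\chi,\varphi)\otimes\oo(\chi',\varphi)\cong\oo(\chi+\chi',\varphi)$; since $\D(\bun_G,k)^\wedge_\varphi$ carries the spectral action of $\perf(p^{-1}(\LSt^{\wedge}_\varphi))$, the functors $\oo(\chi,\varphi)*-$ are compactness-preserving, mutually inverse-invertible endofunctors of $\D(\bun_G,k)^\wedge_\varphi$ with $\oo(\chi,\varphi)*\oo(\chi',\varphi)*-\simeq\oo(\chi+\chi',\varphi)*-$. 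I would then define $(\D(\bun_G,k)^\wedge_\varphi)^\omega_\chi$ to be the essential image of $\oo(\chi,\varphi)*-$ applied to $(\D(\bun_G^1,k)^\wedge_\varphi)^\omega=(\D(G(E),k)^\wedge_{\eta_{G,1}\circ\varphi})^\omega$. By \cref{cor: explict hecke * stalk} with $b=1$, the functor $i_{b'}^{\ren*}\oo(\chi,\varphi)*i_{1!}^{\ren}$ vanishes for $b'\neq b_{\chi,L}$ and is an equivalence onto $\D(G_{b_{\chi,L}}(E),k)^\wedge_{\eta^\alg_{G,b_{\chi,L}}\circ\varphi^{w_\chi}}$ for $b'=b_{\chi,L}$; joint conservativity of the $i_{b'}^{\ren*}$ then forces $\oo(\chi,\varphi)*i_{1!}^{\ren}\pi$ to be supported on the single stratum $\bun_G^{b_{\chi,L}}$, hence to equal $i_{b_{\chi,L}!}^{\ren}\,i_{b_{\chi,L}}^{\ren*}\oo(\chi,\varphi)*i_{1!}^{\ren}\pi$. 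Thus $\oo(\chi,\varphi)*-$ restricts to an equivalence $(\D(\bun_G^1,k)^\wedge_\varphi)^\omega\simeq i_{b_{\chi,L}!}(\D(\bun_G^{b_{\chi,L}},k)^\wedge_{\eta^\alg_{G,b_{\chi,L}}\circ\varphi^{w_\chi}})^\omega$, which is exactly the category on the right-hand side of the lemma.

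Next I would prove that the $(\D(\bun_G,k)^\wedge_\varphi)^\omega_\chi$ generate $(\D(\bun_G,k)^\wedge_\varphi)^\omega$. By \cref{lem: compact objects in localization compact and ULA} together with \cref{cor: localizing D(Bun_G) at a parameter}, every compact object admits a finite filtration whose graded pieces are of the form $i_{b!}^{\ren}\pi$ (up to shift and twist), with $\pi$ an irreducible $G_b(E)$-representation whose Fargues--Scholze parameter is $\varphi$; by \cref{lem: support sheaf with L-parameter of Langlands-Shahidi type} only $b\in B(G)_L$ occur. Such a $\pi$ is, by the Langlands--Shahidi condition, an irreducible parabolic induction of a supercuspidal representation, and its cuspidal support is, by the Bernstein decomposition for modular coefficients (\cite{secherre-stevens}) and compatibility of Fargues--Scholze parameters with parabolic induction, of the form $(\widehat L^{w},\varphi^{w})$ with $\widehat L^{w}\subset\widehat{G_b}$. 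The set of such pairs $(b,w)$ is precisely the image of $X^*(Z(\widehat L))\hookrightarrow B(G)\times W_G$, $\chi\mapsto(b_{\chi,L},w_\chi)$, by \cite[Lemma 4.23]{imaihamann} and the discussion preceding \cite[Lemma 4.20]{imaihamann}; so $i_{b!}^{\ren}\pi\in(\D(\bun_G,k)^\wedge_\varphi)^\omega_\chi$ for a (unique) $\chi$ via the equivalence of the previous paragraph. Hence every graded piece, and so every compact object, is built from the proposed factors.

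Then I would check mutual orthogonality. For $\chi\neq\chi'$ and $\pi,\rho\in(\D(\bun_G^1,k)^\wedge_\varphi)^\omega$, invertibility of the $\oo(\chi,\varphi)*-$ gives
\begin{equation*}
\Hom\!\left(\oo(\chi,\varphi)*i_{1!}^{\ren}\pi,\ \oo(\chi',\varphi)*i_{1!}^{\ren}\rho\right)\cong\Hom\!\left(i_{1!}^{\ren}\pi,\ \oo(\chi'-\chi,\varphi)*i_{1!}^{\ren}\rho\right),
\end{equation*}
and by Step~1 the term $\oo(\chi'-\chi,\varphi)*i_{1!}^{\ren}\rho$ is supported on $\bun_G^{b_{\chi'-\chi,L}}$. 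Since $b_{\psi,L}=\oo(\psi(1)|n_1)\oplus\dots\oplus\oo(\psi(r)|n_r)$ is the trivial bundle only for $\psi=0$, we have $b_{\chi'-\chi,L}\neq 1$; as $\bun_G^1$ is open and $i_{1!}^{\ren}$ is left adjoint to $i_1^{\ren*}$, which kills anything supported off $\bun_G^1$, this $\Hom$ vanishes. The symmetric computation gives the reverse vanishing. Orthogonality forces all the two-step filtrations above to split (the extension class lies in a vanishing $\Hom$), so a compact object decomposes as a finite direct sum of objects lying in the individual factors; combined with Step~1 this yields the asserted decomposition $(\D(\bun_G,k)^\wedge_\varphi)^\omega\simeq\bigoplus_{\chi}(\D(\bun_G,k)^\wedge_\varphi)^\omega_\chi$ with the factors identified as in the statement.

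I expect the main obstacle to be the bookkeeping in the generation step: namely, matching the Bernstein blocks of $\D(G_b(E),k)$ that contain the parameter $\varphi$ with the combinatorial index set $X^*(Z(\widehat L))$ through the parametrisation $(b_{\chi,L},w_\chi)$ of \cite{imaihamann}, and confirming that every such block is actually reached (equivalently, that the cuspidal supports $(\widehat L^{w_\chi},\varphi^{w_\chi})$ with $b_{\chi,L}=b$ exhaust those of irreducible $G_b(E)$-representations with parameter $\varphi$). Once \cref{cor: explict hecke * stalk} is available, the rest is formal.
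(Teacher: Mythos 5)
Your proof is correct and follows essentially the same strategy as the paper: identify the factors $(\D(\bun_G,k)^\wedge_\varphi)^\omega_\chi$ as the images of $(\D(\bun_G^1,k)^\wedge_\varphi)^\omega$ under $\oo(\chi,\varphi)*-$ via \cref{cor: explict hecke * stalk}, show generation from the support/cuspidal-support considerations, and show orthogonality by twisting with invertible Hecke operators and using the semi-orthogonal decomposition. The one genuine (minor) difference is in the orthogonality step: the paper splits into two cases — $b_{\chi,L}=b_{\chi',L}$ with $w_\chi\neq w_{\chi'}$ (handled ``by consideration of $L$-parameters'') versus $b_{\chi,L}\neq b_{\chi',L}$ (handled by twisting down to $\chi'=0$ and invoking the semi-orthogonal decomposition) — whereas you handle both uniformly by twisting to $\Hom(i_{1!}^\ren\pi,\oo(\chi'-\chi,\varphi)*i_{1!}^\ren\rho)$ and observing that $b_{\chi'-\chi,L}\neq 1$ whenever $\chi\neq\chi'$, so the Hom dies against the open stratum. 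Your version is cleaner in that it shows the first case of the paper's argument is subsumed by the second after twisting. Your generation step is also spelled out in more detail (explicit filtration, Bernstein blocks, matching cuspidal supports with the pairs $(b_{\chi,L},w_\chi)$ of \cite[Lemma 4.23]{imaihamann}), which is really what the paper's terse appeal to \cref{lem: support sheaf with L-parameter of Langlands-Shahidi type} is implicitly doing. No gaps.
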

\begin{proof}
    By \cref{lem: support sheaf with L-parameter of Langlands-Shahidi type}, we see that \((\D(\bun_G,k)^\wedge_{\varphi})^\omega\) is generated by \((\D(\bun_G,k)^\wedge_{\varphi})^\omega_\chi\), so we need to show there are no non-trivial morphisms between objects in these categories for different \(\chi\).
    Let \(\pi\in(\D(\bun_G,k)^\wedge_{\varphi})^\omega_\chi\) and \(\pi'\in(\D(\bun_G,k)^\wedge_{\varphi})^\omega_{\chi'}\) for \(\chi\neq\chi'\).
    We want to show that \(\Hom(\pi,\pi')=0\).
    If \(b_{\chi,L}=b_{\chi',L}\) (so then \(w_{\chi}\neq w_{\chi'}\)), this is clear by consideration of \(L\)-parameters.
    Consider now the case \(b_{\chi,L}\neq b_{\chi',L}\).
    Using \cref{cor: explict hecke * stalk}, we can apply \(\oo(\chi'^{-1})*-\) to assume that \(\chi'=0\) and that \(b_{\chi,L}\neq 1\).
    In this case, \(\Hom(\pi,\pi')=0\) due to the semi-orthogonal decomposition on \(\D(\bun_G)\).
\end{proof}
\begin{thm}\label{thm: categorical equivalence}
    Let \(\Lambda\) be a \(\bbZ_\ell[\sqrt{q},\mu_{p^\infty}]\)-algebra.
    Let \(\LSt\) denote the open subscheme consisting of parameters that are of Langlands-Shahidi type with cuspidal support \(\widehat{L}\).
    Let \(p\from\locsys_{\widehat{G}}\to\locsys_{\widehat{G}}^\coarse\) be the natural map.
    We have a \(\Ind\perf(p^{-1}(\LSt))\)-linear equivalence 
    \begin{equation*}
        \Ind\coh_{\nilp}^\qc(p^{-1}(\LSt))\simeq \D(\bun_G)\otimes_{\Ind\perf(\locsys_{\widehat{G}}^\coarse)}\Ind\perf(\LSt)
    \end{equation*}
    sending \(\oo\) to \(\mathcal{W}^\LSt\).
\end{thm}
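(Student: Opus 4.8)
The plan is to deduce this from the general equivalence criterion \cref{lem: general equivalence criterion}, applied with $V = \LSt$. So the whole argument amounts to checking, one by one, that the hypotheses of that lemma hold for the locus of parameters of Langlands-Shahidi type with cuspidal support $\widehat{L}$.

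First I would dispose of the two ``geometric'' hypotheses. The condition $\nilp|_{p^{-1}(\LSt)} = \{0\}$ is \cref{lem: nilpotent singular support Langlands-Shahidi}; in particular $\coh_{\nilp}^\qc(p^{-1}(\LSt)) = \perf^\qc(p^{-1}(\LSt))$ by \cite[Theorem VIII.2.9]{geometrization}. That the stabilizer $S_\varphi$ of any $\varphi \in \LSt(k)$ is a torus is \cref{lem: residual gerbes langlands-shahidi}, which identifies $S_\varphi$ with $Z(\widehat{L})$. For the description $p^{-1}(\LSt_k) \times_{\LSt_k} (\LSt_k)^\wedge_\varphi \cong N^\wedge_0 / S_\varphi$ with $N$ an affine scheme carrying a trivial $S_\varphi$-action, I would argue exactly as in the proof of \cref{thm: categorical equivalence irreducible}: by \cref{cor: localize at langlands-shahidi fiber product} this formal completion equals $(k\times_{\bbZ_\ell}\locsys_{\widehat{G}})^\wedge_\varphi$, which by \cref{cor: formal completion langlands-shahidi} is $(k\times_{\bbZ_\ell}\locsys_{\widehat{L}})^\wedge_{\varphi_{\widehat{L}}}$; since $\varphi_{\widehat{L}}$ is an irreducible parameter for $\widehat{L}$, \cref{lem: locsys irred is gerbe} exhibits $\locsys_{\widehat{L}}^\irred \to \locsys_{\widehat{L}}^{\irred,\coarse}$ as a gerbe banded by $Z(\widehat{L})$, which trivializes over the formal completion of the (affine) coarse space at $\varphi_{\widehat{L}}$, giving $N^\wedge_0 \times BZ(\widehat{L})$ with $N$ the formal completion of $\locsys_{\widehat{L}}^{\irred,\coarse}$ at $\varphi_{\widehat{L}}$. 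This simultaneously produces the line bundles $\oo(\chi,\varphi)$ of \cref{lem: isotypic decomposition Langlands-Shahidi type}.

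Next come the two ``automorphic'' hypotheses. The direct sum decomposition $(\D(\bun_G,k)^\wedge_\varphi)^\omega \simeq \bigoplus_{\chi \in X^*(S_\varphi)} (\D(\bun_G,k)^\wedge_\varphi)^\omega_\chi$ with $(\D(\bun_G,k)^\wedge_\varphi)^\omega_0 \subset \D(\bun_G^1,k)^\wedge_\varphi$ is \cref{lem: decomposition D(Bun_G) at Langlands-Shahidi type parameter}. That $\oo(\chi,\varphi)*-$ restricts to an equivalence $(\D(\bun_G,k)^\wedge_\varphi)^\omega_0 \to (\D(\bun_G,k)^\wedge_\varphi)^\omega_\chi$ is \cref{cor: explict hecke * stalk}, i.e.\ \cref{thm: explict hecke ! stalk} applied to $i_{b'}^{\ren*}\oo(\chi,\varphi)*i_{1!}^\ren$: its parts (1)--(3) say that this functor vanishes unless $b' = b_{\chi,L}$ and in that case is an equivalence onto $\D(G_{b_{\chi,L}}(E))^\wedge_{\eta_{G,b_{\chi,L}}\circ\varphi^{w_\chi}}$, which by the computation of the $\chi$-component in \cref{lem: decomposition D(Bun_G) at Langlands-Shahidi type parameter} is precisely $(\D(\bun_G,k)^\wedge_\varphi)^\omega_\chi$. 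Finally, the remaining hypothesis of \cref{lem: general equivalence criterion} — genericity of every Schur-irreducible $\pi \in \D(G(E))$ with $L$-parameter $\varphi$ — is \cref{lem: Langlands-Shahidi type generic}, since such a $\pi$ is an irreducible smooth representation whose Fargues--Scholze parameter is of Langlands-Shahidi type. Feeding all of this into \cref{lem: general equivalence criterion} yields the $\Ind\perf(p^{-1}(\LSt))$-linear equivalence $\Ind\coh_{\nilp}^\qc(p^{-1}(\LSt)) \simeq \D(\bun_G)\otimes_{\Ind\perf(\locsys_{\widehat{G}}^\coarse)}\Ind\perf(\LSt)$ sending $\oo$ to $\mathcal{W}^\LSt$, which is the assertion.

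Thus, with the reduction package of Section~\ref{sec: reduction} and the Hecke stalk theorem \cref{thm: explict hecke ! stalk} of Section~\ref{sec: computation Hecke operators} in hand, this theorem is purely a matter of matching up hypotheses; the genuinely hard work is all upstream — in \cref{thm: explict hecke ! stalk} and in the identification of the formal fiber $\D(\bun_G,k)^\wedge_\varphi$ together with its $X^*(S_\varphi)$-grading. I would also add a short remark that, since every parameter of Langlands-Shahidi type for $\GL_n$ has a well-defined cuspidal support, the locus of \emph{all} such parameters is the disjoint union over conjugacy classes of Levis of the loci treated here, so the form of the theorem stated in the introduction follows by assembling the equivalences over the corresponding open-and-closed pieces.
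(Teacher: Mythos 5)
Your proposal is correct and follows exactly the same route as the paper: check the hypotheses of \cref{lem: general equivalence criterion} one by one, citing \cref{lem: nilpotent singular support Langlands-Shahidi}, \cref{cor: localize at langlands-shahidi fiber product}, \cref{lem: decomposition D(Bun_G) at Langlands-Shahidi type parameter} together with \cref{cor: explict hecke * stalk}, and \cref{lem: Langlands-Shahidi type generic}. You actually spell out the gerbe-trivialization step (via \cref{cor: formal completion langlands-shahidi} and \cref{lem: locsys irred is gerbe}) in more detail than the paper's terse citation of \cref{cor: localize at langlands-shahidi fiber product}, which is a helpful clarification rather than a different argument.
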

\begin{proof}
    We only need to check the conditions of \cref{lem: general equivalence criterion}.
    That \(\nilp|_{p^{-1}(\LSt)}=\{0\}\) is \cref{lem: nilpotent singular support Langlands-Shahidi}.
    That \(p^{-1}(\LSt)\times_{\LSt}\LSt^{\wedge}_{\varphi}\simeq N^\wedge_{0}/S_{\varphi}\), where \(N\) is an affine scheme with \(0\in N\) and it admits a trivial \(S_{\varphi}\)-action for any \(\varphi\in U(k)\) for \(k\) an algebraically closed \(\Lambda\)-field is \cref{cor: localize at langlands-shahidi fiber product}.
    The direct sum decomposition 
    \begin{equation*}
        (\D(\bun_G,k)^\wedge_{\varphi})^\omega\simeq\bigoplus_{\chi\in X^*(S_{\varphi})}(\D(\bun_G,k)^\wedge_{\varphi})^\omega_\chi
    \end{equation*}
    with the required properties is \cref{cor: explict hecke * stalk} and \cref{lem: decomposition D(Bun_G) at Langlands-Shahidi type parameter}.
    The genericity statement is \cref{lem: Langlands-Shahidi type generic}.
\end{proof}
\begin{rem}\label{rem: main theorem works over zl}
    Using \cite[Proposition 5.2.]{locallanglandsinfamilies2} we can descend \(\mathcal{W}\) to \(\bbZ_\ell\).
    Together with \cref{rem: remove sqrt q} it follows that one can formulate the categorical equivalence already over \(\bbZ_\ell\).
    One can check that \cref{lem: general equivalence criterion} in fact already works over a \(\bbZ_\ell\)-algebra, so the previous theorem already holds over any \(\bbZ_\ell\)-algebra. 
\end{rem}

	\section{t-exactness}\label{sec: t-exactness}
Let \(k\) be an algebraically closed \(\bbZ_\ell\)-field.
\begin{lem}\label{lem: t-exact Langlands-Shahidi type}
    Let \(\varphi\) be a parameter of Langlands-Shahidi type valued in \(k\).
    Then \(\oo(\chi,\varphi)*-\) is \(t\)-exact on \(\D(\bun_G,k)^\wedge_\varphi\).
\end{lem}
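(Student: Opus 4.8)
The plan is to reduce everything to the decomposition of \cref{lem: decomposition D(Bun_G) at Langlands-Shahidi type parameter} and to the $t$-exactness already established in \cref{cor: explict hecke * stalk}. Since $\D(\bun_G,k)^\wedge_\varphi$ is the direct sum of the subcategories $(\D(\bun_G,k)^\wedge_\varphi)_\chi$ and any $t$-structure on a stable category restricts to each direct summand, the perverse $t$-structure is compatible with this decomposition; by \cref{cor: explict hecke * stalk} the functor $\oo(\chi,\varphi)*-$ sends the $\xi$-summand into the $\xi\chi$-summand, so it is enough to show each
\[
\oo(\chi,\varphi)*-\from(\D(\bun_G,k)^\wedge_\varphi)_\xi\to(\D(\bun_G,k)^\wedge_\varphi)_{\xi\chi}
\]
is $t$-exact. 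Writing $b=b_{\xi,L}$ and $b'=b_{\xi\chi,L}$, the two summands are copies of $\D(G_b(E))^\wedge_{\eta_{G,b}\circ\varphi^{w_\xi}}$ and $\D(G_{b'}(E))^\wedge_{\eta_{G,b'}\circ\varphi^{w_{\xi\chi}}}$ embedded via $i_{b!}^\ren$ and $i_{b'!}^\ren$; and since, by the vanishing part of \cref{cor: explict hecke * stalk}, $\oo(\chi,\varphi)*i_{b!}^\ren$ has trivial $*$-stalks at all strata except $b'$, it equals $i_{b'!}^\ren\bigl(i_{b'}^{\ren*}\oo(\chi,\varphi)*i_{b!}^\ren\bigr)$. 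Thus the summand functor is $i_{b'!}^\ren\circ\bigl(i_{b'}^{\ren*}\oo(\chi,\varphi)*i_{b!}^\ren\bigr)\circ(i_{b!}^\ren)^{-1}$, whose middle term is $t$-exact by \cref{cor: explict hecke * stalk}, and we are left with showing that the embeddings $i_{b!}^\ren$ are $t$-exact onto the relevant summands.

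Since $i_{b!}^\ren$ is right and $i_{b*}^\ren$ left $t$-exact, this amounts to $i_{b!}^\ren A\cong i_{b*}^\ren A$ for $A\in\D(G_b(E))^\wedge_{\eta_{G,b}\circ\varphi^{w}}$. The cofibre $\cofib(i_{b!}^\ren A\to i_{b*}^\ren A)$ is supported on strata $b''<b$, with stalks lying in $\D(G_{b''}(E))^\wedge_\varphi$ by $\speccenter(G)$-linearity, and I would kill these with the same $L$-parameter and semi-orthogonal-decomposition bookkeeping used in the proof of \cref{lem: decomposition D(Bun_G) at Langlands-Shahidi type parameter} together with the vanishings of \cref{cor: explict hecke * stalk}, using that compact objects of $\D(G_b(E))^\wedge_\varphi$ are admissible, so that $i_{b!}^\ren$ of them is ULA (\cref{lem: compact objects in localization compact and ULA}). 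Alternatively, one can run the projective-generator argument of \cref{cor: t-exact irreducible case}: $(\D(\bun_G,k)^\wedge_\varphi)_0$ is generated by $\mathcal{W}_\varphi$, which corresponds to the structure sheaf under \cref{thm: categorical equivalence} and is therefore a compact projective generator lying in the heart; hence $\oo(\xi,\varphi)*\mathcal{W}_\varphi$ is a compact projective generator of $(\D(\bun_G,k)^\wedge_\varphi)_\xi$, and since the spectral action is monoidal with $\oo(\chi,\varphi)\otimes\oo(\xi,\varphi)\cong\oo(\chi\xi,\varphi)$, the functor $\oo(\chi,\varphi)*-$ carries $\oo(\xi,\varphi)*\mathcal{W}_\varphi$ to $\oo(\chi\xi,\varphi)*\mathcal{W}_\varphi$. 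An equivalence of blocks of module categories taking a projective generator in the heart to one in the heart is $t$-exact, so it suffices to show each $\oo(\xi,\varphi)*\mathcal{W}_\varphi$ is perverse.

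This last point is the Langlands-Shahidi analogue of \cref{lem: hecke operator on whittaker}, and I would prove it the same way. In characteristic $0$ it follows from perversity of $\mathcal{W}$ and its Hecke translates \cite[Theorem 1.8.]{t-exact}, together with the fact that — up to a twist by the invertible, $t$-exact operator $T_{\det}\cong q^*$ (\cref{cor: explict hecke * stalk}) — $\oo(\xi,\varphi)*-$ is a direct summand of an iterated operator $T_\std^{|\xi|}$ on $\D(\bun_G,k)^\wedge_\varphi$ (\cref{lem: isotypic decomposition Langlands-Shahidi type}, \cref{cor: hom galois representation}). In characteristic $\ell$ one lifts $\varphi$, $\mathcal{W}$ and $\oo(\xi,\varphi)$ to a discrete valuation ring with residue field $k$, checks that $\oo(\xi,\varphi)*\mathcal{W}_\varphi$ is concentrated in a single perverse degree — its endomorphism ring agreeing with that of the structure sheaf by \cref{thm: categorical equivalence}, via a lift-to-the-fibre argument as in \cref{lem: lift to fiber} — and reads off the degree from the characteristic-$0$ fibre, which is perverse by the previous case.

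The main obstacle is exactly this perversity/cleanness input: that $i_{b!}^\ren$ and $i_{b*}^\ren$ agree on the Langlands-Shahidi completions, equivalently that the translates $\oo(\xi,\varphi)*\mathcal{W}_\varphi$ are perverse. Unlike the irreducible case (\cref{cor: t-exact irreducible case}), where the strata involved are open and the supercuspidal input makes everything automatically clean, here the strata $b_{\xi,L}$ need not be open in the support, so the cleanness has to be imported from the characteristic-$0$ result of \cite{t-exact} via the lifting argument above, or extracted from the $L$-parameter analysis of \cref{sec: computation Hecke operators}; everything else in the argument is formal.
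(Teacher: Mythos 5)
The paper's proof is one line — "immediate from the $t$-exactness claim in \cref{thm: explict hecke ! stalk} and \cref{cor: explict hecke * stalk}" — and you have correctly located the key reduction: given \cref{lem: decomposition D(Bun_G) at Langlands-Shahidi type parameter} and the $t$-exactness of the stalk functors, what is left is the cleanness $i_{b!}^\ren A\cong i_{b*}^\ren A$ on the Langlands--Shahidi completions. Where the proposal goes astray is in treating this cleanness as a serious new input that has to be "imported from characteristic $0$" or re-derived by a projective-generator-plus-lifting argument. In fact it is a purely formal consequence of the direct sum decomposition already in hand, as the paper itself observes in the proof of \cref{thm: splitting semi-orthogonal}. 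Concretely: $i_{b*}^\ren A$ still lies in $\D(\bun_G,k)^\wedge_\varphi$ by $\speccenter(G)$-linearity, hence decomposes as $\bigoplus_{b'}i_{b'!}^\ren\pi_{b'}$; but for $b'\neq b$ the inclusion $i_{b'!}^\ren\pi_{b'}\hookrightarrow i_{b*}^\ren A$ is classified by $\Hom(\pi_{b'},i_{b'}^{\ren!}i_{b*}^\ren A)=0$, so $\pi_{b'}=0$ and $i_{b*}^\ren A=i_{b!}^\ren A$. No reference to $\mathcal{W}_\varphi$, to \cref{thm: categorical equivalence}, or to a discrete valuation ring lift is needed at this point.

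Once cleanness is recognized as formal, the statement is genuinely immediate: write $A=\bigoplus_b i_{b!}^\ren\pi_b=\bigoplus_b i_{b*}^\ren\pi_b$ with $\pi_b=i_b^{\ren*}A=i_b^{\ren!}A$, so $A$ is (co)connective iff every $\pi_b$ is; and on each summand $\oo(\chi,\varphi)*-$ is computed by the single non-vanishing stalk functor $\func{b}{b'}{\chi}{\varphi}$ (equivalently by $i_{b'}^{\ren*}\oo(\chi,\varphi)*i_{b!}^\ren$), which is $t$-exact by the unnamed corollary following \cref{thm: explict hecke ! stalk} together with \cref{cor: explict hecke * stalk}. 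Note that this already exploits both $!$- and $*$-versions, whereas your first paragraph only invokes \cref{cor: explict hecke * stalk}. The lifting-to-characteristic-$0$ machinery you sketch is what the paper actually uses, but one level lower: it sits inside \cref{cor: t-exact irreducible case}, which is the $r=1$ base case of the induction proving $t$-exactness of $\func{b}{b'}{\chi}{\varphi}$, so by the time one reaches the present lemma that input has already been paid for and should not be re-run. Your first sketched route (cofibre bookkeeping via the semi-orthogonal decomposition) is, modulo this realization, the correct one; the second (projective-generator plus lifting) would work but duplicates Section 5 and is not the intended argument. Also be a little careful with "any $t$-structure on a stable category restricts to each direct summand": this is not automatic for an arbitrary $t$-structure, but it does hold here because the perverse $t$-structure is defined stratumwise and the summands are single strata, once $i_{b!}^\ren=i_{b*}^\ren$ is known.
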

\begin{proof}
    This is immediate from the \(t\)-exactness claim in \cref{thm: explict hecke ! stalk} and \cref{cor: explict hecke * stalk}.
\end{proof}
\begin{defn}
    For any open substack \(U\subset\locsys_{\widehat{G}}\) we define a \(t\)-structure \(\ind\perf^\qc(U)\) by imposing that the connective part be generated by \(f^*V\) with \(f\from\locsys_{\widehat{G}}\to B\widehat{G}\) being the canonical map and \(V\in\perf(B\widehat{G})\cap\qcoh(B\widehat{G})_{\geq 0}\).
\end{defn}
\begin{thm}\label{thm: t-exact}
    Let \(\LSt\) denote the open subscheme consisting of parameters that are of Langlands-Shahidi type with cuspidal support \(\widehat{L}\), and \(p^{-1}(\LSt)\) the corresponding open stack of \(\locsys_{\widehat{G}}\).
    The \(\perf(\LSt)\)-linear equivalence 
    \begin{equation*}
        \bbL_G^\LSt\from\ind\coh_{\nilp}^\qc(p^{-1}(\LSt))\simeq \D^{\LSt}(\bun_G,k)
    \end{equation*}
    obtained by \(\ind\)-extending \cref{thm: categorical equivalence} is \(t\)-exact.
    The right-hand side has the \(t\)-structure given by equipping \(\D(\bun_G,k)\) with the perverse \(t\)-structure and equipping \(\ind\coh_{\nilp}^\qc(p^{-1}(\LSt))\) with the \(t\)-structure coming via transfer from \(\ind\coh_{\nilp}^\qc(p^{-1}(\LSt))\simeq\ind\perf^\qc(p^{-1}(\LSt))\).
\end{thm}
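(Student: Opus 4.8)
The plan is to reduce the statement, parameter by parameter, to the $t$-exactness of the ``generic'' piece of the equivalence — the one mediated by the Gelfand--Graev representation $\mathcal{W}$ — which will be immediate from the compact projectivity of $\mathcal{W}^{P^e}$ (\cref{lem: whittaker compact projective}) together with the $t$-exactness of the Hecke functors $\oo(\chi,\varphi)*-$ (\cref{lem: t-exact Langlands-Shahidi type}). The only genuine work is the $t$-structure bookkeeping: checking that the perverse $t$-structure localises compatibly along strata and formal completions, and that the two product decompositions in play are compatible with the relevant $t$-structures; this is the step I expect to be the main obstacle.

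\textbf{Step 1: reduction to formal fibres.} The perverse $t$-structure on $\D(\bun_G,\Lambda)$ restricts to $\D^\LSt(\bun_G,k)$ and, for each parameter $\varphi$ of Langlands--Shahidi type valued in an algebraically closed field $k$ over $\Lambda$, further restricts to $\D(\bun_G,k)^\wedge_\varphi$: by (the evident analogue of) \cref{cor: localizing D(Bun_G) at a parameter} these are full subcategories cut out by conditions on the $L$-parameters of the irreducible subquotients of the cohomology sheaves, and perverse truncation — being computed stratum by stratum through the renormalised functors $i_{b!}^\ren$ (right $t$-exact) and $i_{b*}^\ren$ (left $t$-exact) of \cite[Proposition 1.2.1]{beijing-notes} — introduces no new composition factors. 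Dually, the $t$-structure on $\ind\perf^\qc(p^{-1}(\LSt))$ restricts to each $\ind\perf^\qc(p^{-1}(\LSt)^\wedge_\varphi)$. The completion functors $A\mapsto A^\wedge_\varphi$ are $t$-exact and jointly conservative as $\varphi$ ranges over closed points, so $\bbL_G^\LSt$ — which preserves colimits, hence is $t$-exact iff it is both left and right $t$-exact — is $t$-exact as soon as every $\bbL_G^\varphi$ is.

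\textbf{Step 2: reduction to the $\chi=0$ summand.} Fix $\varphi$ with cuspidal support $(\widehat L,\varphi_{\widehat L})$, so $S_\varphi=Z(\widehat L)$. By \cref{lem: nilpotent singular support Langlands-Shahidi} and \cite[Theorem VIII.2.9]{geometrization} we have $\ind\coh^\qc_\nilp(p^{-1}(\LSt)^\wedge_\varphi)=\ind\perf^\qc(p^{-1}(\LSt)^\wedge_\varphi)$, and by \cref{cor: localize at langlands-shahidi fiber product} this is $\prod_{\chi\in X^*(Z(\widehat L))}\ind\perf^\qc(\LSt^\wedge_\varphi)$. Since $Z(\widehat L)$ is a torus, $f^*V$ for $V\in\perf(B\widehat G)$ connective decomposes on the completion as $\bigoplus_\chi V^{(\chi)}\otimes\oo$ with $V^{(\chi)}$ concentrated in the same degrees as $V$; as every character of $Z(\widehat L)$ occurs in a representation of $\widehat G=\GL_n$ (restrict tensor powers of $\std$ and $\std^\vee$), the $t$-structure on the left-hand side is the product of the standard $t$-structures on the factors. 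On the automorphic side, \cref{lem: decomposition D(Bun_G) at Langlands-Shahidi type parameter} gives $\D(\bun_G,k)^\wedge_\varphi\simeq\prod_\chi(\D(\bun_G,k)^\wedge_\varphi)_\chi$; each inclusion of a summand, being simultaneously the left and right adjoint of the projection, agrees with both $i_{b_{\chi,L}!}^\ren$ and $i_{b_{\chi,L}*}^\ren$ restricted (any object of a single summand is set-theoretically supported on the one stratum $\bun_G^{b_{\chi,L}}$, since otherwise the recollement triangle for a more special stratum would produce a nonzero map between distinct summands), hence is $t$-exact, and so are the projections; thus the $t$-structure on $\D(\bun_G,k)^\wedge_\varphi$ is the product of the standard $t$-structures transferred to the summands. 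Under these identifications $\bbL_G^\varphi$ sends the $\chi$-factor of the source to $(\D(\bun_G,k)^\wedge_\varphi)_\chi$ by $A\mapsto\oo(\chi,\varphi)*\bbL_G^{\varphi,0}(A)$ — using $\Ind\perf(p^{-1}(\LSt))$-linearity, $\bbL_G(\oo)=\mathcal{W}$, and the identification of the action of the line bundle $\oo(\chi,\varphi)$ with the corresponding Hecke functor — where $\bbL_G^{\varphi,0}\colon\ind\perf^\qc(\LSt^\wedge_\varphi)\to(\D(\bun_G,k)^\wedge_\varphi)_0=\D(\bun_G^1,k)^\wedge_\varphi$ denotes the $\chi=0$ component. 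Since $\oo(\chi,\varphi)*-$ is $t$-exact on $\D(\bun_G,k)^\wedge_\varphi$ by \cref{lem: t-exact Langlands-Shahidi type} (equivalently \cref{cor: explict hecke * stalk}) and restricts to a $t$-exact equivalence $(\D(\bun_G,k)^\wedge_\varphi)_0\simeq(\D(\bun_G,k)^\wedge_\varphi)_\chi$, it remains only to prove that $\bbL_G^{\varphi,0}$ is $t$-exact.

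\textbf{Step 3: the $\chi=0$ summand.} By the proof of \cref{lem: general equivalence criterion}, $\bbL_G^{\varphi,0}$ is the composite of the equivalence $\ind\perf^\qc(\LSt^\wedge_\varphi)\simeq(\Mod_{\End(\mathcal{W}^{P^e}_k)})^\wedge_\varphi$ induced by $\speccenter(G)\cong\End(\mathcal{W}^{P^e})$ (\cref{lem: generic local langlands in families}, with $P^e$ chosen small enough that $\varphi$ is trivial on $P^e$) with the equivalence $(\Mod_{\End(\mathcal{W}^{P^e}_k)})^\wedge_\varphi\simeq\D(\bun_G^1,k)^\wedge_\varphi$ given by $M\mapsto M\otimes^{\bbL}_{\End(\mathcal{W}^{P^e}_k)}\mathcal{W}^{P^e}_k$, whose inverse is $\Hom(\mathcal{W}^{P^e}_k,-)$. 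The first equivalence is $t$-exact because $\End(\mathcal{W}^{P^e})\cong\speccenter(G)$ is concentrated in degree $0$ (\cref{lem: whittaker compact projective}). The functor $\Hom(\mathcal{W}^{P^e}_k,-)$ is $t$-exact because $\mathcal{W}^{P^e}_k$ is compact projective in $\D(G(E),k)$ (again \cref{lem: whittaker compact projective}): projectivity makes $\Hom(\mathcal{W}^{P^e}_k,-)$ commute with taking cohomology of complexes, so it preserves both connectivity and coconnectivity. Hence the second equivalence, being its inverse, is $t$-exact, and therefore so is the composite $\bbL_G^{\varphi,0}$. This finishes the proof.

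\textbf{Remark on difficulty.} The substantial inputs — the $t$-exactness of $\oo(\chi,\varphi)*-$, including the characteristic-$0$-to-characteristic-$\ell$ reduction — are already packaged into \cref{thm: explict hecke ! stalk}, \cref{cor: explict hecke * stalk} and \cref{cor: t-exact irreducible case}, and the generic (Gelfand--Graev) input is essentially formal. The delicate part above is Step 2: pinning down that both product decompositions are compatible with the $t$-structures, which relies on the precise $t$-exactness properties of the renormalised strata functors on $\bun_G$ and on the splitting of the semi-orthogonal decomposition at Langlands--Shahidi parameters.
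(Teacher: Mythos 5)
Your proposal is correct and arrives at the same conclusion, but the route is genuinely different from the paper's. The paper's proof is much more compressed: it first observes that, by the definition of the $t$-structure on $\ind\perf^\qc(p^{-1}(\LSt))$ via the generators $f^*V$, the equivalence $\bbL_G^\LSt$ is $t$-exact if and only if every Hecke operator $T_V$ is $t$-exact on $\D^\LSt(\bun_G,k)$; it then uses the adjunction $T_V\dashv T_{V^\vee}$ together with $V=V^{\vee\vee}$ to reduce to checking mere right $t$-exactness; and finally it checks right $t$-exactness by passing through the product of formal fibres (where $T_V$ decomposes into the $\oo(\chi,\varphi)$-operators and \cref{lem: t-exact Langlands-Shahidi type} applies) using conservativity and right $t$-exactness of the localisation functors. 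Your argument bypasses the $T_V$-reformulation entirely: you reduce to formal fibres, decompose the equivalence $\bbL_G^\varphi$ into $\chi$-isotypic pieces using \cref{lem: decomposition D(Bun_G) at Langlands-Shahidi type parameter} and $t$-exactness of $\oo(\chi,\varphi)*-$, and then observe that the remaining $\chi=0$ piece is $t$-exact because it is governed by the compact projective object $\mathcal{W}^{P^e}_k$ via \cref{lem: whittaker compact projective}. What the paper's argument buys is that one never has to re-derive the product decomposition at the level of $t$-structures nor to identify the $t$-structure on the formal fibre of the spectral side; the $V\mapsto V^{\vee\vee}$ trick cleanly deduces two-sided exactness from one side. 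What your argument buys is that it makes explicit where the projectivity of the Gelfand--Graev representation enters (the paper only uses it implicitly, through \cref{lem: general equivalence criterion}) and it exhibits the $t$-structure comparison concretely on both sides of the equivalence — which also resolves more transparently the question of why the two connective parts \emph{coincide}, rather than merely why one is contained in the other. One small point to tighten: in Step 1 you assert that the completion functors are $t$-exact and jointly conservative; the paper only claims they are right $t$-exact and conservative, and compensates via the $V^{\vee\vee}$ trick. Your stronger $t$-exactness assertion does hold here (the perverse $t$-structure restricts to the formal fibre by your subquotient argument, and the fact that $!$- and $*$-pushforwards coincide on the fibre makes the inclusion both a left and right adjoint of the projection, so both are $t$-exact), but since it is doing real work, it deserves an explicit line of justification, as the analogous statement for a general torsion subcategory (e.g. $R\Gamma_{(x)}$ on $\D(k[x])$) fails.
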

\begin{proof}
    By definition of the \(t\)-structure on \(\ind\coh_{\nilp}^\qc(p^{-1}(\LSt))\) it suffices to check that Hecke operators \(T_V\) are \(t\)-exact on \(\mathcal{C}\defined\D^{\LSt}(\bun_G,k)\).
    In fact, it suffices to check that \(T_V\) are right exact for all \(V\in\rep(\widehat{G})\), since then, the adjoint \(T_{V^\vee}\) is left \(t\)-exact for all \(V\in\rep(\widehat{G})\) and \(V=V^{\vee\vee}\).
    For this, consider the following diagram:
    \begin{equation*}
        \begin{tikzcd}
            \mathcal{C} \arrow[r, "T_V"] \arrow[d]                                      & \mathcal{C} \arrow[d]                                      \\
            \prod_\varphi(\mathcal{C}\otimes_k k_{\varphi})^\wedge_\varphi \arrow[r, "T_V"] & \prod_\varphi(\mathcal{C}\otimes_k k_\varphi)^\wedge_\varphi
            \end{tikzcd}
    \end{equation*}
    where the product runs over some set of geometric points \(\varphi\) of \(\LSt\) covering all points of \(\LSt\) and \(k_\varphi\) is the field of definition of \(\varphi\).
    The vertical functor is the product of the localization functors and are right \(t\)-exact (being left adjoints of the \(t\)-exact inclusion functors) and the functor to the product is conservative (which is an easy consequence of \cref{thm: categorical equivalence}).
    The bottom arrow is \(t\)-exact by \cref{lem: t-exact Langlands-Shahidi type}.
\end{proof}
\begin{rem}\label{rem: t-exact any field and flat zl algebra}
    Arguing as in \cref{lem: reduction to algebraic closure} we see that base changing the coefficients \(k\) to its algebraic closure yields a conservative functor, which one can easily check to be \(t\)-exact.
    In particular one obtains the previous theorem for any \(\bbZ_\ell\)-field.
    Using the same argument as in \cite[Lemma 3.8.]{iwahori-whittaker-realization} we can then extend this to \(\bbZ_\ell\) and finally also extend this to any flat \(\bbZ_\ell\)-algebra \(\Lambda\), since in this case both \(\bbL_G^\LSt\otimes_{\bbZ_\ell}\Lambda\) and \((\bbL_G^\LSt)^{-1}\otimes_{\bbZ_\ell}\Lambda\) are right \(t\)-exact, but \((\bbL_G^\LSt)^{-1}\otimes_{\bbZ_\ell}\Lambda\) is the right adjoint of \(\bbL_G^\LSt\otimes_{\bbZ_\ell}\Lambda\) by rigidity of \(\Mod_\Lambda\), so that \(\bbL_G^\LSt\otimes_{\bbZ_\ell}\Lambda\) is also left \(t\)-exact.
\end{rem}
\begin{rem}
    It was necessary to \(\ind\)-extend, as the \(t\)-structure on \(\ind\coh_{\nilp}^\qc(p^{-1}(\LSt))\) might not restrict to \(\coh_{\nilp}^\qc(p^{-1}(\LSt))\).
\end{rem}
\begin{rem}
    One should observe that since all the stabilizers of \(p^{-1}(\LSt)\) are linearly reductive and \(p^{-1}(\LSt)\) clearly admits an adequate moduli space (which is \(\LSt\)), that \(\ind\perf^\qc(p^{-1}(\LSt))\) in fact agrees with \(\qcoh(p^{-1}(\LSt))\) and under this equivalence the \(t\)-structures agree.
    This follows from \cite[Proposition 6.14.]{etale-local-structure-stacks} and \cite[Corollary 6.11.]{etale-local-structure-stacks}.
\end{rem}

	\section{Applications}\label{sec: applications}
The results of this paper resolve all the conjectures in \cite{beijing-notes} about generous parameters for \(\GL_n\) and also extend them to positive characteristic.
We also resolve \cite[Conjecture 6.4.]{torsionvanishing} for \(\GL_n\) and \cite[Conjecture 6.6.]{torsionvanishing} for PEL type Shimura varieties in type A (but only for Langlands-Shahidi type parameters).
We fix a Whittaker datum \((U,\psi)\) and let \(\mathcal{W}\) be the sheaf on \(\bun_G\) attached to it.
We took the liberty to formulate some conjectures in slightly more generality than originally formulated as we consider these slight generalizations more natural.
\begin{thm}[compare with {\cite[Conjecture 6.4.]{torsionvanishing}}]\label{thm: splitting semi-orthogonal}
    Let \(\Lambda\) be any \(\bbZ_\ell\)-field.
    Let \(\varphi\) be a semisimple parameter of Langlands-Shahidi type with cuspidal support \((\widehat{L},\varphi_{\widehat{L}})\).
    Then 
    \begin{equation*}
        \D(\bun_G,\Lambda)_{\varphi}^\wedge\simeq\prod_{b\in B(G)_L}\D(\bun_G^b,\Lambda)_{\varphi}^\wedge
    \end{equation*}
    via excision and the \(!\) and \(*\) pushforwards agree for any (complex of) representation of \(G_b(\bbQ_p)\) lying in \(\D(\bun_G^b,\Lambda)_{\varphi}\).
    Hecke operators are \(t\)-exact for the perverse \(t\)-structure.
\end{thm}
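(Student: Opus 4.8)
The plan is to assemble the statement from the structural results already established; the only real work is to reorganise the indexing set and to descend the coefficient field.

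First I would obtain the decomposition together with the agreement of \(i_{b!}\) and \(i_{b*}\). By \cref{lem: support sheaf with L-parameter of Langlands-Shahidi type} every object of \(\D(\bun_G,\Lambda)^\wedge_\varphi\) is set-theoretically supported on \(B(G)_L\), so the excision triangles for the Harder--Narasimhan stratification of \(\bun_G\) only involve the strata \(\bun_G^b\) with \(b\in B(G)_L\). On compact objects \cref{lem: decomposition D(Bun_G) at Langlands-Shahidi type parameter} already provides a direct sum decomposition indexed by \(\chi\in X^*(S_\varphi)\), and I would regroup these summands by \(b=b_{\chi,L}\), noting that \(\{\chi\colon b_{\chi,L}=b\}\) is finite. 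It then remains to identify \(\bigoplus_{\chi\colon b_{\chi,L}=b}(\D(\bun_G,\Lambda)^\wedge_\varphi)^\omega_\chi\) with \((\D(\bun_G^b,\Lambda)^\wedge_\varphi)^\omega\): since the \(\speccenter(G)\)-action on \(\D(\bun_G^b,\Lambda)=\D(G_b(E),\Lambda)\) factors through \(\speccenter(G_b)\), the localisation \(\D(G_b(E),\Lambda)^\wedge_\varphi\) splits over the finitely many points of \(\locsys_{\widehat{G_b}}^{\coarse}\) above \(\varphi\) that carry nonzero smooth representations, and those points are precisely the pairwise distinct conjugates \(\eta_{G,b}\circ\varphi^{w_\chi}\) — this is the Imai--Hamann combinatorics of \cite{imaihamann} together with the irreducibility of the relevant normalised inductions from \cite{lift-irreducible}, exactly as used in \cref{lem: decomposition D(Bun_G) at Langlands-Shahidi type parameter} and \cref{lem: hecke functor is equivalence of categories}. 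Passing to Ind-completions turns the finite direct sums into the product \(\prod_{b\in B(G)_L}\D(\bun_G^b,\Lambda)^\wedge_\varphi\), realised through the excision triangles, which therefore split. A factor of such a product is simultaneously the left and the right adjoint of the complementary projection, so its inclusion coincides with both \(i_{b!}\) and \(i_{b*}\); this is the asserted agreement of the \(!\)- and \(*\)-pushforwards on \(\D(\bun_G^b,\Lambda)_\varphi\).

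Next I would remove the hypothesis that \(\Lambda\) be algebraically closed. The three inputs above are stated over an algebraically closed field, but for a general \(\bbZ_\ell\)-field one base changes along the faithfully flat map \(\Lambda\to\overline{\Lambda}\): the support condition and the excision formalism are unaffected, the \(\Hom\)-vanishings defining the product decomposition can be tested after base change since compact objects of \(\D(\bun_G,\Lambda)^\wedge_\varphi\) are ULA by \cref{lem: compact objects in localization compact and ULA} (so \(\Hom\) commutes with the base change), and the functor \(\D(\bun_G,\Lambda)^\wedge_\varphi\to\D(\bun_G,\overline{\Lambda})^\wedge_\varphi\) is conservative and \(t\)-exact exactly as in \cref{lem: reduction to algebraic closure} and \cref{rem: t-exact any field and flat zl algebra}.

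For the last assertion, the perverse \(t\)-structure on \(\D(\bun_G,\Lambda)\) restricts to \(\D(\bun_G,\Lambda)^\wedge_\varphi\), the operator \(T_V\) preserves this subcategory since it is \(\Ind\perf(\locsys_{\widehat{G}})\)-linear, and by \cref{lem: isotypic decomposition Langlands-Shahidi type} one has \(T_V|_{\D(\bun_G,\Lambda)^\wedge_\varphi}\simeq\bigoplus_{\chi}\Hom_{S_\varphi}(\chi,V)\otimes(\oo(\chi,\varphi)*-)\), each summand of which is \(t\)-exact by \cref{lem: t-exact Langlands-Shahidi type}. I expect the one genuinely substantive point to be the identification in the first step of the regrouped summands with \((\D(\bun_G^b,\Lambda)^\wedge_\varphi)^\omega\), i.e.\ that the parameters of \(G_b(E)\) above \(\varphi\) are exactly the distinct \(\eta_{G,b}\circ\varphi^{w}\); everything else is a routine unwinding of the recollement formalism and flat descent.
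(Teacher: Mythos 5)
Your proposal is correct and follows essentially the same route as the paper: the paper's own proof is a four-line deduction citing \cref{lem: decomposition D(Bun_G) at Langlands-Shahidi type parameter} for the product decomposition, noting that the agreement of \(!\)- and \(*\)-pushforwards and the excision statement are formal consequences, and invoking \cref{lem: t-exact Langlands-Shahidi type} together with \cref{rem: t-exact any field and flat zl algebra} for perverse \(t\)-exactness. You have merely expanded the steps the paper labels ``immediate'' and ``formal'' (regrouping the \(\chi\)-indexed summands by \(b_{\chi,L}\), the adjunction argument for \(i_{b!}=i_{b*}\), and the descent from algebraically closed to arbitrary \(\bbZ_\ell\)-fields), and these expansions are accurate.
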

\begin{proof}
    The product decomposition is immediate from \cref{lem: decomposition D(Bun_G) at Langlands-Shahidi type parameter}.    
    This formally implies that \(!\) and \(*\)-pushforwards agree.
    It is then also formal that the product decomposition is induced via excision.
    The perversity statement is immediate from \cref{lem: t-exact Langlands-Shahidi type} extended to arbitrary \(\bbZ_\ell\)-field as in \cref{rem: t-exact any field and flat zl algebra}.
\end{proof}
We also have an analogue where we localize over the entire locus of Langlands-Shahidi type parameters.
\begin{thm}\label{thm: t-exact LSt locus}
    Let \(\Lambda\) be any \(\bbZ_\ell\)-algebra.
    Let \(p^{-1}(\LSt)\) denote the open substack consisting of parameters that are of Langlands-Shahidi type with cuspidal support \(\widehat{L}\) and let \(\LSt\) denote the coarse moduli.
    Then we have
    \begin{equation*}
        \D^\LSt(\bun_G,\Lambda)\simeq\prod_{b\in B(G)_L}\D^\LSt(\bun_G^b,\Lambda),
    \end{equation*}
    where \(\D^\LSt(\bun_G,\Lambda)\defined\D(\bun_G,\Lambda)\otimes_{\Ind\perf(\locsys_{\widehat{G}}^{\mathrm{coarse}})}\Ind\perf(\LSt)\).
    This decomposition is via excision and the \(!\) and \(*\) pushforwards agree.
    For \(\Lambda\) a flat \(\bbZ_\ell\)-algebra or a \(\bbZ_\ell\)-field Hecke operators are \(t\)-exact
\end{thm}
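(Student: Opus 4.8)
The plan is to establish the three assertions in turn: the product decomposition, then the identification of the $!$- and $*$-pushforwards together with the excision statement (which becomes formal once the decomposition is available), and finally the $t$-exactness of Hecke operators for $\Lambda$ a field or flat, which will be deduced from \cref{thm: t-exact} by a base-change argument.

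For the product decomposition I would first base change the semi-orthogonal decomposition $\D(\bun_G,\Lambda)=\langle\D(G_b(E),\Lambda)\rangle_{b\in B(G)}$ along the symmetric monoidal functor $\Ind\perf(\locsys_{\widehat{G}}^\coarse)\to\Ind\perf(\LSt)$. Since the action of $\Ind\perf(\locsys_{\widehat{G}}^\coarse)$ preserves each piece (as each $i_b^*$ is linear) and $-\otimes_{\Ind\perf(\locsys_{\widehat{G}}^\coarse)}\Ind\perf(\LSt)$ is exact, this yields a semi-orthogonal decomposition of $\D^\LSt(\bun_G,\Lambda)$ into the categories $\D^\LSt(\bun_G^b,\Lambda)$, $b\in B(G)$. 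By \cref{lem: support sheaf with L-parameter of Langlands-Shahidi type}, checked on formal fibers via the conservativity built into \cref{lem: reduction to formal fibers}, the factor $\D^\LSt(\bun_G^b,\Lambda)$ vanishes for $b\notin B(G)_L$, so we are left with a \emph{finite} semi-orthogonal decomposition indexed by $B(G)_L$. Such a decomposition is an orthogonal product exactly when all cross-$\Hom$'s vanish, equivalently when the functor $\bigoplus_{b\in B(G)_L}i_{b!}^\ren\from\prod_{b\in B(G)_L}\D^\LSt(\bun_G^b,\Lambda)\to\D^\LSt(\bun_G,\Lambda)$ is an equivalence. This functor preserves compact objects between compactly generated categories (one may shrink $\LSt$ to a quasi-compact open to guarantee compact generation), so \cref{lem: reduction to formal fibers} together with \cref{lem: categorical fiber is fiber}, after the harmless base change $-\otimes_{\Mod_{\bbZ_\ell}}\Mod_\Lambda$ reducing to $\Lambda=\bbZ_\ell$, reduces us to checking that it becomes an equivalence after passing to the formal fiber at each geometric point $\varphi$ of $\LSt$ over an algebraically closed field $k$. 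There it is the functor $\prod_b\D(\bun_G^b,k)^\wedge_\varphi\to\D(\bun_G,k)^\wedge_\varphi$, which is an equivalence by \cref{thm: splitting semi-orthogonal} (equivalently, by regrouping the summands in \cref{lem: decomposition D(Bun_G) at Langlands-Shahidi type parameter} according to $b=b_{\chi,L}$).

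Granting the decomposition, the remaining structural claims are formal, exactly as in the proof of \cref{thm: splitting semi-orthogonal}: in a product of categories the inclusion of a factor agrees with both its left and right adjoint composed with the projection, so $i_{b!}^\ren\cong i_{b*}^\ren$ on $\D^\LSt(\bun_G^b,\Lambda)$, and the recollement triangles computing the decomposition degenerate, which is the statement that it is induced by excision. For the $t$-exactness when $\Lambda$ is a $\bbZ_\ell$-field or a flat $\bbZ_\ell$-algebra I would invoke \cref{thm: t-exact}: for $k$ algebraically closed the $t$-exactness of $\bbL_G^\LSt$ is, by construction of the $t$-structure on the spectral side, equivalent to the $t$-exactness of every Hecke operator $T_V$ on $\D^\LSt(\bun_G,k)$ (one checks right $t$-exactness for all $V\in\rep(\widehat{G})$ and uses the adjunction $T_V\dashv T_{V^\vee}$ together with $V=V^{\vee\vee}$, as in the proof of \cref{thm: t-exact}). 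The passage from $k$ algebraically closed to an arbitrary $\bbZ_\ell$-field, then to $\bbZ_\ell$, then to a flat $\bbZ_\ell$-algebra is precisely the argument recorded in \cref{rem: t-exact any field and flat zl algebra}: base change along a field extension is conservative and $t$-exact; the reductions $\otimes_{\bbZ_\ell}\bbQ_\ell$ and $\otimes^{\bbL}_{\bbZ_\ell}\bbF_\ell$ detect right $t$-exactness over $\bbZ_\ell$; and for flat $\Lambda$ both $T_V\otimes_{\bbZ_\ell}\Lambda$ and its right adjoint $T_{V^\vee}\otimes_{\bbZ_\ell}\Lambda$ are right $t$-exact, hence $T_V\otimes_{\bbZ_\ell}\Lambda$ is also left $t$-exact, i.e.\ exact.

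The main obstacle is the first, globalization, step: the splitting is available only one formal fiber at a time (\cref{thm: splitting semi-orthogonal}), and the work is in arranging the global comparison functor $\bigoplus_b i_{b!}^\ren$ so that the reduction machinery of \cref{sec: reduction} applies — in particular that it preserves compact objects, that its base changes to the geometric points are exactly the pointwise equivalences, and that the categories involved are compactly generated. Everything genuinely new — the behaviour of the Hecke stalks $\func{b}{b'}{\chi}{\varphi}$ — has already been packaged into \cref{thm: explict hecke ! stalk}, \cref{cor: explict hecke * stalk}, \cref{lem: decomposition D(Bun_G) at Langlands-Shahidi type parameter} and \cref{thm: t-exact}, so this theorem amounts to spreading those statements over $\LSt$ and descending the coefficient ring.
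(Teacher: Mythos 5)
Your proposal is correct and follows essentially the same route as the paper's proof: reduce to formal fibers via the machinery of \cref{sec: reduction}, invoke the pointwise decomposition of \cref{lem: decomposition D(Bun_G) at Langlands-Shahidi type parameter} (equivalently \cref{thm: splitting semi-orthogonal}) there, then base change from $\bbZ_\ell$ to arbitrary $\Lambda$, with the $t$-exactness claim deduced from \cref{thm: t-exact} via \cref{rem: t-exact any field and flat zl algebra}. You spell out somewhat more explicitly than the paper the hypotheses needed to run \cref{lem: reduction to formal fibers} (compact generation, the comparison functor $\bigoplus_b i_{b!}^{\ren}$ preserving compacts), which the paper leaves implicit.
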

\begin{proof}
    One can deduce the product decomposition from a careful analysis of the proof of \cref{thm: categorical equivalence}.
    This formally implies that \(!\) and \(*\)-pushforwards agree.
    Let us give another argument.
    If \(\Lambda\) is an algebraically closed field, the product decomposition is immediate from \cref{lem: decomposition D(Bun_G) at Langlands-Shahidi type parameter}.
    From this we immediately obtain the product decomposition for \(\Lambda=\bbZ_\ell\), since we need only need to check certain hom-spaces to vanish, but they are naturally objects of \(\D(\bbZ_\ell)\) so it suffices to check this after base change to the geometric points of \(\spec(\bbZ_\ell)\).
    We also have
    \begin{align*}
        \D^\LSt(\bun_G,\bbZ_\ell)\otimes_{\Mod_{\bbZ_\ell}}\Mod_{\Lambda}&\simeq\D^\LSt(\bun_G,\Lambda)\\
        \shortintertext{and}\\
        \left(\prod_{b\in B(G)_L}\D^\LSt(\bun_G^b,\bbZ_\ell)\right)\otimes_{\Mod_{\bbZ_\ell}}\Mod_{\Lambda}&\simeq\prod_{b\in B(G)_L}\D^\LSt(\bun_G^b,\Lambda)
    \end{align*}
    so we extend the results to arbitrary \(\bbZ_\ell\)-algebras.

    For \(\Lambda\) a flat \(\bbZ_\ell\)-algebra or a \(\bbZ_\ell\)-field Hecke operators are \(t\)-exact using \cref{thm: t-exact} and \cref{rem: t-exact any field and flat zl algebra}.
\end{proof}
Before discussing the application to global Shimura varieties, we will need the following lemma on Hecke operators for Weil restrictions of \(\GL_n\).
\begin{lem}
    Let \(F/E\) be a finite field extension of \(E\) and write \(G'\defined\mathrm{Res}_{F/E}\GL_n\)
    Then the Hecke operators attached to representations \(V'\in\rep(\widehat{G'})\) inflated from \(\widehat{G}\) are perverse \(t\)-exact on \(\D^\LSt(\bun_{G'},k)\) for any \(\bbZ_{\ell}\)-field \(k\) where \(\LSt\) is the locus of Langlands-Shahidi type parameteres.
\end{lem}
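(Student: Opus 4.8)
The plan is to reduce the statement to the already established case of $\GL_n$ over the local field $F$. The key point is the compatibility of the Fargues--Fontaine formalism with Weil restriction of scalars: a $\mathrm{Res}_{F/E}\GL_n$-bundle on the relative Fargues--Fontaine curve over $E$ is the same datum as a $\GL_n$-bundle on the relative Fargues--Fontaine curve over $F$, so that there is an equivalence $\bun_{G'}\simeq\bun_{\GL_n}$ (the right-hand side formed over $F$). First I would spell this out simultaneously for the relevant $v$-stacks: it identifies the Kottwitz sets $B(G')=B(\GL_{n,F})$; it identifies the stacks of parameters, since an $L$-parameter $W_E\to\widehat{G'}$ is the same as an $L$-parameter $W_F\to\widehat{\GL_n}$, under which the locus $\LSt$ of Langlands--Shahidi type parameters corresponds on both sides, giving $\D^\LSt(\bun_{G'},k)\simeq\D^\LSt(\bun_{\GL_n},k)$; and it identifies the perverse $t$-structures, up to the normalization of the Fargues--Fontaine curve (the perversity function $b\mapsto-\langle 2\rho_{G'},\nu_b\rangle$ on $B(G')$ differing from $b\mapsto-\langle 2\rho_{\GL_n},\nu_b\rangle$ on $B(\GL_{n,F})$ only by the uniform factor $[F:E]$ coming from the degree of the curve over $F$ relative to the curve over $E$).

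Next I would transport the Hecke operators through this equivalence. For $V'\in\rep(\widehat{G'})$, the operator $T_{V'}$ on $\bun_{G'}$ corresponds, via geometric Satake for $\mathrm{Res}_{F/E}\GL_n$, to the Fargues--Scholze Hecke operator on $\bun_{\GL_n}$ over $F$ attached to the corresponding Hecke datum; when $V'$ is inflated from $\widehat G$ this datum carries no genuine $W_F$-action, and the operator is --- after fusing the several legs lying over a single point of the curve over $E$, when $F/E$ is not totally ramified --- a composition of single-legged Hecke operators $T_V$ with $V\in\rep(\widehat{\GL_n})$ over $F$, whose cohomological shifts scale by the same factor $[F:E]$ as the perversity function. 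Each such $T_V$ is perverse $t$-exact on $\D^\LSt(\bun_{\GL_n},k)$ over $F$ by \cref{thm: t-exact LSt locus} (together with \cref{thm: t-exact} and \cref{rem: t-exact any field and flat zl algebra}), applied to $\GL_n$ over the non-archimedean local field $F$ of characteristic $0$, and a composition of $t$-exact functors is $t$-exact, which yields the claim. As an alternative to invoking the $\GL_{n,F}$-result as a black box, one could rerun the proof of \cref{thm: t-exact LSt locus} directly for $G'$: every ingredient --- local Langlands in families and the Bernstein decomposition for $G'(E)=\GL_n(F)$, the formal-completion and parameter computations of \cref{sec: reduction} and \cref{sec: proof}, and the Hecke-stalk analysis of \cref{sec: computation Hecke operators}, whose shifts and perversity would all be scaled uniformly --- is insensitive to replacing $E$ by $F$ once $\bun_{G'}$ is identified with $\bun_{\GL_n}$ over $F$ as above.

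The main obstacle is the first step: making the Weil-restriction dictionary precise and compatible across $\bun$, $\hck$, $\localhck$ and $\locsys$ at once, and in particular determining how a representation of $\widehat{G'}$ inflated from $\widehat G$ decomposes under geometric Satake for $\mathrm{Res}_{F/E}\GL_n$ (its leg structure over the Fargues--Fontaine curve of $F$) and keeping track of the curve normalization, so that the factor $[F:E]$ enters the perversity function and the Hecke shifts in exactly the same way. Once this bookkeeping is in place, perverse $t$-exactness of $T_{V'}$ follows formally from the already-established case of $\GL_n$.
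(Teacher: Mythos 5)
Your overall plan — transport through the Weil-restriction dictionary $\bun_{G',E}\cong\bun_{\GL_n,F}$, match the loci of Langlands--Shahidi type parameters on both sides, and invoke \cref{thm: t-exact LSt locus} applied to $\GL_n$ over $F$ — is the same as the paper's. But two of the details you supply to carry this out are incorrect or underspecified.

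First, the claim that the perversity functions differ by a uniform factor $[F:E]$ is wrong. It is true that $2\rho_{G'}$ picks up a factor of $[F:E]$ relative to $2\rho_{\GL_n}$ (being a sum over the $[F:E]$ factors of $\widehat{G'}\cong\prod_{F\hookrightarrow\overline{E}}\widehat{\GL_n}$), but under the Shapiro identification $B(G')_E\cong B(\GL_n)_F$ the Newton points are divided by $[F:E]$, and these two effects cancel: $\langle 2\rho_{G'},\nu_b^{G'}\rangle=\langle 2\rho_{\GL_n},\nu_b^{\GL_n,F}\rangle$. This is forced by the fact that both compute the same intrinsic dimension of the stratum $\bun_{G',E}^b=\bun_{\GL_n,F}^b$. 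So the perverse $t$-structures agree exactly, with no rescaling, and your claim that the Hecke ``shifts scale by the same factor $[F:E]$ as the perversity'' has nothing to compensate.

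Second, the ``after fusing the several legs\dots a composition of single-legged Hecke operators'' step is not a proof but a placeholder, as you acknowledge at the end. The real content is how a one-legged Hecke operator on $\bun_{G'}$ over $\Div^1_E$ interacts with the Weil groups: the $F$-side operator $T_V$ has target $\D(\bun_{\GL_n,F})^{BW_F}$, while the $E$-side operator you want has target $\D(\bun_{G',E})^{BW_E}$, and these are not the same. The paper resolves this cleanly without any fusion argument: set $V''\defined\Ind_{\widehat{G'}\rtimes W_F}^{\widehat{G'}\rtimes W_E}V'$; then by the Weil-restriction compatibility of the spectral action (\cite[Proposition IX.6.3.]{geometrization}), $T_{V''}$ identifies with $\Ind_{W_F}^{W_E}\circ T_V$; and on forgetting the Galois action, $T_{V'}$ is a direct summand of $T_{V''}$ (since $V''|_{\widehat{G'}}\cong\bigoplus_{\sigma\in W_E/W_F}{}^\sigma V'$), hence a direct summand of $[F:E]$ copies of the $t$-exact functor $T_V$, hence $t$-exact. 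You should replace your fusion heuristic by this argument: it is what actually makes the bookkeeping you worry about in your last paragraph go through.
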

\begin{proof}
    We will need to distinguish variants of \(\bun_G\), \(\locsys_{\widehat{G}}\) and \(\locsys_{\widehat{G}}^\coarse\) for \(E\) and \(F\), we will do this with a subscript, so \(\bun_{G,E}\) refers to the version of \(\bun_G\) constructed using the Fargues-Fontaine curve for \(E\) and similar notation for the other objects.

    First observe that \(\bun_{G',E}\cong\bun_{G,F}\) and that under the isomorphism \(\locsys_{\widehat{G'},E}^\coarse\cong\locsys_{\widehat{G},F}^\coarse\) the locus of Langlands-Shahidi type parameters agrees.
    Therefore we have 
    \begin{equation*}
        \D^\LSt(\bun_{G',E},k)\simeq\D^\LSt(\bun_{G,F},k).
    \end{equation*}
    Let \(V\in\rep(\widehat{G})\) and \(V'\in\rep(\widehat{G'})\) be its inflation, which we consider as a representation of \(\widehat{G'}\rtimes W_F\) by letting \(W_F\) act trivially.
    Write \(V''\defined\Ind_{\widehat{G'}\rtimes W_F}^{\widehat{G'}\rtimes W_E}V'\).
    Following the discussion in the proof of \cite[Proposition IX.6.3.]{geometrization}, the Hecke operator \(T_{V''}\) acts as \(\Ind_{W_F}^{W_E}\circ T_V\) under the identification \(\D(\bun_{G',E})\simeq\D(\bun_{G,F})\) and restricting the Galois action to \(W_F\).
    Forgetting the Galois action we see that \(T_{V'}\) occurs as a direct summand of \(\Ind_{W_F}^{W_E}\circ T_V\), which gives us the claim.
\end{proof}
\begin{cor}\label{cor: t-exact weil restriction}
    In the situaton of the previous lemma the Hecke operators attached to Weyl modules for \(\widehat{G'}\) are perverse \(t\)-exact on \(\D^\LSt(\bun_{G'},k)\).
\end{cor}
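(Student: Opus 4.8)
The plan is to reduce to the previous lemma by decomposing an arbitrary Weyl module for \(\widehat{G'}\) into a tensor product of Weyl modules each supported on a single \(\GL_n\)-factor, and then using that the spectral action is symmetric monoidal.

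First I would recall that \(\widehat{G'}\cong\prod_{\iota\in\Hom_E(F,\bar E)}\GL_n\), with \(W_E\) acting transitively on the index set through its action on the embeddings, the stabiliser of the distinguished embedding \(\iota_0\) being \(W_F\). A Weyl module for \(\widehat{G'}\) is an outer tensor product \(V=\boxtimes_\iota V_{\lambda_\iota}\) of \(\GL_n\)-Weyl modules, and as a representation of \(\widehat{G'}\) we have \(V\cong\bigotimes_\iota\pi_\iota^*V_{\lambda_\iota}\), where \(\pi_\iota\from\widehat{G'}\to\GL_n\) is the \(\iota\)-th projection and each \(\pi_\iota^*V_{\lambda_\iota}\) is again a Weyl module, now supported on the \(\iota\)-th factor. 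Since the spectral action is symmetric monoidal, \(T_{V_1\otimes V_2}\cong T_{V_1}\circ T_{V_2}\) on \(\D^\LSt(\bun_{G'},k)\), and this persists after localising; as a composition of \(t\)-exact functors is \(t\)-exact, I am reduced to showing that \(T_{\pi_\iota^*V_\mu}\) is perverse \(t\)-exact for every embedding \(\iota\) and every \(\GL_n\)-Weyl module \(V_\mu\).

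For \(\iota=\iota_0\) the module \(\pi_{\iota_0}^*V_\mu\) is precisely a representation of \(\widehat{G'}\) inflated from \(\widehat{G}=\GL_n\) as in the previous lemma, so \(T_{\pi_{\iota_0}^*V_\mu}\) is \(t\)-exact by that lemma. For a general \(\iota\), let \(F_\iota\subseteq\bar E\) be the image of \(\iota\); rerunning the previous lemma's argument verbatim with \(F\) replaced by \(F_\iota\) (using \(\bun_{G',E}\cong\bun_{\GL_n,F_\iota}\) and that the Langlands--Shahidi loci match) shows that, after forgetting the Galois action, \(T_{\pi_\iota^*V_\mu}\) is a direct summand of \(\Ind_{W_{F_\iota}}^{W_E}\circ T_{V_\mu}\), where \(T_{V_\mu}\) is the \(\GL_n\)-Hecke operator over \(F_\iota\). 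This is \(t\)-exact: \(T_{V_\mu}\) is \(t\)-exact on \(\D^\LSt(\bun_{\GL_n,F_\iota})\) by \cref{thm: t-exact} applied over the local field \(F_\iota\) together with \cref{thm: t-exact LSt locus}, the functor \(\Ind_{W_{F_\iota}}^{W_E}\) is exact, and a direct summand of a \(t\)-exact functor is \(t\)-exact. Composing over all \(\iota\) gives the claim; and since Weyl modules generate, under shifts and colimits, the connective part of the \(t\)-structure, this is exactly the input needed for the \(t\)-exactness of the categorical equivalence for \(G'\).

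The step I expect to be the main obstacle is the general-\(\iota\) case in the last paragraph: one must check carefully that the previous lemma's argument — in particular the identification \(\bun_{G',E}\cong\bun_{\GL_n,F_\iota}\) and the compatibility of perverse \(t\)-structures and of Langlands--Shahidi loci — really goes through for every embedding \(\iota\) and not just the distinguished one, which amounts to the \(W_E\)-conjugation symmetry of the geometric Satake/Hecke formalism. One should also record that the identity \(T_{V_1\otimes V_2}\cong T_{V_1}\circ T_{V_2}\) is available in the localised setting \(\D^\LSt(\bun_{G'},k)\) with \(k\) a \(\bbZ_\ell\)-field, which follows from the symmetric monoidality of the spectral action.
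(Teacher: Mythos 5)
Your proof is correct and follows essentially the same route as the paper's (very terse) argument: decompose a Weyl module for $\widehat{G'}=\prod_\iota\GL_n$ as an exterior product, convert that via monoidality of the spectral action into a composite of Hecke operators $T_{\pi_\iota^*V_\mu}$, and reduce each factor to the $\GL_n$ case over $F$ via the identification $\bun_{G',E}\cong\bun_{G,F}$ and the $\Ind_{W_F}^{W_E}$-trick of the preceding lemma, so that \cref{thm: t-exact LSt locus} applies. You are more explicit than the paper about handling all embeddings $\iota$, not just the distinguished one; this is a genuine gap-filling, since the preceding lemma is literally stated only for the inflation along the $\iota_0$-projection, and the paper's one-line proof silently quantifies over all factors. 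One small cavil: the closing remark that Weyl modules generating the connective part ``is exactly the input needed for the $t$-exactness of the categorical equivalence for $G'$'' misstates the downstream application -- the paper uses this corollary as input to the torsion-vanishing theorem for Shimura varieties (where the Hodge cocharacter gives a miniscule, hence Weyl, module), not to establish any categorical equivalence for the Weil restriction $G'$. This does not affect the correctness of the proof itself.
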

\begin{proof}
    Any Weyl module for \(\widehat{G'}=\prod_{F\injto\overline{E}}\GL_n\) is obtained as an exterior product of Weyl modules for \(\GL_n\), so it is immediate from \cref{thm: t-exact LSt locus}.
\end{proof}
\begin{thm}[compare with {\cite[Conjecture 6.6.]{torsionvanishing}}, {\cite[Conjecture 1.2.]{koshikawa2021genericcohomologylocalglobal}}]
    Let \(F\) be a CM-field.
    Let \((\mathbf{G},X)\) be a PEL Shimura datum of type A such that \(\mathbf{G}_{\bbQ_p}=\prod_{i}\mathrm{Res}_{L_i/\bbQ_p}\GL_{n_i}\) with \(L_i/\bbQ_p\) unramified and assume we have \cite[Assumption 1.10.]{torsionvanishing}.\footnote{This is automatic if the Shimura variety is compact, see \cite[Remark 1.11.]{torsionvanishing} for further discussion on this point.}
    Let \(C\) be a completed algebraic closure of \(\bbQ_p\).
    We have a \(G(\bbQ_p)\)-representation 
    \begin{equation*}
        R\Gamma_c(\mathcal{S}(\mathbf{G},X)_{K^p,C},\overline{\bbF}_{\ell})
    \end{equation*}
    by passing to infinite level.
    We can localize around a parameter \(\varphi\) of Langlands-Shahidi type to obtain a complex 
    \begin{equation*}
        R\Gamma_c(\mathcal{S}(\mathbf{G},X)_{K^p,C},\overline{\bbF}_{\ell})_{\varphi}.
    \end{equation*}
    Then this is concentrated in degrees \([0,d]\) where \(d=\dim(\mathcal{S}(\mathbf{G},X)_{K^P,C})\)
\end{thm}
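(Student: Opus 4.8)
The plan is to reduce the statement to torsion vanishing for the cohomology of the local Shimura varieties (Hecke operators on $\D(\bun_{\mathbf{G}_{\bbQ_p}})$), which is exactly what the $t$-exactness results of this section give us, and then to invoke the comparison between the cohomology of global Shimura varieties and local shtuka spaces. Concretely, the starting point is the machinery of \cite[Section 6]{torsionvanishing}: under \cite[Assumption 1.10.]{torsionvanishing}, the localized complex $R\Gamma_c(\mathcal{S}(\mathbf{G},X)_{K^p,C},\overline{\bbF}_{\ell})_{\varphi}$ can be computed, up to shift and twist, by applying a Hecke operator $T_{V_\mu}$ (for $V_\mu$ the highest-weight representation of $\widehat{\mathbf{G}_{\bbQ_p}}$ attached to the Shimura cocharacter $\mu$, which is minuscule, hence a Weyl module) to a sheaf on $\bun_{\mathbf{G}_{\bbQ_p}}$ obtained from the cohomology at infinite level away from $p$, and then taking a suitable stalk or restriction to the basic locus. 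The key point is that $\mathbf{G}_{\bbQ_p}=\prod_i\mathrm{Res}_{L_i/\bbQ_p}\GL_{n_i}$ with $L_i/\bbQ_p$ unramified, so $\mathbf{G}_{\bbQ_p}$ is a product of Weil restrictions of $\GL_{n_i}$, and \cref{cor: t-exact weil restriction} applies to each factor.

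First I would recall, following \cite[Section 6]{torsionvanishing} (and the local-global compatibility of Fargues--Scholze parameters), that after localizing at $\varphi$ the global cohomology is a direct summand of $R\Gamma$ of a Hecke operator applied to an object of $\D^{\LSt}(\bun_{\mathbf{G}_{\bbQ_p}},\overline{\bbF}_\ell)$ concentrated in a single perverse degree — the input sheaf being (a shift of) $i_{1!}$ of a smooth $\mathbf{G}_{\bbQ_p}(\bbQ_p)$-representation, which is perverse up to the normalizing shift $\langle 2\rho,\nu\rangle$ built into the conventions. Second, I would apply \cref{cor: t-exact weil restriction}: the Hecke operator attached to $V_\mu$, a Weyl module for $\widehat{\mathbf{G}_{\bbQ_p}}=\prod_i\widehat{\mathrm{Res}_{L_i/\bbQ_p}\GL_{n_i}}$, is perverse $t$-exact on $\D^{\LSt}(\bun_{\mathbf{G}_{\bbQ_p}},\overline{\bbF}_\ell)$; hence the output is again concentrated in a single perverse degree. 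Third, I would track the perverse $t$-structure through the remaining geometric operations in \cite[Section 6]{torsionvanishing} — the pullback-pushforward along the Hodge--Tate / Bialynicki-Birula correspondence and the eventual restriction to finite level — which contribute exactly the shift that spreads the single perverse degree into the amplitude $[0,d]$ with $d=\dim\mathcal{S}(\mathbf{G},X)_{K^p,C}$. This last bookkeeping step is essentially identical to the one carried out in \cite[Section 6]{torsionvanishing}, where the same conclusion is deduced from \emph{assumed} torsion vanishing; our contribution is to supply that input unconditionally for Langlands--Shahidi type $\varphi$.

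The main obstacle is not any single hard estimate but rather the translation between frameworks: one must make sure that the sheaf on $\bun_{\mathbf{G}_{\bbQ_p}}$ produced by the global construction genuinely lies in the formal completion $\D(\bun_{\mathbf{G}_{\bbQ_p}},\overline{\bbF}_\ell)^\wedge_{\varphi}$ — equivalently, that every irreducible subquotient of its cohomology has Fargues--Scholze parameter $\varphi$ (this is where local-global compatibility and \cref{cor: localizing D(Bun_G) at a parameter} enter) — and that it is supported on $B(\mathbf{G}_{\bbQ_p})_L$ so that \cref{thm: t-exact LSt locus} and \cref{cor: t-exact weil restriction} are applicable, and finally that the shifts in the normalized functors $i_{b?}^{\ren}$ and in \cite[Section 6]{torsionvanishing} are reconciled correctly so that the amplitude comes out as $[0,d]$ rather than some translate. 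Once these compatibilities are in place the argument is a direct citation of \cref{cor: t-exact weil restriction} together with \cite[Section 6]{torsionvanishing}.
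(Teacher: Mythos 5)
Your proposal follows essentially the same route as the paper: the paper's proof reduces to the machinery of \cite[\S 5, \S 6]{torsionvanishing}, supplies the needed perverse $t$-exactness input via \cref{cor: t-exact weil restriction} (together with \cref{thm: splitting semi-orthogonal} for the support and $! = *$ statements), and observes that the Hodge cocharacter is minuscule so the relevant representation is a Weyl module. Your use of \cref{thm: t-exact LSt locus} and \cref{cor: localizing D(Bun_G) at a parameter} in place of \cref{thm: splitting semi-orthogonal} is an inessential variation; the underlying argument is the same.
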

\begin{proof}
    As explained in \cite[§6]{torsionvanishing}, this follows from \cref{thm: splitting semi-orthogonal} and \cref{cor: t-exact weil restriction} and the analysis of \cite[§5]{torsionvanishing}.
    Note that the Hecke operator they need to be \(t\)-exact is the one associated with the Hodge cocharacter attached to \((\mathbf{G},X)\), which is miniscule, in particular the represenation attached to it is a Weyl module.
\end{proof}
\begin{thm}[{\cite[Conjecture 2.1.8.]{beijing-notes}}]\label{thm: explicit computation under equivalence}
    Suppose that \((\varphi,\rho)\) and \((b,\pi)\) match under the BM-O bijection \cite[Theorem 1.1.]{bm-o-parametrization} and that \(\varphi\) is of Langlands-Shahidi type.
    \begin{enumerate}
        \item There is an isomorphism \(i_{\varphi*}\rho*\mathcal{W}\cong i_{b!}^\ren\pi\).
        \item The natural maps \(i_{b\sharp}^\ren\pi\xto{\cong}i_{b!}^\ren\pi\xto{\cong}i_{b*}^\ren\pi\) are isomorphisms.
    \end{enumerate}
\end{thm}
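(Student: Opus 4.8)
The plan is to deduce everything from the categorical equivalence \(\bbL_G^\LSt\) of \cref{thm: categorical equivalence} together with the explicit description of the spectral action at Langlands--Shahidi parameters. First I would reduce to working over an algebraically closed field \(k\) and localizing at the single parameter \(\varphi\): by \cref{lem: reduction to individual parameter} and the compatibility of all constructions with base change it suffices to prove the two isomorphisms in \(\D(\bun_G,k)^\wedge_\varphi\). Under the equivalence \(\bbL_G^\varphi\colon \Ind\perf^\qc((\locsys_{\widehat{G},k})^\wedge_\varphi)\simeq \D(\bun_G,k)^\wedge_\varphi\), which sends \(\oo\) to \(\mathcal{W}\), the object \(i_{\varphi*}\rho\) corresponds (via \cref{cor: formal completion langlands-shahidi} and \cref{lem: isotypic decomposition Langlands-Shahidi type}) to a line bundle \(\oo(\chi,\varphi)\) for the character \(\chi\in X^*(S_\varphi)=X^*(Z(\widehat{L}))\) determined by \(\rho\in\Irr(S_\varphi)\), pushed forward from the residual gerbe; hence \(i_{\varphi*}\rho*\mathcal{W}\cong \oo(\chi,\varphi)*\mathcal{W}\). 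So part (1) amounts to identifying \(\oo(\chi,\varphi)*\mathcal{W}\) with \(i_{b!}^\ren\pi\), where \((b,\pi)=(b_{\chi,L},\pi)\) is the pair matched to \((\varphi,\rho)\).

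For this identification I would apply \cref{cor: explict hecke * stalk} (the \(*\)-version of \cref{thm: explict hecke ! stalk}) to the unit stratum. Since \(\mathcal{W}=i_{1!}\cind_{U(E)}^{G(E)}\psi\) and \(\cind_{U(E)}^{G(E)}\psi\) has \(L\)-parameter supported at \(\varphi\) after localization (its Schur-irreducible constituents are the generic representations with parameter \(\varphi\), which by \cref{lem: Langlands-Shahidi type generic} is exactly the relevant packet), \cref{cor: explict hecke * stalk} tells us that \(\oo(\chi,\varphi)*i_{1!}^\ren\) vanishes on all strata \(b'\neq b_{\chi,L}\) and restricts to an equivalence \(\D(G(E))^\wedge_{\eta_{G,1}\circ\varphi}\to \D(G_{b_{\chi,L}}(E))^\wedge_{\eta_{G,b_{\chi,L}}\circ\varphi^{w_\chi}}\). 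Therefore \(\oo(\chi,\varphi)*\mathcal{W}\cong i_{b_{\chi,L}!}^\ren A\) for a single representation \(A\) concentrated in degree \(0\) (using \(t\)-exactness of the functor, also from \cref{cor: explict hecke * stalk}), living in the right block. It then remains to check \(A\cong\pi\); this I would do by matching the Whittaker/generic normalization: the representation \(A\) is the unique generic member of its block (the genericity is inherited from \(\mathcal{W}\) via the equivalence and \cref{lem: Langlands-Shahidi type generic} applied to \(G_{b_{\chi,L}}\)), and by the construction of the BM--O bijection in \cite[Theorem 1.1.]{bm-o-parametrization} — which is pinned down precisely by compatibility with the spectral action and Whittaker normalization, as recalled in the ``Structure of the proof'' discussion — this generic representation is exactly \(\pi\). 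I expect this normalization-matching step to be the main obstacle: one has to verify that the element \(w_\chi\) produced by the Hecke computation agrees with the one in \cite[Lemma 4.23.]{imaihamann}/\cite[Section 4.1.]{bm-o-parametrization} (which is asserted in the remark after \cref{thm: explict hecke ! stalk}) and that the twist \(\varphi^{w_\chi}\) together with the embedding \(\eta_{G,b_{\chi,L}}\) yields precisely the parameter that BM--O assign to \((b_{\chi,L},\pi)\).

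For part (2), I would argue that \(i_{b\sharp}^\ren\pi\xto{} i_{b!}^\ren\pi\xto{} i_{b*}^\ren\pi\) are isomorphisms because, after localizing at \(\varphi\), all three agree with \(\oo(\chi,\varphi)*\mathcal{W}\): by \cref{thm: splitting semi-orthogonal} the \(!\)- and \(*\)-pushforwards from the stratum \(\bun_G^b\) into \(\D(\bun_G,k)^\wedge_\varphi\) agree (the product decomposition is via excision), and \(\sharp\) sits between them, so it agrees too; equivalently, \cref{cor: explict hecke * stalk} exhibits \(\oo(\chi,\varphi)*i_{1!}^\ren\) as \(i_{b!}^\ren\circ(\text{equivalence})\) while \cref{thm: explict hecke ! stalk} exhibits \(\oo(\chi,\varphi)*i_{1*}^\ren\) analogously with \(i_{b*}^\ren\), and the two functors \(\oo(\chi,\varphi)*i_{1!}^\ren\) and \(\oo(\chi,\varphi)*i_{1*}^\ren\) agree on \(\D(G(E))^\wedge_\varphi\) by the remark following \cref{cor: explict hecke * stalk}. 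Since \(i_{b!}^\ren\pi\) is already supported on the closed-in-its-localization stratum, the comparison maps to \(i_{b*}^\ren\pi\) and from \(i_{b\sharp}^\ren\pi\) are equivalences. Finally I would note that, as both statements are about objects of \(\D(\bun_G,\Lambda)\) for \(\Lambda\) a \(\bbZ_\ell\)-field and all constructions commute with base change, it is enough to have checked them over \(\overline{\bbF}_\ell\) and over \(\overline{\bbQ}_\ell\), completing the argument.
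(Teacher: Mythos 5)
Your overall strategy---compute $i_{\varphi*}\rho*\mathcal{W}\cong\oo(\chi,\varphi)*\mathcal{W}_\varphi$, apply \cref{cor: explict hecke * stalk} together with $t$-exactness to conclude it is $i_{b!}^\ren A$ for some $A$ concentrated in degree $0$, and then identify $A$ with $\pi$---is the paper's approach, and part (2) via \cref{thm: splitting semi-orthogonal} is exactly what the paper does. However, your final identification step contains a genuine flaw: you claim $A$ is the ``unique generic member of its block'' and attribute this to \cref{lem: Langlands-Shahidi type generic} ``applied to $G_{b_{\chi,L}}$.'' But $G_{b_{\chi,L}}$ is in general a non-quasi-split inner form of $\GL_n$, for which there is no Whittaker datum and hence no notion of genericity, and \cref{lem: Langlands-Shahidi type generic} is stated only for $G=\GL_n$ itself. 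The genericity of $\mathcal{W}$ does not ``propagate'' across the Hecke operator in any meaningful way.

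The paper's argument avoids this entirely: once one knows (by $t$-exactness) that $i_{\varphi*}\rho*\mathcal{W}$ is a Schur-irreducible object in degree $0$ living on the stratum $b_{\rho,L}$ with $L$-parameter $\varphi$, one simply observes that for inner forms of $\GL_n$ and a Langlands--Shahidi type parameter, there is a \emph{unique} irreducible representation with that parameter (the packet is a singleton because the parabolic induction from the relevant Levi of $G_{b}$ is irreducible---the parameter factors through $\widehat L$ embedded as a Levi of $\widehat{G_{b}}$ via $\eta_{G,b}\circ\varphi^{w_\chi}$). No Whittaker normalization on $G_b$ is invoked or needed. Your concern about matching the element $w_\chi$ with the one in \cite[Section 4.1.]{bm-o-parametrization} is legitimate but is asserted in the remark following \cref{thm: explict hecke ! stalk}; the remaining content of the identification is the packet-singleton fact, not genericity. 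Replacing your genericity claim with this uniqueness argument repairs the proof.
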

\begin{proof}
    For point (1), note that \cref{thm: t-exact}, that \(i_\varphi\rho*\mathcal{W}\) is a Schur-irreducible representation of \(G_{b_{\rho,L}}\) whose \(L\)-parameter is \(\varphi\).
    We have \(b=b_{\rho,L}\) and \(\pi\) is the unique such representation.

    Point (2) to is immediate from \cref{thm: splitting semi-orthogonal}.
\end{proof}
\begin{thm}[{\cite[Conjecture 2.1.9.]{beijing-notes}}]\label{thm: hecke eigensheaf}
    Suppose that \(\varphi\) is a parameter of Langlands-Shahidi type.
    Then 
    \begin{equation*}
        \mathcal{F}_{\varphi}\defined\bigoplus_{b\in B(G),\pi\in\Pi_{\phi}(G_b)}i_{b!}^\ren\pi^{\dim(\iota_{\psi}(b,\pi))}
    \end{equation*}
    is a perverse Hecke eigensheaf with eigenvalue \(\varphi\).
\end{thm}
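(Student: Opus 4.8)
The plan is to deduce this from the categorical equivalence \cref{thm: categorical equivalence} together with \cref{thm: t-exact}, exactly as one verifies the Hecke eigensheaf property in the abelian/irreducible case. Recall that a Hecke eigensheaf with eigenvalue $\varphi$ is an object $\mathcal{F}\in\D(\bun_G)$ such that $T_V\mathcal{F}\cong \mathcal{F}\otimes (r_V\circ\varphi)$ naturally and compatibly in $V\in\rep(\widehat{G})$, where $r_V\circ\varphi$ is the $W_E$-representation obtained by composing $\varphi$ with $V$. Under $\bbL_G^\LSt$, Hecke operators $T_V$ correspond to the spectral action of $f^*V\in\perf(\locsys_{\widehat{G}})$, so the eigensheaf condition translates into finding an object $M\in\Ind\coh_\nilp^\qc(p^{-1}(\LSt))$ with $f^*V\otimes M\cong M\otimes(r_V\circ\varphi)$. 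The obvious candidate is $M=i_{\varphi*}\oo$, the structure sheaf of the residual gerbe $BS_\varphi$ pushed forward along $i_\varphi\from p^{-1}(\LSt)\times_{\LSt}\varphi\to p^{-1}(\LSt)$: since $f^*V$ restricted to the fiber over $\varphi$ is the constant sheaf $r_V\circ\varphi$ (as a $W_E$-representation, with residual $S_\varphi$-action), projection formula gives $f^*V\otimes i_{\varphi*}\oo\cong i_{\varphi*}(r_V\circ\varphi)$, which after decomposing $r_V\circ\varphi$ into $S_\varphi$-isotypic pieces is visibly of eigensheaf shape. The compatibility in $V$ (i.e. that this is an eigensheaf structure in the $\rep(\widehat{G})$-sense, not just a collection of isomorphisms) is automatic from the monoidality of the spectral action, as in \cite[Theorem IX.0.1.]{geometrization} or the discussion in \cite{categorical-fargues-tori}.

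First I would therefore show that $\bbL_G^\LSt$ sends a suitable sheaf supported at $\varphi$ to the sheaf $\mathcal{F}_\varphi$ of the statement. By \cref{thm: explicit computation under equivalence}(1) we have $i_{\varphi*}\rho*\mathcal{W}\cong i_{b!}^\ren\pi$ whenever $(\varphi,\rho)$ matches $(b,\pi)$ under the BM-O bijection. Since $\bbL_G^\LSt(\oo)=\mathcal{W}^\LSt$ and $\bbL_G^\LSt$ is $\Ind\perf(p^{-1}(\LSt))$-linear, we get $\bbL_G^\LSt(i_{\varphi*}\rho)=i_{\varphi*}\rho*\mathcal{W}^\LSt$. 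Now $i_{\varphi*}\oo$ on the spectral side decomposes according to $\mathrm{Irr}(S_\varphi)$, and these characters of $S_\varphi=Z(\widehat{L})$ index precisely the pairs $(b,\pi)$ in an $L$-packet $\Pi_\varphi$ via the BM-O parametrization \cite[Theorem 1.1.]{bm-o-parametrization}; the multiplicity $\dim(\iota_\psi(b,\pi))$ is exactly the dimension of the corresponding isotypic space $\Hom_{S_\varphi}(\chi, r_{\std}\circ\varphi)$-type multiplicity appearing in the decomposition. Hence $\bbL_G^\LSt$ applied to $\bigoplus_{\chi}i_{\varphi*}\oo(\chi)^{\oplus m_\chi}$ (a sheaf set-theoretically supported at $\varphi$ with the right multiplicities $m_\chi$) yields $\bigoplus_{b,\pi}i_{b!}^\ren\pi^{\dim\iota_\psi(b,\pi)}=\mathcal{F}_\varphi$.

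Next I would verify that this spectral-side object is a Hecke eigensheaf. The key computation is: for $V\in\rep(\widehat{G})$, $f^*V\otimes i_{\varphi*}\oo\cong i_{\varphi*}(i_\varphi^*f^*V)$ by the projection formula, and $i_\varphi^* f^* V$ is the pullback of $V$ along $\spec k\to B\widehat{G}$ twisted by $\varphi$, i.e. the $S_\varphi$-representation $V$ carrying the $W_E$-action $r_V\circ\varphi$. Decomposing into isotypic components over $S_\varphi$ and transporting through $\bbL_G^\LSt$ gives $T_V\mathcal{F}_\varphi\cong\mathcal{F}_\varphi\otimes(r_V\circ\varphi)$, functorially in $V$ because the spectral action is a monoidal action of $\Ind\perf(\locsys_{\widehat{G}})$ (compatibly with the $W_E$-action built into $f^*V$). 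Finally, perversity: I would invoke \cref{thm: t-exact}, which says $\bbL_G^\LSt$ is $t$-exact, together with the fact that $i_{\varphi*}\oo$ (and its twists) lie in the heart of the natural $t$-structure on $\Ind\coh_\nilp^\qc(p^{-1}(\LSt))$ — this is where I use that the fiber over $\varphi$ is $BS_\varphi$ with $S_\varphi$ linearly reductive, so pushforward of the structure sheaf is in the heart. Therefore $\mathcal{F}_\varphi=\bbL_G^\LSt$ of a heart object is perverse.

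The main obstacle I anticipate is bookkeeping rather than conceptual: matching the combinatorial data precisely — namely checking that the multiplicities $\dim(\iota_\psi(b,\pi))$ appearing in the definition of $\mathcal{F}_\varphi$ coincide with the multiplicities $m_\chi = \dim\Hom_{S_\varphi}(\chi, -)$ that govern the isotypic decomposition of the spectral-side eigensheaf, uniformly over all $b\in B(G)$ and $\pi\in\Pi_\varphi(G_b)$, and that the BM-O bijection $(\varphi,\rho)\leftrightarrow(b,\pi)$ used in \cref{thm: explicit computation under equivalence} is the same normalization (same Whittaker datum $(U,\psi)$) as the one defining $\iota_\psi$. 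This should follow from \cref{thm: explicit computation under equivalence} and \cref{lem: decomposition D(Bun_G) at Langlands-Shahidi type parameter} together with the comparison of the $B(G)$-parametrized correspondence with \cite[Theorem 1.1.]{bm-o-parametrization} already alluded to in the introduction, but it requires carefully tracking the identification $X^*(Z(\widehat{L}))\cong B(L)_{\mathrm{basic}}\subset B(G)\times W_G$ and the resulting twists $\eta\circ\varphi_{\widehat{L}}^{w_\chi}$.
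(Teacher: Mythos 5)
Your proposal is correct and matches the paper's (very terse) proof, which simply invokes \cref{thm: explicit computation under equivalence}: the eigensheaf is the spectral transport of a sheaf supported at \(\varphi\), the eigen-property follows from the projection formula on the spectral side, and perversity follows from \(t\)-exactness (or, even more directly, from part (2) of that theorem). One small imprecision: the correct spectral-side candidate is not the structure sheaf of the gerbe \(BS_\varphi\) but the pushforward of \(\oo\) from the point \(\spec k\to BS_\varphi\to p^{-1}(\LSt)\), i.e.\ the regular representation \(\bigoplus_\chi\chi\) of \(S_\varphi\) — your second paragraph's \(\bigoplus_\chi i_{\varphi*}\oo(\chi)^{\oplus m_\chi}\) implicitly fixes this, since only the full regular representation satisfies \(f^*V\otimes M\cong M\otimes(r_V\circ\varphi)\).
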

\begin{proof}
    This is immediate from \cref{thm: explicit computation under equivalence}.
\end{proof}
\begin{rem}
    We also obtain a version of \cref{thm: explicit computation under equivalence} and \cref{thm: hecke eigensheaf} for torsion coefficients, if we extend the BM-O bijection to torsion coefficients by requiring it to be compatible with mod-\(\ell\) reduction.
\end{rem}
\begin{rem}
    Assume \(b\) is basic and \(\pi\in\Pi(G_b)\) has supercuspidal parameter.
    Then the discussion in \cite[IX.2.]{geometrization} show that \(i_{b!}\pi\cong i_{b*}\pi\) and these sheaves are perverse.
    In particular, we see that \cite[Conjecture 3.2.6.]{beijing-notes} also holds for \(\GL_n\), using \cite[Theorem IX.7.4.]{geometrization}.
\end{rem}
\begin{thm}
    Let \(k\) be an algebraically closed field with \(p\in k^\times\).
    Let \((G,b,\mu)\) be a basic local Shimura datum with \(d=\dim\mathrm{Sht}(G,b,\mu)_K\).
    Let \(\pi\) be an irreducible supercuspidal representation in \(k\)-vector spaces.
    For \(\rho\in\rep(G_b(E))\), write 
    \begin{equation*}
        R\Gamma_c(G,b,\mu)[\rho]=\colim_{K\to\{1\}}R\Gamma_c(\mathrm{Sht}(G,b,\mu)_K,k)\otimes_{\mathcal{H}(G_b(E))}\rho.
    \end{equation*}
    Then \(R\Gamma_c(G,b,\mu)[\mathrm{JL}(\pi)]\cong\pi\boxtimes(r_{\mu}\circ\varphi_\pi\otimes|\cdot|^{-d/2})[d]\), where \(r_\mu\) is the standard representation attached to \(\mu\), \(\varphi_{\pi}\) is the semisimplified \(L\)-parameter attached to \(\pi\) and \(\mathrm{JL}(\pi)\) is the unique irreducible representation of \(G_b(E)\) whose semisimplified \(L\)-parameter is \(\varphi_\pi\).
\end{thm}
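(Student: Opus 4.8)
The plan is to translate the left‑hand side into a stalk of a Hecke operator on $\bun_G$ and to compute that stalk through the categorical equivalence at $\varphi_\pi$. Since $\pi$ is supercuspidal, its Fargues--Scholze parameter $\varphi_\pi$ is irreducible, hence in particular of Langlands--Shahidi type with cuspidal support $\widehat{G}$, and $\mathrm{JL}(\pi)$ is the unique (supercuspidal) representation of $G_b(E)$ whose semisimplified parameter is $\varphi_\pi$. First I would invoke the comparison recalled in the remark following \cref{thm: explict hecke ! stalk} (which rests on \cite{imaihamann} and \cite{nguyen}, and which in the Lubin--Tate case is \cref{eq: stalk hecke operator}): since $\mu$ is minuscule, $R\Gamma_c(G,b,\mu)[\mathrm{JL}(\pi)]$ agrees, up to the shift and Tate twist coming from the normalization $k[d_\mu](d_\mu/2)$ of $R\Gamma_c(G,b,b',\mu)$, with $i_1^{\ren*}T_\mu i_{b!}^\ren\mathrm{JL}(\pi)$ as a $G(E)\times W_E$-module; here I use that for $b$ basic in $\GL_n$ the renormalizations are harmless, as $\nu_b$ is central, so that $\langle 2\rho_G,\nu_b\rangle=0$ and $\delta_b=1$. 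It then suffices to identify $i_1^{\ren*}T_\mu i_{b!}^\ren\mathrm{JL}(\pi)$.

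That identification I would carry out inside the fibre $\D(\bun_G,k)_{\varphi_\pi}$, into which $i_{b!}^\ren\mathrm{JL}(\pi)$ lifts by \cref{lem: lift to fiber} (its endomorphism ring being $k$), in three steps. (i) By \cref{thm: categorical equivalence irreducible} together with \cref{lem: categorical fiber is fiber} and \cref{lem: decomposition D(Bun_G) at irreducible parameters}, the fibre $\D(\bun_G^{b},k)_{\varphi_\pi}$ is equivalent to $\Mod_k$, with degree-$0$ generator $\oo(\chi_b,\varphi_\pi)*\mathcal{W}_{\varphi_\pi}$, where $\chi_b\in X^*(Z(\widehat{G}))$ corresponds to $b$ under $X^*(Z(\widehat{G}))\cong B(G)_{\mathrm{basic}}$; since $i_{b!}^\ren\mathrm{JL}(\pi)$ is a nonzero object of this category, concentrated in degree $0$ with $\pi_0\End=k$, it must \emph{be} this generator --- here one uses \cref{lem: whittaker localized at irreducible parameter} ($\mathcal{W}_{\varphi_\pi}=i_{1!}\pi$, with $\pi$ our given supercuspidal) and \cref{lem: hecke operator on whittaker} (the Hecke translates of $\mathcal{W}_{\varphi_\pi}$ are concentrated in degree $0$). (ii) As $r_\mu$ is an irreducible representation of $\widehat{G}$, the centre $Z(\widehat{G})=S_{\varphi_\pi}$ acts on it through a single character $\chi_\mu=\mu^{\sharp}$, so by \cref{lem: isotypic decomposition Langlands-Shahidi type} the restriction of $T_\mu$ to $\D(\bun_G,k)_{\varphi_\pi}$ is $(r_\mu\circ\varphi_\pi)\boxtimes\bigl(\oo(\chi_\mu,\varphi_\pi)*-\bigr)$, the $W_E$-action being carried entirely by the multiplicity space $r_\mu$ with $W_E$ acting via $\varphi_\pi$ (the $S_{\varphi_\pi}$-eigenspaces of $r_\mu$ are $\varphi_\pi(W_E)$-stable since $Z(\widehat{G})$ is central). (iii) Because $(G,b,\mu)$ is a basic local Shimura datum, degree reasons on the Fargues--Fontaine curve ($\kappa(b)=-\mu^{\sharp}$ in the conventions of this paper) force $\chi_b=-\chi_\mu$; hence $\oo(\chi_\mu,\varphi_\pi)*\oo(\chi_b,\varphi_\pi)*-$ is the identity, $T_\mu i_{b!}^\ren\mathrm{JL}(\pi)\cong(r_\mu\circ\varphi_\pi)\boxtimes\mathcal{W}_{\varphi_\pi}=(r_\mu\circ\varphi_\pi)\boxtimes i_{1!}^\ren\pi$ is supported on $\bun_G^1$, and applying $i_1^{\ren*}$ gives $i_1^{\ren*}T_\mu i_{b!}^\ren\mathrm{JL}(\pi)\cong\pi\boxtimes(r_\mu\circ\varphi_\pi)$, concentrated in degree $0$.

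Feeding this back into the first step, and converting the Tate twist via the identification of $(1)$ with $|\cdot|$ under the present normalization, yields $R\Gamma_c(G,b,\mu)[\mathrm{JL}(\pi)]\cong\pi\boxtimes(r_\mu\circ\varphi_\pi\otimes|\cdot|^{-d/2})[d]$ with $d=d_\mu=\langle 2\rho_G,\mu\rangle$, the shift and twist being exactly the inverse of the normalization of $R\Gamma_c(G,b,b',\mu)$; this also recovers the Lubin--Tate/Harris--Taylor formula of the introduction. I expect the main obstacle to be bookkeeping rather than any conceptual point: keeping track of, and correctly signing, all the shifts and Tate twists from the $k[d_\mu](d_\mu/2)$ in $R\Gamma_c(G,b,b',\mu)$, from the renormalizations $i_?^{\ren}$, and from the normalization of geometric Satake. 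A secondary point that needs care is to pin down the $W_E$-equivariance in \cref{lem: isotypic decomposition Langlands-Shahidi type} as \emph{exactly} $r_\mu\circ\varphi_\pi$ (not a twist of it), and to check that $i_{b!}^\ren\mathrm{JL}(\pi)$ and its Hecke translates really lie in the honest fibre $\D(\bun_G,k)_{\varphi_\pi}$, not merely in its formal completion.
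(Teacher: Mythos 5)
Your argument is the intended unpacking of the paper's one‑line citation of \cref{thm: t-exact}: translating $R\Gamma_c(G,b,\mu)[\mathrm{JL}(\pi)]$ into the Hecke stalk $i_1^{\ren*}T_\mu i_{b!}^\ren\mathrm{JL}(\pi)$, identifying $i_{b!}^\ren\mathrm{JL}(\pi)$ with the degree‑$0$ generator $\oo(\chi_b,\varphi_\pi)*\mathcal{W}_{\varphi_\pi}$ of $\D(\bun_G^b,k)_{\varphi_\pi}\simeq\Mod_k$ via \cref{thm: categorical equivalence irreducible}, \cref{lem: whittaker localized at irreducible parameter} and \cref{lem: hecke operator on whittaker}, applying the isotypic decomposition \cref{lem: isotypic decomposition Langlands-Shahidi type} to pull out $r_\mu\circ\varphi_\pi$, and cancelling $\chi_b$ against $\chi_\mu$. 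This is correct and matches the paper's route, with the only remaining care being exactly the shift/twist bookkeeping you flag (tracking the $k[d_\mu](d_\mu/2)$ normalization of $R\Gamma_c(G,b,b',\mu)$ against the renormalized stalks and the sign convention $\kappa(b)=-\mu^\sharp$).
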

\begin{proof}
    This is clear from \cref{thm: t-exact}.
\end{proof}

	\printbibliography%
\end{document}